\documentclass[11pt]{article}
\pdfoutput=1 
\usepackage[margin=1in]{geometry}
\usepackage{amsthm,amssymb,enumerate, bbm ,graphicx,color,caption,upgreek, float, tikz, subcaption,booktabs,longtable, appendix,graphics, pdfpages,rotating,mathtools, array, bm, blkarray,setspace,textcomp,ytableau, tabularx}
\usepackage{oubraces}

\usepackage[pdftex]{hyperref} 

\usetikzlibrary{positioning,chains,fit,shapes,calc}

\makeatletter 
\newcommand\mynobreakpar{\par\nobreak\@afterheading} 
\makeatother

\makeatletter
\let\@fnsymbol\@arabic
\makeatother
\newtheorem{theorem}{Theorem}[section]
\newtheorem{conjecture}[theorem]{Conjecture}
\newtheorem{proposition}[theorem]{Proposition}
\newtheorem{lemma}[theorem]{Lemma}
\newtheorem{corollary}[theorem]{Corollary}

\newtheorem{remark}[theorem]{Remark}
\newtheorem{example}[theorem]{Example}
\newtheorem{definition}[theorem]{Definition}

\let\OLDthebibliography\thebibliography
\renewcommand\thebibliography[1]{
  \OLDthebibliography{#1}
  \setlength{\parskip}{0pt}
  \setlength{\itemsep}{0pt plus 0.3ex}
}

\newcommand{\N}{\mathbb{N}}

\newcommand{\R}{\mathbb{R}}

\newcommand{\F}{\mathbb{F}}
 
\newcommand{\calP}{{\mathcal P}}
\newcommand{\olE}{\overline E}
\newcommand{\gbbi}{g}
\newcommand{\hbbi}{h}
\newcommand{\Diag}{\text{\rm Diag}}
\newcommand{\diag}{\text{\rm diag}}
\newcommand{\T}{^{\sf T}}
\newcommand{\rank}{\text{\rm rank}}
\newcommand{\rc}{\text{\rm rc}}

\newcommand{\alphabal}{\alpha_{\text{\rm bal}}}

\newtheorem{probl}{Problem}
\newcommand{\Tr}{\text{\rm Tr}}
\DeclarePairedDelimiter\ceil{\lceil}{\rceil}
\DeclarePairedDelimiter\floor{\lfloor}{\rfloor}

\newcommand\norm[1]{\lVert#1\rVert}

\newcommand{\gbal}{g_{\text{\rm bal}}}
\newcommand{\hbal}{h_{\text{\rm bal}}}
\newcommand{\olG}{\overline G}

\newcommand{\IGbal}{I_{G,\text{\rm bal}}}
\newcommand{\IGbaltwo}{I_{G,\text{\rm bal},2}}

% order r Lasserre bound
\newcommand{\gbbir}{g_{r}}
\newcommand{\hbbir}{h_{r}}
% order 1 Lasserre bound
\newcommand{\gbbione}{g_{1}}
\newcommand{\hbbione}{h_{1}}

\newcommand{\hbalr}{h_{\text{\rm bal},r}}
\newcommand{\gbalr}{g_{\text{\rm bal},r}}
\newcommand{\hbalone}{h_{\text{\rm bal},1}}
\newcommand{\gbalone}{g_{\text{\rm bal},1}}
\newcommand{\lasbalr}{\text{\rm las}_{\text{\rm bal},r}}

\newcommand{\gbalonehat}{\widehat{g_{\text{\rm bal}}}}

\newcommand{\lasbalone}{\text{\rm las}_{\text{\rm bal,1}}}
\newcommand{\thetabal}{{\vartheta}_{\text{\rm bal}}}
\newcommand{\thetabalhat}{\widehat{{\vartheta}_{\text{\rm bal}}}}
\newcommand{\lasbalhat}{\widehat{\text{\rm las}_{\text{\rm bal}}}}
\newcommand{\lasbaltilde}{\widetilde{\text{\rm las}_{\text{\rm bal}}}}

\newcommand{\gbi}{g_{\text{\rm bi}}}
\newcommand{\hbi}{h_{\text{\rm bi}}}
\newcommand{\gbc}{g_{\text{\rm bc}}}
\newcommand{\hbc}{h_{\text{\rm bc}}}
\newcommand{\las}{\text{\rm las}}

\definecolor{donkergroen2}{RGB}{0,95,0}

\usepackage[english]{babel}

\parindent=25pt

\title{Semidefinite approximations for bicliques and biindependent pairs}% \date{23-02-2021}
\author{Monique Laurent\thanks{Centrum Wiskunde \& Informatica (CWI) and Tilburg University. E-mail: \href{mailto:m.laurent@cwi.nl}{\texttt{m.laurent@cwi.nl}}.} \  \& Sven Polak\thanks{Centrum Wiskunde \& Informatica (CWI) and Tilburg University. E-mail: \href{mailto:s.c.polak@tilburguniversity.edu}{\texttt{s.c.polak@tilburguniversity.edu}}.} \ \& Luis Felipe Vargas\thanks{Centrum Wiskunde \& Informatica (CWI) and  Istituto Dalle Molle Studi sull'Intelligenza Artificiale (IDSIA),  USI - SUPSI. E-mail: \href{mailto:luis.vargas@idsia.ch}{\texttt{luis.vargas@idsia.ch}}.}}
\selectlanguage{english}
\begin{document}
\maketitle
\setcounter{footnote}{1}

\begin{abstract}
We investigate some graph parameters dealing with biindependent pairs $(A,B)$ in a bipartite graph $G=(V_1\cup V_2,E)$, i.e., pairs $(A,B)$ where $A\subseteq V_1$, $B\subseteq V_2$ and $A\cup B$ is independent. These parameters also allow to study bicliques in general graphs. When maximizing the cardinality $|A\cup B|$ one finds the stability number $\alpha(G)$, well-known to be polynomial-time computable. When maximizing the product $|A|\cdot |B|$ one finds the parameter $g(G)$, shown to be NP-hard by Peeters (2003), and when maximizing the ratio $|A|\cdot |B|/|A\cup B|$ one finds $h(G)$, introduced by Vallentin (2020) for bounding product-free sets in finite groups. We show that $h(G)$ is an NP-hard parameter and, as a crucial ingredient, that it is NP-complete to decide whether a bipartite graph $G$ has a balanced maximum independent set. These hardness results motivate introducing semidefinite programming bounds for $g(G)$, $h(G)$, and $\alpha_{\text{bal}}(G)$ (the maximum cardinality of a balanced independent set). We show that these bounds can be seen as natural variations of the Lov\'{a}sz $\vartheta$-number, a well-known semidefinite bound on $\alpha(G)$. In addition we formulate closed-form eigenvalue bounds and we show relationships among them as well as with earlier spectral parameters by Hoffman, Haemers (2001) and Vallentin (2020).

\end{abstract}

\medskip
\noindent \textbf{Keywords.} Independent set, biclique, biindependent pair,  Lov\'asz theta number, semidefinite programming, polynomial optimization, eigenvalue bound, stability number of a graph, Hoffman's ratio bound \\
\noindent \textbf{MSC 2020.} 05Cxx, 90C22, 90C23, 90C27, 90C60

\section{Introduction}

Given a  bipartite graph $G=(V_1 \cup V_2, E)$, a \emph{bipartite biindependent pair} in $G$
is a pair $(A,B)$ of subsets  $A\subseteq V_1$ and $B\subseteq V_2$ such that no pair of nodes $\{i,j\}\in A\times B$ is an edge of $G$.
The adjective ``bipartite" is used to indicate that we restrict to the pairs $(A,B)$ that respect the bipartite structure of $G$, i.e., with $A\subseteq V_1$ and $B\subseteq V_2$; we will however sometimes omit it for the sake of brevity.   The maximum sum $|A|+|B|$ taken over all bipartite biindependent pairs  
$(A,B)$  is the well-studied parameter $\alpha(G)$, known as the {\em stability number} of $G$.  
We consider the following two other parameters, asking for the maximum product $|A|\cdot |B|$ and the maximum ratio ${|A|\cdot |B|\over |A|+|B|}$,
\begin{align}
g(G)&:=\max \{|A|\cdot |B|:  
(A,B) \text{ is a bipartite biindependent pair in } G\},\label{eqdefg}\\ 
h(G)&:=\max \Big\{\tfrac{|A|\cdot |B|}{ |A|+|B|}:   
(A,B) \text{ is a  bipartite biindependent  pair in } G\Big\}. \label{eqdefh}
\end{align}
If $G$ is a complete bipartite graph, then any bipartite biindependent pair has $A=\emptyset$ or $B=\emptyset$ (and thus $g(G)=h(G)=0$); such a pair is called {\em trivial}. Otherwise, in the definition of $g(G)$ and $h(G)$, 
one may restrict the optimization to {\em nontrivial} pairs $(A,B)$, i.e., with $A,B\ne \emptyset$. A pair $(A,B)$ is called {\em balanced} if $|A|=|B|$. Then a related parameter of interest is $\alphabal(G)$, the maximum number of vertices in a balanced biindependent pair, given by
$$\alphabal(G):=\max\{ |A|+|B|: (A,B) \text{ is a balanced bipartite biindependent pair in } G\}.$$
One can also define the parameters $\gbal(G)$ and $\hbal(G)$ as the analogs of $g(G)$ and $h(G)$, where one restricts the optimization to balanced pairs in (\ref{eqdefg}) and (\ref{eqdefh}), respectively.
 Here are some easy relations that hold among the above parameters.

\begin{lemma}\label{lemrel1}
Let $G$ be a bipartite graph. Then, we have
\begin{align}
\tfrac{1}{4}\alphabal(G)={1\over 2}\sqrt{\gbal(G)}=\hbal(G) \le h(G)\le \tfrac{1}{2}\sqrt{g(G)} \le \tfrac{1}{4}\alpha(G),\label{eqrel0}\\ 
h(G)={1\over 4}\alpha(G)\Longleftrightarrow {1\over 2}\sqrt{g(G)}={1\over 4}\alpha(G)\Longleftrightarrow \alpha(G)=\alphabal(G),\label{eqrel2} 
\end{align}
\end{lemma}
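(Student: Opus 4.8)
The plan is to establish the three left-hand equalities in~(\ref{eqrel0}) directly, then the chain of inequalities via AM--GM, and finally to deduce the equivalences in~(\ref{eqrel2}) from the equality cases of those inequalities. For the equalities $\tfrac14\alphabal(G)=\tfrac12\sqrt{\gbal(G)}=\hbal(G)$, I would note that a balanced pair is determined size-wise by the common value $t:=|A|=|B|$, and that the three quantities being maximized, namely $|A|+|B|=2t$, $|A|\cdot|B|=t^2$ and $\tfrac{|A|\cdot|B|}{|A|+|B|}=\tfrac{t}{2}$, are all strictly increasing in $t$. Hence, over the finite set of feasible values of $t$, they are all maximized at the same largest feasible value $t^\ast$, yielding $\alphabal(G)=2t^\ast$, $\gbal(G)=(t^\ast)^2$ and $\hbal(G)=t^\ast/2$; substituting gives the common value $t^\ast/2$.

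For the inequality chain, $\hbal(G)\le h(G)$ is immediate since balanced pairs form a subset of all biindependent pairs. The bound $h(G)\le\tfrac12\sqrt{g(G)}$ follows from the AM--GM inequality $|A|+|B|\ge 2\sqrt{|A|\cdot|B|}$, which gives $\tfrac{|A|\cdot|B|}{|A|+|B|}\le\tfrac12\sqrt{|A|\cdot|B|}\le\tfrac12\sqrt{g(G)}$ for every pair. Likewise, $\tfrac12\sqrt{g(G)}\le\tfrac14\alpha(G)$ follows from $|A|\cdot|B|\le\big(\tfrac{|A|+|B|}{2}\big)^2\le\big(\tfrac{\alpha(G)}{2}\big)^2$, so that $g(G)\le\tfrac14\alpha(G)^2$.

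For~(\ref{eqrel2}), I would use the sandwich $\hbal(G)\le h(G)\le\tfrac12\sqrt{g(G)}\le\tfrac14\alpha(G)$ together with $\tfrac14\alphabal(G)=\hbal(G)$. If $h(G)=\tfrac14\alpha(G)$, the chain collapses to equality, forcing $\tfrac12\sqrt{g(G)}=\tfrac14\alpha(G)$; conversely, if $\alpha(G)=\alphabal(G)$, then $\tfrac14\alpha(G)=\tfrac14\alphabal(G)=\hbal(G)\le h(G)\le\tfrac14\alpha(G)$ forces $h(G)=\tfrac14\alpha(G)$. The crux is the middle equivalence $\tfrac12\sqrt{g(G)}=\tfrac14\alpha(G)\Leftrightarrow\alpha(G)=\alphabal(G)$, which I would obtain by tracking when both AM--GM steps bounding $g(G)$ are tight: the identity $g(G)=\tfrac14\alpha(G)^2$ holds precisely when a $g$-optimal pair satisfies $|A|=|B|$ and $|A|+|B|=\alpha(G)$, i.e.\ precisely when some balanced biindependent pair attains $|A|+|B|=\alpha(G)$, which (using $\alphabal(G)\le\alpha(G)$) is exactly the statement $\alphabal(G)=\alpha(G)$.

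The step I expect to require the most care is this last equivalence, where the two AM--GM inequalities must be controlled simultaneously and the existence of an extremal balanced pair correctly translated into the identity $\alphabal(G)=\alpha(G)$; the remaining implications then close the cycle and complete the proof.
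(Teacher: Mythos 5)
Your proof is correct and follows essentially the same route as the paper: the balanced equalities come straight from the definitions, the chain of inequalities from AM--GM applied to optimal pairs, and the equivalences in~(\ref{eqrel2}) from the equality case of AM--GM applied to a $g$-optimal pair (which forces it to be balanced and of size $\alpha(G)$), with the remaining implications closing the cycle via~(\ref{eqrel0}). The only cosmetic difference is that you justify $\hbal(G)\le h(G)$ by noting that balanced pairs form a subset of all feasible pairs, whereas the paper evaluates the $h$-objective at an $\alphabal$-optimal pair; these are the same observation.
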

\proof{Proof.} 
The equalities $\tfrac{1}{4}\alphabal(G)={1\over 2}\sqrt{\gbal(G)}=\hbal(G)$ follow from the definitions. We now show the inequalities in (\ref{eqrel0}).  First, if $(A,B)$ is optimal for $\alphabal(G)$,  then $|A|=|B|$ and thus we have $h(G)\ge {|A|\cdot |B|\over |A|+|B|}=|A|/2=\alphabal(G)/4$. 
Second, if $(A,B)$ is optimal for $h(G)$, then ${1\over 2}\sqrt{g(G)}\ge {1\over 2}\sqrt{|A|\cdot |B|} \ge {|A|\cdot |B|\over |A|+|B|}=h(G)$, where the last inequality holds as $(\sqrt{|A|}-\sqrt{|B|})^2\geq 0$.
Third, if $(A,B)$ is optimal for $g(G)$, then ${1\over 4}\alpha(G)\ge {1\over 4}(|A|+|B|)\ge {1\over 2}\sqrt{|A|\cdot |B|}={1\over 2}\sqrt{g(G)}$, where again the last inequality holds as $(\sqrt{|A|}-\sqrt{|B|})^2\geq 0$. This concludes the proof of (\ref{eqrel0}). Moreover, equality ${1\over 4}\alpha(G)={1\over 2}\sqrt{g(G)}$ implies $|A|=|B|$, and thus  $(A,B)$ is a balanced optimal solution for $\alpha(G)$, so that $\alpha(G)=\alphabal(G)$. In addition, if $h(G)=\frac{1}{4}\alpha(G)$, then ${1\over 4}\alpha(G)={1\over 2}\sqrt{g(G)}$ by (\ref{eqrel0}), which, as we just observed, implies $\alpha(G)=\alphabal(G)$. The other implications follow directly from  (\ref{eqrel0}). 
\endproof

\smallskip
In the rest of this section we first explain how the above parameters also permit to model problems about bicliques (in arbitrary graphs) and we mention some applications. Then we present a roadmap through our main results, that deal with complexity questions, and with designing semidefinite bounds and closed-form eigenvalue-based bounds, topics to which we come back 
in  detail in Sections~\ref{seccomplexity}, \ref{secsdp}, \ref{seceig}, and \ref{sec:balanced}.

\subsection*{Biindependent pairs and bicliques in arbitrary graphs.}

{\em Bipartite} biindependent pairs in {\em bipartite} graphs   
also permit to model {\em general} biindependent pairs and bicliques in {\em arbitrary} graphs. 
Consider an arbitrary graph $G=(V,E)$ (not necessarily bipartite).
A {\em biindependent} pair in $G$ is a pair $(A,B)$ of disjoint subsets of $V$ such that no pair of nodes $\{i,j\}\in A\times B$ is an edge of $G$ (but edges are allowed within  $A$ or  $B$). One then defines analogously the parameters $\gbi(G)$ and $\hbi(G)$, respectively,  as the maximum product $|A|\cdot |B|$  and the maximum  ratio $|A|\cdot |B|\over |A|+|B|$, taken over all biindependent pairs in $G$. The analog of relation (\ref{eqrel0}) holds:
$$\hbi(G)\le {1\over 2}\sqrt{\gbi(G)}\le {1\over 4}|V|.$$ Note that  $\hbi(G)\ge {1\over 4}\alpha(G)$ if $\alpha(G)$ is even and $\hbi(G)\ge {1\over 4}\big(\alpha(G)- {1\over \alpha(G)}\big)$
if $\alpha(G)$ is odd (which can be seen by  partitioning a maximum stable set into two almost equally sized parts).
The parameters $\hbi(G)$ and $\gbi(G)$ can in fact be reformulated in terms of the parameters $g(\cdot)$ and $h(\cdot)$ for an associated bipartite graph
$B_0(G)$, the  {\em extended bipartite double} of $G$, defined as follows.
First we define the {\em bipartite double} $B(G)$, whose node set is $V\cup V'$, where $V'=\{i':i\in V\}$ is a disjoint copy of $V$, and whose  edges are the pairs $\{i,j'\}$ and $\{j,i'\}$ for $\{i,j\}\in E$. 
Then, the {\em extended bipartite double} $B_0(G)$ is obtained by adding  all pairs $\{i,i'\}$ ($i\in V$) as edges to $B(G)$. Now, observe that a pair  $(A,B)$ is  biindependent  in $G$ precisely when the pair $(A\subseteq V,B':=\{i':i\in B\}\subseteq V'$) is  bipartite biindependent  in $B_0(G)$.  
Therefore we have 
\begin{equation}\label{eqbi}
\gbi(G)=g(B_0(G))\quad \text{ and } \quad \hbi(G)=h(B_0(G)) \quad \text{ for any graph } G. 
\end{equation}

One can also model  {bicliques} in an arbitrary  graph $G=(V,E)$.
A {\em biclique} in $G$ is a pair $(A,B)$ of disjoint subsets of $V$ such that $A\times B\subseteq E$ or, equivalently,  $(A,B)$ is a biindependent pair in  the complementary graph $\overline G=(V,\overline E)$ of $G$.  In analogy, let $\gbc(G)$ and $\hbc(G)$ denote the maximum product $|A|\cdot |B|$  and   ratio $|A|\cdot |B|\over |A|+|B|$, taken over all bicliques $(A,B)$ in $G$, so that  
\begin{equation}\label{eqbcB0G}
\gbc(G)=\gbi(\overline G)=g(B_0(\overline G)) \quad \text{ and } \quad \hbc(G)=\hbi(\overline G)=h(B_0(\overline G)) \quad \text{ for any graph } G.
\end{equation}
In the case when $G=(V_1\cup V_2,E)$ is a bipartite graph, nontrivial bicliques in $G$ correspond to nontrivial bipartite biindependent pairs in the bipartite graph  $\olG^b:=(V_1\cup V_2, (V_1\times V_2) \setminus E)$, known as  the {\em bipartite complement} of $G$. So  we also have 
 \begin{equation}\label{eqbcG}
\gbc(G)= g(\olG^b)\quad \text{ and } \quad \hbc(G)=h(\olG^b)\quad \text{ for any graph } G.
 \end{equation}
 
So relations (\ref{eqbcB0G}) and (\ref{eqbcG}) offer different formulations for the parameters $\gbc(\cdot)$ and $\hbc(\cdot)$, we will investigate in Section \ref{secHaemers} how the associated  semidefinite bounds relate. 
 
\subsection*{Complexity results.}
As is well-known there are polynomial-time algorithms for computing the  stability number $\alpha(G)$ of a bipartite graph $G$. On the other hand, 
Peeters \cite{Peeters} shows that, given an integer $k$,  deciding whether a bipartite graph $G$ has a biclique $(A,B)$ with $|A|\cdot |B|\ge k$ is an  NP-complete problem.
Hence, computing the parameter $g(G)$ is  an NP-hard problem (by switching between bicliques and biindependent pairs).

We will show that  also $h(G)$ is hard to compute. 
For this we show that the problem (denoted $\alpha$-BAL-BIP in Section \ref{seccomplexity}) of deciding whether a bipartite graph $G$ has a  {\em balanced} maximum independent set, i.e., whether $\alpha(G)=\alphabal(G)$, is NP-complete 
(see Theorem~\ref{theohardness}).
Combining with Lemma~\ref{lemrel1}, it follows that 
 deciding whether $h(G)\ge {1\over 4}\alpha(G)$ is  an NP-complete  problem.

It is known  that, given an integer $k$, deciding whether  a bipartite graph $G$ contains a  bipartite biindependent pair  $(A,B)$ with $|A|=|B|=k$ is an NP-complete problem~\cite{Garey,Johnson} (switching between biindependent pairs and bicliques). Hence our hardness result for problem $\alpha$-BAL-BIP shows hardness of this problem already   for the case  $k={1\over 2}\alpha(G)$.

Our proof technique will in fact permit to show NP-hardness for a broader set of problems, namely for deciding whether any of the following equalities holds: $g(G)=\gbal(G)$, $h(G)=\hbal(G)$, 
$h(G)={1\over 2}\sqrt{g(G)}$, or ${1\over 2}\sqrt{g(G)}={1\over 4}\alpha(G)$ (thus whether the inequalities in (\ref{eqrel0}) hold at equality). See Theorem \ref{theo-equiv-H} and Corollary \ref{coro-complexity}.

\subsection*{Some applications for the parameters $g(\cdot)$ and $h(\cdot)$.}

As explained above the parameter $g(\cdot)$ also allows to model  {maximum  edge cardinality bicliques} in  bipartite (or general) graphs. 
This problem has many real life applications, such as  reducing assembly times in product manufacturing lines and in the area of formal concept analysis, as explained in \cite{DKST2001} (see also \cite{DKT97,ST1998}).
The related parameter asking for the maximum number of vertices in a balanced biclique has also many applications; e.g., in VLSI design (e.g., \cite{AYRP2007,RL1988,T2006}, in the analysis of biological data (as instance of bicluster, e.g., \cite{YWWY2005}) and of interactions of proteins (e.g., \cite{MRM2014}).

The parameter $g(\cdot)$ appears  naturally in the study of cross-intersecting set (or subspace) families.
For integers $n\ge k\ge \ell\ge 1$, let $\calP_{k}$ denote the collection of $k$-subsets of $[n]$ and similarly for $\calP_{\ell}$. Two set families $\mathcal A\subseteq \calP_{k}$ and $\mathcal B\subseteq \calP_{\ell}$ are called {\em cross-intersecting} if $A\cap B\ne \emptyset$ for all $A\in \mathcal A$ and $B\in \mathcal B$.
Then the maximum product $|\mathcal A|\cdot |\mathcal B|$ for such a cross-intersecting pair is the parameter $g(G^{n}_{k,\ell})$, where $G^n_{k,\ell}$ is the bipartite graph with bipartition $\calP_{k}\cup\calP_{\ell}$ and an edge $(A,B)\in \calP_{k}\times \calP_{\ell}$ if $A\cap B=\emptyset$. Pyber \cite{Pyber1986} gives bounds on this parameter $g(G^n_{k,\ell})$, as extension of the classical Erd\"os-Ko-Rado result \cite{EKR}.
Suda and Tanaka \cite{ST2014} consider the analogous question for subspace families. Given a finite field $\mathbb F_q$ ($q\ge 2$), let $\Omega_k$ denote the set of all $k$-dimensional subspaces of $(\mathbb F_q)^n$ and define analogously $\Omega_\ell$. Consider the bipartite graph $G^{n,q}_{k,\ell}$ with bipartition $\Omega_k\cup \Omega_\ell$, where there is an edge $(A,B)\in \Omega_k\times\Omega_\ell$ if $A\cap B=\{0\}$. Then, biindependent pairs in $G^{n,q}_{k,\ell}$ correspond to cross-intersecting families of subspaces. Suda and Tanaka \cite{ST2014} give bounds on the parameter $g(G^{n,q}_{k,\ell})$ based on semidefinite programming (see also Remark \ref{remSudaTanaka}). We also refer to Suda, Tanaka and Togushide \cite{STT2017}, who consider an extension to cross-intersecting families with measures.

The parameter $g(\cdot)$ is also relevant for bounding the  nonnegative rank of a matrix.
Given a matrix $M\in \R^{|V_1|\times |V_2|}_+$, its  {\em nonnegative rank}  $\rank_+(M)$ is the smallest integer  $r\in\N$ such that $
M=\sum_{\ell=1}^r a_\ell b_{\ell}\T$  for some nonnegative vectors $a_\ell\in\R^{|V_1|}_+$ and $b_\ell\in \R^{|V_2|}_+$;  computing $\rank_+(\cdot)$  is an NP-hard problem \cite{Vavasis}.
A classical combinatorial lower bound for $\rank_+(M)$ is the  {\em rectangle covering bound} $\rc(M)$,  defined as the smallest number of  rectangles $A\times B\subseteq V_1\times V_2$
whose  union is equal to the support  $S_M:=\{(i,j)\in V_1\times V_2: M_{ij}\ne 0\}$ of $M$.
(See, e.g., \cite{FKPT}). The rectangle covering bound was used, e.g., in  \cite{FMPTW} to show an exponential lower bound on the extension complexity of combinatorial polytopes such as the traveling salesman  and correlation  polytopes. Also the parameter $\rc(M)$ is not easy to compute. To approximate it, one can consider the bipartite graph $B_M$, with vertex set $V_1\cup V_2$ and  edge set $E_M:=(V_1\times V_2)\setminus S_M$. Then  one can show that 
$\rc(M) \cdot g(B_M)\ge |S_M|$.
Hence, an upper bound on $g(B_M)$ gives directly a lower bound on $\rc(M)$ and thus a lower bound on the nonnegative rank $\rank_+(M)$.

\medskip
The parameter $h(\cdot)$ was introduced by Vallentin~\cite{Vallentin}, who observed its relevance to maximum product-free subsets in groups in work of Gowers~\cite{Gowers}.
Let~$\Gamma$ be a finite group. A set $A\subseteq \Gamma$ is called {\em product-free} if $ab\not\in A$ for all $a,b\in A$, and one is interested in finding the  largest cardinality of a product-free set in $\Gamma$  (see \cite{Gowers,Kedlaya} for background on this problem).  We now briefly indicate how to bound this parameter using the parameter $h(\cdot)$; for the interested reader we will present this connection in more detail in Appendix \ref{appendixgroup}.

Assume $A\subseteq \Gamma$ is product-free. Let $G_{\Gamma,A}=(V_1 \cup V_2 ,E)$ be the associated bipartite Cayley graph, where $V_1$ and $V_2$ are disjoint copies of $\Gamma$ and there is an edge between $v_1\in V_1$ and $v_2\in V_2$ if their product $v_1v_2$ belongs to $A$.
The crucial observation now is that since~$A$ is product-free, the pair~$(A_1,A_2)$ is  (balanced) bipartite biindependent  in~$G_{\Gamma,A}$, where $A_1\subseteq V_1,A_2\subseteq V_2$ are the corresponding disjoint copies of $A$. This implies  $
\tfrac{|A|}{2} \le h(G_{\Gamma,A})$. Hence, upper bounds on~$h(G_{\Gamma,A})$ give  upper bounds on product-free sets in $\Gamma$.  Vallentin \cite{Vallentin} introduced the eigenvalue-based upper bound $h(G)\le {|V|\over 2r}\lambda_2(A_G)$ for any $r$-regular bipartite graph $G$. Applying it to the $|A|$-regular bipartite graph $G_{\Gamma,A}$, he could recover a result by Gowers~\cite{Gowers}, which states that a product-free subset~$A$ in~$\Gamma$ has cardinality~$|A| \leq |\Gamma|/k^{1/3}$, where $k$ is the minimum dimension
of a nontrivial  representation of~$\Gamma$. 
We will show the sharper eigenvalue-based bound $h(G)\le \widehat h(G)={|V|\over 4}{\lambda_2(A_G)\over r+\lambda(A_G)}$ (see Proposition~\ref{prop:hbbioneedgetransitive}), and use it to show a slight sharpening of Gowers' bound, replacing ${|\Gamma|\over k^{1/3}}$ by ${|\Gamma|\over 1+k^{1/3}}$ (see Theorem~\ref{theorem:Gowers}). 

In fact, for this application, one is only interested in  {\em balanced} biindependent pairs in the graph $G_{\Gamma,A}$ and we have $2|A|\le \alphabal(G_A)$ if $A$ is product-free in $\Gamma$.
This motivates investigating whether sharper semidefinite and eigenvalue-based bounds can be found for the balanced parameters. We come back briefly to this question later in the introduction and it will be investigated in detail in Section \ref{sec:balanced}.

\subsection*{Semidefinite approximations.}

The parameters $g(G)$ and $h(G)$ can  be  formulated as polynomial optimization problems, which leads  to hierarchies of semidefinite programming (SDP) upper bounds $g_r(G)$ and $h_r(G)$ (for $r\ge 1$), able to find the original parameters at order $r=\alpha(G)$. 
We investigate in particular the SDP bounds obtained at the first order $r=1$. As we will see they take the form
\begin{align}
g_1(G)&=\max_{ X \in \mathcal{S}^{|V|}} \Big\{ \langle C, X \rangle:   \,  \left(\begin{matrix} 1 &\diag(X) \T\cr \diag(X)& X\end{matrix}\right) \succeq 0, \, X_{ij} = 0 \text{ if } \{i,j\} \in E\Big\}, \label{eqg1}\\
h_1(G)&=\max_{X\in \mathcal S^{|V|}}\{\langle C,X\rangle:  \,X\succeq 0,\ \text{\rm Tr}(X)=1,\ X_{ij}=0 \text{ if } \{i,j\}\in E\}.\label{eqh1}
\end{align}
\noindent
Here 
$C={1\over 2}{\tiny \left(\begin{matrix} 0 & J \cr J & 0 \end{matrix}\right)}\in \R^{|V_1|+|V_2|}$, where $J$ denotes  the all-ones matrix of appropriate size. 
The parameters  $g_1(G)$ and $h_1(G)$  can  be seen as  variations of the parameter $\vartheta(G)$, introduced by Lov\'asz \cite{Lo79} as upper bound on $\alpha(G)$ for any $G$ (and equal to $\alpha(G)$ when $G$ is bipartite). Indeed, if we replace the objective $\langle C,X\rangle$ by $\text{\rm Tr}(X)$ in  program (\ref{eqg1}) and by $\langle J,X\rangle $   in program (\ref{eqh1}), then we obtain $\vartheta(G)$ in both cases (see (\ref{eqthetaI}) and (\ref{eqthetaJ})).
We  will show the following relations between 
the parameters $h(G)$, $g(G)$, $h_1(G)$,  $g_1(G)$, and $\alpha(G)$. 

\begin{proposition}\label{propghg1h1}
For any bipartite graph $G$ we have 
$$h(G) \leq \tfrac{1}{2}\sqrt{g(G)} \leq  h_1(G) \leq \tfrac{1}{2} \sqrt{g_1(G)} \leq \tfrac{1}{4} \alpha(G).$$
\end{proposition}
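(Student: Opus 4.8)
The plan is to establish the chain of five inequalities by proving each link separately, reusing Lemma~\ref{lemrel1} for the two outermost ones and focusing the real work on the two middle links involving the SDP relaxation $h_1(G)$. The leftmost inequality $h(G)\le \tfrac12\sqrt{g(G)}$ is already part of~(\ref{eqrel0}), so nothing new is needed there. Similarly, the rightmost inequality $\tfrac12\sqrt{g_1(G)}\le \tfrac14\alpha(G)$ should follow once I relate $g_1(G)$ to $\vartheta(G)=\alpha(G)$ (for bipartite $G$); the excerpt notes that replacing the objective in~(\ref{eqg1}) by $\text{\rm Tr}(X)$ yields $\vartheta(G)$, so the idea is to bound the bilinear objective $\langle C,X\rangle$ in terms of the trace-type quantity that defines $\vartheta$, then invoke $\vartheta(G)=\alpha(G)$.

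The heart of the argument is the middle link $\tfrac12\sqrt{g(G)}\le h_1(G)\le \tfrac12\sqrt{g_1(G)}$. For the first half, I would take an optimal biindependent pair $(A,B)$ for $g(G)$ and construct a feasible matrix $X$ for program~(\ref{eqh1}): the natural candidate is $X=\tfrac{1}{|A|+|B|}\,vv\T$ (or a suitably normalized rank-one matrix) built from the indicator vector of $A\cup B$, scaled so that $\text{\rm Tr}(X)=1$. Feasibility requires checking $X\succeq 0$ (immediate for a rank-one $vv\T$) and $X_{ij}=0$ on edges (which holds because $(A,B)$ is biindependent, so no edge meets both $A$ and $B$, killing the relevant off-diagonal entries after the right choice of signs/support). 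Evaluating $\langle C,X\rangle$ on this $X$ should reproduce $\tfrac{|A|\cdot|B|}{|A|+|B|}$ up to the normalization, which after taking into account the AM--GM step $\tfrac{|A|\cdot|B|}{|A|+|B|}$ versus $\tfrac12\sqrt{|A|\cdot|B|}$ yields the bound $\tfrac12\sqrt{g(G)}\le h_1(G)$.

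For the second half $h_1(G)\le \tfrac12\sqrt{g_1(G)}$, the strategy is to take an optimal $X$ for the trace-normalized program~(\ref{eqh1}) and produce from it a feasible point of~(\ref{eqg1}) of comparable objective value, exploiting the structural link between the two SDPs (the $g_1$ program carries the arrow matrix constraint tying $\diag(X)$ to $X$, whereas the $h_1$ program imposes $\text{\rm Tr}(X)=1$). I expect to rescale $X$ by a scalar $t$ chosen so that the arrow-matrix positivity constraint of~(\ref{eqg1}) is met, and then the quadratic relationship between the two objectives—$\langle C,\cdot\rangle$ appears linearly in both but the normalizations differ by a factor tied to $\text{\rm Tr}(X)$—should deliver the square-root loss, i.e.\ $h_1(G)^2\le \tfrac14 g_1(G)$.

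The main obstacle I anticipate is this last step: matching the feasible regions of the two SDPs requires correctly handling the interplay between the trace constraint and the arrow-matrix (Schur complement) constraint, and pinning down the exact scalar rescaling so that the resulting objective value is $\tfrac14 g_1(G)$ rather than merely a constant multiple of it. The AM--GM-type inequalities (as used repeatedly in the proof of Lemma~\ref{lemrel1} via $(\sqrt{|A|}-\sqrt{|B|})^2\ge 0$) will reappear here, but at the level of SDP matrices rather than cardinalities, so I would need to identify the correct matrix analogue—plausibly a Schur complement or Cauchy--Schwarz argument applied to the block structure of $C$—to justify passing from a linear bound to the square-root bound cleanly.
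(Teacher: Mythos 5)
Your treatment of the two outer inequalities is fine: the leftmost is Lemma~\ref{lemrel1}, and the rightmost does follow along the lines you sketch, from $J-4C=ff\T\succeq 0$ together with $\vartheta(G)=\alpha(G)$ (this is essentially how the paper argues, via $g_1(G)\le \alpha(G)h_1(G)$ and $h_1(G)\le \tfrac14\alpha(G)$). However, both middle links---which are the substance of the proposition---have genuine gaps. For $\tfrac12\sqrt{g(G)}\le h_1(G)$: your uniformly weighted rank-one matrix $X=\tfrac{1}{a+b}\chi^{A\cup B}(\chi^{A\cup B})\T$ (with $a=|A|$, $b=|B|$) is indeed feasible for (\ref{eqh1}), but it gives $\langle C,X\rangle=\tfrac{ab}{a+b}$, and AM--GM runs the \emph{wrong} way: $\tfrac{ab}{a+b}\le\tfrac12\sqrt{ab}$, with equality only when $a=b$. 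So this construction proves only $h_1(G)\ge \tfrac{ab}{a+b}$, not $h_1(G)\ge\tfrac12\sqrt{ab}$. The step is salvageable, but you must weight the two sides unequally: with $v=\tfrac{1}{\sqrt{2a}}\chi^A+\tfrac{1}{\sqrt{2b}}\chi^B$ and $X=vv\T$ one gets $\Tr(X)=1$, the edge constraints hold since $A\cup B$ is independent, and $\langle C,X\rangle=\sqrt{a/2}\sqrt{b/2}=\tfrac12\sqrt{ab}$. (That repaired primal construction is a legitimate alternative to the paper's argument, which instead works in the dual formulation (\ref{eqhGV}): for any feasible $(\lambda,Z)$, the principal submatrix of $\lambda I+Z-C$ indexed by $A\cup B$ equals $\bigl(\begin{smallmatrix}\lambda I_a & -\frac12 J_{a,b}\\ -\frac12 J_{b,a} & \lambda I_b\end{smallmatrix}\bigr)$, and a Schur complement forces $\lambda\ge\tfrac12\sqrt{ab}$.)

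The gap at $h_1(G)\le\tfrac12\sqrt{g_1(G)}$ is more serious and is not repairable along the route you propose. Rescaling an optimal $X$ of (\ref{eqh1}) by a scalar $t>0$ makes the arrow constraint of (\ref{eqg1}) read $X\succeq t\,\diag(X)\diag(X)\T$, which requires $\diag(X)$ to lie in the range of $X$; this can fail at the (unique) optimum. Concretely, take $V_1=\{1\}$, $V_2=\{2,3\}$ and no edges: the unique optimizer of (\ref{eqh1}) is $X=vv\T$ with $v=\tfrac12(\sqrt2,1,1)\T$, whose diagonal $\tfrac14(2,1,1)\T$ is not a multiple of $v$, so \emph{no} positive rescaling of $X$ is feasible for (\ref{eqg1}). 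Moreover, even when the rescaling is feasible, the objective scales linearly, $\langle C,tX\rangle=t\,h_1(G)$, so you would need $t\ge 4h_1(G)$, and nothing in your plan produces a feasible $t$ that large; a generic Schur-complement or Cauchy--Schwarz step will not close this. The paper's proof of this link is of a different nature: writing the optimal $X$ as the Gram matrix of vectors $y_i$, it forms the new Gram matrix $X_\epsilon$ of the rescaled projections $\tfrac{d_\epsilon\T y_i}{\|y_i\|^2}y_i$, where $d_\epsilon$ is the normalization of $y'+\epsilon y''$ and $y',y''$ are the sums of the $y_i$ over $V_1$, $V_2$; feasibility of $X_\epsilon$ for (\ref{eqg1}) is then automatic, and the value bound $\langle C,X_\epsilon\rangle\ge 4h_1(G)^2$ for a suitable sign $\epsilon\in\{\pm1\}$ is obtained using complementary slackness ($XS=0$ against an optimal dual solution of (\ref{eqhGV})), which yields the key identities $y_i\T y''=2h_1(G)\|y_i\|^2$ for $i\in V_1$ and $y_j\T y'=2h_1(G)\|y_j\|^2$ for $j\in V_2$. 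This construction, an adaptation of the classical argument that $\las_1(G)\ge\vartheta(G)$, is the idea your plan is missing.
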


It is interesting to note  that   $h_1(G)$ may improve the  bound  ${1\over 2}\sqrt{g_1(G)}$ for  ${1\over 2}\sqrt {g(G)}$. Indeed,  the inequality $h_1(G) \leq \tfrac{1}{2} \sqrt{g_1(G)}$ can be strict, e.g., when $G$ is $K_{n,n}$ minus a perfect matching with~$n \ge 5$, as we see in Section~\ref{secexamples}. The key ingredient to show this is getting eigenvalue-based reformulations for the parameters when $G$ enjoys symmetry properties, as we discuss next.

\subsection*{Eigenvalue bounds.}

When $G$ is  a bipartite $r$-regular graph we can give  closed-form  bounds in terms of the second largest eigenvalue of the adjacency matrix $A_G$ of $G$. These bounds are obtained  by restricting in the definitions (\ref{eqg1}) and (\ref{eqh1}) of  $g_1(G)$ and $h_1(G)$ the optimization to matrices with some symmetry.

\begin{proposition}\label{propeigbound}
Assume~$G$ is a bipartite $r$-regular graph,  set~$n:=|V_1|=|V_2|$, and let $\lambda_2$ be the second largest eigenvalue of the adjacency matrix $A_G$ of $G$. Then we have
$$
g_1(G)\leq \widehat g(G):=\begin{cases}
 \frac{n^2\lambda_2^2}{(\lambda_2+r)^2}  &  \text{ if $r\le 3\lambda_2$},\\
 \frac{n^2\lambda_2}{8(r-\lambda_2)}  &  \text{ otherwise,}
\end{cases}
\quad\text{ and } \quad
h_1(G)\leq \widehat h(G):= \frac{n\lambda_2}{ 2(\lambda_2+r) }.
$$
Moreover,  we have equality $g_1(G)=\widehat g(G)$ if $G$ is  vertex- and edge-transitive,
and equality $h_1(G)=\widehat h(G)$  if $G$ is edge-transitive.
\end{proposition}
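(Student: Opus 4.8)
\textbf{Proof proposal for Proposition~\ref{propeigbound}.}

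The plan is to derive the closed-form bounds by restricting the feasible matrices in the semidefinite programs (\ref{eqg1}) and (\ref{eqh1}) to a suitable symmetric subspace, and then to show that for sufficiently symmetric graphs this restriction loses nothing. Since $G$ is bipartite $r$-regular on $V_1\cup V_2$ with $|V_1|=|V_2|=n$, I would look for feasible solutions $X$ lying in the algebra spanned by the identity $I$, the all-ones block matrices, and the adjacency matrix $A_G$. Concretely, for $h_1(G)$ I would try an ansatz of the form $X=aI+b\,\tfrac{1}{2}\left(\begin{smallmatrix}0&J\\ J&0\end{smallmatrix}\right)+cA_G$, choosing the coefficients so that the edge constraints $X_{ij}=0$ for $\{i,j\}\in E$ are met (which forces the ``$J$-part'' on edge positions to cancel against the $A_G$-part, relating $b$ and $c$), and so that $\mathrm{Tr}(X)=1$ fixes $a$. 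The key point is that all these matrices are simultaneously diagonalizable, so the constraint $X\succeq 0$ becomes a family of scalar inequalities on the joint eigenvalues. The objective $\langle C,X\rangle$ likewise becomes an explicit linear function of the coefficients, and the resulting small optimization problem can be solved in closed form; its optimum should be $\tfrac{n\lambda_2}{2(\lambda_2+r)}=\widehat h(G)$. I would carry out the analogous computation for $g_1(G)$, where the extra diagonal block coupling $1$, $\diag(X)$, and $X$ in (\ref{eqg1}) makes the spectral analysis slightly heavier and produces the two-case formula for $\widehat g(G)$ depending on whether $r\le 3\lambda_2$.

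The spectral bookkeeping rests on the standard structure of bipartite regular graphs. First I would record that $A_G$ has $r$ as its top eigenvalue with the all-ones eigenvector, $-r$ as its bottom eigenvalue (by bipartiteness), and a symmetric spectrum $\{\pm\lambda\}$, with $\lambda_2$ the largest eigenvalue strictly below $r$. I would decompose $\R^{V_1\cup V_2}$ into the two-dimensional space spanned by the two constant-on-each-side vectors (where $C$ and $J$ act nontrivially) and its orthogonal complement, on which $J$ vanishes and $A_G$ acts with eigenvalues in $[-\lambda_2,\lambda_2]$. Writing $X$ in this block-eigenbasis reduces positive semidefiniteness to nonnegativity of a few $1\times 1$ and $2\times 2$ principal blocks, and the worst case is governed by the eigenvalue $\lambda_2$; this is where the ratio $\tfrac{\lambda_2}{\lambda_2+r}$ enters.

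For the exactness statements, the tool is symmetry reduction. When $G$ is edge-transitive (and vertex-transitive), the automorphism group acts transitively on the relevant structures, so by averaging an arbitrary optimal $X$ over the group---which preserves feasibility by convexity and invariance of the objective and constraints---one obtains an optimal solution invariant under $\mathrm{Aut}(G)$. Such an invariant matrix lies precisely in the commutant algebra, which for an edge- and vertex-transitive graph is spanned by $I$, the block-$J$ matrices, and $A_G$ (the same algebra used for the ansatz). Hence the symmetric restriction is without loss of optimality, giving $h_1(G)=\widehat h(G)$ under edge-transitivity and $g_1(G)=\widehat g(G)$ under vertex- and edge-transitivity. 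I would check that $g_1$ additionally needs vertex-transitivity because its diagonal block structure introduces a dependence on $\diag(X)$ that only collapses to a scalar multiple of $I$ when the vertices form a single orbit.

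The main obstacle I anticipate is the $g_1$ computation: unlike $h_1$, whose program is a clean trace-normalized SDP, the parameter $g_1$ involves the bordered matrix in (\ref{eqg1}) coupling the scalar $1$, the diagonal $\diag(X)$, and $X$ itself, so the positive-semidefiniteness condition mixes a Schur-complement constraint with the spectral constraints on $X$. Optimizing over the free coefficients while respecting this Schur complement is what produces the regime split at $r=3\lambda_2$, and getting the two branches---and the constant $8$ in the second case---to come out correctly is the delicate part of the argument. Verifying that the commutant is exactly three-dimensional (so that no further invariant degrees of freedom survive the averaging) is the other point requiring care for the exactness claims.
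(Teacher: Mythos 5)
There is a genuine gap, and it is structural rather than computational: you are restricting the wrong side of the SDP duality. The programs (\ref{eqg1}) and (\ref{eqh1}) are \emph{maximization} problems, so restricting their feasible sets to a symmetric ansatz yields \emph{lower} bounds on $g_1(G)$ and $h_1(G)$, whereas the proposition asserts \emph{upper} bounds $g_1(G)\le \widehat g(G)$ and $h_1(G)\le \widehat h(G)$ valid for every bipartite $r$-regular $G$, transitive or not. The paper instead restricts the \emph{dual} minimization programs --- (\ref{eqhGV}) with $Z=tA_G$ for $h_1(G)$, and (\ref{gbbione:primal0}) with $Z=tA_G$ and $u=\mu e$ for $g_1(G)$ --- so that every restricted feasible point certifies an upper bound; the closed forms then come from a Schur complement and the action of $M_GM_G\T$ on $e$ and $e^\perp$ (Proposition \ref{prop:hbbioneedgetransitive} and Appendix \ref{appendixghat}). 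The direction is not a cosmetic issue: for $G=C_4\oplus C_6$ the paper notes $h_1(G)\le \tfrac{1}{2}\sqrt{g_1(G)}=\tfrac{1}{2}\sqrt{6}<\tfrac{5}{4}=\widehat h(G)$, so $h_1(G)<\widehat h(G)$ can be strict, and consequently no construction of primal-feasible matrices can ever establish $h_1(G)\le\widehat h(G)$.

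The second gap undermines your exactness argument: the commutant of $\mathrm{Aut}(G)$ is \emph{not} spanned by $I$, the all-ones block matrices, and $A_G$, even for vertex- and edge-transitive graphs. The commutant has one basis matrix per orbit of $\mathrm{Aut}(G)$ on ordered pairs; transitivity on vertices and on edges controls the diagonal and the edge positions, but the \emph{non-edges} may split into many orbits (for the hypercube $Q_r$ the commutant is the Bose--Mesner algebra, of dimension $r+1$; for cycles its dimension grows with $n$). So averaging an optimal primal $X$ does not land in your ansatz space, and your restricted primal can be strictly below $h_1(G)$ under full transitivity. Concretely, take $G=Q_4$ ($n=8$, $r=4$, $\lambda_2=2$): with $X=aI+bC+cA_G$, the edge constraints force $c=-b/2$, the condition $\Tr(X)=1$ forces $a=\tfrac{1}{2n}$, the eigenvalues of $X$ are $a\pm b(n-r)/2$ (on $e$ and on $\chi^{V_1}-\chi^{V_2}$) and $a\mp b\lambda_i/2$ (on their orthogonal complement), and the restricted optimum works out to $\min\{\tfrac{1}{2},\tfrac{n-r}{2\lambda_2}\}=\tfrac{1}{2}$, whereas $h_1(Q_4)=\widehat h(Q_4)=\tfrac{4}{3}$; adding the diagonal all-ones blocks to the ansatz only raises the value to $\tfrac{8}{9}$. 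The paper's dual restriction is immune to this problem precisely because the dual matrix variable $Z$ is supported on edge positions only, so edge-transitivity alone collapses its group average to $tA_G$, no matter how the non-edges split into orbits. This is also the real source of the asymmetry you noticed: $h_1(G)=\widehat h(G)$ needs only edge-transitivity, while $g_1(G)=\widehat g(G)$ needs vertex-transitivity as well in order to reduce the dual vector $u$ to $\mu e$ --- your instinct about that distinction was right, but it lives on the dual side, not in the structure of $\diag(X)$.
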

Observe that the bound $h(G)\le \widehat h(G)$ sharpens the  bound $ h(G) \le \frac{n}{r} \lambda_2$ by Vallentin  \cite{Vallentin}. Moreover, one can check that $\widehat h(G)\le {1\over 2}\sqrt{\widehat g(G)}$, which  mirrors the inequalities $h(G)\le {1\over 2}\sqrt {g(G)}$ and $h_1(G)\le {1\over 2}\sqrt{g_1(G)}$ (in Proposition \ref{propghg1h1}). We will see in Section \ref{secexamples} several classes of graphs for which strict inequality $\widehat h(G)< {1\over 2}\sqrt{\widehat g(G)}$ holds
  and, in Section \ref{seceig}, we will compare the parameter $\widehat h (\cdot)$ with other eigenvalue bounds by Hoffman and by Haemers \cite{Haemers1997,Haemers2001}.
 
\subsection*{Bounds for the balanced parameters.}

 As we have seen earlier, the parameter $\alphabal(G)$, asking for the maximum cardinality of a balanced independent set in $G$,  arises naturally when considering the parameters $h(\cdot)$ and $g(\cdot)$. An additional motivation comes from its relevance to  product-free sets in groups and other applications as in \cite{AYRP2007,MRM2014,RL1988,T2006,YWWY2005}. 
  The question thus arises of finding semidefinite and eigenvalue-based bounds  for $\alphabal(G)$ (and the related parameters $\hbal(G)$ and $\gbal (G)$) that improve on the bounds $h_1(G)$ and $\widehat h(G)$ designed for the general (not necessarily balanced) parameters. We investigate this question in detail in Section~\ref{sec:balanced}. We define  semidefinite bounds $\lasbalone(G)$ and $\thetabal(G)$ for $\alphabal(G)$,  $\gbalone(G)$ for $\gbal(G)$, and $\hbalone(G)$ for $\hbal(G)$, and we show  they satisfy 
 ${1\over 4}\lasbalone(G)\le {1\over 2}\sqrt{\gbalone(G)}\le \hbalone(G)={1\over 4}\thetabal(G)$ (see Proposition \ref{propcomparebal}).
 Interestingly, the ``balanced versions" of the theta number may lead to different parameters, i.e., 
$ \lasbalone(G) <\thetabal(G)$ may hold (see Example~\ref{remarkstrictbal}). On the other hand, we show that the closed-form values obtained by restricting the optimization to symmetric solutions in each of these semidefinite bounds in fact recover (up to the correct transformation) the eigenvalue bound $\widehat h(G)$ (see Proposition \ref{prop:symbalhat2}).

\subsection*{Organization of the paper.}
The paper is organized as follows. 
Section \ref{seccomplexity} is devoted to the study of the complexity status of the parameters $h(\cdot)$, $g(\cdot)$ and their balanced analogs $\alphabal(\cdot),$ $\gbal(\cdot)$ and $\hbal(\cdot)$.
 In Section \ref{secsdp} we investigate  semidefinite bounds for $g(\cdot)$ and $h(\cdot)$ and, in Section \ref{seceig}, we study the corresponding eigenvalue-based bounds. In Section \ref{secexamples} we illustrate the behaviour of the various parameters on several classes of regular bipartite graphs. We turn our attention to bounds for the balanced parameters in Section \ref{sec:balanced} and conclude with several remarks and open questions in the final Section~\ref{secconcluding}. 
In Appendix \ref{appendixgroup} we briefly present  the application of the parameters $h(\cdot),\widehat h(\cdot), \alphabal(\cdot)$ to bounding product-free sets in finite groups and we group several technical proofs in Appendices \ref{appendix-Xx}, \ref{appendixghat} and \ref{appendixproofsym}.
 
\subsection*{Some notation and preliminaries.}
Throughout $\mathcal S^n$ denotes the set of real symmetric $n\times n$ matrices. Let $I_n,J_n\in\mathcal S^n$ denote, respectively, the identity matrix and the all-ones matrix (also denoted as $I$, $J$ when the dimension is clear from the context). Given integers $a,b\ge 1$ we also let $J_{a,b}$ denote the $a\times b$ all-ones matrix. 
Given a graph $G=(V=[n],E)$,  $\mathcal S_G$ denotes the set of matrices  $M\in \mathcal S^n$ that are {\em supported} by $G$, i.e., such that $M_{ij}=0$ for all $i,j\in V$ such that $\{i,j\}\not\in E$. 
For a matrix $X\in \mathcal S^n$, $\diag(X)=(X_{ii})_{i=1}^n\in\R^n$ denotes the vector of its diagonal entries and, for a vector $x\in \R^n$, $\Diag(x)\in \mathcal S^n$ is the diagonal matrix with the $x_i$'s as its diagonal entries. We use the symbol $e\in \R^n$ to denote the all-ones vector (whose dimension should be clear from the context).

For a real symmetric matrix $A\in \mathcal S^{|V|}$ we denote its eigenvalues as $\lambda_1(A)\ge \ldots \ge \lambda_{|V|}(A)$. We will often consider the case when $A$ has  a bipartite structure, of the form
\begin{equation}\label{eqM}
A=\left(\begin{matrix} 0 & M\cr M\T& 0\end{matrix}\right)\in \mathcal S^{|V|},
\end{equation}
where $V$ is partitioned as $V=V_1\cup V_2$ with $|V_1|=|V_2|=:n$ and $M\in \R^{|V_1|\times |V_2|}$. Then the eigenvalues of $A$  are $\pm \sqrt{\lambda_1(MM\T)},\ldots, \pm \sqrt{\lambda_n(MM\T)}$, thus arising from the singular values of~$M$.

For a subset $U\subseteq V$ we let $\chi^U\in\R^{|V|}$ denotes its characteristic vector, whose $i$th entry is 1 if $i\in U$ and 0 if $i\in V\setminus U$. For a matrix $M\in \mathcal S^{|V|}$,  $M[U]=(M_{ij})_{i,j\in U}$ denotes the principal submatrix of $M$ indexed by $U$.

\section{Complexity results.}\label{seccomplexity}

In this section we prove several complexity results.  
Recall that a {\em clique} in $G$ is a set of pairwise adjacent vertices and $\omega(G)$ denotes the maximum cardinality of a clique in $G$, so that $\omega(G)=\alpha(\olG)$. 
We consider the following problems. 

\begin{probl}[$\alpha$-BAL-BIP]\label{ALPHA-IS-BAL-BIP}
Given a bipartite graph $G$, decide whether $\alpha(G)=\alphabal(G)$, i.e., whether $G$ has a balanced maximum independent set.
\end{probl}
\begin{probl}[HALF-SIZE-CLIQUE-EDGE] \label{prob:halfcliqueedge}
Given a graph~$G=(V,E)$ with $|V|$ even and  $|E|=\tfrac{1}{4}|V|(|V|-2)$, decide whether $\omega(G)\ge  
 \tfrac{|V|}{2}$.
\end{probl}
\begin{probl}[HALF-SIZE-CLIQUE]
Given a graph $G=(V,E)$ with $|V|$ even, decide whether $\omega(G)\ge {|V|\over 2}.$
\end{probl}
\begin{probl}[CLIQUE]
Given a graph $G$ and an integer $k\in \N$, decide whether $\omega(G)\ge k$.
\end{probl}
It is well-known that CLIQUE is an NP-complete problem \cite{Karp72} as well as problem HALF-SIZE-CLIQUE;
we refer,  e.g., to \cite{Alonetal} for an easy reduction of CLIQUE to HALF-SIZE-CLIQUE.
In what follows we will show  the following reductions
\begin{align}\label{relation-reductions}
\text{HALF-SIZE-CLIQUE}  \leq_{P}\text{HALF-SIZE-CLIQUE-EDGE}  \leq_{P}\text{$\alpha$-BAL-BIP}.
\end{align}
Here we say that L$_1\leq_P$ L$_2$ if we have a polynomial-time algorithm permitting to encode an instance of  L$_1$ as an instance of L$_2$. We will show the first reduction in Theorem \ref{theorem-half-clique-edge} and the second one in Theorem \ref{theo-equiv-H} below.
Then, using the reductions in (\ref{relation-reductions}),  we obtain the following complexity results.

\begin{theorem}\label{theohardness}
Problem \ref{ALPHA-IS-BAL-BIP} ({\rm $\alpha$-BAL-BIP}) is an NP-complete problem.
\end{theorem}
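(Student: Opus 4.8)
The plan is to prove Theorem~\ref{theohardness} in two independent parts: membership in NP, and NP-hardness. For membership, note that a balanced maximum independent set in a bipartite graph $G$ is a certificate that can be checked in polynomial time: given a candidate independent set $S=A\cup B$ with $A\subseteq V_1$, $B\subseteq V_2$, we verify in polynomial time that $S$ is independent and that $|A|=|B|$; to confirm it is \emph{maximum} we compute $\alpha(G)$ (polynomial-time for bipartite graphs, e.g.\ via K\H{o}nig's theorem and bipartite matching) and check $|S|=\alpha(G)$. So the whole verification runs in polynomial time, placing $\alpha$-BAL-BIP in NP.

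For hardness, the strategy is dictated by the reduction chain~(\ref{relation-reductions}): since HALF-SIZE-CLIQUE is known to be NP-complete, it suffices to establish the two polynomial reductions
\[
\text{HALF-SIZE-CLIQUE}\ \leq_{P}\ \text{HALF-SIZE-CLIQUE-EDGE}\ \leq_{P}\ \text{$\alpha$-BAL-BIP},
\]
which are the contents of the forthcoming Theorem~\ref{theorem-half-clique-edge} and Theorem~\ref{theo-equiv-H}. Composing these two reductions gives a polynomial reduction from the NP-complete problem HALF-SIZE-CLIQUE to $\alpha$-BAL-BIP, so $\alpha$-BAL-BIP is NP-hard; together with membership in NP this yields NP-completeness. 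I would present the theorem's proof essentially as this bookkeeping: cite the NP-completeness of HALF-SIZE-CLIQUE (e.g.\ via the reduction of CLIQUE to HALF-SIZE-CLIQUE from \cite{Alonetal}), invoke the two reductions, and conclude.

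The first reduction (to HALF-SIZE-CLIQUE-EDGE) handles the extra edge-count constraint $|E|=\tfrac14|V|(|V|-2)$, which is a technical normalization; the natural idea is to pad an instance of HALF-SIZE-CLIQUE with additional vertices and carefully chosen edges so that the target clique size remains $|V|/2$ while the edge count hits the prescribed value, without creating or destroying cliques of the relevant size. The second reduction (to $\alpha$-BAL-BIP) is the conceptual heart: given $G=(V,E)$ with the edge-count normalization, one builds a bipartite graph $H$ in which a half-size clique in $G$ translates into a balanced maximum independent set in $H$. The most natural construction uses the bipartite complement structure (cliques in $G$ correspond to biindependent pairs in $\overline G^b$, as in relation~(\ref{eqbcG})), so that a clique of size $|V|/2$ with both ``sides'' of equal size becomes a balanced biindependent pair of the right total size, and the edge-count condition is exactly what forces $\alpha(H)$ to equal the balanced value.

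The main obstacle is entirely in these two reductions, not in the present theorem, which is a short composition argument. In particular the delicate point to get right in Theorem~\ref{theo-equiv-H} is ensuring that the balanced biindependent pair coming from a half-size clique is genuinely a \emph{maximum} independent set in $H$, i.e.\ that $\alpha(H)=\alphabal(H)$ holds exactly when $G$ has a half-size clique and fails otherwise; this is where the precise value $\alpha(H)$ must be controlled, and the edge-count constraint of HALF-SIZE-CLIQUE-EDGE is presumably introduced precisely to pin down this value. Since both reductions are deferred to later theorems that I may assume, the proof of Theorem~\ref{theohardness} itself reduces to stating membership in NP and chaining the reductions.
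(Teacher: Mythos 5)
Your proof is correct and takes essentially the same route as the paper: Theorem~\ref{theohardness} is obtained precisely by composing the reductions of Theorems~\ref{theorem-half-clique-edge} and~\ref{theo-equiv-H} with the NP-completeness of HALF-SIZE-CLIQUE, and membership in NP is immediate (as you note) since $\alpha(G)$ is polynomial-time computable for bipartite $G$. One peripheral inaccuracy: your guess that Theorem~\ref{theo-equiv-H} works via the bipartite complement as in~(\ref{eqbcG}) is not how the paper proceeds --- it uses the gadget graph $H_G$ of Definition~\ref{defHG}, adapted from the constrained vertex-cover reduction of~\cite{CK03}, where balance forces an equal split of the edge gadgets and the edge-count normalization forces a half-size clique --- but since you only invoke that theorem rather than prove it, this does not affect the validity of your argument.
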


\begin{corollary}\label{corhardness}
Computing the parameter $h(G)$ for $G$ bipartite is NP-hard.
\end{corollary}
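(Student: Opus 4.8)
The plan is to derive Corollary~\ref{corhardness} directly from Theorem~\ref{theohardness} together with the easy relations in Lemma~\ref{lemrel1}. The key observation is that, by~(\ref{eqrel2}), the problem of deciding whether $\alpha(G)=\alphabal(G)$ is equivalent to deciding whether $h(G)=\tfrac{1}{4}\alpha(G)$. Since $\alpha(G)$ is polynomial-time computable for bipartite $G$ (being equal to $\vartheta(G)$, or via König's theorem and bipartite matching), an efficient algorithm for computing $h(G)$ would immediately let us test the equality $h(G)=\tfrac14\alpha(G)$ in polynomial time, and hence solve $\alpha$-BAL-BIP.

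Concretely, I would argue by reduction, showing $\text{$\alpha$-BAL-BIP}\leq_P \text{(computing $h$)}$. Given a bipartite instance $G$, first compute $\alpha(G)$ in polynomial time. Then, assuming an oracle (or polynomial-time algorithm) that returns the value $h(G)$, compare $h(G)$ with $\tfrac14\alpha(G)$. By the equivalence $h(G)=\tfrac14\alpha(G)\Longleftrightarrow \alpha(G)=\alphabal(G)$ from~(\ref{eqrel2}), this comparison decides $\alpha$-BAL-BIP. Since $\alpha$-BAL-BIP is NP-complete by Theorem~\ref{theohardness}, computing $h(G)$ must be NP-hard.

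One technical point worth flagging is the model of computation for the ``value'' $h(G)$: since $h(G)=\tfrac{|A|\cdot|B|}{|A|+|B|}$ is a rational number with polynomially-bounded numerator and denominator, it can be represented exactly with polynomially many bits, and the comparison $h(G)\overset{?}{=}\tfrac14\alpha(G)$ is an exact rational comparison, so no numerical/precision subtleties arise. This is the only place where care is needed, and it is routine. I expect the main (and essentially only) obstacle to be purely bookkeeping — ensuring the reduction is stated as a genuine Turing (Cook) reduction from the decision problem $\alpha$-BAL-BIP to the computation of $h(G)$ — rather than any substantive difficulty, since all the mathematical content is already packaged in Lemma~\ref{lemrel1} and Theorem~\ref{theohardness}.

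\begin{proof}
By Lemma~\ref{lemrel1}, and specifically the equivalence $h(G)=\tfrac14\alpha(G)\Longleftrightarrow \alpha(G)=\alphabal(G)$ in~(\ref{eqrel2}), a bipartite graph $G$ satisfies $\alpha(G)=\alphabal(G)$ if and only if $h(G)=\tfrac14\alpha(G)$. Suppose we have a polynomial-time algorithm computing $h(G)$ for bipartite graphs $G$. Given an instance $G$ of $\alpha$-BAL-BIP (Problem~\ref{ALPHA-IS-BAL-BIP}), we can compute $\alpha(G)$ in polynomial time (as $G$ is bipartite) and compute $h(G)$ using the assumed algorithm. Both values are rationals of polynomial bit-size, so we may test in polynomial time whether $h(G)=\tfrac14\alpha(G)$, which by the above equivalence decides whether $\alpha(G)=\alphabal(G)$. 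This would give a polynomial-time algorithm for $\alpha$-BAL-BIP, which is NP-complete by Theorem~\ref{theohardness}. Hence computing $h(G)$ for bipartite $G$ is NP-hard.
\end{proof}
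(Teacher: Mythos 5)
Your proof is correct and follows exactly the paper's own argument: use the equivalence $h(G)=\tfrac14\alpha(G)\Longleftrightarrow\alpha(G)=\alphabal(G)$ from (\ref{eqrel2}), the polynomial-time computability of $\alpha(G)$ for bipartite $G$, and the NP-completeness of $\alpha$-BAL-BIP from Theorem~\ref{theohardness}. The added remark on the bit-size of $h(G)$ is a harmless (and valid) refinement of the same reduction.
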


\proof{Proof.}  
Recall that computing $\alpha(G)$ in bipartite graphs can be done in polynomial time. Hence, if there is a polynomial time algorithm for computing $h(G)$, then one can decide in polynomial time whether $h(G)=\frac{\alpha(G)}{4}$, which is equivalent to Problem~\ref{ALPHA-IS-BAL-BIP}, in view of Lemma \ref{lemrel1}. 
\endproof

\smallskip\noindent
The proof technique used to show the reduction from problem HALF-SIZE-CLIQUE-EDGE to problem 
$\alpha$-BAL-BIP  will in fact allow to show 
a  broader set of results. Namely it permits to show hardness of testing whether any of the following equalities holds: $g(G)=\gbal(G)$,  $h(G)=\hbal(G)$, or $h(G)={1\over 2}\sqrt{g(G)}$.
  In  other words, it is NP-hard  to  check whether any of the inequalities in relation (\ref{eqrel0}) holds at equality. See Corollary \ref{coro-complexity} below for these and other hardness results.

\medskip
In the rest of the section we will prove the two reductions from relation (\ref{relation-reductions}) and related hardness results for the other (balanced) parameters. 
For this we use as a first ingredient the following graph constructions. 
\begin{definition}
{\em Let $G=(V(G),E(G))$ and $H=(V(H), E(H))$ be two graphs with disjoint vertex sets and let $k\ge 1$ be an integer.
\begin{description}
\item[(i)] The {\em disjoint union} of $G$ and $H$, denoted by $G\oplus H$, is the graph with vertex set $V(G)\cup V(H)$ and edge set $E(G)\cup E(H)$.
\item[(ii)] The {\em join} of $G$ and $H$, denoted by $G\bowtie H$, is the graph with vertex set $V(G)\cup V(H)$ and edge set $E(G)\cup E(H)\cup (V(G)\times V(H))$. 
\item[(iii)] The {\em $k$-th expansion} of $G$, denoted by $G^{(k)}$, is the graph constructed as follows: 
its vertex set is $\bigcup_{v\in V(G)}X_v$, where $X_v$ are disjoint sets, each of size $k$, and we have a clique on each $X_v$ and a complete bipartite graph between $X_u$ and $X_v$ whenever $\{u,v\}\in E(G)$.
 \end{description}
 }
\end{definition}

Clearly we have the following relations 
\begin{align}
|V(G\oplus H)|= |V(G)|+|V(H)|,\ |E(G\oplus H)|=|E(G)|+|E(H)|,\label{eq0} \\
\omega(G\oplus H)=\max\{\omega(G),
 \omega(H)\},\\
|V(G\bowtie H)|=|V(G)|+|V(H)|,\ |E(G\bowtie H)|= |E(G)|+|E(H)|+|V(G)|\cdot |V(H)|,\\  \omega(G\bowtie H)= \omega(G)+\omega(H),  \label{eq1}\\
|V(G^{(k)})|=k|V(G)|,\  |E(G^{(k)})|=\tbinom{k}{2}|V(G)| + k^2|E(G)|,\  \omega(G^{(k)}) = k\omega(G). \label{eq3}
\end{align}

\definecolor{sqsqsq}{rgb}{0.12549019607843137,0.12549019607843137,0.12549019607843137}
\begin{figure}
\centering
\begin{tikzpicture}[line cap=round,line join=round,x=.4cm,y=.4cm]
\clip(-6.3877982447671355,-1.3900286587123167) rectangle (1.7947183186186364,3.0021770512561883);
\draw [line width=2pt] (-3.54,2.178409421321225)-- (-1.78,2.1784094213212244);
\draw [line width=2pt] (-1.78,2.1784094213212244)-- (-0.9,0.654204710660612);
\draw [line width=2pt] (-0.9,0.654204710660612)-- (-1.78,-0.87);
\draw [line width=2pt] (-1.78,-0.87)-- (-3.54,-0.87);
\draw [line width=2pt] (-3.54,-0.87)-- (-4.42,0.6542047106606134);
\draw [line width=2pt] (-4.42,0.6542047106606134)-- (-3.54,2.178409421321225);
\draw [line width=2pt] (-3.54,2.178409421321225)-- (-1.78,-0.87);
\draw [line width=2pt] (-3.54,-0.87)-- (-1.78,2.1784094213212244);
\draw [line width=2pt] (-4.42,0.6542047106606134)-- (-0.9,0.654204710660612);
\draw [line width=2pt] (-1.78,2.1784094213212244)-- (-1.78,-0.87);
\begin{scriptsize}
\draw [fill=sqsqsq] (-3.54,-0.87) circle (3.5pt);
\draw [fill=sqsqsq] (-1.78,-0.87) circle (3.5pt);
\draw [fill=sqsqsq] (-0.9,0.654204710660612) circle (3.5pt);
\draw [fill=sqsqsq] (-1.78,2.1784094213212244) circle (3.5pt);
\draw [fill=sqsqsq] (-3.54,2.178409421321225) circle (3.5pt);
\draw [fill=sqsqsq] (-4.42,0.6542047106606134) circle (3.5pt);
\end{scriptsize}
\end{tikzpicture}
\caption{Graph $F$, $\omega(F)=3$, 6 nodes, 10 edges.\label{graph-F}}\label{graphF}\vspace{-5pt}
\end{figure}

\vspace*{-3mm}
\begin{theorem}\label{theorem-half-clique-edge}
{\rm HALF-SIZE-CLIQUE} $\leq_P$ {\rm HALF-SIZE-CLIQUE-EDGE}.
\end{theorem}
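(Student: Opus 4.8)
The plan is to turn an arbitrary instance $G$ of HALF-SIZE-CLIQUE into one meeting the extra edge-count requirement by \emph{joining} $G$ with a suitable ``half-clique gadget'', i.e., a graph $H$ on an even number $h$ of vertices with $\omega(H)=h/2$. Write $|V(G)|=n=2m$ and $e=|E(G)|$, so the question is whether $\omega(G)\ge m$, and set $G':=G\bowtie H$. By the join relations for vertices/edges and \eqref{eq1} we have $|V(G')|=2m+h$ (which is even), $\omega(G')=\omega(G)+h/2$, and $|E(G')|=e+|E(H)|+2mh$. Hence the clique threshold for $G'$ is $|V(G')|/2=m+h/2$, and since $\omega(G')=\omega(G)+h/2$ we obtain the key equivalence $\omega(G')\ge |V(G')|/2 \iff \omega(G)\ge m$. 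Thus $G'$ is a YES-instance of HALF-SIZE-CLIQUE-EDGE exactly when $G$ is a YES-instance of HALF-SIZE-CLIQUE, \emph{regardless of how $H$ is chosen}, as long as $\omega(H)=h/2$.

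It remains to enforce the edge constraint $|E(G')|=\tfrac14|V(G')|(|V(G')|-2)=(m+h/2)(m+h/2-1)$. Given the formula for $|E(G')|$, this forces $H$ to have exactly
$$f:=(m+h/2)(m+h/2-1)-e-2mh$$
edges. So the entire reduction boils down to the combinatorial task: \emph{for a suitable even $h$, build a graph $H$ on $h$ vertices with $\omega(H)=h/2$ and exactly $f$ edges.} Among graphs on $h$ vertices with clique number $h/2$, the edge count ranges from $\binom{h/2}{2}$ (a clique $K_{h/2}$ plus $h/2$ isolated vertices) up to the Tur\'an value $\binom{h}{2}-h/2$; in particular $H=K_{h/2}\oplus H''$, with $H''$ an arbitrary graph on the remaining $h/2$ vertices, realizes \emph{every} integer edge count in $[\binom{h/2}{2},2\binom{h/2}{2}]$ while keeping $\omega(H)=h/2$ (using \eqref{eq0} and that $\omega(H'')\le h/2$ automatically).

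The crux is then to pick $h$ so that the forced value $f$ lands in this achievable window. I would simply take $h:=16m$ (polynomial in the input). A direct computation gives $f=49m^2-9m-e$, and since $0\le e\le \binom{2m}{2}=2m^2-m$, one checks $47m^2-8m\le f\le 49m^2-9m$, which lies inside $[\binom{8m}{2},\,2\binom{8m}{2}]=[32m^2-4m,\,64m^2-8m]$ for all $m\ge 1$ (the case $m=0$ being trivial). Hence $H=K_{8m}\oplus H''$, with $H''$ any graph on $8m$ vertices having $f-\binom{8m}{2}$ edges (a count in the valid range $[0,\binom{8m}{2}]$), does the job, and $G'=G\bowtie H$ is the desired instance; the construction is plainly polynomial-time.

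The graph $F$ of Figure~\ref{graph-F} satisfies $\omega(F)=|V(F)|/2$ and is the prototypical half-clique gadget, with its expansions $F^{(j)}$ providing (via \eqref{eq3}, since $\omega(F^{(j)})=3j=|V(F^{(j)})|/2$) an alternative family of building blocks that can be combined by $\oplus$ and $\bowtie$ to tune the edge count. The step I expect to be the main obstacle is exactly this bookkeeping: verifying that the forced edge count $f$ is simultaneously nonnegative, integral, and within the range realizable by a clique-number-$h/2$ graph on $h$ vertices, uniformly over all admissible values of $e$.
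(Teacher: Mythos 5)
Your reduction is correct, and it shares the paper's overall strategy---join $G$ with a gadget whose clique number is exactly half its vertex count (so that the threshold question is preserved, via $\omega(G\bowtie H)=\omega(G)+\omega(H)$ from (\ref{eq1})), then tune a filler graph's edge count to hit the target $\tfrac14|V|(|V|-2)$ exactly---but your implementation is genuinely simpler than the paper's. The paper builds $H:=((G\bowtie F^{(n)})\bowtie K_t)\oplus H_0$, i.e., it joins $G$ with the expansion $F^{(n)}$ of the $6$-vertex, $10$-edge graph $F$ of Figure~\ref{graphF} together with a clique $K_t$, and places the edge-count filler $H_0$ \emph{outside} the join as a disjoint summand. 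The dense Tur\'an-like gadget $F^{(n)}$ is needed there precisely because an outside filler can contribute at most $\binom{t}{2}$ edges while its $t$ vertices raise the quadratic edge target; when $G$ is sparse, a clique alone cannot cover the resulting deficit, and $F^{(n)}$ (whose edge density exceeds $1/2$) absorbs it. You sidestep this entirely by placing the filler \emph{inside} the gadget: $G'=G\bowtie(K_{8m}\oplus H'')$, where the clique is taken large relative to $G$ (on $8m$ vertices versus $|V(G)|=2m$), so the clique edges plus the $32m^2$ join edges already bring the count close enough to the target that the forced filler value $f-\binom{8m}{2}=17m^2-5m-e$ always lies in $[0,\binom{8m}{2}]$; your arithmetic checking $f\in[32m^2-4m,\,64m^2-8m]$ for all $m\ge 1$ is correct, as is the observation that $\omega(K_{8m}\oplus H'')=8m$ automatically since $H''$ lives on only $8m$ vertices. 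What your route buys is the elimination of both the special graph $F$ and the expansion operation $G^{(k)}$ (your reduction needs only joins, disjoint unions, cliques, and an arbitrary graph with a prescribed edge count); what the paper's buys is a smaller output instance ($8n+2t$ vertices with $t=O(n)$, versus your $18m=9n$ vertices---comparable in the end, so the advantage is minor).
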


\proof{Proof.} 
Let $G$ be an instance of HALF-SIZE-CLIQUE, set $|V(G)|=2n$,  $|E(G)|~=~m$. Let $t$ be the smallest integer such that $\binom{t}{2}\geq 9n^2+n+m$. Consider the graph $F$ from Fig. \ref{graphF} and define the graph $H:=((G\bowtie F^{(n)})\bowtie K_t) \oplus H_0$, where $H_0$ is a graph with $t$ nodes and $\binom{t}{2}- (9n^2+n+m)$ edges. So the role of $H_0$ is to add enough edges in order to ensure that $|E(H)|= |V(H)|(|V(H)|-2)/4$. Observe that $H$ can be constructed in polynomial time. 
Using     (\ref{eq0})-(\ref{eq3}), we obtain 
 \begin{align*}
 |V(H)|&=8n+2t,\\
 |E(H)|&=  (m+6\tbinom{n}{2}+10n^2+12n^2)+ \tbinom{t}{2} + 8nt + (\tbinom{t}{2}-9n^2-n-m)
 \\&=(4n+t)(4n+t-1)=\tfrac{1}{4}(8n+2t)(8n+2t-2),\\
 \omega(H)&=  \omega(G)+3n+t.
 \end{align*}
Hence, $H$ is an instance of HALF-SIZE-CLIQUE-EDGE and $\omega(H)\geq |V(H)|/2$ if and only if $\omega(G)\geq |V(G)|/2$. Therefore, if there is a polynomial time algorithm for solving HALF-SIZE-CLIQUE-EDGE, then we can solve HALF-SIZE-CLIQUE in polynomial time.
\endproof

\smallskip
As a next step we show the reduction of HALF-SIZE-CLIQUE-EDGE to 
 $\alpha$-BAL-BIP.
  Our proof is inspired from an argument in~\cite{CK03}, where   
the authors  consider minimum vertex covers in a bipartite graph restricted to have  at least $k_1$ vertices in one side of the bipartition and at least $k_2$ vertices in the other side.
In \cite[Theorem 3.1]{CK03} it is shown  that deciding existence of such vertex covers is NP-complete by giving a reduction from CLIQUE. We adapt this reduction by suitably selecting the values of $k_1$ and $k_2$, considering independent sets (complements of vertex covers) instead of vertex covers, and modifying the graph construction used in \cite{CK03}.

The following graph construction will play a central role for the reduction of HALF-SIZE-CLIQUE-EDGE to $\alpha$-BAL-BIP (and other related problems). 

\smallskip
\begin{definition}\label{defHG}
{\em Given a graph $G=(V,E)$ with $n:=|V|$ and $m:=|E|$,   consider the bipartite graph $H_G=(V_1\cup V_2, E_H)$ constructed as follows. 
\begin{description}
\item [(i)]
For each vertex $v\in V$ we construct two vertices $v_1\in V_1$ and $v_2\in V_2$ and add the edge $\{v_1,v_2\}$ to $E_H$.
\item [(ii)]
 For each edge $e\in E$ we construct two vertex sets $L_e\subseteq V_1$ and $R_e\subseteq V_2$ with $|L_e|=|R_e|=n+1$ and add  all edges in  $L_e\times R_e$ to $E_H$.
 \item [(iii)]
If $v\in V$ is incident to $e\in E$, then  we let $v_1$ be adjacent in $H_G$ to all vertices of $R_e$.
\end{description}
Hence, setting $L_V:=\{v_1: v\in V\}$, $R_V:=\{v_2: v\in V\}$, $L_E:=\bigcup_{e\in E} L_e$, and $R_E:=\bigcup_{e\in E}R_e$, we have 
$V_1=L_V\cup L_E$ and $V_2=R_V\cup R_E$, there is a perfect matching between $L_V$ and $R_V$, there is a complete bipartite graph between $L_e$ and $R_e$ for each $e\in E$, and there is a complete bipartite graph between $v_1\in V_1$ and $R_e$ for each edge $e\in E$ containing $v\in V$.
}
\end{definition}

\smallskip
The next lemma shows that the maximal independent sets in the bipartite graph $H_G$ have a very special structure, which will be  useful for the proof of Theorem \ref{theo-equiv-H} below.

\begin{lemma}\label{aux-lemma-H}
Let $G=(V,E)$ be a graph, $n:=|V|$, $m:=|E|$, and let $H_G$ be the associated bipartite graph as in Definition \ref{defHG}. Assume $I\subseteq V(H_G)=V_1\cup V_2$ is a   maximal independent  set of $H_G$. Then $I$   takes the following form 
\begin{align}\label{shape}
I\cap V_1= \{v_1: v\in A\} \cup  \bigcup_{e\in E_1}L_e,\quad I\cap V_2= \{v_2:v\in B\} \cup  \bigcup_{e\in E_2} R_e,
\end{align}
where $A\subseteq V$, $B=V\setminus A$, $E_1$ is the set of edges $e\in E$ that are incident to some node $v\in A$, and $E_2=E\setminus E_1$ (thus the set of edges $e\in E$ contained in $B$). Moreover, $I$ is  a maximum independent set of $H_G$ and $\alpha(H_G) = n+m(n+1)$. Conversely, any set $I$ as in (\ref{shape})  is a (maximum)  independent set of $H_G$.
\end{lemma}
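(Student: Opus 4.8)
The plan is to exploit the very restricted neighbourhood structure of $H_G$ and to argue purely from independence together with maximality. First I would record the neighbourhoods of the four types of vertices, all immediate from Definition~\ref{defHG}: each $v_2\in R_V$ has $v_1$ as its \emph{unique} neighbour; each vertex of $L_e$ has neighbourhood exactly $R_e$; each $v_1\in L_V$ is adjacent to $v_2$ and to $R_e$ for every edge $e$ incident to $v$; and each vertex of $R_e$, with $e=\{u,w\}$, is adjacent to $L_e$ together with $u_1$ and $w_1$. The point is that $R_V$ and $L_E$ are made of ``pendant-like'' vertices with tiny neighbourhoods, which is what pins down the structure.

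Next I would exploit the pendant structure of the $v_2$'s: since $v_2$ has the single neighbour $v_1$, independence forbids having $v_1,v_2\in I$ simultaneously, while maximality forces $v_2\in I$ whenever $v_1\notin I$. Hence exactly one of $v_1,v_2$ lies in $I$ for each $v$, which defines $V=A\cup B$ with $A=\{v:v_1\in I\}$ and $B=\{v:v_2\in I\}$. I would then treat each edge gadget similarly. Because $L_e\times R_e$ is complete bipartite, $I$ cannot meet both $L_e$ and $R_e$. If $R_e\cap I=\emptyset$, then, as $L_e$ has no neighbours outside $R_e$, maximality gives $L_e\subseteq I$; conversely, if some $r\in R_e\cap I$, then $L_e\cap I=\emptyset$ and, using that $r$ is adjacent to $u_1,w_1$, one gets $u_1,w_1\notin I$, after which every vertex of $R_e$ has no neighbour in $I$ and maximality yields $R_e\subseteq I$. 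Thus for every edge exactly one of $L_e\subseteq I$ or $R_e\subseteq I$ holds, defining a partition $E=E_1\cup E_2$.

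The remaining structural point is the linking constraint between $A$ and $E_1$: if $e=\{u,w\}$ is incident to $A$, say $u\in A$, then $u_1\in I$ is adjacent to all of $R_e$, forcing $R_e\cap I=\emptyset$ and hence $L_e\subseteq I$, i.e.\ $e\in E_1$; equivalently, every edge of $E_2$ lies inside $B$. This places $I$ in the form (\ref{shape}). To upgrade ``maximal'' to ``maximum'' I would simply count: regardless of the choices above, $|I|=|A|+|B|+|E_1|(n+1)+|E_2|(n+1)=n+m(n+1)$, since $A\cup B=V$ and $E_1\cup E_2=E$ are partitions. As every maximal independent set attains this \emph{common} value and every maximum independent set is maximal, we conclude $\alpha(H_G)=n+m(n+1)$ and that maximal and maximum independent sets coincide here. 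For the converse, given $A\subseteq V$ with $B=V\setminus A$, $E_1$ the edges meeting $A$, and $E_2$ the edges inside $B$, I would check independence of the right-hand side of (\ref{shape}) by running through the three adjacency types: the matching edges $\{v_1,v_2\}$ are avoided since $A\cap B=\emptyset$; a vertex $v_1$ with $v\in A$ meets $R_e$ only for $e$ incident to $v$, which then lies in $E_1$, not $E_2$; and $L_e$ with $e\in E_1$ meets $R_{e'}$ only for $e'=e\notin E_2$. Its size is again $n+m(n+1)$, so it is maximum.

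The step I expect to be most delicate is the edge-gadget analysis, where one must combine independence (to rule out $L_e$ and $R_e$ both meeting $I$) with maximality twice over (to force the \emph{full} set $L_e$ or $R_e$ into $I$ and to pin down the endpoints $u_1,w_1$), and then notice that the resulting constant cardinality is exactly what promotes maximal to maximum. I would also flag a harmless subtlety: maximality alone leaves a free choice of $L_e$ versus $R_e$ for edges lying inside $B$, so recording $E_1$ as precisely the edges incident to $A$ selects the canonical representative; this is in any case the only admissible choice once one later restricts to \emph{balanced} maximum independent sets, which is where the lemma will be applied.
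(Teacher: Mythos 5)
Your gadget analysis is correct and is in fact \emph{more} careful than the paper's own proof, but it does not establish the lemma as stated --- and it cannot, because the ``harmless subtlety'' you flag at the end is precisely the point where the lemma fails. What your case analysis proves is: a partition $V=A\cup B$; for each edge $e$ exactly one of $L_e\subseteq I$ or $R_e\subseteq I$ (call these edge sets $F_1$ and $F_2$ --- your ``$E_1,E_2$''); and every edge meeting $A$ lies in $F_1$. This gives only the \emph{inclusion} $\{e\in E: e \text{ meets } A\}\subseteq F_1$, whereas (\ref{shape}) requires equality, since the lemma defines $E_1$ to be exactly the set of edges meeting $A$. For an edge $e$ contained in $B$, maximality permits both alternatives: if $L_e\subseteq I$, the vertices of $R_e$ are blocked by $L_e$ itself, so nothing forces $R_e\subseteq I$. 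And for a fixed $I$ there is no ``canonical representative'' to select: $I\cap L_E$ is whatever it is. Concretely, take $G=K_2$ with $V=\{u,w\}$ and $e=\{u,w\}$; then $I=\{u_2,w_2\}\cup L_e$ is a maximal --- indeed maximum, of size $5=n+m(n+1)$ --- independent set of $H_G$ with $A=\emptyset$, yet $I\cap V_1=L_e\ne\emptyset$, contradicting (\ref{shape}). So the stated structure claim is false, and your sentence ``This places $I$ in the form (\ref{shape})'' is a non sequitur.

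To be clear, the defect originates in the paper, not in your analysis: the paper's proof asserts ``by maximality of $I$, we have $R_e\subseteq I$ for any $e\in E_2$'', which is exactly the unjustified step your two-sided case distinction avoids. What is true --- and what you actually proved --- is the corrected lemma in which $E_1,E_2$ are replaced by \emph{some} partition $E=F_1\cup F_2$ such that every edge of $F_2$ is contained in $B$. Your counting argument is unaffected: every maximal independent set has size $n+m(n+1)$, hence maximal $=$ maximum, $\alpha(H_G)=n+m(n+1)$, and the converse direction stands. This weaker statement still suffices for Theorem \ref{theo-equiv-H}: balancedness forces $|F_1|=|F_2|=m/2$ and $|A|=|B|=|V|/2$, and then, writing $E_2$ for the set of edges inside $B$, one has $m/2=|F_2|\le |E_2|\le \binom{|V|/2}{2}$, which equals $m/2$ under the hypothesis $|E|=|V|(|V|-2)/4$; so equality holds throughout and $B$ is a clique. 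Note, however, that your closing claim --- that restricting to balanced maximum independent sets already makes the adjacency-defined partition the only admissible one --- is also not correct in general; it is the theorem's edge-count hypothesis, not balancedness alone, that forces $F_2$ to exhaust the edges inside $B$.
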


\proof{Proof.} 
Assume $I\subseteq V_1\cup V_2$ is a maximal independent set of $H_G$. Set $A:=\{v\in V: v_1\in I\}$, $B:=\{v\in V: v_2\in I\}$, and $E_2:=E\setminus E_1$, where $E_1$ is the set of edges $e\in E$ that are incident to some node $v\in A$; we show that (\ref{shape}) holds.
First, we have $A\cap B=\emptyset$ (for, if $v\in A\cap B$, then the edge $\{v_1,v_2\}$ of $H_G$ would be contained in $I$, contradicting that $I$ is independent). Moreover, $A\cup B=V$ (for, if $v\in V\setminus (A\cup B)$, then the set $I\cup\{v_2\}$ would be independent in $H_G$, contradicting the maximality of $I$). 
So we have $I\cap L_V=\{v_1:v\in A\}$ and $I\cap R_V=\{v_2:v\in B\}$.
We now claim that $I\cap L_E=\bigcup_{e\in E_1}L_e$ and $I\cap R_E=\bigcup_{e\in E_1}R_e$.
First note that, if $I\cap R_e\ne \emptyset$, then $e$ is not incident to any node of $A$ and thus $e\in E_2$. Moreover, by maximality of $I$, we have $R_e\subseteq I$ for any $e\in E_2$. So we indeed have $I\cap R_E=\bigcup_{e\in E_2}R_e$ and in turn this implies $I\cap L_E=\bigcup_{e\in E_1}L_e$. Therefore we have $|I|=n+m(n+1)$, which implies that $\alpha(H_G)=n+m(n+1)$ and that $I$ is maximum independent. This concludes the proof (since the last (reverse) claim is straigthforward to check).
\endproof

\begin{corollary}\label{coro-H}
Let $G=(V,E)$ be a graph and let $H_G$ be the bipartite graph as in Definition~\ref{defHG}. The following assertions are equivalent.
\begin{description}
\item[(i)] $\alphabal(H_G)=\alpha(H_G)$.
\item[(ii)] $\gbal(H_G)=g(H_G)$.
\item[(iii)] $\hbal(H_G)=h(H_G)$.
\end{description}
\end{corollary}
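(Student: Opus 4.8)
The key structural fact is Lemma~\ref{aux-lemma-H}: every maximal (hence maximum) independent set $I$ of $H_G$ has the form (\ref{shape}), is determined by a bipartition $V=A\cup B$, and has the fixed total size $\alpha(H_G)=n+m(n+1)$. The plan is to read off the two side-sizes $|I\cap V_1|$ and $|I\cap V_2|$ as explicit functions of $|A|$ and of $|E_1|$ (the edges meeting $A$), and then observe that \emph{all three} equalities in the corollary are governed by the \emph{same} quantity, namely whether the two sides of some maximum independent set can be made equal. The three conditions will then be equivalent because each reduces, via Lemma~\ref{lemrel1} applied to the maximum independent sets of $H_G$, to the single combinatorial question ``does $H_G$ admit a balanced maximum independent set?''

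\emph{Step 1 (the core computation).}
Fix a maximum independent set $I$ given by $A\subseteq V$ with $|A|=a$, and let $E_1$ be the edges incident to $A$, so $|E_2|=m-|E_1|$. From (\ref{shape}),
\[
|I\cap V_1|=a+(n+1)|E_1|,\qquad |I\cap V_2|=(n-a)+(n+1)(m-|E_1|).
\]
Thus $I$ is balanced iff $|I\cap V_1|=|I\cap V_2|$, i.e.\ iff $2a+2(n+1)|E_1|=n+(n+1)m$. I would record that $\alpha(H_G)=\alpha_{\mathrm{bal}}(H_G)$ holds precisely when \emph{some} admissible pair $(A,E_1)$ satisfies this balance equation, which is exactly condition (i).

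\emph{Step 2 (reduce (ii) and (iii) to (i)).}
For the product and ratio parameters I would invoke Lemma~\ref{lemrel1}. By (\ref{eqrel2}) applied to the bipartite graph $H_G$, the equalities $g(H_G)=g_{\mathrm{bal}}(H_G)$ and $h(H_G)=h_{\mathrm{bal}}(H_G)$ are each equivalent to $\alpha(H_G)=\alpha_{\mathrm{bal}}(H_G)$ --- but one must be careful, since (\ref{eqrel2}) directly gives the chain of equivalences $h(G)=\tfrac14\alpha(G)\Leftrightarrow \tfrac12\sqrt{g(G)}=\tfrac14\alpha(G)\Leftrightarrow \alpha(G)=\alpha_{\mathrm{bal}}(G)$, whereas (ii),(iii) ask about $g=g_{\mathrm{bal}}$ and $h=h_{\mathrm{bal}}$ rather than about the endpoints of (\ref{eqrel0}). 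So the honest route is to argue each implication directly from the structure theorem. For (i)$\Rightarrow$(ii),(iii): a balanced maximum $I$ has $|A|=|B|$ in the sense $|I\cap V_1|=|I\cap V_2|=:s$, giving a balanced pair achieving $g_{\mathrm{bal}}(H_G)\ge s^2$ and $h_{\mathrm{bal}}(H_G)\ge s/2$; since this same $I$ is optimal for $g$ and $h$ among \emph{all} maximum independent sets, and since by (\ref{eqrel0}) the maximizers of $g$ and $h$ can be taken to be maximum independent sets of $H_G$, one gets the reverse equalities. The converse directions (ii)$\Rightarrow$(i) and (iii)$\Rightarrow$(i) follow because a balanced optimizer for $g_{\mathrm{bal}}$ or $h_{\mathrm{bal}}$ that also attains $g$ or $h$ forces, via the equality case $(\sqrt{|A|}-\sqrt{|B|})^2=0$ in Lemma~\ref{lemrel1}, a balanced \emph{maximum} independent set.

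\emph{The main obstacle.}
The delicate point is the claim that the optimizers for $g(H_G)$ and $h(H_G)$ may be taken among \emph{maximum} independent sets of $H_G$ --- a priori a biindependent pair $(A,B)$ maximizing a product or ratio need not extend to a maximum independent set, and it need not even have $A\cup B$ maximal. I expect this to be the real content to verify, and I would handle it using the rigidity of the structure in Lemma~\ref{aux-lemma-H}: because the blocks $L_e,R_e$ have the large common size $n+1$, greatly exceeding $n$, any nontrivial biindependent pair of large product/ratio is forced to include whole blocks on the appropriate sides, and completing to a maximal independent set can only help the relevant objective. Making this monotonicity precise --- that passing from an arbitrary optimal pair to the associated maximum independent set does not decrease $g$ or $h$ --- is where the careful argument lies; once it is in place, the equivalences (i)$\Leftrightarrow$(ii)$\Leftrightarrow$(iii) drop out from Step~1 and Lemma~\ref{lemrel1}.
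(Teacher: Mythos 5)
Your proposal has the right skeleton -- the forward implications via relation (\ref{eqrel0}) and the converses via the rigidity of Lemma~\ref{aux-lemma-H} -- which is exactly the paper's route. But the converse directions, which are the real content, contain a genuine gap. You justify (ii)$\Rightarrow$(i) and (iii)$\Rightarrow$(i) ``via the equality case $(\sqrt{|A|}-\sqrt{|B|})^2=0$ in Lemma~\ref{lemrel1}'': this cannot work. That equality case only characterizes when a pair is \emph{balanced}, which is already given (you start from a balanced optimizer of $g$ or $h$); it says nothing about why such a pair spans a \emph{maximum} independent set, which is what $\alphabal(H_G)=\alpha(H_G)$ requires. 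You then correctly identify, in your ``main obstacle'' paragraph, that the hinge is precisely this point -- but you leave it unresolved (``making this monotonicity precise \dots is where the careful argument lies''), so the proof is not complete as written.

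The missing step is in fact a one-line observation, and it does not need the block sizes $|L_e|=|R_e|=n+1$ that you invoke. If $(A,B)$ is a nontrivial biindependent pair whose union is \emph{not} maximal independent, then some vertex $w$ can be added to one side, and both objectives strictly increase: $(|A|+1)|B|>|A|\,|B|$ and $\tfrac{(|A|+1)|B|}{|A|+|B|+1}-\tfrac{|A|\,|B|}{|A|+|B|}=\tfrac{|B|^2}{(|A|+|B|)(|A|+|B|+1)}>0$. Hence any nontrivial optimizer of $g(H_G)$ or $h(H_G)$ -- in particular a balanced one, which exists under (ii) or (iii) -- spans a maximal independent set; Lemma~\ref{aux-lemma-H} then upgrades maximal to maximum, and balancedness gives $\alpha(H_G)=\alphabal(H_G)$. (This also shows your worry that an optimizer ``need not even have $A\cup B$ maximal'' is misplaced: maximality is automatic.) A second, smaller inaccuracy: in your forward direction you attribute to (\ref{eqrel0}) the claim that maximizers of $g$ and $h$ ``can be taken to be maximum independent sets''; (\ref{eqrel0}) says no such thing. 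The forward implications need only the collapse of the chain (\ref{eqrel0}) when $\alphabal(H_G)=\alpha(H_G)$: then $\hbal=h$ and $\sqrt{\gbal}=\sqrt{g}$ follow immediately, with no structural input about $H_G$ at all.
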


\proof{Proof.} 
The implications (i) $\Longrightarrow$ (ii) and (i) $\Longrightarrow$  (iii) follow from relation (\ref{eqrel0}).
Conversely, assume (ii) holds and let $(A,B)$ be a balanced optimal solution for $g(H_G)$. Then $A\cup B$ is  maximal independent in $H_G$ and thus, by Lemma \ref{aux-lemma-H}, it is maximum, so that $\alpha(H_G)=|A\cup B|=\alphabal(H_G)$ as $(A,B)$ is balanced. The same argument shows the implication (iii) $\Longrightarrow$ (i).
\endproof

\smallskip
Now we show the main result of the section, which combined with Theorem \ref{theorem-half-clique-edge}, implies 
Theorem~\ref{theohardness}.

\begin{theorem}\label{theo-equiv-H}
Let $G=(V,E)$ be a graph satisfying $|E|=\frac{1}{4}|V|(|V|-2)$  and let $H_G$ be the associated bipartite graph as in Definition \ref{defHG}. The following assertions are equivalent.
\begin{description}
\item [(i)] $G$ has a clique of size $|V|/2$, i.e., $\omega(G)\ge |V|/2$.
\item [(ii)] $\alpha(H_G)=\alphabal(H_G)$.
\end{description}
Therefore,  {\rm HALF-SIZE-CLIQUE-EDGE} $\leq_P$ {\rm $\alpha$-BAL-BIP}.
\end{theorem}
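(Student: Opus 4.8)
The plan is to exploit the rigid structure of the maximal independent sets of $H_G$ supplied by Lemma~\ref{aux-lemma-H}. That lemma tells us every maximal independent set $I$ of $H_G$ is in fact maximum and is completely determined by a single subset $A\subseteq V$ (with $B=V\setminus A$): on the $V_1$ side it consists of $\{v_1:v\in A\}$ together with all $L_e$ for edges $e$ meeting $A$, and on the $V_2$ side of $\{v_2:v\in B\}$ together with all $R_e$ for edges $e$ contained in $B$. Consequently the equality $\alpha(H_G)=\alphabal(H_G)$ holds precisely when some such $A$ yields a \emph{balanced} set, i.e.\ $|I\cap V_1|=|I\cap V_2|$. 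So the first step is to translate assertion (ii) into an arithmetic condition on $A$.

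Writing $n=|V|$, $m=|E|$, $a=|A|$, and letting $m_B$ denote the number of edges of $G$ contained in $B=V\setminus A$, I would count, using that the sets $L_e$ (resp.\ $R_e$) are pairwise disjoint of size $n+1$ and that an edge is either incident to $A$ or contained in $B$, that $|I\cap V_1|=a+(m-m_B)(n+1)$ and $|I\cap V_2|=(n-a)+m_B(n+1)$. The balance equation $|I\cap V_1|=|I\cap V_2|$ then simplifies to
\[
2a-n=-(m-2m_B)(n+1).
\]
This is where the crucial rigidity appears: the left-hand side lies in $[-n,n]$, whereas $m-2m_B$ is an integer, so if $m-2m_B\ne 0$ the right-hand side would have absolute value at least $n+1>n$, a contradiction. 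Hence $m-2m_B=0$, which forces $a=n/2$ and $m_B=m/2$.

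Next I would invoke the calibrated edge count $m=\tfrac14 n(n-2)$. Since $|E|$ is an integer, $n$ is even, and this value equals $2\binom{n/2}{2}$, so $m_B=m/2=\binom{n/2}{2}$. As $B$ has exactly $n/2$ vertices, $m_B=\binom{n/2}{2}$ is the maximum possible number of internal edges, which forces $B$ to span a complete graph, i.e.\ $B$ is a clique of size $n/2$; this establishes (ii) $\Rightarrow$ (i). For the converse (i) $\Rightarrow$ (ii), given a clique $B$ of size $n/2$ I would set $A=V\setminus B$, observe that then $m_B=\binom{n/2}{2}=m/2$ and $a=n/2$ satisfy the displayed balance equation, so the associated maximum independent set is balanced and $\alpha(H_G)=\alphabal(H_G)$. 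Finally, since $H_G$ is bipartite and constructible from $G$ in polynomial time, the equivalence (i) $\Leftrightarrow$ (ii) is exactly the desired reduction {\rm HALF-SIZE-CLIQUE-EDGE} $\leq_P$ {\rm $\alpha$-BAL-BIP}.

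The only genuinely delicate point is the magnitude–integrality argument collapsing the balance equation to $m-2m_B=0$; everything else is bookkeeping built on Lemma~\ref{aux-lemma-H}. It is worth emphasizing that the hypothesis $|E|=\tfrac14|V|(|V|-2)$ is not incidental but precisely engineered so that ``exactly half the edges lie inside $B$'' coincides with ``$B$ spans a complete graph'', which is what turns the existence of a balanced maximum independent set in $H_G$ into the existence of a half-size clique in $G$.
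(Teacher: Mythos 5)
Your proof is correct and follows essentially the same route as the paper's: both use Lemma~\ref{aux-lemma-H} to reduce (ii) to a balance equation on $(|A|,|E_1|,|E_2|)$, both exploit the gap argument (left side bounded by $n$, right side a multiple of $n+1$) to force $|E_1|=|E_2|$ and $|A|=|B|=|V|/2$, and both then use the calibrated edge count $|E|=\tfrac14|V|(|V|-2)$ together with $|E_2|\le\binom{|V|/2}{2}$ to conclude that $B$ is a half-size clique, with the converse direction handled by the same explicit construction. The only cosmetic difference is that you phrase the counting via the variables $a$ and $m_B$ rather than the paper's $|E_1|,|E_2|$ notation.
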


\proof{Proof.} 
We first show (i) $\Longrightarrow$ (ii). Assume  $C$ is a clique of $G$ with $|C|=|V|/2$. Let $E_2$ be the set of edges of $G$ that are contained in $C$, so that $E_1:=E\setminus E_2$ is the set of edges of $G$ that are incident to some node in $V\setminus C$. 
By the assumption on $G$ we have $\binom{|V|/2}{2}= {|E|\over 2}$ and thus  $|E_2|= \binom{|V|/2}{2}= {|E|\over 2}= |E_1|$.  Consider the  subset $I\subseteq V_1\cup V_2$ of $V(H_G)$, which is defined   by  
$$I\cap V_1= \{v_1: v\notin C\} \cup \bigcup_{e\in E_1} L_e,\quad  I\cap V_2= \{v_2: v\in C\} \cup \bigcup_{e\in E_2} R_e.$$
By Lemma \ref{aux-lemma-H}, $I$ is a maximum independent set in $H_G$ and $\alpha(H_G)=n+m(n+1).$ Moreover, we have $|I\cap V_1|=|I\cap V_2|$, which shows that $\alphabal(H_G)=\alpha(H_G)$. 

Now we show (ii) $\Longrightarrow$ (i).  By the assumption (ii), $H_G$ has a balanced maximum independent  set  $I$. By Lemma \ref{aux-lemma-H},  $I$ takes the form as in (\ref{shape}). As $I$ is balanced we have $|I\cap V_1|=|I\cap V_2|$ and thus $||A|-|B||= (n+1)||E_2|-|E_1||$. If $|E_1|\ne |E_2|$ then the left hand side is at most $n$ while the right hand side is at least $n+1$. Therefore we have $|E_1|=|E_2|=|E|/2$ and $|A|=|B|=|V|/2$. 
Moreover, $|E_2|\le {|B|\choose 2}={|V|/2\choose 2}$ since $E_2$ consists of the edges that are contained in $B$. This gives $|E|=2|E_2| \le 2{|V|/2\choose 2}= |V|(|V|-2)/4$. We now use the assumption $|E|=|V|(|V|-2)/4$ on the number of edges of $G$, which implies that equality holds throughout and thus that $B$ is a clique in $G$ of size $|B|=|V|/2$,  showing (i).
\endproof

\begin{corollary}\label{coro-complexity}
Given a bipartite graph $G$ it is NP-hard to decide whether any of the following equalities holds.
\begin{description}
\item[(i)] $g(G)=\gbal(G)$.
\item[(ii)]$h(G)=\hbal(G)$.
\item[(iii)] $h(G)=\frac{1}{4}\alpha(G)$.
\item [(iv)] $\frac{1}{2}\sqrt{g(G)}=\frac{1}{4}\alpha(G)$.
\item[(v)] $h(G)={1\over 2}\sqrt{g(G)}$.
\end{description}
\end{corollary}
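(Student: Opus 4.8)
The plan is to reduce the NP-complete problem HALF-SIZE-CLIQUE-EDGE to each of the five decision problems, reusing the bipartite graph $H_G$ from Definition~\ref{defHG}. Given an instance $G=(V,E)$ of HALF-SIZE-CLIQUE-EDGE (so $|E|=\tfrac14|V|(|V|-2)$), the graph $H_G$ is constructed in polynomial time, since it has $|V_1|=|V_2|=n+m(n+1)$ vertices with $n=|V|$, $m=|E|$. The whole argument rests on showing that each of the five equalities (i)--(v), applied to $H_G$, is equivalent to the condition $\omega(G)\ge |V|/2$. Since by Theorem~\ref{theo-equiv-H} we already know that $\omega(G)\ge |V|/2 \Longleftrightarrow \alpha(H_G)=\alphabal(H_G)$, it suffices to prove that each equality (i)--(v) for $H_G$ is equivalent to $\alpha(H_G)=\alphabal(H_G)$. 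Combined with the NP-completeness of HALF-SIZE-CLIQUE-EDGE (which follows from Theorem~\ref{theorem-half-clique-edge} together with the known hardness of HALF-SIZE-CLIQUE), this yields NP-hardness of deciding each equality for bipartite graphs.

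For the equalities (i) $g(H_G)=\gbal(H_G)$ and (ii) $h(H_G)=\hbal(H_G)$, the equivalence with $\alpha(H_G)=\alphabal(H_G)$ is exactly Corollary~\ref{coro-H}. For the equalities (iii) $h(H_G)=\tfrac14\alpha(H_G)$ and (iv) $\tfrac12\sqrt{g(H_G)}=\tfrac14\alpha(H_G)$, the equivalence is precisely the content of relation~(\ref{eqrel2}) in Lemma~\ref{lemrel1}. So items (i)--(iv) require no new work beyond invoking these earlier results together with Theorem~\ref{theo-equiv-H}.

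The only case needing a separate argument is (v), the equality $h(H_G)=\tfrac12\sqrt{g(H_G)}$, since relation~(\ref{eqrel2}) does not directly relate this equality to $\alpha=\alphabal$. Here I would argue both implications. If $\alpha(H_G)=\alphabal(H_G)$, then by~(\ref{eqrel2}) we get $h(H_G)=\tfrac14\alpha(H_G)=\tfrac12\sqrt{g(H_G)}$, giving one direction. For the converse, suppose $h(H_G)=\tfrac12\sqrt{g(H_G)}$ and pick a nontrivial pair $(A,B)$ optimal for $h(H_G)$ (which exists since $g(H_G)>0$, as $H_G$ is not complete bipartite). Chaining the inequalities from the proof of Lemma~\ref{lemrel1},
$$\frac{|A|\,|B|}{|A|+|B|}=h(H_G)=\tfrac12\sqrt{g(H_G)}\ge \tfrac12\sqrt{|A|\,|B|}\ge \frac{|A|\,|B|}{|A|+|B|},$$
forces equality throughout, so $(A,B)$ is balanced ($|A|=|B|$) and optimal for $g(H_G)$. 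Since $(A,B)$ maximizes the product $|A|\cdot|B|$, the independent set $A\cup B$ must be maximal in $H_G$: otherwise one could enlarge $A$ or $B$ and strictly increase the product. By Lemma~\ref{aux-lemma-H} every maximal independent set of $H_G$ is maximum of size $n+m(n+1)$, so $A\cup B$ is a balanced maximum independent set, whence $\alpha(H_G)=\alphabal(H_G)$.

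The main obstacle is precisely item (v): it is the one equality not covered by the packaged equivalences of Lemma~\ref{lemrel1} and Corollary~\ref{coro-H}, and establishing the ``$\Rightarrow$'' direction requires exploiting the rigidity of maximal independent sets in $H_G$ furnished by Lemma~\ref{aux-lemma-H} (namely that maximal forces maximum), which is what upgrades a balanced $g$-optimal pair into a balanced maximum independent set. Everything else is a routine repackaging of results already proved, so the proof should be short once this observation is in place.
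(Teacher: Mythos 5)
Your proposal is correct and follows the paper's proof essentially verbatim: reduce via $H_G$, dispatch (i)--(ii) by Corollary~\ref{coro-H} and (iii)--(iv) by relation~(\ref{eqrel2}), then conclude with Theorems~\ref{theorem-half-clique-edge} and~\ref{theo-equiv-H} and the hardness of HALF-SIZE-CLIQUE. Your explicit treatment of (v) — forcing equality in the chain to get a balanced $g$-optimal pair, then using maximality and Lemma~\ref{aux-lemma-H} — is exactly what the paper compresses into ``one can easily verify that (v) implies (i)'' followed by the already-proved implication of Corollary~\ref{coro-H}, so it is the same route spelled out in more detail.
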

\proof{Proof.} 
We show that it is NP-hard to check any of the equalities (i)-(v) for the class of bipartite graphs that are of the form $H_G$ (as in Definition \ref{defHG}) for some graph $G$ with $|E|=\frac{1}{4}|V|(|V|-2)$. 
The key fact is that, for bipartite graphs of the form $H_G$,  any of the assertions (i)-(v) is equivalent to $\alpha(H_G)=\alphabal(H_G)$; this was shown in Corollary \ref{coro-H} for (i)-(ii) and in relation (\ref{eqrel2}) for (iii)-(iv), and one can easily verify that  (v) implies (i). Then the corollary follows using Theorems  \ref{theorem-half-clique-edge} and~\ref{theo-equiv-H} together with hardness of HALF-SIZE-CLIQUE.  
\endproof
\smallskip

\begin{remark}
{\em The hardness results in Corollary \ref{coro-complexity} hold in fact for a broader class of bipartite graph parameters. For this consider a bivariate function $f:\R^2_+\to \R$ that satisfies the condition
\begin{equation}\label{eqf}
f(a,b)\le {a+b\over 4}, \ \text{ and }\ f(a,b)={a+b\over 4}\Longleftrightarrow a=b, \quad \text{ for all } a,b\in \N
\end{equation}
and define the corresponding graph parameter 
$$f(G):= \max\{f(|A|,|B|): (A,B) \text{ is bipartite biindependent in } G\}\quad \text{ for } G \text{ bipartite}.$$
Using relation (\ref{eqf}) one can check the inequalities ${\alphabal(G)\over 4}\le f(G)\le {\alpha(G)\over 4}$ and the equivalence $f(G)={\alpha(G)\over 4} \Longleftrightarrow \alpha(G)=\alphabal(G).$
Using Theorem \ref{theo-equiv-H}, it follows that computing $f(\cdot)$ is NP-hard (already for the bipartite graphs of the form $H_G$ for some graph $G$ with ${|V|(|V|-2)}/4$ edges).

Examples of functions satisfying (\ref{eqf}) include $f(a,b)={ab\over a+b}$ (giving the parameter $h(G)$) and $f(a,b)={1\over 2}\sqrt {ab}$ (giving ${1\over 2}\sqrt{g(G)}$), or any $f(\cdot)$ nested between $h(\cdot)$ and ${1\over 2}\sqrt{g(\cdot)}$. As another example  consider $f(a,b):= \Big({1\over 2}\sqrt{ab}\Big)^p \Big({a+b\over 4}\Big)^{1-p}$ with $0\le p\le 1$, which gives a graph parameter $f(\cdot)$  nested between ${1\over 2}\sqrt{g(\cdot)}$ and $\alpha(\cdot)\over 4$.
}
\end{remark}

\section{Semidefinite approximations for the parameters \texorpdfstring{$g(G)$ and $h(G)$}{g(G) and h(G)}.}\label{secsdp}

In this section we introduce semidefinite approximations for the parameters $g(\cdot)$ and $h(\cdot)$ from (\ref{eqdefg}) and (\ref{eqdefh}), which are both NP-hard to compute  as we saw in the previous sections.
Let $G=(V=V_1\cup V_2,E)$ be a bipartite graph and let $C$ be the matrix from relation (\ref{eqC}) below. 
The starting point is to formulate the parameters $g(G)$ and $h(G)$  as maximizing, respectively,  the quadratic polynomial $x\T Cx$ and the rational function $x\T Cx \over x\T x$ over the  vectors $x\in \{0,1\}^{|V|}$ such that $x_ix_j=0$ for all $ \{i,j\}\in E$.
Then, to get a tractable approximation, a common approach   is  to linearize the quadratic terms by introducing a matrix $X$ modeling $xx\T$ in the case of $g(G)$, and modeling $xx\T \over x\T x$ in the case of $h(G)$. In this way one obtains the semidefinite bounds $g_1(G)$ and $h_1(G)$  introduced earlier in (\ref{eqg1}) and (\ref{eqh1}).  More generally, one can define hierarchies of semidefinite parameters  $(h_r(G))_{r\in\N}$ and~$(g_r(G))_{r\in\N}$ that upper bound~$h(G)$ and~$g(G)$, respectively,   using polynomial optimization techniques. Then  the parameters~$h_1(G)$ and~$g_1(G)$ correspond to the bounds at the first level~$r=1$  in these hierarchies. We will next briefly recall how the polynomial optimization approach applies for bounding the parameters $g(G)$ and $h(G)$ and after that  we investigate the bounds $g_1(G)$ and $h_1(G)$  in more detail.

\subsection{Polynomial optimization formulations and bounds.}\label{secpop}

We begin with a short recap on notation about polynomials and their use for approximating stable sets in graphs.
 Throughout, $\R[x]=\R[x_1,\ldots,x_n]$ denotes the ring of $n$-variate polynomials. For an integer $r\in \N$,  $\R[x]_r$ denotes the subset of $n$-variate polynomials with degree at most $r$. Then, $\Sigma_r\subseteq \R[x]_{2r}$ denotes the set of sums of squares of polynomials, of the form $\sum_{i=1}^ku_i^2$ with $u_i\in \R[x]_r$ and $k\in \N$. Recall that one can test whether a polynomial $f\in \R[x]_{2r}$ belongs to $\Sigma_r$ via semidefinite optimization. Indeed, $f\in \Sigma_{r}$ if and only if there exists a positive semidefinite matrix $Q$ that satisfies the polynomial identity 
$f(x)=[x]_r\T Q[x]_r$, where $[x]_r $ denotes the vector of square-free (aka multilinear) monomials of degree at most $r$. In particular $[x]_1$ denotes the (column) vector $(1, x_1,\ldots,x_n)\T$.

Let $G=(V=[n],E)$ be a graph. Define the ideal $I_G\subseteq \R[x]$ generated by the polynomials  $x_i^2-x_i$ ($i\in V$) and $x_ix_j$ ($\{i,j\}\in E$),  which consists of the polynomials  
$q=\sum_{i\in V}u_i(x_i^2-x_i)+\sum_{\{i,j\}\in E}u_{ij}x_ix_j$ with $u_i,u_{ij}\in \R[x]$.
For an integer $r\in\N$, 
 let $I_{G,2r}\subseteq \R[x]_{2r}$ denote its degree $2r$ truncation consisting of the above polynomials $q$, where we require that $u_i$ and $u_{ij}$ have degree at most $2r-2$.
The motivation for considering the ideal $I_G$ comes from the fact that  the stable sets in $G$ correspond to the vectors  in its variety $V(I_G)$, i.e., to the vectors $x\in\R^n$ satisfying $x_i^2-x_i=0$ for $i\in V$ and $x_ix_j=0$ for $\{i,j\}\in E$.
This enables reformulating the stability number of $G$ as
\begin{align}
\alpha(G)=\max\Big\{ \sum_{i\in V} x_i: x\in V(I_G)\Big\}& 
=\min\Big\{\lambda: \lambda -\sum_{i\in V}x_i\ge 0 \text{ for all } x\in V(I_G)\Big\} \label{eqalphapos}\\
&= \min\Big\{\lambda: \lambda -\sum_{i\in V}x_i\in \Sigma_{\alpha(G)} +I_{G,2\alpha(G)}\Big\}.\label{eqalphasos}
\end{align}
Here,  the last equality follows from the following well-known key fact:
for a  polynomial $p\in\R[x]$,  
\begin{equation}\label{eqsoslas}
p(x)\ge 0\text{ for all } x\in V(I_G) \Longleftrightarrow p\in \Sigma_{\alpha(G)}+I_{G}
\end{equation}
(see \cite{Lasserre01}, \cite{LaurentMOR2003}).
This motivates defining  the  parameters
\begin{align}\label{eqalphar}
\las_r(G):=\min\Big\{\lambda: \lambda-\sum_{i\in V}x_i\in \Sigma_r +I_{G,2r}\Big\}\quad \text{ for any } r\in \N,
\end{align}
also known as the Lasserre bounds for $\alpha(G)$. The parameter $\las_r(G)$ can be expressed via a semidefinite program and we have $\alpha(G)\le \las_{r+1}(G)\le \las_r(G)$, with equality $\alpha(G)=\las_r(G)$ if $r\ge \alpha(G)$ \cite{LaurentMOR2003}.  At order $r=1$ we obtain the bound $\las_1(G)$ which, after applying SDP duality, can be checked to take the form
\begin{equation}\label{eqthetaI} 
\las_1(G)= \max\Big\{\langle I, X\rangle: X\in \mathcal S^n,\ \left(\begin{matrix} 1 & \diag(X)\T\cr \diag(X)& X\end{matrix}\right)\succeq 0,\ 
X_{ij}=0 \text{ for } \{i,j\}\in E\Big\}.
\end{equation}
Another upper bound on $\alpha(G)$ is the theta number by Lov\'asz \cite{Lo79}, defined by
\begin{equation}\label{eqthetaJ}
\vartheta(G)=\max\{\langle J,X\rangle: X\in \mathcal S^n,\ X\succeq 0,\ \langle I,X\rangle =1,\ X_{ij}=0 \text{ for } \{i,j\}\in E\}.
\end{equation}
As is well-known these two bounds coincide: 
\begin{equation}\label{eqtheta=}
\las_1(G)=\vartheta(G)
\end{equation}
 (see, e.g., \cite{GLSbook}; see also Remark \ref{remtheta}).
Moreover, $\vartheta(G)=\alpha(G)$ if $G$ is bipartite (more generally, if $G$ is perfect, see \cite{GLSbook}).
 We now indicate how the polynomial optimization approach sketched above also applies to  the parameters $g(\cdot)$ and $h(\cdot)$.

\medskip
Assume now $G=(V=V_1\cup V_2,E)$ is a bipartite graph. 
Define the matrix
\begin{equation}\label{eqC}
C:={1\over 2}\left(\begin{matrix} 0 & J_{|V_1|,|V_2|} \cr J_{|V_2|,|V_1|} & 0\end{matrix}\right)\in \mathcal S^{|V|},
\end{equation}
so that $x\T Cx=\big(\sum_{i\in V_1}x_i\big)\big(\sum_{j\in V_2}x_j\big)$. As observed above one can encode a biindependent pair $(A,B)$ with $A\subseteq V_1$ and $B\subseteq V_2$ by its characteristic vector $x=\chi^{A\cup B}$, which belongs to the variety $V(I_G)$.
Then  we can express the parameters $g(G)$ and $h(G)$ as
\begin{align}
\gbbi(G)&=\max\Big\{x\T Cx: x_i^2=x_i \ (i\in V),\ x_ix_j=0 \ (\{i,j\}\in E)\Big\}, \label{eqgbbipop}\\
\hbbi(G)&=\max\Big\{{x\T Cx\over x\T x}: x_i^2=x_i \ (i\in V),\ x_ix_j=0 \ (\{i,j\}\in E)\Big\} \label{eqhbbipop}.
\end{align}
The  Lasserre bounds of order $r$ for $\gbbi(G)$ and $\hbbi(G)$ read, respectively,
\begin{align}
\gbbir(G)&:=\min\{\lambda: \lambda-x\T Cx \in \Sigma_r+I_{G,2r}\},
\label{eqgbbipopr}\\
\hbbir(G)&:=\min\{\lambda: x\T(\lambda I-C)x \in \Sigma_r+I_{G,2r}\},
\label{eqhbbipopr}
\end{align}
and the next result follows as a direct application of relation (\ref{eqsoslas}).

\begin{lemma}
Let $G$ be a bipartite graph. For any integer $r\ge 1$, we have 
$\gbbi(G)\le \gbbir(G)$ and $\hbbi(G)\le \hbbir(G)$, with equality if $r\ge \alpha(G)$.
\end{lemma}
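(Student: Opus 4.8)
The final statement to prove is the Lemma asserting that $\gbbi(G)\le \gbbir(G)$ and $\hbbi(G)\le \hbbir(G)$ for all $r\ge 1$, with equality when $r\ge \alpha(G)$. The plan is to derive both parts as direct applications of the sum-of-squares/ideal characterization recorded in relation (\ref{eqsoslas}), exactly as was done for the stability number $\alpha(G)$ in passing from (\ref{eqalphapos}) to (\ref{eqalphasos}). The key observation is that the feasible vectors $x\in V(I_G)$ of the polynomial formulations (\ref{eqgbbipop}) and (\ref{eqhbbipop}) are precisely the characteristic vectors of biindependent pairs, so on this variety the objectives agree with the combinatorial parameters.

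First I would treat $\gbbi(G)$. Writing $\gbbi(G)=\max\{x\T Cx:x\in V(I_G)\}$, the value $\gbbi(G)$ is by definition the smallest $\lambda$ such that $\lambda-x\T Cx\ge 0$ for all $x\in V(I_G)$. By the equivalence (\ref{eqsoslas}), this nonnegativity on the variety is equivalent to the membership $\lambda-x\T Cx\in\Sigma_{\alpha(G)}+I_G$. Hence $\gbbi(G)=\gbbir(G)$ as soon as $r\ge\alpha(G)$, since then $\Sigma_r+I_{G,2r}$ contains the required certificate. For general $r\ge 1$, the degree-truncated cone $\Sigma_r+I_{G,2r}$ is contained in $\{p:p\ge 0 \text{ on } V(I_G)\}$, so any $\lambda$ feasible for (\ref{eqgbbipopr}) satisfies $\lambda-x\T Cx\ge 0$ on the variety and therefore $\lambda\ge\gbbi(G)$; taking the minimum over feasible $\lambda$ gives $\gbbi(G)\le\gbbir(G)$.

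For $\hbbi(G)$ the argument is essentially the same but the objective is a ratio, so one clears the denominator. On $V(I_G)$ every nonzero $x$ has $x\T x=\sum_{i\in V}x_i>0$ (the coordinates are $0/1$), so $\hbbi(G)$ is the smallest $\lambda$ with $x\T(\lambda I-C)x\ge 0$ for all $x\in V(I_G)$ (the zero vector being harmless since it makes the quadratic form vanish). Applying (\ref{eqsoslas}) to the polynomial $x\T(\lambda I-C)x$ gives, for $r\ge\alpha(G)$, the membership $x\T(\lambda I-C)x\in\Sigma_r+I_{G,2r}$ at $\lambda=\hbbi(G)$, yielding $\hbbi(G)=\hbbir(G)$; and for general $r$ the cone containment again forces $\hbbi(G)\le\hbbir(G)$.

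I do not expect a genuine obstacle here, as the result is a routine transcription of the standard Lasserre/Positivstellensatz framework. The only point requiring mild care is the ratio objective: one must justify that maximizing $x\T Cx/x\T x$ over the variety is equivalent to minimizing $\lambda$ subject to the quadratic-form nonnegativity $x\T(\lambda I-C)x\ge 0$, which follows because $x\T x>0$ for all nonzero $x\in V(I_G)$ and the inequality holds trivially at $x=0$. One should also confirm that the degree bound $2\alpha(G)$ coming from (\ref{eqsoslas}) matches the truncation level $r=\alpha(G)$ used in the definitions (\ref{eqgbbipopr}) and (\ref{eqhbbipopr}), which it does by the degree conventions fixed earlier in the subsection.
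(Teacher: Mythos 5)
Your proof is correct and takes essentially the same route as the paper, which establishes the lemma as a direct application of relation (\ref{eqsoslas}): feasible certificates in $\Sigma_r+I_{G,2r}$ give nonnegativity on $V(I_G)$ (hence the inequalities for all $r\ge 1$), and conversely (\ref{eqsoslas}) supplies a degree-$\alpha(G)$ certificate (hence equality for $r\ge\alpha(G)$). Your additional care with the ratio objective, clearing the denominator via $x\T x>0$ for nonzero $x\in V(I_G)$, is exactly the routine detail the paper leaves implicit.
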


Since sums of squares of polynomials can be modelled using positive semidefinite matrices  the parameters $\las_r(G)$, $g_r(G)$, $h_r(G)$ can be formulated using a semidefinite program. In later sections we will give the explicit semidefinite programs for the parameters $g_1(G)$ and $h_1(G)$, their symmetric versions and  their balanced analogs. An important  property that we will use is that strong duality holds for all these semidefinite programs, which follows from a result in \cite{JoszHenrion}
(thanks to the presence of the equations $x_i^2-x_i=0$ for $i\in V$ in the original polynomial optimization problems).

\subsection{Semidefinite formulations for the Lasserre bounds  \texorpdfstring{$\hbbione(G)$ and $\gbbione(G)$}{h1(G) and g1(G)}.}\label{secsdphg}

In this section we give explicit semidefinite formulations for  the Lasserre bounds (\ref{eqgbbipopr}) and (\ref{eqhbbipopr}) of order $r=1$ for $g(G)$ and $h(G)$. In particular, we indicate how to obtain the formulations  given earlier in (\ref{eqg1}) and (\ref{eqh1}). Recall that $\mathcal S_G$ consists of the matrices in $\mathcal S^{|V|}$ that are supported by $G$. 
We begin with a claim expressing polynomials in the truncated ideal $I_{G,2}$ that we will repeatedly use.

\begin{lemma}\label{lemMIG}
Given a graph $G=(V,E)$ and a matrix $M\in \mathcal S^{1+|V|}$ (indexed by $\{0\}\cup V$) we have $[x]_1\T M [x]_1 \in I_{G,2}$ if and only if $M$ takes the form
\begin{align}\label{eqMIG}
M=\left(\begin{matrix} 0 & -u\T /2\cr -u/2 & \Diag(u)+Z\end{matrix}\right)\ \text{ for some } u\in \R^{|V|},\ Z\in \mathcal S_G.
\end{align}
\end{lemma}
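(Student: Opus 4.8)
The plan is to reduce the statement to a coefficient comparison in the monomial basis of $\R[x]_2$, using the fact that at order $r=1$ the truncated ideal $I_{G,2}$ is an explicitly describable linear subspace. First I would recall that, since the multipliers $u_i,u_{ij}$ in the definition of $I_{G,2r}$ are required to have degree at most $2r-2$, for $r=1$ they are forced to be constants. Hence $I_{G,2}$ consists exactly of the real linear combinations $\sum_{i\in V}c_i(x_i^2-x_i)+\sum_{\{i,j\}\in E}c_{ij}x_ix_j$ with $c_i,c_{ij}\in\R$. Expanding such a generic element, its constant term vanishes, the coefficient of $x_i$ is $-c_i$, the coefficient of $x_i^2$ is $c_i$, and the coefficient of $x_ix_j$ (for $i<j$) is $c_{ij}$ if $\{i,j\}\in E$ and $0$ otherwise.

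Next I would expand the other side. Writing $M$ in block form with scalar $M_{00}$, off-diagonal vector $b=(M_{0i})_{i\in V}$, and $|V|\times|V|$ block $N=(M_{ij})_{i,j\in V}$, one computes $[x]_1\T M[x]_1=M_{00}+2\sum_i b_ix_i+\sum_i N_{ii}x_i^2+2\sum_{i<j}N_{ij}x_ix_j$. Matching this against the generic element of $I_{G,2}$ monomial by monomial yields precisely three families of conditions: $M_{00}=0$ (constant term), $N_{ii}=-2b_i$ for each $i$ (equating the $x_i$- and $x_i^2$-coefficients, which must be negatives of one another), and $N_{ij}=0$ whenever $i\neq j$ and $\{i,j\}\notin E$. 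I would then package these into the claimed form by setting $u:=\diag(N)$, so that $N_{ii}=-2b_i$ becomes $b=-u/2$ (matching the blocks $-u\T/2$ and $-u/2$), and defining $Z:=N-\Diag(u)$; the vanishing of the off-edge entries of $N$, together with $Z_{ii}=0$ by construction, says exactly that $Z\in\mathcal S_G$. The converse is the same computation read backwards: substituting the form (\ref{eqMIG}) into $[x]_1\T M[x]_1$ produces $\sum_i u_i(x_i^2-x_i)+\sum_{\{i,j\}\in E}2Z_{ij}x_ix_j$, which manifestly lies in $I_{G,2}$.

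I do not expect a genuine obstacle: the result is a bookkeeping identity between a symmetric matrix and a quadratic form. The only points demanding care are the factor-of-two conventions relating the off-diagonal entries $M_{0i}$ and $N_{ij}$ to the polynomial coefficients $2b_i$ and $2N_{ij}$, and the fact that membership in $\mathcal S_G$ forces a zero diagonal (consistent with $Z_{ii}=0$), which makes the decomposition $N=\Diag(u)+Z$ into diagonal and off-diagonal parts unambiguous.
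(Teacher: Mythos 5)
Your proposal is correct and follows exactly the paper's argument: the paper likewise observes that membership in $I_{G,2}$ means $[x]_1\T M[x]_1=\sum_{i\in V}u_i(x_i^2-x_i)+\sum_{\{i,j\}\in E}u_{ij}x_ix_j$ with constant multipliers and concludes by equating coefficients, which is precisely the monomial-by-monomial matching you carry out (your version simply makes the bookkeeping, including the factor-of-two conventions and the zero diagonal of $Z\in\mathcal S_G$, explicit).
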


\proof{Proof.} 
By definition, $[x]_1\T M [x]_1\in I_{G,2}$ if  $[x]_1\T M [x]_1= \sum_{i\in V} u_i (x_i^2-x_i)+\sum_{\{i,j\}\in E}u_{ij}x_ix_j$ for some $u_i, u_{ij}\in \R$. The result follows by equating coefficients at both sides of this polynomial identity.
\endproof

We now give semidefinite formulations for the parameters $h_1(G)$ and $g_1(G)$.
 
 \begin{lemma} \label{lemhbbir1}
Let $G=(V=V_1\cup V_2,E)$ be a bipartite graph. Then  the Lasserre bound of order $r=1$ for $\hbbi(G)$ can be reformulated as 
 \begin{align}
 \hbbione(G) & = \min_{\lambda\in \R, Z\in \mathcal S^{|V|}} \{\lambda: \lambda I+Z-C\succeq 0, \  
 Z\in\mathcal S_G\},\label{eqhGV} \\
& =\max_{X\in \mathcal S^{|V|}}\{\langle C,X\rangle: X\succeq 0,  \ \text{\em Tr}(X)=1,\ X_{ij}=0 \text{ for } \{i,j\}\in E\}.\label{eqhGVd}
\end{align} \end{lemma}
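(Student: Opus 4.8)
The plan is to read off the primal formulation (\ref{eqhGV}) directly from the sum-of-squares definition (\ref{eqhbbipopr}) of $\hbbione(G)$ at order $r=1$, and then to obtain (\ref{eqhGVd}) as its conic dual. So I start by unfolding the membership $x\T(\lambda I-C)x\in\Sigma_1+I_{G,2}$. By definition of $\Sigma_1$ every summand is $[x]_1\T Q[x]_1$ with $Q\succeq 0$ indexed by $\{0\}\cup V$, and by Lemma \ref{lemMIG} every element of $I_{G,2}$ is $[x]_1\T M[x]_1$ with $M$ of the shape (\ref{eqMIG}). Since the map $A\mapsto [x]_1\T A[x]_1$ is injective on symmetric matrices indexed by $\{0\}\cup V$ (the monomials $1,x_i,x_i^2,x_ix_j$ being linearly independent), the required polynomial identity $x\T(\lambda I-C)x=[x]_1\T(Q+M)[x]_1$ is equivalent to the matrix identity $Q+M=\left(\begin{smallmatrix}0&0\T\\0&\lambda I-C\end{smallmatrix}\right)$, where the constant and linear blocks on the right vanish because $x\T(\lambda I-C)x$ is a pure quadratic form. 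Solving for $Q$ and substituting the form (\ref{eqMIG}) of $M$ gives $Q=\left(\begin{smallmatrix}0&u\T/2\\ u/2&\lambda I-C-\Diag(u)-Z\end{smallmatrix}\right)$ with $u\in\R^{|V|}$ and $Z\in\mathcal S_G$.

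The key reduction is then the standard fact that a positive semidefinite matrix with a zero diagonal entry has its entire corresponding row and column equal to zero: since $Q_{00}=0$ and $Q\succeq 0$, the off-diagonal block $u/2$ is forced to vanish, i.e.\ $u=0$. Hence $Q\succeq 0$ collapses to the single condition $\lambda I-C-Z\succeq 0$ with $Z\in\mathcal S_G$, and minimizing over $\lambda$ yields $\hbbione(G)=\min\{\lambda:\lambda I-C-Z\succeq 0,\ Z\in\mathcal S_G\}$. Replacing $Z$ by $-Z$, which leaves $\mathcal S_G$ invariant, gives exactly (\ref{eqhGV}). I expect this first step to be the only delicate point: one has to keep careful track of the fact that there are no constant or linear terms and that $u$ is consequently annihilated, whereas everything downstream is routine.

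Finally, for (\ref{eqhGVd}) I would pass to the conic dual of (\ref{eqhGV}). Introducing a multiplier $X\succeq 0$ for the constraint $\lambda I+Z-C\succeq 0$ and forming the Lagrangian $\lambda-\langle X,\lambda I+Z-C\rangle=\lambda(1-\Tr(X))-\langle X,Z\rangle+\langle C,X\rangle$, minimizing over the free scalar $\lambda$ forces $\Tr(X)=1$, while minimizing over $Z\in\mathcal S_G$ forces $X_{ij}=0$ for every edge $\{i,j\}\in E$ (otherwise the infimum is $-\infty$); the surviving objective is $\langle C,X\rangle$, which is precisely the program (\ref{eqhGVd}). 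Weak duality is immediate, and to close the gap and guarantee attainment on both sides I would invoke the strong-duality result of \cite{JoszHenrion}, applicable here thanks to the presence of the equations $x_i^2-x_i=0$; alternatively one can verify Slater's condition directly, since $X=\tfrac1n I$ is strictly feasible for the maximization (\ref{eqhGVd}) and $(\lambda,Z)=(\lambda_1(C)+1,0)$ is strictly feasible for the minimization (\ref{eqhGV}).
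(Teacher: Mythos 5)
Your proof is correct and follows essentially the same route as the paper's: both unfold the order-$1$ sum-of-squares membership via Lemma \ref{lemMIG}, use $Q_{00}=0$ (hence a zero first row and column of the PSD Gram matrix) to collapse the condition to $\lambda I + Z - C \succeq 0$ with $Z\in\mathcal S_G$, and then obtain (\ref{eqhGVd}) as the semidefinite dual, with strong duality justified by \cite{JoszHenrion}. Your explicit Slater verification (strict feasibility of both programs) is a nice self-contained alternative to that citation, but otherwise the argument coincides with the paper's proof.
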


\proof{Proof.} 
By definition, $h_1(G)$ is the smallest scalar $\lambda$ for which $x\T(\lambda I-C)x\in \Sigma_2+I_{G,2}$, i.e., the smallest $\lambda$ for which $[x]_1\T Q[x]_1 -x\T (\lambda I-C)x \in I_{G,2}$ for some matrix $Q\succeq 0$ (indexed by $\{0\}\cup V$).
Using Lemma \ref{lemMIG} we obtain that $Q_{00}=0$ and thus $Q_{0i}=0$ for all $i\in V$ (as $Q\succeq 0$). From this follows that the principal submatrix indexed by $V$ takes the form
$Q[V]= Z+\lambda I-C$ for some $Z\in \mathcal S_G$ and we arrive at the formulation (\ref{eqhGV}) for $h_1(G)$. By taking the semidefinite dual we obtain the formulation (\ref{eqhGVd}). As already noted above strong duality holds, as an application of \cite{JoszHenrion}.
\endproof

\begin{remark} \label{remSudaTanaka}
After submission of our paper, H.\ Tanaka attended us on references~\cite{ST2014} and~\cite{STT2017}, where the following bound on~$\sqrt{g(G)}$ is studied. Let~$I_{V_1}=\text{Diag}(\chi^{V_1})$ and~$I_{V_2}=\text{Diag}(\chi^{V_2})$, and define the parameter
\begin{align}
T(G):=\max_{X\in \mathcal S^{|V|}}\{ \langle C,X\rangle: X\succeq 0,  \ \langle X, I_{V_1}\rangle= \langle X, I_{V_2}\rangle=1,\ X_{ij}=0 \text{ for } \{i,j\}\in E,\ X \geq 0 \}.
\end{align}
%This parameter $T(G)$ is nested between $h_1(G)$ and $h_2(G)$. 
Note that if  we let~$T'(G)$ denote the same parameter without the entrywise nonnegativity constraint on~$X$, then we have equality~$T'(G)=2h_1(G)$. It is clear that $T'(G)\le 2 h_1(G)$, since an optimal solution $X$ for~$T'(G)$ gives a feasible matrix $X/2$ for the program (\ref{eqhGVd}) defining $h_1(G)$.
 Conversely,  if $X$ is optimal for $h_1(G)$, then $2X$ is feasible for $T'(G)$. Indeed, one can show that such $X$ satisfies $\langle X,I_{V_1}\rangle = \langle X,I_{V_2} \rangle= 1/2$ (using  relations~\eqref{hyAyB2} and~\eqref{hyAyB3} in the proof below). So, we have the inequalities $\sqrt{g(G)}\le T(G)\le 2h_1(G)$.
 %. For this, we write $X = \text{Gram}(y_i \, | \,  i \in V_1 \cup V_2)$ and then use relations~\eqref{hyAyB2} and~\eqref{hyAyB3} below to show $\sum_{i \in V_1} \norm{y_i}^2 = \sum_{j \in V_2} \norm{y_j}^2$.
\end{remark}

\begin{lemma}\label{lemgbbir1}
Let $G$ be a bipartite graph. Then we have
\begin{align}
\gbbione(G) &= \min_{\lambda\in \R, u\in \R^{|V|}, Z\in \mathcal S^{|V|}} \Big\{\lambda:
\left(\begin{matrix} \lambda & u\T /2\cr u/2 & \Diag(u)-C+Z\end{matrix}\right)\succeq 0,\ Z\in \mathcal S_G 
\Big\},\label{gbbione:primal0} \\
 & =\max_{ X \in \mathcal{S}^{|V|}} \Big\{ \langle C, X \rangle:   \,  
 \left(\begin{matrix} 1 & \diag(X)\T\cr \diag(X)& X\end{matrix}\right) \succeq 0, \, X_{ij} = 0 \text{ for  }\{i,j\} \in E  \Big\}.\label{gbbione:dual}
\end{align}
\end{lemma}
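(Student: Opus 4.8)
Lemma \ref{lemgbbir1}: We need to derive two SDP formulations for $g_1(G)$.

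The plan is to follow exactly the same strategy as in the proof of Lemma \ref{lemhbbir1}, adapted to the product objective $g(G)$ rather than the ratio $h(G)$. First I would start from the definition (\ref{eqgbbipopr}) of $g_1(G)$ as the smallest $\lambda$ for which $\lambda - x^{\sf T} C x \in \Sigma_1 + I_{G,2}$. Unwinding the sum-of-squares condition at level $r=1$, this means there is a positive semidefinite matrix $Q \in \mathcal S^{1+|V|}$ (indexed by $\{0\}\cup V$) such that the polynomial identity
\begin{equation*}
[x]_1^{\sf T} Q [x]_1 - (\lambda - x^{\sf T} C x) \in I_{G,2}
\end{equation*}
holds. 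The key difference from the $h_1$ case is that the constant term here is $\lambda$ (not zero), so the $(0,0)$-entry of $Q$ is no longer forced to vanish.

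Next I would apply Lemma \ref{lemMIG} to characterize when the displayed polynomial lies in $I_{G,2}$. Writing $Q$ blockwise as $Q = \begin{psmallmatrix} Q_{00} & q^{\sf T} \\ q & Q[V] \end{psmallmatrix}$, matching coefficients via (\ref{eqMIG}) gives $Q_{00} = \lambda$ (from the constant term), and the linear and quadratic terms force $q = u/2$ and $Q[V] = \Diag(u) - C + Z$ for some $u \in \R^{|V|}$ and $Z \in \mathcal S_G$. This yields precisely the primal formulation (\ref{gbbione:primal0}), where the $2\times 2$-block matrix is required to be positive semidefinite and $Z$ is supported on $G$.

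Finally, I would obtain (\ref{gbbione:dual}) by taking the semidefinite-programming dual of (\ref{gbbione:primal0}). The dual variable is a matrix $\begin{psmallmatrix} 1 & \diag(X)^{\sf T} \\ \diag(X) & X \end{psmallmatrix} \succeq 0$; the constraint pairing against $\lambda$ fixes the $(0,0)$-entry to $1$, the constraint pairing against each $u_i$ links the $(0,i)$-entry to $X_{ii}$ (forcing the off-diagonal top row to be $\diag(X)$), the constraint pairing against $Z \in \mathcal S_G$ forces $X_{ij}=0$ for $\{i,j\}\in E$, and the objective becomes $\langle C, X\rangle$. The main obstacle—really the only nonroutine point—is justifying that strong duality holds so that the two optimal values coincide; but this follows, exactly as noted in the proof of Lemma \ref{lemhbbir1}, from the result of \cite{JoszHenrion}, whose hypotheses are met because the defining ideal $I_G$ contains the equations $x_i^2 - x_i = 0$ for $i \in V$. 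I would therefore just invoke that argument rather than repeat it, and the dual-feasibility and complementary-slackness bookkeeping is otherwise a direct transcription of the computation already carried out for $h_1(G)$.
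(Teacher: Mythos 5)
Your proposal is correct and follows essentially the same route as the paper's proof: unwind the definition of $\gbbione(G)$ into the existence of a matrix $Q\succeq 0$ with $[x]_1\T Q[x]_1-(\lambda-x\T Cx)\in I_{G,2}$, apply Lemma \ref{lemMIG} to pin down the block structure of $Q$ (noting, as you do, that $Q_{00}=\lambda$ is no longer forced to vanish, unlike in the $h_1$ case), and then pass to (\ref{gbbione:dual}) by semidefinite duality, with strong duality supplied by \cite{JoszHenrion}. The only cosmetic discrepancy is the sign of the off-diagonal block ($u/2$ versus the $-u/2$ produced by Lemma \ref{lemMIG}), which is immaterial since congruence by $\Diag(-1,1,\dots,1)$ preserves positive semidefiniteness.
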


\proof{Proof.} 
By definition $g_1(G)$ is the smallest scalar $\lambda$ for which $\lambda-x\T Cx\in \Sigma_2+I_{G,2}$. In other words this is the smallest $\lambda$ for which there exists $Q\succeq 0$ such that 
$[x]_1\T \big( Q- {\tiny \left(\begin{matrix} \lambda & 0\cr 0 & -C\end{matrix}\right)}\big)[x]_1\in I_{G,2}$.
Using Lemma \ref{lemMIG} we obtain the formulation of $g_1(G)$ as in (\ref{gbbione:primal0}). 
Then  the formulation~$\eqref{gbbione:dual}$ follows  by taking the dual of the semidefinite program (\ref{gbbione:primal0}) and strong duality holds, by a result in \cite{JoszHenrion}.

\endproof

\begin{remark}\label{remlasone}
{\em In order to highlight some similarities and differences between the parameters $\las_1(G)$, $g_1(G)$ and $h_1(G)$ we indicate how to derive the formulation (\ref{eqthetaI}) of $\las_1(G)$. Let us start with the definition of $\las_1(G)$ as the smallest $\lambda$ for which $\lambda-\sum_{i\in V}x_i\in \Sigma_2+I_{G,2}$. Since $\sum_{i\in V}x_i-x\T I x\in I_{G,2}$ we can alternatively search for the smallest $\lambda$ for which $[x]_1\T \big(Q -{\tiny \left(\begin{matrix} \lambda & 0\cr 0 & -I\end{matrix}\right)}\big)[x]_1 \in I_{G,2}$. Using Lemma~\ref{lemMIG} we obtain 
\begin{align}\label{eqlas1dual}
\las_1(G)=\min_{\lambda\in\R, u\in \R^{|V|}, Z\in \mathcal S^{|V|}}\Big\{\lambda : 
 \left(\begin{matrix} \lambda & u\T/2 \cr u/2 & \Diag(u)-I+Z\end{matrix}\right)\succeq 0,\ Z\in \mathcal S_G\Big\}.
 \end{align}
 Taking the dual semidefinite program of (\ref{eqlas1dual}) we arrive at the formulation (\ref{eqthetaI}).
 
Note the similarity between programs (\ref{gbbione:primal0}) and (\ref{eqlas1dual}), which are the same up to exchanging the matrices $C$ and $I$. Note also that
it is possible to simplify program (\ref{eqlas1dual}) and to bring it in the  form 
 \begin{align}\label{eqlas1dualsimple}
 \las_1(G)=\min_{\lambda\in\R, Z\in \mathcal S^{|V|}} \Big\{\lambda: 
  \left(\begin{matrix} \lambda & e\T\cr e& I+Z\end{matrix}\right)\succeq 0,\ Z\in \mathcal S_G\Big\},
  \end{align}
which is another well-known formulation of $\vartheta(G)$. 
To see this, call $Q$ the matrix in program (\ref{eqlas1dual}). As $Q_{ii}=u_i-1\ge 0$ we have  $u_i\ge 1$ for all $i\in V$. By scaling the $i$th column/row of $Q$ by $2/u_i$ and adding $1-{4\over u_i^2}(u_i-1)={(u_i-2)^2\over u_i^2}\ge 0$ to entry $Q_{ii}$, we obtain a new  matrix $Q'\succeq 0$ satisfying $Q'_{0i}=Q'_{ii}=1$ for all $i\in V$, thus feasible for (\ref{eqlas1dualsimple}). This shows the equivalence of (\ref{eqlas1dual}) and (\ref{eqlas1dualsimple}).

Note, however,  that the above rescaling trick could not be applied to  program (\ref{gbbione:primal0}); indeed if $Q$ denotes the matrix appearing in (\ref{gbbione:primal0}), then one must have $Q_{ij}=-1/2$ for all positions $(i,j)\in V_1\times V_2$ corresponding  to non-edges of $G$. }
 \end{remark}

\smallskip
Finally, we mention a natural strengthening of $h_1(G)$, obtained by adding one row/column to the matrix variable (as in the definition (\ref{gbbione:dual}) of $g_1(G)$):
\begin{equation}\label{eqh1p}
h_1'(G):= \max\Big\{\langle C,X\rangle: \left(\begin{matrix}1 & x\T\cr x & X\end{matrix}\right)\succeq 0, \ \text{\rm Tr}(X)=1, \ x=\diag(X), \ X_{ij}=0 \text{ for }\{i,j\}\in E\Big\}.
\end{equation}
We have $$h(G)\le h_1'(G)\le h_1(G).$$ The inequality $h_1'(G)\le h_1(G)$ is clear since any feasible solution of (\ref{eqh1p}) gives a feasible solution of (\ref{eqhGVd}). To see that $h(G)\le h_1'(G)$, let $(A,B)$ be an optimal solution for $h(G)$  and set $y:=\chi^{A\cup B}$. Then $x:=y/e\T y$ and $X:=yy\T/e\T y$ provide a feasible solution for $h_1'(G)$,   with value $\langle C,X\rangle =|A|\cdot |B|/|A\cup B|= h(G)$ (using the fact that  $X-xx\T= yy\T (e\T y-1)/(e\T y)^2\succeq 0$).
In the next section we will show that $h_1(G)$ upper bounds also ${1\over 2}\sqrt{g(G)}$; the next example shows this is not true for  $h_1'(G)$.

\smallskip
\begin{example}
Let~$G=(V_1\cup V_2,E)$ be the bipartite graph  
with~$V_1=\{1,2\}$, $V_2=\{3,4\}$, and a single edge~$\{1,3\}$, see Figure~\ref{fig:smallgraphexample}.
\begin{figure}[ht]
\centering
\begin{tikzpicture}[thick,
  every node/.style={draw,circle},
  thenode/.style={fill=black, inner sep = 2pt},
  every fit/.style={ellipse,draw,inner sep=-2pt,text width=1.25cm,text height=1.5cm},
  -,shorten >= 1pt,shorten <= 1pt
]
% the vertices of V1
\begin{scope}[start chain=going below,node distance=5mm]
  \node[thenode,on chain] (f1)  [label={[xshift=-0.3cm, yshift=-0.5cm]$1$}] {};
  \node[thenode,on chain] (f2)  [label={[xshift=-0.3cm, yshift=-0.5cm]$2$}] {};
\end{scope}

% the vertices of V2
\begin{scope}[xshift=2.5cm,start chain=going below,node distance=5mm]
  \node[thenode,on chain] (s4) [label={[xshift=0.3cm, yshift=-0.5cm]$3$}] {};
  \node[thenode,on chain] (s5)  [label={[xshift=0.3cm, yshift=-0.5cm]$4$}] {};
\end{scope}

% the set V1
\node [fit=(f1) (f2),dotted,label={[xshift=0.0cm, yshift=-0.2cm]$V_1$}] {};
% the set V2
\node [fit=(s4) (s5),dotted,label={[xshift=0.0cm, yshift=-0.2cm]$V_2$}] {};

% the edges
\draw[ultra thick] (f1) -- (s4);
\end{tikzpicture}
\caption{Graph $G$ with  $\alpha(G)=3$, $\alphabal(G)=2$, $h(G)=2/3$, and $g(G)=2$}\label{fig:smallgraphexample}
\end{figure}
We have $h_1'(G)< \big({\sqrt 2\over 2}=\big) {1\over 2}\sqrt{g(G)} = h_1(G)$.
Indeed,  
$h_1(G) \ge  {1 \over 2} \sqrt{g(G)}$ holds by  Proposition \ref{propghg1h1},
and~$h_1(G) \le {\sqrt 2 \over 2}$ follows from the fact that~${\sqrt 2 \over 2}I + A_G - C \succeq 0$, which exhibits a feasible solution to~\eqref{eqhGV}. Moreover, the strict inequality $h_1'(G)<\tfrac{\sqrt 2}{2}$ follows from the fact that the dual program~\eqref{eq:h1primedual} (defined below) of~\eqref{eqh1p} has feasible solution~$\lambda=0.0002$, $\eta = 0.7068$, $u=(-0.01,0.004,-0.01,0.004)\T$, $Z=0.99A_G$, with objective value~$0.707< \tfrac{\sqrt 2}{2}$.
\end{example}

\subsection{Comparison of the Lasserre bounds \texorpdfstring{$h_1(G)$ and $g_1(G)$}{h1(G) and g1(G)}.}

In this section we  show  the following inequalities 
$$h(G)\le  \tfrac{1}{2}\sqrt{g(G)} \leq  h_1(G) \leq \tfrac{1}{2} \sqrt{g_1(G)} \leq \tfrac{1}{4} \alpha(G)\quad \text{ for any bipartite graph } G,$$ 
that were claimed in Proposition~\ref{propghg1h1}.   
One may have strict inequalities  $h_1(G) < \tfrac{1}{2} \sqrt{g_1(G)} < \tfrac{1}{4}\alpha(G)$, e.g., when $G$ is  the complete bipartite graph~$K_{n,n}$ minus a perfect matching and  $n\geq 5$ (see Section~\ref{secKnnminM}).
To show the above inequalities we will use, in particular, the fact that the theta number $\vartheta(G)$ admits the 
two equivalent formulations that were given earlier in (\ref{eqthetaI}) and (\ref{eqthetaJ}) (recall (\ref{eqtheta=}), see also Remark \ref{remtheta}) and the fact that $\vartheta(G)=\alpha(G)$ when $G$ is a bipartite graph. 
Recall  that we already know $h(G) \leq {1\over 2}\sqrt{\gbbi(G)}$ from Lemma~\ref{lemrel1}. Hence, in order to show Proposition~\ref{propghg1h1}, it suffices to show the inequalities
${1\over 2}\sqrt{\gbbi(G)}\le \hbbione(G)$, $h_1(G)\le {1\over 2}\sqrt{g_1(G)}$, $h_1(G)\le {1\over 4}\alpha(G)$,  and $g_1(G)\le \alpha(G)h_1(G)$. 

\paragraph{Proof of  ${1\over 2}\sqrt{\gbbi(G)}\le \hbbione(G)$.} 
Let $(A,B)$ be an optimal solution for $\gbbi(G)$ with $|A|=:a, |B|=:b$ and let $(\lambda, Z)$ be a feasible solution for the formulation (\ref{eqhGV}) of $h_1(G)$; we show that $\lambda\ge {1\over 2}\sqrt{ab}$. By assumption, the matrix $M:=\lambda I +Z-C$ is positive semidefinite and thus also its principal submatrix $M[A\cup B]$ is positive semidefinite. Observe that $M[A\cup B]$ has the block-form
$$ M[A\cup B]= \left(\begin{matrix} \lambda I_a & -\frac{1}{2}J_{a,b} \cr -\frac{1}{2}J_{b,a} & \lambda I_b\end{matrix}\right),$$
because $Z_{ij}=0$ for $i\in A, j\in B$ as $A\cup B$ is independent. By taking a Schur complement we obtain that $M[A\cup B]\succeq 0$ if and only if $\lambda I_a - \frac{b}{4\lambda}J_{a,a} \succeq 0$.
This implies  $\lambda \geq \frac{1}{2}\sqrt{ab}={1\over 2}\sqrt{\gbbi(G)}$ and thus 
$\hbbione(G)\ge {1\over 2}\sqrt{\gbbi(G)}$.\hfill\qed

\paragraph{Proof of $h_1(G)\le {1\over 2}\sqrt{g_1(G)}$.}
Let $X$ be an optimal solution for the formulation (\ref{eqhGVd}) of $h_1(G)$. Then $X\succeq 0$ and thus $X=(y_i\T y_j)_{i,j\in V}$ for some vectors $y_i\in \R^{|V|}$ ($i\in V$). We may assume without loss of generality that $y_i\ne 0$ for $i\in V$ (since, if $y_i=0$, then we just replace $X$ by its principal submatrix indexed by $V\setminus \{i\}$).
Define the vectors $y':=\sum_{i\in V_1}y_i$ and $y'':=\sum_{i\in V_2}y_i$, so that $h_1(G)=\langle C,X\rangle =(y')\T y''$. To shorten notation we set $h:=h_1(G)=(y')\T y''$. We may assume $h>0$, else there is nothing to prove.
For $\epsilon=\pm 1$, define the vector $d_\epsilon:={y'+\epsilon y''\over \|y'+\epsilon y''\|}$.
Here the convention is that we consider the vector $d_\epsilon$ only if $y'+\epsilon y''\ne 0$. Note that at least one of $d_1$ and $d_{-1}$ is well-defined (since otherwise one would have $y'=y''=0$, implying $h_1(G)=0$, a contradiction).
Then  let $X_\epsilon$ denote the Gram matrix of the vectors  
${d_\epsilon\T y_i\over \|y_i\|^2}y_i$  for $i\in V$;  we  claim that  $X_\epsilon$ is feasible for the formulation (\ref{gbbione:dual}) of $g_1(G)$. 
To see it, consider the matrix $Y_\epsilon$ defined as the Gram matrix of the vectors $d_\epsilon$ and ${d_\epsilon\T y_i\over \|y_i\|^2}y_i$  for $i\in V$, so that $X_\epsilon$ is its principal submatrix indexed by $V$, and note that $Y_\epsilon\succeq 0$, $(Y_\epsilon)_{00}=1$, $(Y_\epsilon)_{0i}=(Y_\epsilon)_{ii}$ for $i\in V$, and $(Y_\epsilon)_{ij}=0$ if $\{i,j\}\in E$.
Hence, if one can show that $\langle C, X_\epsilon\rangle \ge 4\langle C,X\rangle^2$ for some $\epsilon\in \{\pm 1\}$, then this implies $g_1(G)\ge \langle C, X_\epsilon\rangle \ge 4\langle C,X\rangle^2=4h_1(G)^2$ and the proof is complete. The rest of the proof is devoted to  showing that $\langle C, X_\epsilon\rangle \ge 4\langle C,X\rangle^2$ for some $\epsilon\in \{\pm 1\}$, and is a bit technical. 

In a first step, we show that the vectors $y_i$ ($i\in V$) satisfy the following relations
\begin{align}
y_i\T y'' &= 2 h \norm{y_i}^2 \quad (i \in V_1),\label{hyAyB2}\\
y_j\T y' &= 2 h \norm{y_j}^2   \quad (j \in V_2).\label{hyAyB3}
\end{align}
For this consider an optimal solution $S:=hI+Z-C$ of the program (\ref{eqhGV}) defining $\hbbione(G)$, where  $Z\in\mathcal S_G$. As $X$ and $S$ are primal and dual optimal solutions we must have $XS=0$, i.e.,  
$0= hX+XZ-XC$. We now compute the diagonal entries. Note that $(XZ)_{ii}=0$ for all $i\in V$ (since, for each $k\in V$, we have $X_{ik}=0$ or $Z_{ki}=0$). Hence, for $i\in V_1$, we have
$h\|y_i\|^2=h X_{ii}= (XC)_{ii}= {1\over 2}\sum_{j\in V_2} X_{ij}= {1\over 2}y_i\T y''$, and, for $j\in V_2$, we have
$h\|y_j\|^2=hX_{jj}= (XC)_{jj}= {1\over 2}\sum_{i\in V_1}X_{ij}= {1\over 2}y_j\T y'.$
So (\ref{hyAyB2}) and (\ref{hyAyB3}) hold.

We now proceed to compute 
\begin{align}\label{eqpf1}
\langle  C,X_\epsilon\rangle & = 
\sum_{(i,j)\in V_1\times  V_2} {d_\epsilon\T y_i \cdot d_\epsilon\T y_j  \over \|y_i\|^2\|y_j\|^2}\cdot y_i\T y_j.
\end{align}
First,   we compute (part of) the inner term for $i\in V_1$ and $j\in V_2$:
\begin{align}
{d_\epsilon\T y_i \cdot d_\epsilon\T y_j  \over \|y_i\|^2\|y_j\|^2} &=
{1\over \|y'+\epsilon y''\|^2} {(y'+\epsilon y'')\T y_i \cdot (y'+\epsilon y'')\T y_j  \over \|y_i\|^2\|y_j\|^2}\\
& ={1\over \|y'+\epsilon y''\|^2}\Big( 2h {(y')\T y_i\over \|y_i\|^2} + 2h {(y'')\T y_j \over \|y_j\|^2} + \epsilon {(y')\T y_i \cdot (y'')\T y_j \over \|y_i\|^2\|y_j\|^2} + 4h^2\epsilon\Big),\label{eqpf2}
\end{align}
where we have used relations (\ref{hyAyB2}), (\ref{hyAyB3}) and that $\epsilon^2=1$  to carry out the simplifications.
Next  observe that
\begin{align}\label{eqpf3}
\sum_{(i,j)\in V_1\times V_2} {(y')\T y_i\over \|y_i\|^2}y_i\T y_j & = \sum_{i\in V_1}  {(y')\T y_i\over \|y_i\|^2}\Big(\sum_{j\in V_2}y_i\T y_j\Big)=  \sum_{i\in V_1}  {(y')\T y_i\over \|y_i\|^2} y_i\T y''= 2h\sum_{i\in V_1}(y')\T y_i=2h\|y'\|^2,
\end{align}
where we have used again relation (\ref{hyAyB2}).
In the same way we have
\begin{align}\label{eqpf4}
\sum_{(i,j)\in V_1\times V_2} {(y'')\T y_j\over \|y_j\|^2}y_i\T y_j = 2h\|y''\|^2.
\end{align}
Combining (\ref{eqpf1}), (\ref{eqpf2}), (\ref{eqpf3}) and (\ref{eqpf4}), we obtain
\begin{align*}
\langle  C,X_\epsilon\rangle & = 
{1\over \|y'+\epsilon y''\|^2} \Big( 4h^2(\|y'\|^2 + \|y''\|^2 + \epsilon (y')\T y'') +
 \epsilon \sum_{(i,j)\in V_1\times V_2} {(y')\T y_i \cdot (y'')\T y_j \cdot y_i\T  y_j\over \|y_i\|^2\|y_j\|^2}\Big)\\
& = {1\over \|y'+\epsilon y''\|^2}\Big( 4h^2 \|y'+\epsilon y''\|^2 -4h^2\epsilon (y')\T y'' +
 \epsilon \sum_{(i,j)\in V_1\times V_2} {(y')\T y_i \cdot (y'')\T y_j \cdot y_i\T  y_j\over \|y_i\|^2\|y_j\|^2}\Big)\\
 &= 4h^2 +{\epsilon \over \|y'+\epsilon y''\|^2} 
 \Big(\underbrace{\sum_{(i,j)\in V_1\times V_2} {(y')\T y_i \cdot (y'')\T y_j \cdot y_i\T  y_j\over \|y_i\|^2\|y_j\|^2}  -4h^3}_{=: \varphi}\Big) = 4h^2 +{\epsilon \cdot \varphi \over \|y'+\epsilon y''\|^2}.
  \end{align*}
We can now conclude the proof. Assume first $y'\pm y''\ne 0$, so that both $d_1$ and $d_{-1}$ are well-defined.
If $\varphi\ge 0$ then $\langle  C,X_1\rangle \ge 4h^2$. Otherwise, if $\varphi<0$, then $\langle  C,X_{-1}\rangle \ge 4h^2$. So we have shown the desired result: $\langle C,X_\epsilon\rangle\ge 4h^2$ for some $\epsilon\in \{\pm 1\}$.
Consider now the case when $y'=\epsilon  y''$ for some $\epsilon\in\{\pm 1\}$. 
 Then, using relations  (\ref{hyAyB2}) and (\ref{hyAyB3}), we obtain that 
$\varphi =0$. Hence, if $y'=y''$ (resp., $y'=-y''$), then we have $\langle C,X_{1}\rangle \ge 4h^2$ (resp., $\langle C,X_{-1}\rangle \ge 4h^2)$, which concludes the proof.  \hfill\qed

\begin{remark}\label{remtheta}
{\em Note that the proof for the inequality $h_1(G)\le {1\over 2}\sqrt{g_1(G)}$ resembles - but is  technically more involved than - the classical proof for the inequality
$\las_1(G)\ge \vartheta(G)$, where $\las_1(G)$ is given by (\ref{eqthetaI}) and $\vartheta(G)$ by (\ref{eqthetaJ}) and $G$ is an arbitrary graph. (The reverse inequality $\vartheta(G)\ge \las_1(G)$ is straightforward.) We sketch the  proof for $\las_1(G)\ge \vartheta(G)$ in order to highlight the resemblance with the proof above for  $ {1\over 2}\sqrt{g_1(G)}\ge h_1(G)$. So assume $X$ is optimal for (\ref{eqthetaJ}) (defined as the Gram matrix of vectors $y_i$ for $i\in V$) and construct the matrix $X_1$ (as the Gram matrix of the vectors ${d_1\T y_i\over \|y_i\|^2}y_i$ for $i\in V$, where $d_1:=(\sum_{i\in V}y_i)/\|\sum_{i\in V}y_i\|$). Then, $\vartheta(G)=\langle J,X\rangle=\|\sum_{i\in V}y_i\|^2$,  $1=\langle I,X\rangle=\sum_{i\in V}\|y_i\|^2$, and $y_i\T y_j=0$ if $\{i,j\}\in E$. This implies  $X_1$ is feasible for (\ref{eqthetaI}), and thus $\las_1(G)\ge \langle X_1,I\rangle$. It suffices now to check that $\langle X_1,I\rangle =\sum_{i\in V}{(d_1\T y_i)^2\over \|y_i\|^2} \ge \|\sum_{i\in V}y_i\|^2=\vartheta(G)$. But this follows easily using Cauchy-Schwartz inequality, namely
$$\|\sum_{i\in V}y_i\|^2 =(d_1\T\sum_{i\in V}y_i)^2=\big(\sum_{i\in V}{d_1\T y_i \over \|y_i\|}\|y_i\|\big)^2 \le \big(\sum_{i\in V}{(d_1\T y_i )^2\over \|y_i\|^2}\big) (\sum_{i\in V}\|y_i\|^2)
= \sum_{i\in V}{(d_1\T y_i )^2\over \|y_i\|^2}.$$
}
\end{remark}

\paragraph{Proof of $h_1(G)\le {1\over 4}\alpha(G)$.}
Let $X$ be optimal for the formulation  (\ref{eqhGVd}) of $h_1(G)$. Then $X$ is feasible for (\ref{eqthetaJ}) and thus $\vartheta(G)\ge \langle J,X\rangle$. Since $J-4C\succeq 0$ this implies $\langle J,X\rangle \ge 4\langle C,X\rangle= 4 h_1(G)$. Combining both inequalities we get $4h_1(G)\le \vartheta(G)=\alpha(G)$.\hfill\qed

\paragraph{Proof of $g_1(G)\le \alpha(G)h_1(G)$.} 
Let $X$ be an optimal solution for the formulation (\ref{gbbione:dual}) of $g_1(G)$. Then $X\over \Tr(X)$ is feasible for $h_1(G)$ and thus $g_1(G)=\langle C,X\rangle \le h_1(G) \cdot \Tr(X)$. 
On the other hand, $X$ is feasible for (\ref{eqthetaI}), which gives $\vartheta(G)\ge \Tr(X)$. Combining these two facts we obtain that 
$g_1(G)\le h_1(G)\cdot \vartheta(G)=h_1(G)\cdot \alpha(G)$.\hfill\qed

\begin{remark}
{\em So we have the following chain of inequalities for any bipartite graph $G$,
$${1\over 4}\alphabal(G)\le \hbbi(G)\le {1\over 2}\sqrt{\gbbi(G)}\le \hbbione(G)\le {1\over 4}\alpha(G)$$ (Proposition \ref{propghg1h1} and Lemma \ref{lemrel1}).
Hence equality $\alpha(G)=\alphabal(G)$ implies  $h_1(G)=h(G)$. Observe that the reverse implication holds when restricting to the bipartite graphs of the form $H_G$ (constructed from some graph $G$ as in Definition \ref{defHG}).  Indeed,  $h_1(H_G)=h(H_G)$ implies ${1\over 2}\sqrt{g(H_G)}=h(H_G)$, which in turn implies  $g(H_G)=\gbal(H_G)$ (Corollary \ref{coro-complexity} and its proof) and thus $\alpha(H_G)=\alphabal(H_G)$ (Corollary \ref{coro-H}).
This shows that deciding whether the parameter $h(\cdot)$ coincides with its  semidefinite relaxation $h_1(\cdot)$  
is an NP-hard problem (already when restricting to the bipartite graphs of the form $H_G$, recall Theorem \ref{theo-equiv-H}). This  can be seen as an analog of the hardness of deciding whether the basic semidefinite relaxation of the maximum cut problem is exact, as shown in  \cite{DelormePoljak}.
}
\end{remark}

\section{Eigenvalue bounds for the parameters \texorpdfstring{$g(G)$ and $h(G)$}{g(G) and h(G)}.}\label{seceig}

Let $G=(V,E)$ be a bipartite graph, with adjacency matrix $A_G$. We have introduced  in Lemmas~\ref{lemhbbir1} and \ref{lemgbbir1} the parameters  $g_1(G)$ and $h_1(G)$ that, respectively, upper bound the parameters $g(G)$ and $h(G)$. 
For convenience, we repeat their formulations 
\begin{align*}
g_1(G)& =  \min_{\lambda\in \R,  Z \in \mathcal S^{|V|}, u\in \R^{|V|}}\big\{\lambda: \lambda(\Diag(u)-C+Z) -\tfrac{1}{4}uu\T\succeq 0,\ \lambda \ge 0,\ Z\in \mathcal S_G\big\},\\  
h_1(G)&= \min_{\lambda\in \R, Z\in \mathcal S^{|V|}} \{\lambda: \lambda I+Z-C\succeq 0, \  Z\in \mathcal S_G\} 
\end{align*}
(where the formulation for  $g_1(G)$ follows from (\ref{gbbione:primal0}) after taking the Schur complement with respect to the upper left corner $\lambda$).
In order to obtain closed-form parameters 
one may restrict the optimization in each of the above programs to matrices  $Z=tA_G$ (for some $t\in \R$) and, for the parameter $g_1(G)$, to vectors $u=\mu e$ (for some $\mu\in \R$).  Let $\widehat g(G)$ and $\widehat h(G)$ denote the parameters obtained in this way, so that $g_1(G)\le \widehat g(G)$ and $h_1(G)\le \widehat h(G)$.
When the graph $G$ is  regular, the all-ones vector is an eigenvector of  the matrices involved in the programs defining $\widehat g(G)$ and $\widehat h(G)$, and, as we will show below,  this  allows to  show the closed-form expressions claimed in  Proposition~\ref{propeigbound} for   $\widehat g(G)$ and $\widehat h(G)$  in terms of the second largest eigenvalue $\lambda_2$ of $A_G$ and $n:=|V_1|=|V_2|$.

We will  use the following basic result about the eigenvalues of $A_G$. We refer, e.g., to the book by Brouwer and Haemers \cite{BH17} for general background about eigenvalues of graphs.

\begin{lemma}\label{lembipeig}
Assume $G=(V_1\cup V_2,E)$ is a bipartite $r$-regular graph with $|V_1|=|V_2|=:n\ge 2$. Then its adjacency matrix is of the form
\begin{equation}\label{eqAMG}
A_G = \begin{pmatrix} 
0 & M_G \\
M_G\T & 0
\end{pmatrix}, \quad \text{ where } M_G\in \R^{|V_1|\times |V_2|},
\end{equation}
the  eigenvalues of $A_G$ are $\pm \sqrt{\lambda_i(M_GM_G\T)}$ for $i\in [n]$,   $\lambda_1(A_G)=r$, $\lambda_{2n}(A_G)=-r$, and $\lambda_2(A_G)\ge 0$, with equality $\lambda_2(A_G)=0$ if and only if $G$ is complete bipartite.
In the case when $G=B(H)$ is the bipartite double of an $r$-regular  graph $H$, we have $M_G=A_{H}$, the eigenvalues of $A_{B(H)}$ are $\pm \lambda_i(A_H)$ for $i\in [n]$ and thus 
$\lambda_2(A_{B(H)}) = \max\{\lambda_2(A_H),- \lambda_n(A_H)\}$.
When $G=B_0(H)$ is the extended bipartite double of $H$, we have  $M_G=A_{H}+I$ and $\lambda_2(A_{B_0(H)})=\max\{\lambda_2(A_H)+1,-\lambda_n(A_H)-1\}$.
\end{lemma}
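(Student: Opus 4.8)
The plan is to read everything off the block structure of $A_G$ together with the preliminary fact on symmetric matrices of bipartite shape recorded in the notation section. The form \eqref{eqAMG} is immediate: as $G$ has no edges inside $V_1$ or inside $V_2$, the two diagonal blocks of $A_G$ vanish and the biadjacency matrix $M_G$ is left as the off-diagonal block. Applying that preliminary fact to $A=A_G$ gives at once that the $2n$ eigenvalues of $A_G$ are $\pm\sqrt{\lambda_i(M_G M_G\T)}$ for $i\in[n]$. Since they occur in $\pm$ pairs, listed in decreasing order they read $\sqrt{\lambda_1(M_G M_G\T)}\ge\cdots\ge\sqrt{\lambda_n(M_G M_G\T)}\ge 0\ge-\sqrt{\lambda_n(M_G M_G\T)}\ge\cdots$, so (using $n\ge 2$) we get $\lambda_1(A_G)=\sqrt{\lambda_1(M_G M_G\T)}$ and $\lambda_2(A_G)=\sqrt{\lambda_2(M_G M_G\T)}\ge 0$.

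To identify $\lambda_1(A_G)=r$ I would use that $r$-regularity gives $M_G e = M_G\T e = re$, whence $M_G M_G\T e = r^2 e$; as $M_G M_G\T$ is entrywise nonnegative with constant row sum $r^2$, its spectral radius is $r^2$, i.e.\ $\lambda_1(M_G M_G\T)=r^2$ and $\lambda_1(A_G)=r$. The value $\lambda_{2n}(A_G)=-r$ then follows from the $\pm$ symmetry of the spectrum (equivalently, the vector equal to $e$ on $V_1$ and $-e$ on $V_2$ is an eigenvector for $-r$). For the equality case, $\lambda_2(A_G)=0$ holds iff $\lambda_2(M_G M_G\T)=0$ iff $\rank(M_G)=\rank(M_G M_G\T)\le 1$; since $r\ge 1$ forces $M_G\ne 0$, the rank is exactly $1$. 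The one slightly delicate point is the combinatorial step: a rank-one $0/1$ matrix all of whose row and column sums equal $r$ must be $J_n$ (writing $M_G=ab\T$, the row- and column-sum conditions force $a$ and $b$ to be constant, hence $M_G=J_n$ with $r=n$), which is precisely the biadjacency matrix of $K_{n,n}$. This is the only place I expect any genuine care.

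For the two double covers I would first read the biadjacency matrix straight from the definitions: in $B(H)$ the vertex $i$ is joined to $j'$ exactly when $\{i,j\}\in E(H)$, so $M_G=A_H$, and $B_0(H)$ only adds the matching edges $\{i,i'\}$, so $M_G=A_H+I$. As $A_H$ is symmetric, in the first case $M_G M_G\T=A_H^2$ has eigenvalues $\lambda_i(A_H)^2$, so the eigenvalues of $A_{B(H)}$ are $\pm|\lambda_i(A_H)|$, which as a multiset is exactly $\{\pm\lambda_i(A_H):i\in[n]\}$; in the second case $M_G M_G\T=(A_H+I)^2$ has eigenvalues $(\lambda_i(A_H)+1)^2$, giving the multiset $\{\pm(\lambda_i(A_H)+1)\}$ for $A_{B_0(H)}$.

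Finally, to extract $\lambda_2$ I would sort these multisets. Using $|\lambda_i(A_H)|\le\lambda_1(A_H)=r$, the largest eigenvalue of $A_{B(H)}$ is $\lambda_1(A_H)=r$, attained only by the term $+\lambda_1(A_H)$; deleting it, the largest remaining term among $\{\lambda_i(A_H)\}_{i\ge 2}\cup\{-\lambda_i(A_H)\}_{i\ge 1}$ is $\max\{\lambda_2(A_H),-\lambda_n(A_H)\}$, which is therefore $\lambda_2(A_{B(H)})$. The same bookkeeping with every $\lambda_i(A_H)$ shifted by $1$ gives $\lambda_1(A_{B_0(H)})=r+1$ and $\lambda_2(A_{B_0(H)})=\max\{\lambda_2(A_H)+1,-\lambda_n(A_H)-1\}$, completing the proof.
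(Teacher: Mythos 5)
Your proof is correct. Note that the paper itself does not prove this lemma: it is stated as a basic fact with a pointer to the book of Brouwer and Haemers, and the spectral fact you invoke (that a symmetric matrix in bipartite block form has eigenvalues $\pm\sqrt{\lambda_i(MM\T)}$, $i\in[n]$) is likewise only recorded in the paper's preliminaries without proof. So your argument is a valid, self-contained derivation of everything the paper takes for granted, and it follows the natural route: block structure, singular values, Perron--Frobenius-type identification of $\lambda_1$, and rank-one analysis for the equality case. Two minor remarks. First, the equivalence $\lambda_2(A_G)=0 \Longleftrightarrow G=K_{n,n}$ genuinely requires $r\ge 1$ (for $r=0$ the empty graph has $\lambda_2(A_G)=0$ but is not complete bipartite); this assumption is implicit in the lemma as the paper uses it, and you correctly make it explicit where it is needed. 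Second, in the bookkeeping for $B(H)$, the maximum $r$ of the spectrum need not be attained \emph{only} by the term $+\lambda_1(A_H)$ (e.g.\ if $H$ is disconnected, so $\lambda_2(A_H)=r$, or if $H$ is bipartite, so $-\lambda_n(A_H)=r$), but your argument never uses uniqueness: it only needs that $+\lambda_1(A_H)$ is \emph{one} occurrence of the maximum, so that deleting it leaves the second-largest element of the multiset as the maximum of the remainder. The conclusion, and the analogous computation for $B_0(H)$, are therefore unaffected.
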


\subsection{An eigenvalue-based upper bound \texorpdfstring{$\widehat h(G)$ for $h(G)$}{hhat(G) for h(G)}.}

We give a closed-form eigenvalue-based upper bound  for the parameter $h(G)$ in the case when the bipartite graph $G$ is $r$-regular. 
Let $n:=|V_1|=|V_2|$ and let $\lambda_2$ denote the second largest eigenvalue of $A_G$ (i.e., the second largest singular value of $M_G$, by Lemma \ref{lembipeig}).
Vallentin \cite{Vallentin} shows that $\hbbi(G)\le \frac{n}{r} \lambda_2$, our next result gives a sharpening of this bound. 
\begin{proposition}\label{prop:hbbioneedgetransitive}
Assume~$G$ is a bipartite $r$-regular graph,   set~$|V_1|=|V_2|=:n$, and let $\lambda_2$ be the second largest eigenvalue of its adjacency matrix $A_G$.  Then we have 
\begin{align}\label{eq:hhateigenvaluebound} \hbbione(G)  \leq \widehat{h}(G)={n\over 2}  \frac{ \lambda_2}{r+\lambda_2} \leq \frac{n}{r} \lambda_2.
\end{align}
 Moreover, equality $\hbbione(G) ={n\over 2}  \frac{ \lambda_2}{r+\lambda_2} $ holds when $G$ is  edge-transitive. \end{proposition}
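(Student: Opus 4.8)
The plan is to establish the three assertions separately: the closed form for $\widehat h(G)$ (which yields $h_1(G)\le\widehat h(G)$), the elementary comparison $\widehat h(G)\le \tfrac nr\lambda_2$, and the equality $h_1(G)=\widehat h(G)$ for edge-transitive $G$.

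First I would compute $\widehat h(G)$ by restricting the variable $Z$ in the min-formulation~(\ref{eqhGV}) to $Z=tA_G$ with $t\in\R$. Writing $A_G=\left(\begin{smallmatrix}0&M_G\\ M_G\T&0\end{smallmatrix}\right)$ and $C=\tfrac12\left(\begin{smallmatrix}0&J\\ J&0\end{smallmatrix}\right)$, the candidate $\lambda I+tA_G-C$ is the block matrix with diagonal blocks $\lambda I_n$ and off-diagonal block $N:=tM_G-\tfrac12 J$; a Schur complement shows it is positive semidefinite exactly when $\lambda\ge\sigma_{\max}(N)$, so $\widehat h(G)=\min_{t\in\R}\sigma_{\max}(tM_G-\tfrac12 J)$. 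The key is to use $r$-regularity: the all-ones vector $e$ is a singular vector of $M_G$ for the singular value $r$, while $J=ee\T$ acts only on the line $\R e$. Hence $N$ preserves the splitting $\R e\oplus e^\perp$; on $\R e$ it is scalar multiplication by $tr-\tfrac n2$, and on $e^\perp$ it agrees with $tM_G|_{e^\perp}$, whose singular values are $|t|\sigma_2,\dots,|t|\sigma_n$ with $\sigma_2=\lambda_2$ by Lemma~\ref{lembipeig}. Thus $\sigma_{\max}(N)=\max\{|tr-\tfrac n2|,\,|t|\lambda_2\}$, and minimizing over $t$ the two branches $\tfrac n2-tr$ and $t\lambda_2$ meet at $t^\ast=\tfrac{n}{2(r+\lambda_2)}$, where both equal $\tfrac{n\lambda_2}{2(r+\lambda_2)}$; a short sign check confirms this is the global minimum. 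This gives $\widehat h(G)=\tfrac n2\,\tfrac{\lambda_2}{r+\lambda_2}$ and hence $h_1(G)\le\widehat h(G)$. The remaining inequality $\widehat h(G)\le\tfrac nr\lambda_2$ is immediate, being equivalent to $r\le 2(r+\lambda_2)$, which holds as $r,\lambda_2\ge0$ (Lemma~\ref{lembipeig}); one may assume $\lambda_2>0$, since $\lambda_2=0$ means $G$ is complete bipartite and all quantities vanish.

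For the equality when $G$ is edge-transitive I would argue by symmetrization rather than by exhibiting a dual-optimal $X$ in~(\ref{eqhGVd}). Let $\Gamma\le\mathrm{Aut}(G)$ act transitively on $E$ and preserve the bipartition, so that $C$ is $\Gamma$-invariant, i.e.\ $P_\sigma C P_\sigma\T=C$ for all $\sigma\in\Gamma$ (an automorphism of a bipartite graph either fixes or swaps the two sides, and both leave $C$ unchanged). Given any optimal pair $(\lambda,Z)$ for~(\ref{eqhGV}), the average $\bar Z:=\tfrac1{|\Gamma|}\sum_{\sigma\in\Gamma}P_\sigma Z P_\sigma\T$ is again feasible with the same value $\lambda$: it lies in $\mathcal S_G$ because automorphisms preserve $E$, and $\lambda I+\bar Z-C=\tfrac1{|\Gamma|}\sum_\sigma P_\sigma(\lambda I+Z-C)P_\sigma\T\succeq0$. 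Now $\bar Z$ is $\Gamma$-invariant, hence constant on each $\Gamma$-orbit of index pairs; since membership in $\mathcal S_G$ forces zero diagonal and zero entries off the edges, and edge-transitivity makes the edges a single orbit, we conclude $\bar Z=tA_G$ for some $t\in\R$. Therefore the optimum of~(\ref{eqhGV}) is already attained on the one-parameter family $\{tA_G:t\in\R\}$ over which $\widehat h(G)$ optimizes, so $h_1(G)=\widehat h(G)$.

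The routine part is the closed-form computation in the first step, once the block structure and the action of $J$ on $\R e\oplus e^\perp$ are laid out. I expect the main point requiring care to be the equality case: one must verify that the feasible region and the objective of~(\ref{eqhGV}) are genuinely invariant under $\Gamma$ (hinging on $P_\sigma C P_\sigma\T=C$), and then use the clean structural fact that a $\Gamma$-invariant matrix supported on the single edge-orbit must be a scalar multiple of $A_G$; this is precisely what collapses the symmetrized program exactly onto the family defining $\widehat h(G)$.
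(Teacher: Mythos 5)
Your proposal is correct and takes essentially the same route as the paper: the paper likewise restricts $Z=tA_G$ in (\ref{eqhGV}), takes a Schur complement, and uses regularity (so that $e$ is a common eigenvector of $M_GM_G\T$ and $J$) to reduce the problem to minimizing $\max\{|tr-n/2|,\,|t|\lambda_2\}$ over $t$, attained at $t=\tfrac{n}{2(r+\lambda_2)}$. The only differences are presentational: you phrase the reduction via the singular values of $tM_G-\tfrac12 J$ and the invariant splitting $\R e\oplus e^\perp$ rather than via the eigenvalues of the Schur-complement matrix, and you spell out the group-averaging argument (valid as stated for connected $G$, where every automorphism preserves or swaps the two sides) that the paper compresses into the single assertion that for edge-transitive $G$ the restriction to $Z=tA_G$ is without loss of generality.
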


\proof{Proof.} 
We may assume $G$ is not complete bipartite (else $\lambda_2=0$ and $h(G)=h_1(G)=\widehat h(G)=0$).
The inequality ${n\over 2}{\lambda_2\over r+\lambda_2}\le {n\over 2}\lambda_2$ is clear; we now show $h_1(G)\le {n\over 2}{\lambda_2\over r+\lambda_2}$. For this we use the formulation of $\hbbione(G)$ from (\ref{eqhGV}), where we restrict the optimization to matrices $Z$ of the form $Z=tA_G$ for some scalar $t\in \R$; we will show that the resulting optimal value is equal to ${n\over 2}{\lambda_2\over r+\lambda_2}$.  Note that when $G$ is  edge-transitive  this restriction can be made without loss of generality.
Thus we aim to compute the optimum value of the program 
\begin{equation}\label{eqh1hat}
\widehat {h}(G):=\min_{\lambda, t\in \R} \{ \lambda: \lambda I + tA_G - C \succeq 0 \},
\end{equation}
which upper bounds $\hbbione(G)$ and is equal to it when $G$ is  edge-transitive.
 By taking a Schur complement, the matrix
$$
 \lambda I + tA_G - C  = \begin{pmatrix} \lambda I & tM_G-\tfrac{1}{2}J \\ tM_G\T -\tfrac{1}{2} J & \lambda I \end{pmatrix}
$$
is positive semidefinite if and only if $\lambda > 0$ and the matrix
\begin{align*}
    \lambda^2 I -(tM_G-\tfrac{1}{2}J)(tM_G\T-\tfrac{1}{2}J) 
    &=\lambda^2 I -(t^2M_G M_G\T - \tfrac{t}{2} M_G J - \tfrac{t}{2} JM_G\T +\tfrac{1}{4}J^2)
    \\&= \lambda^2 I -t^2M_G M_G\T +  \tfrac{rt}{2} J+  \tfrac{rt}{2} J -\tfrac{n}{4}J
     \\&= \lambda^2 I -t^2M_G M_G\T+ (rt - \tfrac{n}{4})J =:Q
\end{align*}
is positive semidefinite. Since $G$ is not complete bipartite we have $\lambda >0$. We now analyze when $Q$ is positive semidefinite. The all-ones vector $e$ is an eigenvector of $M_G M_G\T$ 
and $J$, and thus also of $Q$.   Any eigenvector $w\perp e$  of $M_G M_G\T$ for 
$\lambda_i(M_GM_G\T)$ ($2\le i\le n$) is an eigenvector of $Q$. Then the eigenvalues of $Q$ at these eigenvectors are as follows:
\begin{align*}
    \text{ at $e$: }\quad  &\lambda^2 -t^2r^2 +n(tr-\tfrac{n}{4}),\\
    \text{ at $w \perp e$: } \quad&\lambda^2-t^2\lambda_i(M_GM_G\T)\quad \text{ for } i= 2,\ldots,n.
\end{align*}
Hence, $Q\succeq 0$ if and only if $ \lambda^2 -t^2r^2 +n(tr-\tfrac{n}{4}) \ge 0$ and 
$ \lambda^2-t^2\lambda_i(M_GM_G\T)\ge 0$ for any $i\ge 2$, which is equivalent to $\lambda^2-t^2\lambda_2^2\ge 0$ (recall Lemma \ref{lembipeig}).
Therefore,  we must select $t$ such that
$$
\max \{ t^2 \lambda_2^2 , t^2 r^2 -ntr + \tfrac{n^2}{4}\} \text{ is smallest possible.}
$$
This maximum value is minimized at a root of the  quadratic function 
$    \phi(t) := (t^2r^2 - trn +  \tfrac{n^2}{4})-t^2 \lambda_2^2= t^2(r^2-\lambda_2^2) - trn + \tfrac{n^2}{4}.$ 
Its discriminant is $r^2 n^2 -n^2(r^2-\lambda_2^2) = n^2\lambda_2^2$ and $\phi(t)$ has two roots
$\frac{rn +\epsilon n\lambda_2}{2(r^2-\lambda_2^2)}=
\frac{n}{2(r-\epsilon\lambda_2)}$ for $\epsilon=\pm 1$.  
So
$\max \{ t^2 \lambda_2^2 , t^2 r^2 -ntr + \tfrac{n^2}{4}\}$  is minimized at the smallest root 
$t:={n\over 2(r+\lambda_2)}$. Therefore  we have $\widehat h(G) =t\lambda_2= \frac{n \lambda_2}{2(r+\lambda_2)}$,
which proves~(\ref{eq:hhateigenvaluebound}). \endproof

\smallskip
The parameter $h_1'(G)$ introduced in (\ref{eqh1p}) provides an upper bound for $h(G)$ that is at least as good as  $h_1(G)$.
A natural question is whether one can derive from it another closed-form bound for $h(G)$ that may improve on $\widehat h(G)$ when $G$ is  regular.
To define such a bound one  follows the same strategy as for $\widehat h(G)$. First, one writes the dual formulation of (\ref{eqh1p}), which reads 
\begin{align}\label{eq:h1primedual}
\min_{\lambda,\eta\in\R, u\in \R^n, Z\in \mathcal S^n}\Big\{\lambda+\eta: \left(\begin{matrix} \lambda & -u\T/2\cr -u\T/2 & \Diag(u)+\eta I+Z-C\end{matrix}\right)\succeq 0, \ Z\in \mathcal S_G\Big\}.
\end{align}
Then one restricts the optimization to $u=\mu e$ and  $Z=tA_G$ for scalars $\mu,t\in \R$ and, after that, one takes again the dual, which gives the parameter
\begin{equation}\label{eqhpsym}
\widehat h'(G):=\max\Big\{ \langle C,X\rangle: \left(\begin{matrix} 1 & x\T \cr x & X\end{matrix}\right)\succeq 0,\ \text{\rm Tr}(X)=1,\ e\T x = 1,\ \langle A_G,X\rangle =0\Big\}.
\end{equation}
To ease the comparison with $\widehat h(G)$, let us also write the dual program of (\ref{eqh1hat}), which reads
\begin{equation}\label{eqh1hatX}
\widehat h(G)= \max \{\langle C,X\rangle: X\succeq 0,\ \text{\rm Tr}(X)=1,\ \langle A_G,X\rangle =0\}.
\end{equation}
(Strong duality holds since  (\ref{eqh1hat}) is strictly feasible.)  
Both parameters $\widehat h(G)$ and $\widehat h'(G)$  in fact coincide. To show this we need the following auxiliary result, whose proof is postponed to Appendix~\ref{appendix-Xx}.

\begin{lemma}\label{lemXx}
Assume $X\in \mathcal S^n$ satisfies $X\succeq 0$ and $\text{\rm Tr}(X)=1$. Then   there exists a vector $x\in \R^n$ such that $\left(\begin{matrix} 1 & x\T\cr x& X\end{matrix}\right)\succeq 0$ and $e\T x=1$ if and only if $\langle J,X\rangle \ge 1$.
\end{lemma}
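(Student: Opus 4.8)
The plan is to prove both directions of the equivalence by analyzing when a vector $x\in\R^n$ can complete the positive semidefinite matrix $\left(\begin{smallmatrix} 1 & x\T\cr x& X\end{smallmatrix}\right)$ subject to the affine constraint $e\T x=1$. The key tool is the Schur complement: since $X\succeq 0$ is given, the bordered matrix is positive semidefinite precisely when $x$ lies in the column space (range) of $X$ and $1 - x\T X^+ x\ge 0$, where $X^+$ denotes the Moore--Penrose pseudoinverse. So the problem reduces to deciding whether there exists $x\in \mathrm{range}(X)$ with $x\T X^+ x\le 1$ and $e\T x=1$.

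First I would handle the necessity ($\Rightarrow$) direction, which is the cleaner one. Suppose such an $x$ exists. Write $e = e_0 + e_1$, where $e_1$ is the orthogonal projection of $e$ onto $\mathrm{range}(X)$ and $e_0\perp \mathrm{range}(X)$. Since $x\in\mathrm{range}(X)$ we have $e\T x = e_1\T x$, so the constraint reads $e_1\T x = 1$. Applying Cauchy--Schwarz in the inner product induced by $X^+$ on $\mathrm{range}(X)$, one gets $1 = (e_1\T x)^2 \le (e_1\T X^+\! {}^{-1} e_1)(x\T X^+ x)$; more precisely, writing $x = X^{1/2} v$ and $e_1 = X^{1/2}\! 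X^{+1/2} e_1$ appropriately, the inequality $x\T X^+ x\ge 1$ combined with $e\T x=1$ forces $e_1\T X e_1 \ge$ something. The cleanest route is: the maximum of $e\T x$ over $x\in\mathrm{range}(X)$ with $x\T X^+ x\le 1$ equals $\sqrt{e\T X e}$ (attained at $x = Xe/\sqrt{e\T X e}$, using $X X^+ X = X$). Hence a feasible $x$ with $e\T x=1$ exists only if this maximum is at least $1$, i.e. $\sqrt{e\T X e}\ge 1$, which is exactly $\langle J,X\rangle = e\T X e\ge 1$.

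For the sufficiency ($\Leftarrow$) direction I would run the same extremal computation in reverse: assuming $\langle J,X\rangle = e\T X e\ge 1$, the vector $x^\star := Xe/(e\T X e)$ lies in $\mathrm{range}(X)$, satisfies $e\T x^\star = (e\T X e)/(e\T X e)=1$, and has $x^\star\T X^+ x^\star = (e\T X X^+ X e)/(e\T X e)^2 = (e\T X e)/(e\T X e)^2 = 1/(e\T X e)\le 1$. By the Schur complement criterion this $x^\star$ witnesses $\left(\begin{smallmatrix} 1 & x\T\cr x& X\end{smallmatrix}\right)\succeq 0$ with $e\T x=1$, completing the proof.

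The main obstacle I anticipate is bookkeeping around the pseudoinverse and the range condition when $X$ is singular: the Schur complement characterization requires the generalized form ($x\in\mathrm{range}(X)$ together with $1-x\T X^+ x\ge 0$), and the identities $X X^+ X = X$ and $X^+ X X^+ = X^+$ must be invoked carefully so that the expressions $e\T X e$ and $x^\star\T X^+ x^\star$ simplify correctly. A way to sidestep the pseudoinverse entirely is to substitute $x = Xy$ for a free vector $y\in\R^n$ (which automatically lands in $\mathrm{range}(X)$), whereupon $x\T X^+ x = y\T X y$ and the two conditions become $e\T X y = 1$ and $y\T X y\le 1$; optimizing $e\T X y$ subject to $y\T X y\le 1$ gives maximum $\sqrt{e\T X e}$ directly by Cauchy--Schwarz with respect to the seminorm induced by $X$, avoiding any inversion. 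This reformulation is likely the smoothest path and I would present the argument in those terms.
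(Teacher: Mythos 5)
Your proof is correct, and it in fact constructs the same witness vector as the paper, $x^\star = Xe/(e\T X e)$, but verifies it with genuinely different machinery. The paper takes the Schur complement with respect to the scalar corner $1$, so that the bordered matrix is positive semidefinite if and only if $X - xx\T \succeq 0$; it then builds $x$ from the spectral decomposition $X=\sum_i \beta_i u_iu_i\T$ (its $x$ is exactly $Xe/\langle J,X\rangle$, written in spectral coordinates) and checks $X - xx\T\succeq 0$ by a Cauchy--Schwarz computation in the eigenbasis. You instead take the Schur complement with respect to $X$, which, since $X$ may be singular, requires the generalized form: $x\in\mathrm{range}(X)$ and $1 - x\T X^+ x \ge 0$. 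Granting that criterion, your sufficiency check collapses to the identity $XX^+X = X$, and your necessity direction follows from the extremal computation $\max\{e\T x : x\in\mathrm{range}(X),\ x\T X^+ x\le 1\}=\sqrt{e\T X e}$. What each buys: yours is slicker once the generalized Schur complement lemma is granted, while the paper's is more self-contained (no pseudoinverses, only the elementary equivalence with $X-xx\T\succeq 0$, at the cost of an explicit spectral computation). Three remarks. (i) For the ``only if'' direction your extremal computation is heavier than needed: from $X - xx\T\succeq 0$ one gets $\langle J,X\rangle = e\T X e \ge (e\T x)^2 = 1$ in one line, which is the paper's argument. (ii) The sentence in your necessity paragraph invoking ``Cauchy--Schwarz in the inner product induced by $X^+$'' (with the expression $e_1\T X^{+}{}^{-1} e_1$) is garbled as written, but this is harmless since you explicitly discard it in favor of the clean extremal argument; just delete it in a final write-up. (iii) Your closing reformulation via $x = Xy$, under which the conditions become $e\T X y = 1$ and $y\T X y\le 1$ and the pseudoinverse disappears entirely, is indeed the cleanest variant and is the version you should present; note also that, in both your proof and the paper's, the hypothesis $\Tr(X)=1$ plays no role.
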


\begin{proposition} \label{prophhp}
For any bipartite regular graph $G$ we have $\widehat h(G)=\widehat h'(G)$.
\end{proposition}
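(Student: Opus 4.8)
The plan is to prove $\widehat h(G)=\widehat h'(G)$ by comparing the two dual formulations \eqref{eqh1hatX} and \eqref{eqhpsym}. Since $\widehat h'(G)$ arises from the strengthened program \eqref{eqh1p}, the inequality $\widehat h'(G)\le \widehat h(G)$ should hold for free: any feasible solution $(x,X)$ for \eqref{eqhpsym} yields $X$ feasible for \eqref{eqh1hatX} with the same objective value (the constraints $X\succeq 0$, $\Tr(X)=1$, $\langle A_G,X\rangle=0$ are common to both, and we simply forget $x$ and the extra PSD block). So the whole content is the reverse inequality $\widehat h(G)\le \widehat h'(G)$, which amounts to showing that any optimal $X$ for \eqref{eqh1hatX} can be extended by a suitable vector $x$ to a solution feasible for \eqref{eqhpsym}.

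First I would invoke Lemma~\ref{lemXx}: for $X\succeq 0$ with $\Tr(X)=1$, a vector $x$ with $\left(\begin{smallmatrix} 1 & x\T\cr x& X\end{smallmatrix}\right)\succeq 0$ and $e\T x=1$ exists precisely when $\langle J,X\rangle\ge 1$. Thus it suffices to exhibit an optimal solution $X$ of \eqref{eqh1hatX} satisfying $\langle J,X\rangle\ge 1$; then the extended pair $(x,X)$ is feasible for \eqref{eqhpsym} (the constraints $\langle A_G,X\rangle=0$ and $\Tr(X)=1$ are unchanged), giving $\widehat h'(G)\ge \langle C,X\rangle=\widehat h(G)$. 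The natural way to produce such an $X$ is to use the symmetry of the problem: since $G$ is regular (and the program \eqref{eqh1hat} was obtained by restricting to $Z=tA_G$, i.e.\ to the commutant of the trivial/regular symmetry), I expect an optimal $X$ to be attainable in the algebra generated by $I$, $J$, and $A_G$. Concretely, averaging any optimal $X$ over the automorphisms implicit in the restriction, or directly analyzing the optimal $X$ in the two-dimensional space spanned by $I$ and $C$ on the $e$-eigenline together with the orthogonal complement, should yield a canonical optimal solution whose $\langle J,X\rangle$ can be computed explicitly.

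The concrete computation I would carry out mirrors the eigenvalue analysis in the proof of Proposition~\ref{prop:hbbioneedgetransitive}. Writing $X$ in block form adapted to $V=V_1\cup V_2$ and exploiting that the optimum of \eqref{eqh1hatX} is $\widehat h(G)=\tfrac{n\lambda_2}{2(r+\lambda_2)}$, I would identify an explicit optimal $X$ — built from the all-ones vector on each side and the second singular vectors of $M_G$ — and verify that its total mass $\langle J,X\rangle$ is at least $1$. The constraint $\langle A_G,X\rangle=0$ forces the off-diagonal (between $V_1$ and $V_2$) part of $X$ to be orthogonal to $M_G$; combined with $\Tr(X)=1$ this still leaves enough freedom to keep the $J$-mass large, since one can place weight along the $e$ direction where $\langle J,\cdot\rangle$ is maximized.

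The step I expect to be the main obstacle is verifying $\langle J,X\rangle\ge 1$ for a well-chosen optimal $X$, rather than merely for some feasible $X$: one must be sure that optimality for \eqref{eqh1hatX} is compatible with having large $J$-mass. The cleanest route is probably to show that the symmetrized (group-averaged) optimal solution lies in the span of $I$, $C$, $A_G$ and $J$ and then reduce the whole question to a small explicit linear-algebra check in that low-dimensional algebra, where both $\langle C,X\rangle=\widehat h(G)$ and $\langle J,X\rangle\ge 1$ can be confirmed by direct computation. Once that canonical $X$ is in hand, Lemma~\ref{lemXx} supplies the extension $x$ and both inequalities combine to give $\widehat h(G)=\widehat h'(G)$.
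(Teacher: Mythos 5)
You have the right skeleton — the easy inequality $\widehat h'(G)\le \widehat h(G)$, and the reduction via Lemma~\ref{lemXx} of the reverse inequality to exhibiting an optimal solution $X$ of \eqref{eqh1hatX} with $\langle J,X\rangle\ge 1$ — and this is exactly how the paper's proof is structured. But the step you yourself flag as "the main obstacle" is precisely the step you never carry out, and it is the entire content of the proposition. Your plan for it (symmetrize an optimal $X$, argue it lies in the algebra spanned by $I$, $C$, $A_G$, $J$, then do an explicit computation) is not executed, and as stated it is shaky: a general regular bipartite graph need not have any nontrivial automorphisms, so "averaging over the automorphisms implicit in the restriction" is not a well-defined operation that lands you in that algebra. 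A correct completion along your lines would instead have to go through complementary slackness with the dual optimal matrix $\widehat h(G)I+tA_G-C$ (with $t=\tfrac{n}{2(r+\lambda_2)}$), identify its kernel as spanned by $(e;e)$ and the second-singular-vector pairs $(u;-v)$, and build an explicit optimal $X$ from these — this works, but it is a substantial computation, not a routine check.

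What you missed is a one-line observation that makes all of this unnecessary: with $f=\chi^{V_1}-\chi^{V_2}$ one has $J-4C=ff\T\succeq 0$, so for \emph{every} feasible $X\succeq 0$ of \eqref{eqh1hatX} (optimal or not) $\langle J,X\rangle\ge 4\langle C,X\rangle$. Taking $X$ optimal gives $\langle J,X\rangle\ge 4\widehat h(G)\ge 4h(G)\ge 2$, where $h(G)\ge \tfrac12$ holds for $G\ne K_{n,n}$ by taking a nonadjacent pair $(\{a\},\{b\})$, $a\in V_1$, $b\in V_2$; the case $G=K_{n,n}$ is trivial since both parameters vanish (a case your proposal also does not address). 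This disposes of your worry about "optimal versus merely feasible": the $J$-mass bound is automatic from positive semidefiniteness, no canonical or symmetric optimizer is needed. With that inequality in hand, Lemma~\ref{lemXx} applies verbatim and the proof closes as you intended.
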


\proof{Proof.} 
Comparing (\ref{eqhpsym}) with  (\ref{eqh1hatX}) it is clear that $\widehat h'(G)\le \widehat h(G)$.
If $G=K_{n,n}$, then both bounds are equal to 0. Assume $G\ne K_{n,n}$ and let $X$ be an optimal solution for (\ref{eqh1hatX}). 
As $J-4C\succeq 0$ we have $\langle J,X\rangle \ge 4 \langle C,X\rangle = 4\cdot  \widehat h(G)\ge 4 \cdot h(G)\ge 2$ (where $h(G)\ge 1/2$ follows by considering a biindependent pair $(\{a\},\{b\})$ with $a\in V_1$ and $b\in V_2$).
Hence we can apply  Lemma \ref{lemXx} and  find a vector $x$ such that $(x,X)$ is feasible for (\ref{eqhpsym}), which shows that   $\widehat h'(G)\ge \langle C,X\rangle =\widehat h(G)$.
\endproof

\subsection{An eigenvalue-based upper bound  \texorpdfstring{$\widehat g(G)$ for $g(G)$}{ghat(G) for g(G)}.}

In the same way one can give an eigenvalue-based upper bound $\widehat g(G)$ for the parameter $g(G)$ when $G$ is bipartite $r$-regular. It is obtained by solving analytically the following optimization problem
$$\widehat g(G):= \min_{\lambda, \mu,t\in\R} \Big\{\lambda: \lambda( \mu I -C + tA_G) - \tfrac{\mu^2}{4}J \succeq 0,\ \lambda \ge 0  \Big\}.$$
The details are analogous to those for the parameter $\widehat h(G)$ considered in the previous section, but  technically more involved. So we postpone the proof of the next result to Appendix \ref{appendixghat}.

\begin{proposition}\label{prop:gbbionesymmetric}
Assume~$G$ is a bipartite $r$-regular graph,  set~$n:=|V_1|=|V_2|$, and let $\lambda_2$ be the second largest eigenvalue of the adjacency matrix $A_G$ of $G$. Then we have 
$$
g_1(G)\le \widehat g(G)=\begin{cases}
 \frac{n^2\lambda_2^2}{(\lambda_2+r)^2}  &  \text{ if $r\le 3\lambda_2$},\\
 \frac{n^2\lambda_2}{8(r-\lambda_2)}  &  \text{ otherwise.}
\end{cases}
$$
Moreover, equality $g_1(G)=\widehat g(G)$ holds if $G$ is vertex- and edge-transitive.
\end{proposition}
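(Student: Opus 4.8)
The plan is to mirror, step by step, the analysis already carried out for $\widehat h(G)$ in Proposition~\ref{prop:hbbioneedgetransitive}, but applied to the formulation of $g_1(G)$ from (\ref{gbbione:primal0}) after restricting to the symmetric solutions $u=\mu e$ and $Z=tA_G$. First I would take the Schur complement of (\ref{gbbione:primal0}) with respect to the top-left scalar $\lambda$ (as already done in the displayed formulation of $g_1(G)$ at the start of Section~\ref{seceig}) to reduce the positive semidefiniteness constraint to $\lambda(\mu I - C + tA_G) - \tfrac{\mu^2}{4}J \succeq 0$ with $\lambda\ge 0$. Writing this matrix in the bipartite block form using $A_G=\bigl(\begin{smallmatrix} 0 & M_G \\ M_G\T & 0\end{smallmatrix}\bigr)$ and $C=\tfrac12\bigl(\begin{smallmatrix} 0 & J \\ J & 0\end{smallmatrix}\bigr)$, I would take a second Schur complement to obtain a single $n\times n$ condition of the form $\lambda^2\mu^2 I - (\lambda t M_G - \tfrac{\lambda}{2}J - \tfrac{\mu^2}{4}J)(\cdots)\T \succeq 0$, exactly as in the proof of Proposition~\ref{prop:hbbioneedgetransitive}.

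Next, since $G$ is $r$-regular the all-ones vector $e$ is a common eigenvector of $M_GM_G\T$, $J$, and hence of the resulting matrix $Q$, while every eigenvector $w\perp e$ of $M_GM_G\T$ for eigenvalue $\lambda_i(M_GM_G\T)$ is also an eigenvector of $Q$ (with the $J$-terms dropping out). This diagonalizes the feasibility condition into two scalar inequalities: one coming from the eigenvalue at $e$ (involving $r$, $n$, $\mu$, $t$, $\lambda$) and one coming from the eigenvectors $w\perp e$, which by Lemma~\ref{lembipeig} reduces to a single constraint governed by $\lambda_2^2$. The minimization over $\lambda$ then amounts to choosing $\mu$ and $t$ so that the two resulting bounds on $\lambda$ are balanced and as small as possible, paralleling the root-of-the-discriminant computation that produced $t=\tfrac{n}{2(r+\lambda_2)}$ in the $\widehat h$ case.

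The main obstacle — and the reason the proof is relegated to Appendix~\ref{appendixghat} — is that here we optimize over \emph{two} free parameters $\mu$ and $t$ rather than one, so the balancing condition is a genuine two-variable optimization instead of finding the root of a single quadratic. I expect the two scalar constraints to trace out a feasible region whose boundary is piecewise described by the $e$-eigenvalue equation and the $\lambda_2$-eigenvalue equation, and the optimal $\lambda$ will be attained either where both are tight simultaneously or on one of the two arcs. This is precisely where the case distinction $r\le 3\lambda_2$ versus $r>3\lambda_2$ arises: in one regime the $\lambda_2$-constraint is active and yields $\frac{n^2\lambda_2^2}{(\lambda_2+r)^2}$, while in the other the $e$-constraint becomes binding and gives $\frac{n^2\lambda_2}{8(r-\lambda_2)}$. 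Carefully identifying the threshold between the two regimes (checking where the two candidate optima cross, and verifying feasibility/optimality of $(\lambda,\mu,t)$ in each) is the delicate bookkeeping that the appendix must carry out.

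Finally, for the equality claim, I would argue exactly as in Proposition~\ref{prop:hbbioneedgetransitive}: when $G$ is vertex- and edge-transitive, averaging any feasible solution of (\ref{gbbione:primal0}) over the automorphism group of $G$ produces an equally good solution in which $u$ is a constant multiple of $e$ (by vertex-transitivity) and $Z$ is a constant multiple of $A_G$ (by edge-transitivity, since $Z\in\mathcal S_G$ must be supported on the single edge-orbit). Hence the restriction to $u=\mu e$, $Z=tA_G$ is without loss of generality, giving $g_1(G)=\widehat g(G)$.
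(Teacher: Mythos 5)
Your overall architecture is the same as the paper's (Appendix~\ref{appendixghat}): restrict to $u=\mu e$, $Z=tA_G$ (justified by averaging over automorphisms, exactly as you argue for the equality claim, which is correct), take the Schur complement with respect to $\lambda$, diagonalize using regularity, and solve a two-variable optimization in $(\mu,t)$ whose answer splits at $r=3\lambda_2$. However, the specific reduction you propose breaks down. Unlike in Proposition~\ref{prop:hbbioneedgetransitive}, where the diagonal blocks of $\lambda I+tA_G-C$ are the scalar matrices $\lambda I_n$, here the matrix $\lambda(\mu I-C+tA_G)-\tfrac{\mu^2}{4}J$ has diagonal blocks $\lambda\mu I_n-\tfrac{\mu^2}{4}J_n$: the term $-\tfrac{\mu^2}{4}J$ does not sit only in the off-diagonal blocks. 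So your ``second Schur complement'' condition $\lambda^2\mu^2I-BB\T\succeq 0$ (with $B=\lambda tM_G-\tfrac{\lambda}{2}J_n-\tfrac{\mu^2}{4}J_n$) is not the correct positivity condition; evaluated at the all-ones vector it gives $\lambda(\mu+tr-\tfrac n2)\ge\tfrac{\mu^2n}{4}$, whereas the correct condition (the paper's (\ref{eqeig1})) is $\lambda(\mu+tr-\tfrac n2)\ge\tfrac{\mu^2n}{2}$, so the constants in the final formulas would come out wrong. The paper avoids this pitfall by not taking a second Schur complement at all: it analyzes the $2n\times 2n$ matrix directly, using that the eigenvalues of $tA_G-C$ are $\pm$ the singular values of $tM_G-\tfrac12 J_n$.

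The more serious gap is your claim that feasibility reduces to \emph{two} scalar inequalities (one at $e$, one at $w\perp e$ governed by $\lambda_2^2$). This drops a third, essential constraint, $\mu\ge tr-\tfrac n2$, coming from the eigenvector $\chi^{V_1}-\chi^{V_2}$, i.e., from requiring $\mu I-C+tA_G\succeq 0$; the paper records it as $\mu\ge |tr-n/2|$ in (\ref{eqeig2}). With only your two inequalities, the minimization of $F(\mu)=\tfrac n2\,\tfrac{\mu^2}{\mu+tr-n/2}$ subject to $\mu\ge |t|\lambda_2$ is solved for \emph{every} $r$ by $t=\tfrac{n}{\lambda_2+r}$, $\mu=t\lambda_2$, with value $\tfrac{n^2\lambda_2^2}{(\lambda_2+r)^2}$; the second branch $\tfrac{n^2\lambda_2}{8(r-\lambda_2)}$ would never appear. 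Indeed, the threshold $r=3\lambda_2$ is precisely where this unconstrained optimizer starts violating $\mu\ge tr-\tfrac n2$, so the case distinction you correctly anticipate cannot be produced by the constraint set you wrote down. The remaining work you defer as ``bookkeeping'' (the paper's program (\ref{eqg1wh}) and its three-case analysis over the sign of $tr-n/2$ and of $t$) is exactly the part where this extra constraint does its job, so it cannot be waved away.
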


\begin{remark}
{\em Here are examples of regular bipartite graphs satisfying $r\le 3\lambda_2$, or the reverse inequality $3\lambda_2\le r$: If $G$ is a perfect matching on $2n$ vertices, then $\lambda_2=r=1$ and thus $r< 3\lambda_2$ (see Section~\ref{secperfectM}); on the other hand, if $G$ is the complete bipartite graph $K_{n,n}$ minus a perfect matching, then $r=n-1$ and $\lambda_2=1$ and thus $r\ge 3\lambda_2$ if $n\ge 4$ (see Section~\ref{secKnnminM}).

Recall the inequalities $h(G)\le {1\over 2}\sqrt{g(G)}$ (from Lemma \ref{lemrel1}) and $h_1(G)\le {1\over 2}\sqrt{g_1(G)}$ (from Proposition~\ref{propghg1h1}). One can  check 
that also the eigenvalue bounds  satisfy the analogous relation  $\widehat h(G)\le {1\over 2}\sqrt{\widehat g(G)},$  with equality if and only if $r\le 3\lambda_2$. Hence, in the regime $3\lambda_2<r$,  the parameter $\widehat h(G)$ provides a strictly better bound than ${1\over 2}\sqrt{\widehat g(G)}$ for both $h(G)$ and ${1\over 2}\sqrt{g(G)}$.

So we have $h_1(G)\le \min\{\widehat h(G),{1\over 2}\sqrt{g_1(G)}\}
\le \max \{\widehat h(G),{1\over 2}\sqrt{g_1(G)}\}\le {1\over 2}\sqrt{\widehat g(G)}$. We now observe that  the two parameters $\widehat h(G)$ and ${1\over 2}\sqrt{g_1(G)}$ are  incomparable.
Indeed, as observed above, strict inequality $\widehat h(G)<{1\over 2}\sqrt{g_1(G)}$ may hold (e.g., for $K_{n,n}$ minus a perfect matching). 
On the other hand, there are regular bipartite graphs satisfying ${1\over 2}\sqrt{g_1(G)}<\widehat h(G)$ (such $G$ is not edge-transitive). As an example, let $G$ be the disjoint union of $C_4$ and $C_6$, thus $2$-regular with $\lambda_2=2$. Then, we verified that  ${1\over 2}\sqrt{g_1(G)}={1\over 2}\sqrt 6<{5\over 4}=\widehat h(G)$.
}
  \end{remark}

\subsection{Links to some other  eigenvalue bounds.}
\label{secHaemers}

In this section we investigate links between the new bounds introduced in previous sections and some known eigenvalue bounds in the literature. First we point out a natural link between $\hat h(\cdot)$ and Hoffman's ratio bound  (\ref{eqHoffman}) for the stability number of a graph.
After that, we present links to some spectral parameters  $\varphi(G)$,  $\varphi'(G)$ and $\varphi_H(G)$ by Haemers \cite{Haemers1997,Haemers2001}, 
which he used to bound the parameter $\gbc(G)$,  the maximum number of edges in a  biclique of an arbitrary graph $G$; see (\ref{eqHG1}), (\ref{eqHG3}) and (\ref{eqHG5}) below for the exact definitions.
As $\gbc(G)=\gbi(\olG)=g(B_0(\olG))$, also  the parameter $h_1(B_0(\olG))$ provides an upper bound for $\gbc(G)$. We will review the parameters of Haemers and investigate their relationships  with the parameters $h_1(\cdot)$ and $\widehat h(\cdot)$.

\subsubsection{Linking the parameter \texorpdfstring{$\widehat h(B(G))$}{hhat(B(G))}  to Hoffman's bound for \texorpdfstring{$\alpha(G)$}{alpha(G)}.}\label{secHoffman}

Let $G=(V=[n],E)$ be  an arbitrary graph and let $\lambda_n(A_G)$ be the smallest eigenvalue of its adjacency matrix. If $G$ is $r$-regular, then the following bound holds for its stability number:
\begin{equation}\label{eqHoffman}
\alpha(G)\le n{- \lambda_n(A_G)\over r-\lambda_n(A_G)}.
\end{equation}
This bound was proved by Hoffman (unpublished) and is known as Hoffman's ratio bound (see  Haemers \cite{Haemers2021} for a short proof and a historical account). 
There is  a tight link between Hoffman's ratio bound for $G$ and the parameter $\widehat h(\cdot)$ for its  bipartite double $B(G)$. 
Indeed, if $A\subseteq V$ is an independent set in $G$, then the pair $(A,A)$ is a balanced biindependent pair in $B(G)$. So $|A|\le \alpha(G)$ and $2|A|\le \alphabal(B(G)) \le 4\cdot \widehat h(B(G))$, giving
\begin{equation}\label{eqHoffmananalog}
\alpha(G)\le {1\over 2}\alphabal(B(G)) \le 2\cdot \widehat h(B(G))= n {\lambda_2(A_{B(G)})\over r+\lambda_2(A_{B(G)})}.
\end{equation}
By Lemma \ref{lembipeig}, we have  
 $\lambda_2(A_{B(G)})=\max\{\lambda_2(A_G), -\lambda_n(A_G)\}$, and thus 
$$n{- \lambda_n(A_G)\over r-\lambda_n(A_G)} \le  2\cdot \widehat h(B(G))= n {\lambda_2(A_{B(G)})\over r+\lambda_2(A_{B(G)})}.$$ 
 Lov\'asz \cite{Lo79} showed that also $\vartheta(G)$ is upper bounded by Hoffman's ratio bound.   The parameters $\vartheta(G)$ and $h_1(B(G))$ satisfy the analogous relationship: 
$\vartheta(G) \le  2\cdot h_1(B(G))$. Indeed, if $X$ is an optimal solution to program (\ref{eqthetaJ}), then the matrix
$X':={1\over 2}{\tiny \left(\begin{matrix} X& X\cr X& X\end{matrix}\right)}$ is feasible for (\ref{eqhGVd}) with objective value $\langle C,X'\rangle = {1\over 2}\langle J,X\rangle ={1\over 2}\vartheta(G)$, giving the desired inequality.
 
\subsubsection{Linking the parameter \texorpdfstring{$\hbbione(B_0(G))$}{h1(B0(G))} to Haemers' spectral bound \texorpdfstring{$\varphi(G)$}{phi(G)}.}
\label{sec:haemersdetails}

As we saw earlier, for any bipartite graph $G$, the parameter $h_1(G)$ provides an upper bound for the parameter $g(G)$, via ${1\over 2}\sqrt{g(G)} \le h_1(G)$.  
This also directly gives a bound for the parameter $\gbi(G)=g(B_0(G))$ when $G$ is an arbitrary graph, namely
${1\over 2} \sqrt{\gbi(G)}\le h_1(B_0(G))$. 

For an arbitrary graph $G=(V,E)$, Haemers \cite{Haemers2001} introduces the  spectral parameter \begin{equation}\label{eqHG1}
\varphi(G):= \min_{M\in \mathcal S^{|V|}}\{\lambda_{abs}(M): 
M_{ij}=1 \text{ for all } \{i,j\}\in E\},
\end{equation}
where $\lambda_{abs}(M)$ 
denotes the maximum absolute value of an eigenvalue of $M$, and he  shows that $\varphi(G)$ provides an upper bound for the parameter $\gbc(G)=\gbi(\olG)$ via the inequality 
\begin{equation}\label{eqHG2}
\sqrt{\gbc(G)}\le \varphi(G).
\end{equation}

So we have two bounds for $\gbc(G)$, namely  ${1\over 2}\sqrt{\gbc(G)}\le {1\over 2}\varphi(G)$ and ${1\over 2}\sqrt{\gbc(G)}\le h_1(B_0(\olG))$. We now show that these two upper bounds in fact coincide.

\begin{lemma}\label{lemhbivarphi}
For any graph $G$, we have $ \hbbione(B_0(G))={1\over 2}\varphi(\olG)$.
\end{lemma}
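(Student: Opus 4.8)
The goal is to prove the identity $\hbbione(B_0(G)) = \tfrac{1}{2}\varphi(\olG)$ relating the first-level Lasserre bound on the extended bipartite double $B_0(G)$ to Haemers' spectral parameter applied to the complement $\olG$. The plan is to start from the semidefinite formulation (\ref{eqhGV}) of $\hbbione$, namely $\hbbione(B_0(G)) = \min\{\lambda : \lambda I + Z - C \succeq 0,\ Z \in \mathcal S_{B_0(G)}\}$, and to transform it step by step into Haemers' eigenvalue-minimization problem (\ref{eqHG1}) defining $\varphi(\olG)$.

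First I would make the bipartite structure explicit. The extended bipartite double $B_0(G)$ has vertex set $V \cup V'$, and its adjacency (hence the support constraint on $Z$, and the off-diagonal block of $C$) is governed by a block matrix of the form (\ref{eqM}). The key observation is that a pair $(i,j')$ is a \emph{non-edge} of $B_0(G)$ precisely when $i \ne j$ and $\{i,j\} \notin E$, i.e.\ precisely when $\{i,j\}$ is an edge of $\olG$. Thus in the feasible matrix $\lambda I + Z - C$, the off-diagonal $V \times V'$ block is forced to equal $-\tfrac{1}{2}$ exactly at the positions corresponding to edges of $\olG$, while at the edge-positions of $B_0(G)$ the entry of $Z$ is a free variable (and the $C$-entry is $0$ there). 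The diagonal blocks of $Z$ vanish since $B_0(G)$ is bipartite, so $\lambda I + Z - C$ has the block form $\bigl(\begin{smallmatrix} \lambda I & N \cr N\T & \lambda I\end{smallmatrix}\bigr)$ where $N$ ranges over all matrices with $N_{ij} = -\tfrac12$ whenever $\{i,j\}\in E(\olG)$ and $N_{ij}$ free otherwise.

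Next I would invoke the standard fact (recorded after (\ref{eqM})) that such a block matrix is positive semidefinite if and only if $\lambda \ge \lambda_{abs}(N)$, since its eigenvalues are $\lambda \pm \sigma_k(N)$ for the singular values $\sigma_k(N)$, and $\lambda_{abs}(N) = \max_k \sigma_k(N)$. Hence minimizing $\lambda$ over feasible $(\lambda, Z)$ is the same as minimizing $\lambda_{abs}(N)$ over all matrices $N$ with prescribed entries $-\tfrac12$ on the edges of $\olG$. Rescaling $N = -\tfrac12 M$ turns the constraint into $M_{ij} = 1$ for $\{i,j\} \in E(\olG)$ and gives $\lambda_{abs}(N) = \tfrac12 \lambda_{abs}(M)$, so the optimum equals $\tfrac12 \min\{\lambda_{abs}(M) : M_{ij} = 1 \text{ for } \{i,j\}\in E(\olG)\} = \tfrac12 \varphi(\olG)$, as desired.

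The main subtlety to handle carefully is the matching between constraints: one must check that the free off-diagonal entries of $N$ (those at the non-edge positions of $\olG$, i.e.\ at the edge positions of $B_0(G)$ together with the diagonal positions $i = j$ coming from the added $\{i,i'\}$ edges) correspond \emph{exactly} to the free entries of $M$ off the edges of $\olG$ in (\ref{eqHG1}). In particular the diagonal $\{i,i'\}$ edges of $B_0(G)$ ensure that $N_{ii}$ is unconstrained, matching that $\varphi(\olG)$ imposes no condition on the diagonal of $M$ (loops are not edges of $\olG$); this is precisely why the \emph{extended} double $B_0$, rather than $B$, appears. Verifying this bijection between the two free-entry patterns, and confirming the $\pm\sigma_k$ eigenvalue description carries the positive-semidefiniteness equivalence through the rescaling, is the only real work; everything else is a direct translation between the two optimization problems.
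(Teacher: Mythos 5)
Your proposal is correct and is essentially the paper's own argument: the paper likewise rewrites the formulation (\ref{eqhGV}) of $\hbbione(B_0(G))$ as minimizing the largest eigenvalue of a bipartite block matrix whose off-diagonal block is prescribed (up to scaling) exactly on the edges of $\olG$, and matches this with the reformulation (\ref{lemphi}) of $\varphi(\olG)$ via the $\pm$-eigenvalue pairing. The only cosmetic difference is direction — you push $\hbbione(B_0(G))$ forward into Haemers' form via the rescaling $N=-\tfrac{1}{2}M$, while the paper first reformulates $\varphi(\olG)$ and then identifies $Y=2(C-Z)$ — and both treatments leave the same minor symmetrization point (passing between a symmetric $M$ and a general off-diagonal block) implicit.
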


\proof{Proof.} 
Let  $G=(V,E)$ and $\olG=(V,\olE)$. First observe  the parameter $\varphi(\olG)$ can be reformulated as
\begin{equation}\label{lemphi}
\varphi(\olG)= \min\Big\{\lambda_{\max}(Y): Y=\left(\begin{matrix}0 & M\cr M &0\end{matrix}\right), \ M\in \mathcal S^{|V|},\ M_{ij}=1 \ \text{ for all } \{i,j\}\in \olE\Big\};
\end{equation}
this follows  from the fact that the eigenvalues of any  $Y$ in (\ref{lemphi}) are $\pm \lambda_i(M)$ for $i\in [|V|]$.
Let $V\cup V'$ be the vertex set of  the extended bipartite double  $B_0(G)$, where $V'$ is a disjoint copy of $V$, and let $C$ be the matrix from (\ref{eqC}), which is now indexed by $V\cup V'$.
We use the formulation (\ref{eqhGV}) of $\hbbione(B_0(G))$,  defined as  the smallest scalar $\lambda$ for which $\lambda I-C+Z\succeq 0$ for some  $Z\in\mathcal S_{B_0(G)}$ or, equivalently, as the minimum value of $\lambda_{\max}(C-Z)$ for $Z\in \mathcal S_{B_0(G)}$.
Since  the condition $Z\in \mathcal S_{B_0(G)}$ corresponds to $Y:=2(C-Z)$ being feasible for (\ref{lemphi}), we can conclude that $2\hbbione(B_0(G))=\varphi(\olG)$.

\endproof

\subsubsection{Linking  \texorpdfstring{$\hbbione(B_0(G))$}{h1(B0(G))} to Haemers'  spectral bounds \texorpdfstring{$\varphi'(G)$}{phi'(G)} and \texorpdfstring{$\varphi_H(G)$}{phiH(G)}.} \label{sec:haemerseigenvalues}

In the previous section we mentioned the spectral bound $\varphi(G)$ from (\ref{eqHG1}) of Haemers \cite{Haemers2001}  for the parameter $\gbc(G)$ and observed its link to the parameter $h_1(\cdot)$, recall (\ref{eqHG2}) and Lemma~\ref{lemhbivarphi}.
In some earlier work \cite{Haemers1997}, Haemers  introduced  the following spectral parameter for an arbitrary graph $G=(V=[n],E)$,
\begin{align}\label{eqHG3}
\varphi'(G):= \min_{M\in \mathcal S^{|V|}} \left\{ n \frac{\lambda(M)}{1+\lambda(M)}\,: Me=e, \ M_{ij}=0 \, \text{ for } \{i,j\} \in E\right\},
\end{align}
where~$\lambda(M)$ denotes the second largest absolute value of an eigenvalue of $M$. 
Haemers~\cite{Haemers2001} showed that~$\varphi(G) \leq \varphi'(G)$ for all~$G$ and that there are graphs~$G$ for which the inequality is strict. 

Let $L_G$ denote the Laplacian matrix of $G$ that is defined as $L_G=D_G-A_G$, where $D_G\in \mathcal S^n$ is the diagonal matrix whose $i$-th entry is the degree of vertex $i\in V$ in $G$. 
In what follows we let $0=\mu_1\le \mu_2\le \ldots\le \mu_n$ denote  the eigenvalues of the Laplacian matrix $L_G$. 
In \cite[Theorem~2.4]{Haemers1997} Haemers shows the inequality 
\begin{align}\label{eqHG4}
 \varphi'(\overline{G})\leq  \varphi_H(G):= {n\over 2}\Big(1-{\mu_2\over \mu_n}\Big)
 \end{align}
for any graph $G$ (on $n$ nodes), and he shows that equality holds in (\ref{eqHG4}) if $G$ is vertex- and edge-transitive. So we have the following inequalities 
\begin{align}\label{haemers:chain}
(h_1(B_0(G)) =)\ \tfrac{1}{2}\varphi(\overline{G})\leq \tfrac{1}{2}\varphi'(\overline{G})\leq {1\over 2}\varphi_H(G)= {n\over 4}\Big(1-{\mu_2\over \mu_n}\Big),
\end{align} 
where the right most  inequality is an equality if~$G$ is vertex- and edge-transitive. We next sharpen this latter result and show that $h_1(B_0(G))=  {n\over 4}\Big(1-{\mu_2\over \mu_n}\Big)$  if $G$ is vertex- and edge-transitive.

\begin{proposition}\label{prop:hbionesymmetric}
Let~$G=(V,E)$ be a graph,  set~$n:=|V|$, and let  $0=\mu_1\le \mu_2\le \ldots\le \mu_n$ denote the eigenvalues of the Laplacian matrix of $G$.  Then we have 
$$\hbbione(B_0(G))={1\over 2}\varphi(\olG) \leq {1\over 2} \varphi_H(G)={n\over 4}\Big(1-{\mu_2\over \mu_n}\Big),$$
with equality if $G$ is vertex- and edge-transitive. 
\end{proposition}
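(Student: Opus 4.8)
The first identity and the inequality in the statement require no new work. The equality $\hbbione(B_0(G))=\tfrac12\varphi(\olG)$ is exactly Lemma~\ref{lemhbivarphi}, and the inequality $\tfrac12\varphi(\olG)\le\tfrac12\varphi_H(G)=\tfrac n4\big(1-\tfrac{\mu_2}{\mu_n}\big)$ is the chain~(\ref{haemers:chain}) (which strings together Haemers' bounds $\varphi(\olG)\le\varphi'(\olG)\le\varphi_H(G)$). So the only genuinely new content is the equality claim, and for it it suffices to prove the reverse inequality $\hbbione(B_0(G))\ge\tfrac n4\big(1-\tfrac{\mu_2}{\mu_n}\big)$ when $G$ is vertex- and edge-transitive.

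The plan is to compute $\hbbione(B_0(G))$ exactly by a symmetry reduction. Since $G$ is vertex-transitive it is $r$-regular for some $r$, so $L_G=rI-A_G$ and hence $\mu_2=r-\lambda_2(A_G)$, $\mu_n=r-\lambda_n(A_G)$. I start from the formulation~(\ref{eqhGV}), i.e.\ $\hbbione(B_0(G))=\min\{\lambda_{\max}(C-Z): Z\in\mathcal S_{B_0(G)}\}$ with $C$ as in~(\ref{eqC}) indexed by $V\cup V'$. Each automorphism of $G$ induces, acting diagonally on $V$ and $V'$, an automorphism of $B_0(G)$ that fixes $C$; averaging a feasible $Z$ over $\mathrm{Aut}(G)$ keeps it feasible and does not increase $\lambda_{\max}(C-Z)$, by convexity of $\lambda_{\max}$ and its invariance under the induced permutations. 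Hence the optimum is attained at an $\mathrm{Aut}(G)$-invariant $Z$. As $B_0(G)$ is bipartite, $Z$ has a single off-diagonal block $W$ supported on the pattern $I+A_G$, and invariance under the vertex- and edge-transitive group forces $W=pI+tA_G$ for scalars $p,t$ (constant diagonal by vertex-transitivity, constant on edges by edge-transitivity).

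It then remains to minimize the largest absolute eigenvalue of $\tfrac12 J-pI-tA_G$ over $p,t\in\R$. Splitting along the all-ones vector $e$ (a common eigenvector of $J$ and $A_G$) and its orthogonal complement, and using Lemma~\ref{lembipeig}, reduces this to the scalar minimax
\[
\hbbione(B_0(G))=\min_{p,t\in\R}\max\Big\{\big|\tfrac n2-p-tr\big|,\ |p+t\lambda_2(A_G)|,\ |p+t\lambda_n(A_G)|\Big\}.
\]
I would lower-bound this minimax by a dual certificate. Writing $a=p+t\lambda_2(A_G)$, $b=p+t\lambda_n(A_G)$ and $c=\tfrac n2-p-tr$, the combination with coefficients $\tfrac{r-\lambda_n(A_G)}{\lambda_2(A_G)-\lambda_n(A_G)}$, $-\tfrac{r-\lambda_2(A_G)}{\lambda_2(A_G)-\lambda_n(A_G)}$ and $1$ cancels both $p$ and $t$ and leaves the constant $\tfrac n2$; since the absolute values of these coefficients sum to $\tfrac{2(r-\lambda_n(A_G))}{\lambda_2(A_G)-\lambda_n(A_G)}$, the triangle inequality gives $\max\{|a|,|b|,|c|\}\ge\tfrac{n(\lambda_2(A_G)-\lambda_n(A_G))}{4(r-\lambda_n(A_G))}$, which equals $\tfrac n4\big(1-\tfrac{\mu_2}{\mu_n}\big)$ via $\mu_2=r-\lambda_2(A_G)$ and $\mu_n=r-\lambda_n(A_G)$. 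Together with the upper bound from~(\ref{haemers:chain}) this yields the asserted equality.

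The main obstacle is the symmetry-reduction step: one must justify carefully that averaging over $\mathrm{Aut}(G)$ loses no generality (using convexity and permutation-invariance of $\lambda_{\max}$ together with invariance of $C$) and that the invariant feasible matrices are exactly of the form $pI+tA_G$. Once the problem is reduced to the two-parameter minimax, the remaining work is elementary; the only mild care needed is to observe that the eigenvalues of $\tfrac12 J-pI-tA_G$ on $e^\perp$ are $-(p+t\lambda_i(A_G))$ for $i\ge 2$, so their extremes occur at $\lambda_2(A_G)$ and $\lambda_n(A_G)$ and the three-term certificate therefore controls all eigenvalues.
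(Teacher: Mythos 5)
Your reduction of the first identity to Lemma~\ref{lemhbivarphi} and of the inequality to the chain~(\ref{haemers:chain}) is fine, and your closing certificate computation is correct; modulo the issue below, your route is essentially the paper's (a lossless symmetry reduction to a small minimax, written in terms of $pI+tA_G$ instead of the paper's $\mu I+tL_G$, the same family for regular $G$, and finished by a dual certificate instead of the paper's pairwise combination of constraints). However, your symmetry-reduction step has a genuine gap. You average only over the diagonally acting copy of $\mathrm{Aut}(G)$ and assert that invariance forces the off-diagonal block to be $W=pI+tA_G$, ``constant on edges by edge-transitivity''. But $Z\in\mathcal S_{B_0(G)}$ is symmetric as a $2n\times 2n$ matrix, which imposes no symmetry on the block $W$ itself: $W_{ij}=Z_{i,j'}$ and $W_{ji}=Z_{j,i'}$ are independent entries of $Z$. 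So the entries of $W$ over the support of $A_G$ are indexed by \emph{arcs} of $G$, and a diagonally-invariant $W$ is constant on arc-orbits, not edge-orbits. Vertex- plus edge-transitivity does not imply arc-transitivity: half-transitive graphs (the Holt graph on $27$ vertices is the smallest) have exactly two arc-orbits, swapped by reversal. For such $G$ the invariant matrices form the strictly larger family $W=pI+t_1A_1+t_2A_1\T$ with $A_1+A_1\T=A_G$. Since you need a \emph{lower} bound on the minimum over all feasible $Z$ (equivalently, over all invariant $Z$), computing the minimax only over the subfamily $t_1=t_2$ proves nothing: a priori an invariant nonsymmetric $W$ could achieve a strictly smaller value of $\lambda_{\max}(C-Z)$, and your eigenvector analysis does not apply to it, since $\tfrac12 J-W$ is then not symmetric and $A_1$ need not commute with $A_1\T$.

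The fix is precisely the ingredient of the paper's proof that you dropped: enlarge the group by the flip automorphism $\pi$ of $B_0(G)$ exchanging $i$ and $i'$ for all $i\in V$. Conjugation by any permutation preserves $\lambda_{\max}$, and $\pi$ fixes $C$ and preserves $\mathcal S_{B_0(G)}$, so your averaging argument runs verbatim for the enlarged group; since $\pi$ acts on $Z$ by transposing $W$, the resulting invariant block is symmetric, and only then do vertex- and edge-transitivity force $W=pI+tA_G$ (in the paper's language, once $\pi$ is included the edge set of $B_0(G)$ splits into the two orbits $\Omega_V$ and $\Omega_E$). With this repair, your certificate — which correctly yields
\[
\max\Big\{\big|\tfrac n2-p-tr\big|,\ |p+t\lambda_2(A_G)|,\ |p+t\lambda_n(A_G)|\Big\}\ \ge\ \frac{n\,(\lambda_2(A_G)-\lambda_n(A_G))}{4\,(r-\lambda_n(A_G))}=\frac n4\Big(1-\frac{\mu_2}{\mu_n}\Big)
\]
— completes the proof.
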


\proof{Proof.} 
Consider the parameter $\widetilde {h}(G)$ obtained from the definition of $\hbbione(B_0(G))$ in (\ref{eqhGV}), where   we restrict the optimization  to matrices $Z$ of the form $Z=\tiny{\begin{pmatrix} 0 & tL_G+\mu I\\ tL_G+\mu I& 0 \end{pmatrix}}$ for  scalars $t,\mu\in \R$.  
Hence, $\hbbione(B_0(G))\le \widetilde {h}(G)$. First, we show that if $G$ is vertex- and edge-transitive (hence regular), then  this restriction can be made without loss of generality and thus $\hbbione(B_0(G))= \widetilde {h}(G).$ 

For this, for any permutation  $\sigma$  of $V$ consider the associated permutation $\widetilde \sigma$ of $V\cup V'$ (the vertex set of $B_0(G)$, where $V'$ is a disjoint copy of $V$) defined by $\widetilde \sigma(i)=\sigma(i)$ and $\widetilde\sigma(i'):=\sigma(i)'$ for $i\in V$; clearly,  $\widetilde \sigma$ is an automorphism of $B_0(G)$ if $\sigma$ is an automorphism of $G$. Consider in addition the automorphism $\pi$ of $B_0(G)$ obtained by flipping $V$ and $V'$: $\pi(i)=i'$ and $\pi(i')=i$ for $i\in V$. Then, under the action of the group of automorphisms of $B_0(G)$ generated by $\pi$ and $\widetilde \sigma$ (for $\sigma$ automorphism of $G$), the edge set of $B_0(G)$  is partitioned into two orbits, the orbit $\Omega_V:=\{\{i,i'\}: i\in V\}$ and the orbit $\Omega_E:=\{\{i,j'\},\{i',j\}:\{i,j\}\in E\}$. Now, if $(\lambda,Z)$ is feasible for $h_1(B_0(G))$, then the same holds for its symmetrization obtained by averaging over the group of automorphisms of $B_0(G)$ just described. This gives a new feasible solution $(\lambda, Z)$, where the entries of $Z$   take two possible nonzero values, depending whether the entry corresponds to an edge  in $\Omega_V$ or in $\Omega_E$, and thus $Z$ has indeed the desired form claimed above.

We now aim to compute the optimum value of the program 
$$\widetilde {h}(G)=\min_{\lambda, t,\mu\in \R} \Big\{ \lambda \, : \,\,\begin{pmatrix} \lambda I & tL_G+\mu I - \tfrac{1}{2}J\\ tL_G+\mu I-\tfrac{1}{2}J& \lambda I \end{pmatrix} \succeq 0  \Big\}$$
and to show it is equal to ${n\over 4}\Big(1-{\mu_2\over \mu_n}\Big)$.
By taking a Schur complement (and assuming~$\lambda >0$) the matrix in the above semidefinite program
is positive semidefinite if and only if the matrix
\begin{align*}
    \lambda^2 I -(tL_G+\mu I -\tfrac{1}{2}J)(tL_G+ \mu I-\tfrac{1}{2}J) 
       &= (\lambda^2 -\mu^2)I-t^2L_G^2-2t\mu L_G+(\mu -\tfrac{n}{4})J =: Q
\end{align*}
is positive semidefinite. Let~$e$ denote the all-ones vector, which is an eigenvector of $L_G$ for its smallest eigenvalue $\mu_1=0$,  and let $w_i\perp e$ be an eigenvector of $L_G$ for its eigenvalue $\mu_i$ with $i\ge 2$. Then the eigenvalues of $Q$ at these eigenvectors are as follows:
\begin{align*}
    \text{ at $e$: }\quad  &\lambda^2-\mu^2 +n(\mu-\tfrac{n}{4})= \lambda^2- (\mu-\tfrac{n}{2} )^2,\\
    \text{ at $w_i \perp e$: } \quad&\lambda^2-(t \mu_i +\mu)^2, \quad \text{for $i=2,\ldots,n$.}
\end{align*}
Hence $Q\succeq 0$ if and only if all these eigenvalues are nonnegative and thus  we must select $t,\mu$ such that
$$
\max \Big\{  
( \mu-\tfrac{n}{2})^2, (t \mu_2 +\mu)^2,(t \mu_n +\mu)^2\Big\} \text{ is smallest possible.}
$$
 So we must find the smallest value of $\lambda$ for which there exist~$t,\mu$ satisfying the system
\begin{align*}
\lambda &\geq |t \mu_2 + \mu|,\quad \lambda \geq |t \mu_n + \mu|, \quad \lambda \geq | \mu -\tfrac{n}{2}|.
\end{align*}
First, note that taking~$\mu:= \tfrac{n}{4}+\tfrac{n\mu_2}{4\mu_n}$, $t:={-n \over 2 \mu_n}$ and~$\lambda:={n \over 4}(1-{\mu_2 \over \mu_n})$ is feasible for the above system (since $t\mu_2+\mu=\lambda$, $t\mu_n+\mu= \mu-{n\over 2}=-\lambda$), which shows $\widetilde h(G)\le {n \over 4}(1-{\mu_2 \over \mu_n})$. We now show the reverse inequality. Assume $\lambda,t,\mu$ satisfy the above system.
The conditions~$\lambda \geq -t \mu_n - \mu$ and~$\lambda \geq t\mu_2 + \mu$ together give~$\lambda \geq \tfrac{1}{2}(\mu_2-\mu_n)t$,
and  the  conditions~$\lambda \geq t\mu_2+\mu$ and~$\lambda \geq -\mu+\tfrac{n}{2}$ give~$\lambda \geq \tfrac{\mu_2}{2} t + \tfrac{n}{4}$.
Therefore, $\widetilde h(G)$ is at least the smallest value of $\lambda$ for which there exists $t$ such that  ~$\lambda \ge \max \{\tfrac{1}{2}(\mu_2-\mu_n)t , \tfrac{\mu_2}{2} t + \tfrac{n}{4}\}$. Now observe that  this maximum  is minimized at the intersection point, where~$t=-\tfrac{n}{2\mu_n}$ (since $\mu_2-\mu_n\le 0$ and $\mu_2\ge 0$).
This gives the desired relation
$\widetilde h(G)\ge  \tfrac{1}{2}(\mu_2-\mu_n)\big({n\over -2\mu_n}\big)= {n \over 4}(1-{\mu_2 \over \mu_n})$, which concludes the proof. 
\endproof

\smallskip
An interesting feature of the closed-form  bound ${1\over 2} \varphi_H(G)={n\over 4}\Big(1-{\mu_2\over \mu_n}\Big)$ in Proposition~\ref{prop:hbionesymmetric} is that it is valid without any regularity assumption on the graph $G$.

\medskip Assume now  $G$ is $r$-regular,  still arbitrary (not necessarily bipartite) on $n$ nodes. Then its adjacency matrix $A_G$ satisfies   $A_G=rI-L_G$ and thus its eigenvalues are $\lambda_i=r-\mu_{i}$ for $i\in [n]$, with $\lambda_1=r\ge \lambda_2\ge \ldots\ge \lambda_n$. Therefore, for any $r$-regular graph $G$, we have
\begin{equation}\label{eqHG5}
h_1(B_0(G))\le {1\over 2}  \varphi_H(G)={n\over 4}\Big(1-{\mu_2\over \mu_n}\Big)= {n\over 4} {\lambda_2-\lambda_n\over r-\lambda_n}.
\end{equation}
As shown in Proposition \ref{prop:hbionesymmetric},
equality $h_1(B_0(G))={1\over 2} \varphi_H(G)$ holds if $G$ is vertex- and edge-transitive.
Since the extended bipartite double  graph $B_0(G)$ is $(r+1)$-regular,  one can also upper bound 
$h_1(B_0(G))$ by the parameter $\widehat h(B_0(G))$ (as defined in Proposition~\ref{propeigbound}). By Lemma \ref{lembipeig} the second largest eigenvalue of the adjacency matrix of $B_0(G)$ equals $\max\{\lambda_2 +1,-\lambda_n-1\}$, and thus
\begin{equation}\label{eqhhat}
h_1(B_0(G))\le \widehat h(B_0(G))= {n\over 2} {\max\{\lambda_2+1,-\lambda_n-1\} \over \max\{\lambda_2 +1,-\lambda_n-1\} +r+1}.
\end{equation} 
Next we compare  the upper bounds in (\ref{eqHG5}) and (\ref{eqhhat}).

\begin{proposition}\label{lemphihhat}
Let $G$ be an $r$-regular graph. Then we have ${1\over 2}  \varphi_H(G) \le \widehat h(B_0(G)),$ with equality if and only if $\lambda_2=r$ or $\lambda_2+\lambda_n+2=0$. 
\end{proposition}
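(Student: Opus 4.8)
The plan is to compare the two closed forms recorded in (\ref{eqHG5}) and (\ref{eqhhat}) directly. Set $s:=\max\{\lambda_2+1,\,-\lambda_n-1\}$, which by Lemma~\ref{lembipeig} is the second largest eigenvalue of $A_{B_0(G)}$, so that ${1\over 2}\varphi_H(G)={n\over 4}\,{\lambda_2-\lambda_n\over r-\lambda_n}$ and $\widehat h(B_0(G))={n\over 2}\,{s\over s+r+1}$. For $r\ge 1$ the matrix $A_G$ has a negative eigenvalue (its trace is $0$ while $\lambda_1=r>0$), so $\mu_n=r-\lambda_n>0$ and both sides are nonnegative; the edgeless case $r=0$ is trivial. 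Cancelling $n$ and clearing the positive denominators, the asserted inequality is equivalent to
\[
(\lambda_2-\lambda_n)(s+r+1)\ \le\ 2s\,(r-\lambda_n),
\]
and I would prove this, tracking equality, by splitting on which term attains the maximum defining $s$.

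First I would treat the case $s=\lambda_2+1$, i.e.\ $\lambda_2+\lambda_n+2\ge 0$. Substituting $s=\lambda_2+1$ and moving everything to one side, the difference $(\lambda_2-\lambda_n)(\lambda_2+r+2)-2(\lambda_2+1)(r-\lambda_n)$ factors, after regrouping, as
\[
(\lambda_2-r)(\lambda_2+\lambda_n+2).
\]
This is $\le 0$ because $\lambda_2\le r$ while $\lambda_2+\lambda_n+2\ge 0$ in this case, and it vanishes exactly when $\lambda_2=r$ or $\lambda_2+\lambda_n+2=0$. Obtaining this factorization is the one genuinely computational step and the main point to verify carefully; everything else is bookkeeping.

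Next I would treat the case $s=-\lambda_n-1$, i.e.\ $\lambda_2+\lambda_n+2\le 0$. Here the right-hand denominator collapses, $s+r+1=r-\lambda_n$, so after cancelling this common positive factor the inequality reduces to $\lambda_2-\lambda_n\le -2\lambda_n-2$, that is $\lambda_2+\lambda_n+2\le 0$, which is precisely the hypothesis of this case; equality holds iff $\lambda_2+\lambda_n+2=0$. Finally I would merge the two cases. The boundary $\lambda_2+\lambda_n+2=0$ is shared between them, and the condition $\lambda_2=r$ forces $\lambda_2+\lambda_n+2\ge r+(-r)+2=2>0$ (using $\lambda_n\ge -r$), so it can only occur in the first case; hence equality in ${1\over 2}\varphi_H(G)\le\widehat h(B_0(G))$ holds precisely when $\lambda_2=r$ or $\lambda_2+\lambda_n+2=0$, as claimed.
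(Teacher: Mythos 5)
Your proof is correct and follows essentially the same route as the paper's: clear the positive denominators, split into cases according to which term attains the maximum defining the second eigenvalue of $A_{B_0(G)}$, and factor the resulting difference; your case-1 factorization $(\lambda_2-r)(\lambda_2+\lambda_n+2)$ is exactly the one in the paper. Your case 2 is in fact slightly cleaner: by noting that $s+r+1=r-\lambda_n$ and cancelling this common positive factor you reduce the inequality directly to the case hypothesis, avoiding the expansion the paper performs there (where, incidentally, the paper's stated factor $(r-\lambda_2)$ should be $(r-\lambda_n)$, though the sign conclusion and the equality characterization are unaffected).
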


\proof{Proof.} 
Set $\mu:= \max\{\lambda_2 +1,-\lambda_n-1\}$ and note that ${1\over 2} \varphi_H(G) \le \widehat h(B_0(G))$ is equivalent to
$\psi:=\mu(\lambda_2+\lambda_n-2r) +(r+1)(\lambda_2-\lambda_n)\le 0.$
If $\lambda_2+\lambda_n+2\ge 0$ then $\mu=\lambda_2+1$ and we have $\psi=(\lambda_2-r)(\lambda_2+\lambda_n+2)\le 0$. Otherwise, $\lambda_2+\lambda_n+2\le 0$, $\mu=-\lambda_n-1$ and  we have 
$\psi= (r-\lambda_2)(\lambda_2+\lambda_n+2)\le 0$.
\endproof

 So Haemers' bound $\varphi_H(G)$ improves on  the bound $\widehat h(B_0(G))$ for any regular  graph $G$.
On the other hand,  also the reverse situation may occur,  where  the parameter $\widehat h$ improves on Haemers' bound $\varphi_H$. 
For this consider a bipartite graph $G=(V_1\cup V_2,E)$. As observed in  (\ref{eqbcG}), we have $\gbc(G)= g(\olG^b)$, where $\olG^b=(V_1\cup V_2, (V_1\times V_2)\setminus E)$ is the bipartite complement of $G$. Hence we have the inequalities
\begin{equation}\label{eqgbch}
\begin{split}
{1\over 2}\sqrt{\gbc(G)}={1\over 2}\sqrt{g(\olG^b)}\le h_1(\olG^b)\le \widehat h(\olG^b),\\
{1\over 2}\sqrt{\gbc(G)}={1\over 2}\sqrt{g(B_0(\olG))} \le h_1(B_0(\olG))\le {1\over 2} \varphi_H(\olG),
\end{split}
\end{equation}
where we assume that $G$ is regular when considering the parameters $\widehat h(\olG^b)$ and $\varphi_H(\olG)$.
Next we show 
 that $h_1(B_0(\olG))=h_1(\olG^b)$ and that $\widehat h(\olG^b)\le{1\over 2}  \varphi_H(\olG)$.
 
 \begin{proposition}\label{propeqHaemers}
 Let $G$ be a bipartite graph. Then we have $h_1(B_0(\olG))=h_1(\olG^b)$.
 Moreover, if $G$ is $r$-regular,   $n:=|V_1|=|V_2|$ and $\lambda_2$ denotes the second largest eigenvalue of $A_G$, 
 then we have 
   \begin{equation}\label{eqhHaemers}
\widehat h(\olG^b)= {n\over 2}{\lambda_2\over \lambda_2+n-r} \le {1\over 2}  \varphi_H(\olG)= {n\over 2} {\lambda_2+r\over 2n-r+\lambda_2},
\end{equation}
with strict inequality precisely when  $\lambda_2<r<n$, i.e., when $G$ is connected and $G\ne K_{n,n}$.
   \end{proposition}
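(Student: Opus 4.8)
The statement splits into the size-free identity $\hbbione(B_0(\olG))=\hbbione(\olG^b)$, valid for every bipartite $G$, and, under $r$-regularity, the two closed-form evaluations together with the comparison $\widehat h(\olG^b)\le \tfrac12\varphi_H(\olG)$.

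For the identity, the plan is to route both quantities through the parameter $\varphi(\cdot)$. Applying Lemma~\ref{lemhbivarphi} with $\olG$ in place of $G$ gives at once $\hbbione(B_0(\olG))=\tfrac12\varphi(\overline{\olG})=\tfrac12\varphi(G)$, so it suffices to prove $\hbbione(\olG^b)=\tfrac12\varphi(G)$. Starting from formulation (\ref{eqhGV}), $\hbbione(\olG^b)=\min\{\lambda_{\max}(C-Z):Z\in\mathcal S_{\olG^b}\}$, and since $\olG^b$ is bipartite the matrix $C-Z$ is block-antidiagonal with off-diagonal block $N=\tfrac12 J-Z_{12}$, which ranges over all rectangular matrices equal to $\tfrac12$ on the edges of $G$. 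Hence $\hbbione(\olG^b)=\tfrac12\min\{\lambda_{abs}(N'):N'_{ij}=1 \text{ on } E\}$ over $|V_1|\times|V_2|$ matrices $N'$. The only nonroutine point is to identify this rectangular minimum with $\varphi(G)=\min\{\lambda_{abs}(M):M\in\mathcal S^{|V|},\,M_{ij}=1 \text{ on } E\}$: any symmetric $M$ feasible for $\varphi(G)$ restricts to a feasible off-diagonal block $N':=M[V_1,V_2]$ with $\lambda_{abs}(N')\le\lambda_{abs}(M)$ (a singular value of a block never exceeds the operator norm of the whole matrix), and conversely a feasible $N'$ embeds as the bipartite matrix $\left(\begin{smallmatrix}0&N'\\ N'\T&0\end{smallmatrix}\right)$ of equal value; the two minima therefore agree.

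For the closed forms I would first collect the spectra of the auxiliary graphs. Writing $M_G$ for the biadjacency of $G$, the bipartite complement $\olG^b$ has biadjacency $J-M_G$ and is $(n-r)$-regular; since $M_Ge=M_G\T e=re$, the vector $e$ is a common eigenvector and $J-M_G$ agrees with $-M_G$ on $e^\perp$, so the singular values of $J-M_G$ are $n-r$ together with those of $M_G$ restricted to $e^\perp$, giving second largest singular value $\lambda_2$ (Lemma~\ref{lembipeig}). Feeding $(n-r,\lambda_2)$ into Proposition~\ref{prop:hbbioneedgetransitive} yields $\widehat h(\olG^b)=\tfrac n2\cdot\tfrac{\lambda_2}{n-r+\lambda_2}$. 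For $\varphi_H(\olG)$ I would use the complementation identity $L_{\olG}=2nI-J-L_G$ on $2n$ vertices: it fixes $e$ with eigenvalue $0$ and sends each eigenvalue $r-\lambda_i(A_G)$ of $L_G$ on $e^\perp$ to $2n-r+\lambda_i(A_G)$, whence $\mu_2(\olG)=2n-r+\lambda_{2n}(A_G)=2(n-r)$ and $\mu_{2n}(\olG)=2n-r+\lambda_2$. Substituting into $\varphi_H(\olG)=n\bigl(1-\mu_2(\olG)/\mu_{2n}(\olG)\bigr)$ gives $\tfrac12\varphi_H(\olG)=\tfrac n2\cdot\tfrac{\lambda_2+r}{2n-r+\lambda_2}$.

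The comparison is then a one-line cross-multiplication: for $G\ne K_{n,n}$ both denominators are positive, and clearing them reduces $\widehat h(\olG^b)\le\tfrac12\varphi_H(\olG)$ to $\lambda_2(2n-r+\lambda_2)\le(\lambda_2+r)(n-r+\lambda_2)$, whose right-minus-left difference factors exactly as $(r-\lambda_2)(n-r)$. This is nonnegative because $r=\lambda_1\ge\lambda_2$ and $r\le n$, and it vanishes iff $r=\lambda_2$ or $r=n$; the first means the Perron value $r$ is not simple, i.e.\ $G$ is disconnected, and the second means $G=K_{n,n}$, so strict inequality holds exactly in the regime $\lambda_2<r<n$. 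I expect the genuine obstacle to be the first part, where the two programs sit on matrices of different sizes ($2|V|$ against $|V|$) so that no direct block substitution is available; the clean way around this is precisely the detour through $\varphi(\cdot)$ combined with the fact that $\varphi(G)$ is attained on bipartite matrices when $G$ is bipartite. The second part is then routine spectral bookkeeping once the regularity of $\olG^b$ and the Laplacian complementation identity are recorded.
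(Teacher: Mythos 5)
Your proposal is correct, and for the first assertion it takes a genuinely different route from the paper's. The paper proves $h_1(B_0(\olG))=h_1(\olG^b)$ entirely on the primal side of formulation (\ref{eqhGVd}): an optimal $X$ for $h_1(\olG^b)$ is embedded as $\tfrac12\left(\begin{smallmatrix}X&0\\0&X\end{smallmatrix}\right)$ to get one inequality, and conversely an optimal $Y$ for $h_1(B_0(\olG))$ is split into its principal submatrices indexed by $V_1\cup V_2'$ and $V_1'\cup V_2$, which are renormalized by their traces and recombined using $\Tr(X)+\Tr(X')=\Tr(Y)=1$. You instead argue on the dual side: Lemma~\ref{lemhbivarphi} applied with $\olG$ in place of $G$ gives $h_1(B_0(\olG))=\tfrac12\varphi(\overline{\olG})=\tfrac12\varphi(G)$, and you identify $h_1(\olG^b)$ with $\tfrac12\varphi(G)$ by rewriting (\ref{eqhGV}) as minimization of $\lambda_{\max}(C-Z)$ over $Z\in\mathcal S_{\olG^b}$, reducing it to a rectangular completion problem over $V_1\times V_2$ matrices equal to $1$ on $E$, and then showing the rectangular and symmetric minima coincide (operator-norm monotonicity under taking submatrices in one direction, the bipartite embedding $\left(\begin{smallmatrix}0&N'\\ N'^{\sf T}&0\end{smallmatrix}\right)$ in the other). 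Your route buys the reuse of an already-proven lemma plus the worthwhile byproduct that $\varphi(G)$ is attained on bipartite-structured matrices when $G$ is bipartite; the paper's primal argument is self-contained and does not need that identification. For the closed forms and the comparison, your computations match the paper's up to bookkeeping: you obtain $\varphi_H(\olG)$ from the Laplacian complementation identity $L_{\olG}=2nI-J-L_G$ where the paper passes through the adjacency spectrum of $\olG$ and (\ref{eqHG5}), and both proofs reduce the inequality in (\ref{eqhHaemers}) to the factorization $(r-\lambda_2)(n-r)\ge 0$, with the same characterization of equality via $\lambda_2<r$ (connectedness) and $r<n$ ($G\ne K_{n,n}$). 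One shared blemish, not a gap on your part: for $G=K_{n,n}$ the closed-form expression for $\widehat h(\olG^b)$ degenerates to $0/0$, an edge case the paper's statement also leaves implicit.
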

 
\proof{Proof.} 
 First we prove $h_1(B_0(\olG))=h_1(\olG^b)$. For this we use the formulation (\ref{eqhGVd}) for the parameter $h_1(\cdot)$. Recall the definition (\ref{eqC}) of the matrix $C\in \mathcal S^{|V|}$   for the bipartition $V=V_1\cup V_2$, and let  $\widetilde C\in \mathcal S^{|V|+|V'|}$ denote the analogous matrix corresponding now to the bipartition $V\cup V'$, where $V=V_1\cup V_2$ and $V'=V_1'\cup V_2'$ is a disjoint copy of $V$. The matrices $\widetilde C$ and $A_{B_0(\olG)}$ have the form
 ${\tiny
 \widetilde C={1\over 2}\left(\begin{matrix} 0 & J & J & 0\cr J & 0 & 0 & J \cr J &  0 & 0 & J \cr 0 & J & J & 0\end{matrix}\right)
 }$  and 
$ {\tiny 
 A_{B_0(\olG)}= \left(\begin{matrix} 0 & A(\olG^b) & I & 0 \cr A(\olG^b) & 0 & 0 & I\cr I & 0 & 0 & A(\olG^b)\cr 0 & I &  A(\olG^b) & 0\end{matrix}\right)
   }$
 with respect to the partition $V_1\cup V_2'\cup V_1'\cup V_2$ (taken in that order), 
 setting $A(\olG^b):=A_{\olG^b}$ for easier notation.
 If $X\in \mathcal S^{|V|}$ is optimal for $h_1(\olG^b)$, then $Y:= {1\over 2} {\tiny \left(\begin{matrix} X & 0 \cr 0 & X\end{matrix}\right)}$
 is feasible for $h_1(B_0(\olG))$ with $\langle \widetilde C,Y\rangle =\langle C,X\rangle$, which shows 
 $h_1(B_0(G))\ge h_1(\olG^b)$.  
Conversely, assume $Y\in \mathcal S^{|V|+|V'|}$ is optimal for $h_1(B_0(\olG))$. Let $X$ (resp., $X'$) denote the principal submatrix of $Y$ indexed by $V_1\cup V_2'$ (resp., $V_1'\cup V_2$). Then $X/\text{Tr}(X)$ and $X'/\text{Tr}(X')$ are both feasible for $h_1(\olG^b)$, which implies $h_1(\olG^b)\cdot \text{Tr}(X) \ge \langle C,X\rangle$ and 
  $h_1(\olG^b)\cdot \text{Tr}(X') \ge \langle C,X'\rangle$. Summing up and using 
  $\text{Tr}(X)+\text{Tr}(X')=\text{Tr}(Y)=1$, we get 
  $h_1(\olG^b)\ge \langle C,X\rangle +\langle C,X'\rangle =\langle \widetilde C,Y\rangle =h_1(B_0(\olG)).$

Assume now $G$ is bipartite $r$-regular,  $\lambda_2=\lambda_2(A_G)$ and $n:=|V_1|=|V_2|$; we show (\ref{eqhHaemers}).
  First we compute the parameter $\widehat h(\olG^b)$. For this note that  $\olG^b$ is $(n-r)$-regular.
  Moreover, if  $M_G$  denotes the incidence matrix of $G$, then the incidence matrix of $\olG^b$ is  $J-M_G$, whose second largest singular value  is equal to the second largest singular value of $M_G$ and thus to $\lambda_2$. Hence, using relation (\ref{eq:hhateigenvaluebound}), we obtain $\widehat h(\olG^b)={n\over 2}{\lambda_2 \over n-r + \lambda_2}$, as desired. Next we compute the parameter $\varphi_H(\overline G)$. For this note that $\olG$ is $(2n-1-r)$-regular, the second largest eigenvalue of $A_{\olG}$ is $-1-\lambda_{\min}(A_G)=r-1$ and its smallest eigenvalue is $-1-\lambda_2(A_G)=-1-\lambda_2$. In view of (\ref{eqHG5})
 we get
  $\varphi_H(\olG)= {n}{ r+\lambda_2\over 2n-r+\lambda_2}$, as desired. 
  One can then easily check that the inequality in (\ref{eqhHaemers}) is equivalent to $(r-\lambda_2)(n-r)\ge 0$, which holds since $\lambda_2\le r\le n$.  Hence the inequality in (\ref{eqhHaemers}) is strict precisely when $ \lambda_2<r<n$, i.e., when $G$ is connected and $G\ne K_{n,n}$.
\endproof

\smallskip
We   summarize the various bounds obtained above for the parameter $\gbc(G)$ when $G$ is an arbitrary $r$-regular graph (Figure \ref{figG}) and when $G$ is bipartite $r$-regular (Figure \ref{figGbip}). As before  let $\lambda_1=r\ge \lambda_2\ge \ldots\ge \lambda_n$ denote the eigenvalues of $A_G$. Then $\olG$ is $(n-1-r)$-regular, with $\lambda_2(A_{\olG})= -1-\lambda_n$ and $\lambda_n(A_{\olG})=-1-\lambda_2$.

\begin{figure}[H]
\begin{subfigure}[b]{0.43\textwidth}
\scalebox{0.82}{\parbox{.43\linewidth}{\begin{align*} 
\frac{1}{2}\sqrt{\gbc(G)} \leq
\overunderbraces{&\br{3}{\substack{\text{with equality if $\olG$ is vertex-} \\ \text{and edge-transitive} \\ \text{\textbf{  Prop. \ref{prop:hbionesymmetric}}}}}} 
{ &h_1(B_0(\olG))& \leq &\frac{1}{2}\varphi_H(\olG) &\leq  \widehat h(B_0(\olG))&}{&&&\br{2}{\substack{\text{with equality if and only if}\\ \text{$\lambda_n=r-n$ or $\lambda_2+\lambda_n=0$} \\ \textbf{ Prop. \ref{lemphihhat}} }}}
\end{align*}}}\caption{\scriptsize Bounds on $\gbc(G)$ for  $G$ $r$-regular}\label{figG}
\end{subfigure}
\begin{subfigure}[b]{0.45\textwidth}
\scalebox{0.82}{\parbox{.49\linewidth}{\begin{align*} 
\frac{1}{2}\sqrt{\gbc(G)} \leq
\underbrace{h_1(B_0(\olG))=h_1(\olG^b)}_{\textbf{Prop. \ref{propeqHaemers}}}
\le 
\overunderbraces{&\br{2}
{\substack {\text{with equality if and only if }\\ \text{$\lambda_2=r$ or $r=n$} \\ \textbf{ Prop \ref{propeqHaemers}}}}&} % upper brace
{ &\widehat h(\olG^b) \le& {1\over 2}\varphi_H(\olG) & \le  \widehat h(B_0(\olG))&} % main formula
{&& \br{2}
{\substack{\text{with equality if and only if}\\ \text{$\lambda_n=r-n$ or $\lambda_2+\lambda_n=0$}\\\textbf{Prop. \ref{lemphihhat}}}}} %lower brace
\end{align*}}}\caption{\scriptsize Bounds on $\gbc(G)$ for  $G$ bipartite $r$-regular}\label{figGbip}
\end{subfigure}
\caption{Bounds on $\gbc(G)$; recall $h_1(B_0(\olG))\le \widehat h(B_0(\olG))$, with equality if $B_0(\olG)$ is edge-transitive (Proposition~\ref{prop:hbbioneedgetransitive}).}
\end{figure}

\section{Examples.}\label{secexamples}

We now illustrate the behaviour of the various parameters discussed above  on some classes of  regular graphs. Recall the definition of the matrix  $M_G$ in Lemma \ref{lembipeig}.

\subsection{The perfect matching.}\label{secperfectM}

For $n\ge 2$, let~$G$ be a perfect matching on~$2n$ vertices. Then~$M_G = I$, $r=1$, $\lambda_2=1$, and $G$ is vertex- and edge-transitive. Using Proposition \ref{propeigbound} we obtain  
$$
h_1(G) =\widehat{h}(G) = {n\over 2}\frac{ \lambda_2}{r+\lambda_2} = \frac{n}{4} \quad  \text{ and } \quad
g_1(G) =\widehat{g}(G) =  \frac{n^2}{4}. 
$$
We have $g(G) = \floor{n/2}\ceil{n/2}$ and $h(G)={1\over n}  \floor{n/2}\ceil{n/2}$ (obtained by maximizing $ab$ and ${ab\over a+b}$ with  $a,b\ge 0$  integers and $a+b\le n$). 
Hence, $h_1(G)={1\over 2}\sqrt {g_1(G)}$ and $h_1(G)$, $g_1(G)$  give tight bounds for $h(G)$, $g(G)$ (with equality for $n$ even and up to rounding for $n$ odd).

\subsection{The complete bipartite graph \texorpdfstring{$K_{n,n}$}{Kn,n} minus a perfect matching.}\label{secKnnminM}

For $n\ge 2$, let $G$ be the complete bipartite graph~$K_{n,n}$ with  a deleted perfect matching (also known as the {\em crown graph} on $2n$ vertices). Then~$G$ is vertex- and edge-transitive, $(n-1)$-regular, $M_G = J_n-I_n$, and $\lambda_2=1$. We have $\hbbi(G)=\tfrac{1}{2}$ and~$\gbbi(G)=1$. 
Using Proposition \ref{propeigbound} we obtain
$$
h_1(G) =\widehat{h}(G) ={n\over 2} \frac{ \lambda_2}{r+\lambda_2} =\frac{1}{2}, \,\text{ and } \, 
g_1(G) =\widehat{g}(G) =\begin{cases}
  \frac{n^2}{8(n-2)} & n \geq 4, \\
  1 & n \leq 4. \end{cases}    
$$
Hence the bound $h_1(G)$ is tight for both $h(G)$ and ${1\over 2}\sqrt {g(G)}$, while the ratio $g_1(G)/g(G)$ grows linearly in $n$. Note that $h_1 (G)<{1\over 2}\sqrt{g_1(G)}$ 
for $n\ge 5$, which gives an example with strict separation between the parameters $h_1$ and ${1\over 2}\sqrt{g_1}$ (and thus $\widehat h$ and ${1\over 2}\sqrt{\widehat g}$).  
In view of  (\ref{eqgbch}), the parameter $\gbc(G)$  
  is upper bounded by  $4\widehat h(\olG^b)^2$ and by $\varphi_H(\olG)^2$.
 Note that    $4\widehat h(\olG^b)^2= 4({n\over 4})^2={n^2\over 4}$, which  improves  on Haemers' bound  $\varphi_H(\olG)^2= ({n^2\over n+2})^2$ for $n\ge 3$. This thus gives a class of graphs for which strict inequality holds in (\ref{eqhHaemers}). 

\subsection{The cycle graph~\texorpdfstring{$C_n$}{Cn}.} 

Let $G$ be the cycle $C_n$ on $n\ge 3$ vertices, which is vertex- and edge-transitive, and $2$-regular.
The eigenvalues of the adjacency matrix~$A_{C_n}$ are~$2 \cos(2\pi j / n)$ where~$j=0,\ldots,n-1$ (see, e.g., \cite{BH17}), so~$\lambda_2(A_{C_n})=2\cos(2\pi/n)$,  and $\lambda_n(A_{C_n}) = -2$ if~$n$ is even,  $\lambda_n(A_{C_n}) = -2\cos(\pi/n)$ if $n$ is odd.  

First we compute the parameters for the extended bipartite double graph $B_0(C_n)$. 
Using Proposition \ref{prop:hbionesymmetric} and relations (\ref{eqHG5}), (\ref{eqhhat}), 
we get
$$ h_1(B_0(C_n)) ={1\over 2}\varphi_H(C_n)=\begin{cases} \frac{n}{4} \cos(\pi/n)^2 &\mbox{if } n \text{ even}, \\
  \frac{n}{4} (2 \cos(\pi/n)-1)&\mbox{if } n\text{ odd}, \end{cases} \,\,\, \widehat{h}(B_0(C_n)) ={n\over 4} \frac{ 2\cos(2\pi/n)+1}{\cos(2\pi/n)+2}.
$$
Hence we have  $h_1(B_0(C_n)) = \widehat{h}(B_0(C_n)) (=0)$ for $n=3$ (in which case $B_0(C_3)=K_{3,3}$), and 
strict inequality  $h_1(B_0(C_n)) < \widehat{h}(B_0(C_n))$  for $n\ge 4$  (as expected from Proposition \ref{lemphihhat}). Note also that $B_0(C_n)$ is not edge-transitive if $n\ge 4$.  One can also show that 
\begin{align*}
h(B_0(C_n)) =\begin{cases} \tfrac{1}{4}(n-2) &\mbox{if } n \text{ even}, \\
  \frac{(n-1)(n-3)}{4(n-2)}&\mbox{if } n \text{ odd}, \end{cases} \,
g(B_0(C_n)) =\begin{cases} \tfrac{1}{4}(n-2)^2&\mbox{if } n \text{ even}, \\
  \tfrac{1}{4}(n-1)(n-3) &\mbox{if } n \text{ odd}. \end{cases}
\end{align*} 
So~$h(B_0(C_n))\le {1\over 2}\sqrt{g(B_0(C_n))}$, with equality for $n$ even. Moreover,  the ratio $\widehat h(B_0(C_n))/h(B_0(C_n))$ tends to 1 as $n\rightarrow \infty$, so the bound $\widehat h(B_0(C_n))$ (and thus $h_1(B_0(C_n))$ too) is asymptotically tight for $h(B_0(C_n))$ and ${1\over 2}\sqrt{g(B_0(C_n))}$.

\medskip
For $n$ even the graph $G=C_n$ is bipartite. Then we have
$$h(C_n)\le h_1(C_n)=\widehat h(C_n)= {n\over 4}{\lambda_2\over \lambda_2+r}= {n\over 4} {\cos(2\pi/n)\over \cos(2\pi/n)+1}\le {\alpha(C_n)\over 4}={n\over 8}.$$
So $h_1(C_n)=\Theta (n/8)=\Theta(\alpha(C_n)/4)$.
Moreover, one can construct a bipartite biindependent pair $(A,B)$ showing  $h(C_n)=\Theta(n/8)$ (see also \cite{CLZXL2021}). Namely, for $n\equiv 0 \pmod{4}$, set $A=\{1,3,\ldots,{n\over 2}-1\}$, $B=\{{n\over 2}+2, {n\over 2}+4,\ldots, n-2\}$  with $|A|={n\over 4},|B|={n\over 4}-1$, and, for  $n\equiv 2 \pmod{4}$, set $A=\{1,3,\ldots,{n\over 2}-2\}$, $B=\{{n\over 2}+1,{n\over 2}+3,\ldots, n-2\}$ with $|A|=|B|={n-2\over 4}$.

\subsection{The hypercube graph~\texorpdfstring{$Q_r$}{Qr}.}

The hypercube graph~$Q_r$ is the bipartite graph with vertex set $V=\{0,1\}^r$, where two vertices are adjacent when their Hamming distance is~$1$. So the bipartition is $V=V_1\cup V_2$, where $V_1$ (resp., $V_2$) consists of all $x\in V$ with an even (resp., odd)  Hamming weight $|x|$. The graph~$Q_r$ is vertex- and edge-transitive, and~$r$-regular. The eigenvalues of $A_{Q_r}$ are~$r-2k$ for~$k=0,\ldots,r$, where the eigenvalue~$r-2k$ has multiplicity~$\tbinom{r}{k}$. So~$\lambda_2(A_{Q_r})=r-2$.  
Thus the parameter $h_1(Q_r)$ is given by
\begin{align*}
h_1(Q_r) =\widehat{h}(Q_r) =2^{{r-3}}\frac{r-2}{r-1}.
\end{align*}
One can show that $\lim_{r\to \infty} h_1(Q_r)/h(Q_r) =1$. For this, we will show that $h(Q_r)\ge {a(r-1)\over 4}$, 
where the sequence $(a(r))_{r\ge 0}$ is defined recursively by 
\begin{align}\label{eqar}
a(2r):=2^{2r}-\binom{2r}{r},\ a(2r+1):=2\cdot a(2r)  \text{ if } r\ge 1, \text{ and } a(0)=0.
\end{align}
 Using the fact that ${2r\choose r}\sim {2^{2r}\over \sqrt{\pi r}}$ one  obtains $a(r-1) \sim 2^{r-1}$ and $h(Q_r)\ge 2^{r-3}(1-c/\sqrt r)$ (for some constant $c>0$) and thus $h_1(Q_r)/h(Q_r)$ tends to 1 as $r\to\infty$.  Note that the  bound~$h(Q_r) \leq \alpha(Q_r)/4=2^{r-1}/4=2^{r-3}$ from Lemma~\ref{lemrel1} is slightly weaker than~$h(Q_r)\le h_1(Q_r)$, but already strong enough to exhibit $h(Q_r) \sim 2^{r-3}$ (when combined with the lower bound $h(Q_r)\ge {a(r-1)\over 4}$).

We now  show that $h(Q_r)\ge {a(r-1)\over 4}$. For this, it is useful to observe that the graph $Q_r$ is isomorphic to $B_0(Q_{r-1})$, the extended bipartite double of $Q_{r-1}$ (the bipartition of $Q_r$ provides the bipartition of $B_0(Q_{r-1})$ by simply deleting the last coordinate in all vertices of $Q_r$). Thus we have $h(Q_r)=h(B_0(Q_{r-1})) = \hbi(Q_{r-1})$, where the last equality follows from (\ref{eqbi}).
Hence, instead of searching for bipartite biindependent pairs in $Q_r$ we may as well search for (general) biindependent pairs in $Q_{r-1}$, which is a simpler task. We show that $\hbi(Q_r)\ge {1\over 4}a(r)$ for all $r\ge 1$.
First consider the case of $Q_{2r}$. Define the sets 
$$L:=\{x\in \{0,1\}^{2r}: |x|\leq r-1\},\quad U:=\{x\in \{0,1\}^{2r}: |x|\geq r+1\}.$$
Then, $(L,U)$ is a (balanced) biindependent pair in $Q_{2r}$, with $|L|=|U|={1\over 2}\big(2^{2r}-{2r\choose r}\big)={1\over 2}a(2r)$, which implies $\hbi(Q_r)\ge {1\over 4}a(2r)$.
Consider now the case of $Q_{2r+1}$. Define  $L':=L\times \{0,1\}$ and  $ U':=U\times \{0,1\}\subseteq \{0,1\}^{2r+1}$. Then the pair $(L',U')$ is (balanced) biindependent in $Q_{2r+1}$, with $|L'|=|U'|=a(2r)={1\over 2}a(2r+1)$, which implies $\hbi(Q_{2r+1})\ge {1\over 4}a(2r+1).$

The above construction can be used to show that $\alphabal(Q_r)\ge a(r-1)$ for all $r\ge 1$. 
For this, given  $A\subseteq \{0,1\}^r$, define the following subsets of $\{0,1\}^{r+1}$ obtained by adding a parity bit,
\begin{align*}A_{\rm even}:=\{(x,|x|\text{ mod } 2): x\in A\}\subseteq \{0,1\}^{r+1},\
A_{\rm odd}:=\{(x,|x|+1\text{ mod } 2): x\in A\}\subseteq \{0,1\}^{r+1}.
\end{align*}
Applying this to the above sets $L,U\subseteq \{0,1\}^{2r}$, we obtain $L_{\rm even}, U_{\rm odd}\subseteq \{0,1\}^{2r+1}$ such that $(L_{\rm even}, U_{\rm odd})$ is balanced bipartite biindependent in $Q_{2r+1}$ with $|L_{\rm even}|=|U_{\rm odd}|= |L|=a(2r)/2$, which implies $\alphabal(Q_{2r+1})\ge a(2r)$. Similarly, using the sets $L',U'\subseteq \{0,1\}^{2r+1}$, we obtain $L'_{\rm even},U'_{\rm odd}\subseteq \{0,1\}^{2r+2}$ that provide a balanced bipartite biindependent pair in $Q_{2r+2}$ with
$|L'_{\rm even}|=|U'_{\rm odd}|=|L'|=a(2r+1)/2$, which implies 
$\alphabal(Q_{2r+2})\ge a(2r+1)$.
\begin{conjecture}\label{conjecture:cube}
We conjecture that equality $\alphabal(Q_r)=a(r-1)$ holds for all $r\ge 1$.
\end{conjecture}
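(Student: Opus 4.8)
The plan is to recast the statement as an exact vertex-isoperimetric problem on the cube and then apply Harper's theorem. Since $Q_r\cong B_0(Q_{r-1})$ (as used above), a balanced bipartite biindependent pair in $Q_r$ is exactly a pair of disjoint sets $A,B\subseteq\{0,1\}^{r-1}$ with $|A|=|B|$ and no edge of $Q_{r-1}$ between them. For a fixed $A$, a set $B$ of the same size with no edge to $A$ exists iff it fits in the complement of the closed neighbourhood $N[A]=A\cup N(A)$, i.e.\ iff $|A|+|N[A]|\le 2^{r-1}$; writing $\partial A=N[A]\setminus A$, this is $2|A|+|\partial A|\le 2^{r-1}$. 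Hence, with $m:=r-1$,
\[
\alphabal(Q_r)=2\,g(m),\qquad g(m):=\max\bigl\{|A|:A\subseteq\{0,1\}^m,\ |A|+|N[A]|\le 2^m\bigr\},
\]
and the conjecture is equivalent to $g(m)=a(m)/2$ for all $m\ge 0$. The inequality $g(m)\ge a(m)/2$ is precisely the construction already given in this section: the Hamming ball $L=\{|x|\le r-1\}$ for $m=2r$ and the cylinder $L\times\{0,1\}$ for $m=2r+1$, each saturating the bound with $|A|+|N[A]|=2^m$. So only the upper bound $g(m)\le a(m)/2$ remains.

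For the upper bound I would invoke Harper's vertex-isoperimetric inequality. Let $f(k):=\min\{|N[A]|:|A|=k\}$. Harper's theorem says this minimum is attained by the initial segment $H_k$ of the simplicial order; since these segments are nested, $N[H_k]\subseteq N[H_{k+1}]$, so $f$ is non-decreasing and $k\mapsto k+f(k)$ is strictly increasing. As a set of size $k$ satisfies the constraint iff $f(k)\le 2^m-k$, we get $g(m)=\max\{k:k+f(k)\le 2^m\}$. Consequently the entire conjecture reduces to the single equality
\[
f\bigl(a(m)/2\bigr)=2^m-a(m)/2,
\]
i.e.\ to showing that the construction above is itself vertex-isoperimetrically optimal: granting it, we have $k+f(k)=2^m$ at $k=a(m)/2$, while monotonicity gives $(k+1)+f(k+1)\ge 2^m+1>2^m$, so $g(m)=a(m)/2$ exactly.

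For even $m=2r$ this equality is immediate, because $a(2r)/2=\sum_{i\le r-1}\binom{2r}{i}$ is the cardinality of a complete Hamming ball, which is itself an initial segment of the simplicial order; thus $f(a(2r)/2)=|N[\{|x|\le r-1\}]|=\sum_{i\le r}\binom{2r}{i}=2^m-a(2r)/2$. The hard part, which I expect to be the main obstacle, is the odd case $m=2r+1$. Here $a(2r+1)/2=2^{2r}-\binom{2r}{r}$ and the extremal set I can exhibit, the cylinder $C=\{(x,b):|x|\le r-1\}$, is \emph{not} an initial segment of the simplicial order. One must therefore prove that $C$ nonetheless attains the Harper minimum, equivalently that the simplicial initial segment $H_k$ of the same size $k=2^{2r}-\binom{2r}{r}$ (the ball $\{|x|\le r-1\}$ together with its first $\binom{2r}{r-1}$ weight-$r$ vertices) has $|N[H_k]|=2^{2r}+\binom{2r}{r}=|N[C]|$. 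Reducing $N[H_k]$ to the radius-$r$ ball plus the weight-$(r+1)$ up-shadow of the chosen weight-$r$ vertices, this becomes the Kruskal--Katona identity that the minimum up-shadow of $\binom{2r}{r-1}$ weight-$r$ sets in $[2r+1]$ equals $\binom{2r}{r}$ (attained by the star of all such sets through a fixed coordinate). Verifying this shadow identity uniformly in $r$ --- i.e.\ reconciling the ``cylinder'' extremal shape with Harper's ``ball plus simplicial partial layer'' in every odd dimension --- is the crux of the argument.

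As a parallel route for the odd case I would exploit the product structure $Q_{2r+1}=Q_{2r}\,\square\,K_2$: writing $A=A_0\times\{0\}\cup A_1\times\{1\}$ with $A_0,A_1\subseteq\{0,1\}^{2r}$ gives $|N[A]|=|N[A_0]\cup A_1|+|N[A_1]\cup A_0|$ (neighbourhoods now in $Q_{2r}$). The symmetric choice $A_0=A_1$ collapses the constraint to the already-solved even case $|A_0|+|N[A_0]|\le 2^{2r}$ and recovers the cylinder value $2g(2r)=a(2r)$. The remaining difficulty --- and, I suspect, the reason the statement is only a conjecture --- is to show that no asymmetric split $A_0\ne A_1$ can do better; the union terms $N[A_0]\cup A_1$ prevent a termwise reduction, since one side may be large and the other empty, and controlling them appears to require essentially the same isoperimetric input as the direct Harper approach.
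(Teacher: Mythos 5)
There is no proof of this statement in the paper for you to be compared against: it is posed as a conjecture, supported only by the construction giving $\alphabal(Q_r)\ge a(r-1)$ (which you reuse as your lower bound) and by numerical verification for $r\le 13$. So the only question is whether your outline is sound and whether its admitted gaps are real.

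Your reduction is correct, and — contrary to your own assessment — the step you call the ``crux'' is not a genuine obstacle, so your first route can be completed into a full proof. The translation $\alphabal(Q_r)=2\max\{|A|:A\subseteq\{0,1\}^m,\ |A|+|N[A]|\le 2^m\}$ with $m=r-1$ is right; Harper's theorem together with the nestedness of its extremal sets does give that this maximum equals $\max\{k:k+f(k)\le 2^m\}$ with $f(k)=|N[H_k]|$ non-decreasing; and the even case is finished because $a(2s)/2=\sum_{i\le s-1}\binom{2s}{i}$ is an exact ball. For odd $m=2s+1$, however, you do not need to prove that the cylinder attains the Harper minimum, nor any Kruskal--Katona minimality statement: Harper's theorem already asserts that $H_k$ is a minimizer, so all that is required is to \emph{compute} $|N[H_k]|$ at $k=a(2s+1)/2=2^{2s}-\binom{2s}{s}$. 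The simplicial order for which Harper's theorem holds refines weight by the lexicographic order in which sets containing the smallest coordinates come first (with the colex refinement the theorem would be false). Consequently the top layer of $H_k$, consisting of $k-|B_{s-1}|=\binom{2s+1}{s}-\binom{2s}{s}=\binom{2s}{s-1}$ weight-$s$ vertices, is exactly the star of all weight-$s$ vertices containing coordinate $1$, and its up-shadow is exactly the set of weight-$(s+1)$ vertices containing coordinate $1$: such a vertex lies above a star member if and only if it contains coordinate $1$, so there are precisely $\binom{2s}{s}$ of them. Hence $f(k)=|B_s|+\binom{2s}{s}=2^{2s}+\binom{2s}{s}$, giving $k+f(k)=2^{2s+1}$, and your monotonicity argument closes the odd case exactly as the even one. (Even in your formulation, the shadow identity you want is the standard dual form of Kruskal--Katona, not an open question; but inside Harper's framework the minimality comes for free and only the one-line star computation is needed.) Thus your first route, completed with this single observation, proves the conjecture in both parities and would settle the paper's open problem; the fallback analysis of asymmetric splits in $Q_{2r+1}=Q_{2r}\,\square\,K_2$ is unnecessary.
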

We have verified numerically that Conjecture~\ref{conjecture:cube} indeed holds for any $r\le 13$. For $r\le 8$ this can be verified using 
an integer programming solver (like Gurobi \cite{Gurobi}).
For larger values  $r\le 13$ we show this in an indirect manner. We consider the semidefinite  upper bound on $\alphabal(Q_r)$ that is obtained from the Lasserre relaxation of order 2. After applying a symmetry reduction (as done in \cite{GMS,LPS17}), we solve the resulting semidefinite program numerically and obtain an upper bound that coincides with $a(r-1)$ for  $r\le 13$. 
In addition,  $\alphabal(Q_r)/a(r-1)\to 1$ as $r\to\infty$ since $\alphabal(Q_r)\le \alpha(Q_r)=2^{r-1}$ and  $a(r-1)\sim 2^{r-1}$. 
 
Observe that $\alphabal(Q_{r+1})\ge 2\cdot \alphabal(Q_r)$. For this, for $x\in\{0,1\}^r$ let $x'\in \{0,1\}^r$ be obtained by switching the last bit of $x$, so that the weights of $x,x'$ have distinct parities and, for a set $A\subseteq \{0,1\}^r$ and $b\in \{0,1\}$, define $Ab:=\{(x,b): x\in A\}\subseteq \{0,1\}^{r+1}$. The claim now follows from the fact that if 
 $(A,B)$ is a balanced bipartite biindependent pair in $Q_r$, then the pair 
 $(B1 \cup B'0, A1 \cup A'0)$ is balanced bipartite biindependent in $Q_{r+1}$ with size $2|A\cup B|$.
 Hence, the above conjecture implies equality $\alphabal(Q_{r+1})= 2\cdot \alphabal(Q_r)$ for $r$ odd.
 
 Interestingly, the sequence $a(r)$ in (\ref{eqar}) corresponds to the sequence A307768 in OEIS \cite{oesis.org}, which counts  
the number of heads-or-tails games of length $r$ during which at some point there are as many heads as tails. 
It is also related to several other well-known combinatorial counting problems; see, e.g., \cite{EK99} or \cite[Chapter III]{Fel} for an overview. It is interesting to understand the exact relationship of this sequence with the parameter $\alphabal(Q_r)$.

\section{Lasserre bounds for the balanced parameters.}\label{sec:balanced}

In this section we turn our attention to the ``balanced" parameters $\alphabal(G)$,  $\gbal(G)$ and $\hbal(G)$ that are obtained by restricting the optimization to balanced bipartite biindependent pairs in the definition of $\alpha(G)$,  $g(G)$ and $h(G)$. Recall from  (\ref{eqrel0}) that  
${1\over 4}\alphabal(G)={1\over 2}\sqrt{\gbal(G)}=\hbal(G)$.  
Since these are NP-hard parameters  one is interested in finding efficient bounds for them, strengthening those for the original parameters $g(G)$ and $h(G)$.

Let $G=(V=V_1\cup V_2,E)$ be a bipartite graph. Following the approach in Section \ref{secpop}, each of the parameters $\alphabal(G)$, $\gbal(G)$ and $\hbal(G)$  has a natural polynomial optimization formulation, which offers the starting point to define several hierarchies of semidefinite relaxations.  For this define the vector $f:=\chi^{V_1}-\chi^{V_2}$. Let $\IGbal$ denote the ideal in $\R[x]$ that is generated by the ideal $I_G$ (itself generated by $x_i^2-x_i$ for $i\in V$ and $x_ix_j$ for $\{i,j\}\in E$) and the polynomial $f\T x$.  For an integer $t$ let $I_{G,\text{\rm bal},t}$ denote its truncation at degree $t$, where all summands are restricted to have degree at most $t$. 
Then the formulation for $\alphabal(G)$ follows by replacing the ideal $I_G$ (resp., $I_{G,2\alpha(G)}$) by the ideal $\IGbal$ (resp., $I_{G,\text{\rm bal},2\alpha(G)}$) in (\ref{eqalphapos}) (resp., (\ref{eqalphasos})).
Similarly, $\gbal(G)$ (resp., $\hbal(G)$) is obtained by adding the ``balancing" constraint $f\T x=0$ to the program (\ref{eqgbbipop}) defining $g(G)$ (resp., to the program (\ref{eqhbbipop}) defining $h(G)$). 
Now each of these polynomial optimization formulations can be used to define a Lasserre-type hierarchy. In this way one obtains 
the  hierarchies $\lasbalr(G)$, $\gbalr(G)$, and $\hbalr(G)$ for $r\in\N$ that converge to $\alphabal(G)$, $\gbal(G)$, and $\hbal(G)$, respectively, after $r\ge \alpha(G)$ steps. They are obtained, respectively, from the programs (\ref{eqalphar}) (defining $\las_r(G)$), (\ref{eqgbbipopr}) (defining $g_r(G)$), and (\ref{eqhbbipopr}) (defining $h_r(G)$) by replacing the truncated ideal $I_{G,2r}$ by its balanced analog $I_{G,\text{\rm bal},2r}$; that is,
\begin{align*}
\lasbalr(G) =\min\big\{\lambda: \lambda-x\T x\in \Sigma_2+I_{G,\text{\rm bal},2r} \big\},\\
\gbalr(G)=\min\big\{\lambda: \lambda -x\T Cx\in \Sigma_2+I_{G,\text{\rm bal},2r}\big\},\\
\hbalr=\min\big\{\lambda: x\T(\lambda I- C)x\in \Sigma_2+I_{G,\text{\rm bal},2r}\big\}.
\end{align*}
We will now   focus on the Lasserre bounds of  order $r=1$. We will give explicit semidefinite formulations and show relationships between the various parameters. The parameter $\lasbalone(G)$  is the analog of  $\las_1(G)= \vartheta(G)$ obtained by adding a balancing constraint to program (\ref{eqthetaI}). However, adding a balancing constraint to  the formulation of $\vartheta(G)$ in  (\ref{eqthetaJ}) leads to another parameter $\thetabal(G)$ that is in general weaker than $\lasbalone(G)$. The parameters $\gbalone(G)$ and $\hbalone(G)$ are  obtained by adding a balancing constraint to the respective programs defining $g_1(G)$ and $h_1(G)$. Moreover, they can be shown to be nested between $\lasbalone(G)$ and $\thetabal(G)$, see Proposition~\ref{propcomparebal} below.  For bipartite regular graphs we will investigate some natural symmetric variations  
of these parameters, with the hope of obtaining a new closed-form parameter strengthening $\widehat h(G)$. However, as we will show, 
it turns out that in all cases one recovers the  parameter $\widehat h(G)$, see Propositions~\ref{prop:symbalhat} and \ref{prop:symbalhat2}. So  the refined formulations taking into account the balancing constraints do not yet lead to stronger eigenvalue bounds for the parameter $\alphabal(\cdot)$.

\subsection{The Lasserre bounds of order \texorpdfstring{$r=1$}{r=1} for the balanced parameters.}

We begin with semidefinite reformulations for the parameter $\lasbalone(G)$.
\begin{lemma}\label{lemlasbalone}
For any bipartite graph  $G=(V,E)$ we have
\begin{align}
\lasbalone(G) & =  \max_{ X \in \mathcal{S}^{|V|}} \Big\{ \langle I, X \rangle :   \,  \setlength\arraycolsep{0pt}\left(\begin{matrix} 1 & \diag(X)\T\cr \diag(X)& X\end{matrix}\right) \succeq 0, \, X_{ij} = 0 \text{ if } \{i,j\} \in E, \ \langle ff\T,X\rangle =0\Big\},\label{eq:thetalasbal} \\
&=\min_{Z\in \mathcal S^{|V|}, u\in \R^{|V|}, s\in \R}\left\{ \lambda: \begin{pmatrix} \lambda & -u\T/2 \\ -u/2 & \Diag(u)-I+Z +sff\T \end{pmatrix}\succeq 0, \ Z\in \mathcal S_G \right\}.\label{eq:thetalasbaldual}
\end{align}
\end{lemma}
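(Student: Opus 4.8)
The plan is to recognize the maximization program (\ref{eq:thetalasbal}) as the order-one moment (primal) relaxation attached to the definition of $\lasbalone(G)$, and then to obtain the minimization program (\ref{eq:thetalasbaldual}) as its semidefinite dual, in exactly the spirit of the treatment of $\las_1(G)=\vartheta(G)$ in Remark~\ref{remlasone}. Recall that, by definition, $\lasbalone(G)$ is the smallest $\lambda$ for which $\lambda-x\T x\in\Sigma_2+\IGbaltwo$, and that $\IGbaltwo$ is obtained from $I_{G,2}$ by additionally adjoining the products $(\alpha+\beta\T x)(f\T x)$ of the balancing generator $f\T x$ with an arbitrary affine multiplier ($\alpha\in\R$, $\beta\in\R^{|V|}$). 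Since $\sum_{i\in V}x_i-x\T I x\in I_{G,2}\subseteq\IGbaltwo$, I may equivalently seek the smallest $\lambda$ such that $[x]_1\T\big(Q-\left(\begin{smallmatrix}\lambda&0\\0&-I\end{smallmatrix}\right)\big)[x]_1\in\IGbaltwo$ for some $Q\succeq0$, just as in the unbalanced case.

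First I would dualize the defining sum-of-squares program, exactly as Remark~\ref{remlasone} passes from an SOS description to formulation (\ref{eqthetaI}). The squarefree monomial structure produces the bordered moment matrix $\left(\begin{smallmatrix}1&\diag(X)\T\\\diag(X)&X\end{smallmatrix}\right)\succeq0$, the edge generators $x_ix_j$ produce the equalities $X_{ij}=0$ for $\{i,j\}\in E$, and the free affine multiplier $\alpha+\beta\T x$ of the balancing generator $f\T x$ produces (by semidefinite duality, free multiplier $\leftrightarrow$ equality constraint) the two linear conditions $f\T\diag(X)=0$ (from the constant part) and $Xf=0$ (from the linear part). Thus the SDP dual of the definition is
\[
\max_{X\in\mathcal S^{|V|}}\Big\{\langle I,X\rangle:\ \left(\begin{matrix}1&\diag(X)\T\\\diag(X)&X\end{matrix}\right)\succeq0,\ X_{ij}=0\ (\{i,j\}\in E),\ f\T\diag(X)=0,\ Xf=0\Big\},
\]
and by the presence of the constraints $x_i^2-x_i=0$ strong duality holds (\cite{JoszHenrion}), so this moment program equals $\lasbalone(G)$.

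The key simplification, and the point where a little care is needed, is to show that the pair of constraints $f\T\diag(X)=0$, $Xf=0$ collapses to the single quadratic constraint $\langle ff\T,X\rangle=0$ appearing in (\ref{eq:thetalasbal}). One implication is immediate: $Xf=0$ gives $\langle ff\T,X\rangle=f\T Xf=0$. For the converse, any feasible $X$ has $Q:=\left(\begin{smallmatrix}1&\diag(X)\T\\\diag(X)&X\end{smallmatrix}\right)\succeq0$; applying $Q$ to the vector $w:=\left(\begin{smallmatrix}0\\f\end{smallmatrix}\right)$ gives $w\T Q w=f\T Xf=\langle ff\T,X\rangle$, so $\langle ff\T,X\rangle=0$ together with $Q\succeq0$ forces $Qw=0$, and reading off the two blocks of $Qw=0$ yields exactly $f\T\diag(X)=0$ and $Xf=0$. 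Hence the two feasible regions coincide, and (\ref{eq:thetalasbal}) indeed computes $\lasbalone(G)$.

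Finally I would take the semidefinite dual of (\ref{eq:thetalasbal}), precisely as (\ref{eqlas1dual}) is the dual of (\ref{eqthetaI}) in Remark~\ref{remlasone}: the variable dual to $\left(\begin{smallmatrix}1&\diag(X)\T\\\diag(X)&X\end{smallmatrix}\right)\succeq0$ produces the bordered block $\left(\begin{smallmatrix}\lambda&-u\T/2\\-u/2&\Diag(u)-I\end{smallmatrix}\right)$, the constraints $X_{ij}=0$ produce a free $Z\in\mathcal S_G$, and the single scalar constraint $\langle ff\T,X\rangle=0$ produces a single free scalar multiplier $s$ contributing the rank-one term $sff\T$, which is exactly (\ref{eq:thetalasbaldual}); strong duality again follows from \cite{JoszHenrion}. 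The main obstacle is therefore conceptual rather than computational: it is the observation that, although the truncated ideal $\IGbaltwo$ carries an affine (hence $(|V|+1)$-parameter) multiplier of the balancing generator, positive semidefiniteness of the moment matrix lets all of this balancing information be encoded by the single constraint $\langle ff\T,X\rangle=0$ on the primal side and, dually, by the single scalar $s$ with the rank-one term $sff\T$ on the dual side.
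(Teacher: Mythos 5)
Your proof is correct and follows essentially the same route as the paper's: dualize the SOS definition of $\lasbalone(G)$ using the structure of the truncated ideal (the paper does this explicitly via Lemma~\ref{lemMIG}), observe that positive semidefiniteness collapses the pair of constraints $f\T\diag(X)=0$, $Xf=0$ to the single constraint $\langle ff\T,X\rangle=0$, and dualize once more to reach (\ref{eq:thetalasbaldual}). The only cosmetic differences are that you apply the PSD-kernel argument to the full bordered matrix with the vector $(0,f)$ (the paper applies it to $X$ and then uses a Schur complement to get $f\T\diag(X)=0$), and that the paper certifies strong duality for the second dual pair by exhibiting an explicit strictly feasible point for (\ref{eq:thetalasbaldual}) rather than citing \cite{JoszHenrion} a second time.
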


\proof{Proof.} 
As in Section \ref{secsdphg} the proof   uses Lemma \ref{lemMIG}.
By definition,  $\lasbalone(G)$ is the smallest scalar $\lambda$ 
for which $\lambda -x\T Ix \in \Sigma_2+\IGbaltwo$, i.e., $\lambda -x\T Ix - (a_0+a\T x)f\T x \in \Sigma_2+I_{G,2}$ for some $a_0\in \R$, $a\in \R^n$. 
Thus $\lasbalone(G)$ is the smallest $\lambda$ such that 
$[x]_1\T \big(Q- {\tiny \left(\begin{matrix} \lambda & a_0f\T/2 \cr a_0f/2  & -I+ {af\T+fa\T\over 2}\end{matrix}\right)}  \big)[x]_1 \in I_{G,2}$ for some $a_0\in \R, a\in \R^n$.
Applying Lemma \ref{lemMIG} we arrive at the program  
$$\lasbalone(G)=\min_{Z\in \mathcal S^{|V|}, u,a\in\R^{|V|}, a_0\in \R}
\Big\{ \lambda : 
{ \left(\begin{matrix} \lambda & {1\over 2}(-u+a_0f)\T\cr {1\over 2}(-u+a_0f) &
\Diag(u)-I+Z+{af\T +fa\T\over 2}\end{matrix}\right)}
\succeq 0, Z\in \mathcal S_G\Big\}.$$
Now we take the  dual of this semidefinite program. We also  apply some simplifications, such as observing  that $Xf=0$ is equivalent to $\langle ff\T,X\rangle =0$ when $X\succeq 0$, which in turn implies 
$f\T \diag(X)=0$ when  ${\tiny \left(\begin{matrix} 1 & \diag(X)\T \cr \diag(X) & X\end{matrix}\right)}\succeq 0$. In this way we arrive at the program (\ref{eq:thetalasbal}). Taking  the dual of  (\ref{eq:thetalasbal}) gives the (simplified) program (\ref{eq:thetalasbaldual}). Note that strong duality holds since program (\ref{eq:thetalasbaldual}) is strictly feasible (e.g., take $s=0$, $Z=0$, $u=\mu e$ with $\mu>1$, and $\lambda>{n\over 4}{\mu^2\over \mu-1}$).
\endproof

\smallskip
Hence program (\ref{eq:thetalasbal}) is the analog of program (\ref{eqthetaI}) defining $\las_1(G)=\vartheta(G)$ to which we add the balancing condition $\langle ff\T,X\rangle =0$.
Next we consider the analog of program (\ref{eqthetaJ}) to which we add the balancing conditions $\langle ff\T,X\rangle =0$ and $f\T \diag(X)=0$, giving the parameter
\begin{equation}\label{eq:thetabal}
\begin{split}
\thetabal(G):=\max_{X\in \mathcal S^{|V|}}\big\{\langle J,X\rangle: &\, X\succeq 0, \  \text{\rm Tr}(X)=1,\ X_{ij}=0 \text{ if } \{i,j\}\in E,   \\
& \langle ff\T,X\rangle =0,\  \langle \Diag(f),X\rangle =0 \big\}, 
\end{split}
\end{equation}
\begin{align}
&=\min_{Z\in \mathcal S^{|V|}, \lambda, s, v\in \R} \left\{ \lambda : \lambda I-J+Z +v \Diag(f)+sff\T  \succeq 0,\, 
Z \in \mathcal S_G\right\},\label{eq:thetabaldual}
\end{align}
where the second formulation (\ref{eq:thetabaldual}) follows by taking the dual of (\ref{eq:thetabal}) (and observing that  (\ref{eq:thetabaldual}) is strictly feasible).
We will see in Proposition~\ref{propcomparebal}  below that $\thetabal(G)$ provides a weaker bound for $\alphabal(G)$  than $\lasbalone(G)$.

\medskip
We now consider the parameter $\gbalone(G)$.
By definition,  $\gbalone(G)$ is  the smallest scalar $\lambda$ for which $\lambda -x\T Cx\in \Sigma_2+\IGbaltwo$. Comparing with the definition of $\lasbalone(G)$ we see that it suffices to exchange the matrices $C$ and $I$ to get the semidefinite formulations of $\gbalone(G)$ in the next lemma (recall also Remark \ref{remlasone}).

\begin{lemma}
For any bipartite graph  $G=(V,E)$ we have
\begin{align} 
 \gbalone(G) &=\max_{ X \in \mathcal{S}^{|V|}} \Big\{ \langle C, X \rangle:     \left(\begin{matrix} 1 & \diag(X)\T\cr \diag(X)& X\end{matrix}\right) \succeq 0, \, X_{ij} = 0 \text{ if  }\{i,j\} \in E,\,  \langle ff\T,X \rangle =0  \Big\},
\label{eq:gbalone} \\
&= \min_{\lambda,s\in \R, u\in \R^{|V|}, Z\in \mathcal S^{|V|}} \Big\{\lambda:\left(\begin{matrix} \lambda & -u\T/2\cr
-u/2 & \Diag(u)-C+Z+sff\T \end{matrix}\right)\succeq 0,\ Z\in\mathcal S_G\Big\}.\label{eq:gbalonedual} 
 \end{align}
\end{lemma}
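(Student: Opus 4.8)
**The plan is to follow the exact template established earlier in the paper for deriving $\las_1(G)$ and $g_1(G)$, adapting the argument to incorporate the balancing constraint $f^{\sf T}x = 0$.**

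The starting point is the defining relation for $\gbalone(G)$ as the smallest scalar $\lambda$ for which $\lambda - x^{\sf T}Cx \in \Sigma_2 + \IGbaltwo$. Since the balanced truncated ideal $\IGbaltwo$ is generated by $I_{G,2}$ together with products of the form $(a_0 + a^{\sf T}x)\,f^{\sf T}x$ with $a_0 \in \R$ and $a \in \R^{|V|}$, the condition becomes: there exist $Q \succeq 0$ (indexed by $\{0\}\cup V$), $a_0 \in \R$, and $a \in \R^{|V|}$ such that
$$[x]_1^{\sf T}\Big(Q - \begin{pmatrix} \lambda & a_0 f^{\sf T}/2 \\ a_0 f/2 & -C + \tfrac{af^{\sf T}+fa^{\sf T}}{2}\end{pmatrix}\Big)[x]_1 \in I_{G,2}.$$
Applying Lemma~\ref{lemMIG} to identify the form of matrices whose associated quadratic lies in $I_{G,2}$, I would extract the constraints on $Q$ exactly as in the proof of Lemma~\ref{lemgbbir1}: the $(0,0)$ entry and the balancing terms get absorbed, yielding the primal program
$$\gbalone(G) = \min_{\lambda, a_0 \in \R,\, u, a \in \R^{|V|},\, Z \in \mathcal S_G}\Big\{\lambda : \begin{pmatrix} \lambda & \tfrac{1}{2}(-u + a_0 f)^{\sf T} \\ \tfrac{1}{2}(-u+a_0 f) & \Diag(u) - C + Z + \tfrac{af^{\sf T}+fa^{\sf T}}{2}\end{pmatrix} \succeq 0,\ Z \in \mathcal S_G\Big\}.$$

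Next I would take the semidefinite dual of this program. The dual variable is a matrix indexed by $\{0\}\cup V$, and after writing out the complementary constraints arising from the free variables $a_0$ and $a$, the balancing terms dualize to the single condition $\langle ff^{\sf T}, X\rangle = 0$ (together with the induced condition $f^{\sf T}\diag(X)=0$, which follows automatically once $X \succeq 0$ and the Schur-complement block structure force $Xf=0$). The objective $\langle C, X\rangle$ and the constraints $X_{ij}=0$ for $\{i,j\}\in E$ and the Schur-complement constraint come directly from the $C$ and $Z$ terms, yielding formulation~\eqref{eq:gbalone}. The key simplification to carry out here is the same one noted in the proof of Lemma~\ref{lemlasbalone}: that $Xf = 0$ is equivalent to $\langle ff^{\sf T}, X\rangle = 0$ when $X \succeq 0$, which in turn forces $f^{\sf T}\diag(X)=0$. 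Finally, strong duality must be verified so that the min and max formulations coincide; this follows from the result in~\cite{JoszHenrion} thanks to the presence of the equations $x_i^2 - x_i = 0$, exactly as invoked for $g_1(G)$ and $\las_1(G)$.

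The main structural point—rather than an obstacle—is that, as the paper itself remarks, the entire derivation is identical to that of $\lasbalone(G)$ in Lemma~\ref{lemlasbalone} up to exchanging the matrices $I$ and $C$ in the objective. So the cleanest route is simply to observe that substituting $C$ for $I$ throughout the proof of Lemma~\ref{lemlasbalone} transforms formulations~\eqref{eq:thetalasbal} and~\eqref{eq:thetalasbaldual} into~\eqref{eq:gbalone} and~\eqref{eq:gbalonedual} respectively, and that every step of that argument (the application of Lemma~\ref{lemMIG}, the dualization, the simplification $Xf=0 \Leftrightarrow \langle ff^{\sf T},X\rangle=0$, and the strong-duality justification) goes through verbatim since none of those steps depend on the specific form of the objective matrix. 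The only mild care needed is to confirm strict feasibility of the dual program~\eqref{eq:gbalonedual} for strong duality, which again holds by taking $s=0$, $Z=0$, $u = \mu e$ with $\mu$ large, and $\lambda$ correspondingly large, exactly as in Lemma~\ref{lemlasbalone}.
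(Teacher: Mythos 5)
Your proposal is correct and matches the paper's own argument: the paper proves this lemma precisely by remarking that one only needs to exchange the matrices $C$ and $I$ in the derivation of $\lasbalone(G)$ (Lemma~\ref{lemlasbalone}), which is exactly the reduction you identify, and the intermediate steps you spell out (Lemma~\ref{lemMIG}, dualization, the simplification $Xf=0\Leftrightarrow \langle ff\T,X\rangle=0$, and strong duality via strict feasibility or \cite{JoszHenrion}) are the same ones the paper uses there. No gaps.
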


Finally we give semidefinite formulations for  the parameter $\hbalone(G)$.

 \begin{lemma} \label{lemhbal1}
Let $G=(V,E)$ be a bipartite graph. Then  we have 
\begin{equation}\label{eq:hbalone}
\begin{split}
 \hbalone(G)=\max_{X\in\mathcal S^{|V|}}
\{\langle C,X\rangle:\  & X\succeq 0,\ \text{\em Tr}(X)=1,\ X_{ij}=0\ \text{ if } \{i,j\}\in E, \\
&\,  \langle f f\T, X\rangle =0, \, \langle \Diag(f), X\rangle  = 0 \}, 
\end{split}
\end{equation}
\begin{equation}\label{eq:hbalonedual}
 \hbalone(G)= \min_{\substack{\lambda,v,s\in \R, Z\in \mathcal S^{|V|}}} \{\lambda: \lambda I-C +Z+v \Diag(f) + s ff\T \succeq 0, \  Z\in\mathcal S_G\}.
\end{equation}
\end{lemma}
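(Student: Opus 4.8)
The plan is to mirror the proof of Lemma~\ref{lemhbbir1} for $h_1(G)$, incorporating the balancing generator $f\T x$ exactly as in the proof of Lemma~\ref{lemlasbalone}. I would start from the definition of $\hbalone(G)$ as the smallest $\lambda$ for which $[x]_1\T Q[x]_1 - x\T(\lambda I-C)x\in\IGbaltwo$ for some $Q\succeq 0$ (indexed by $\{0\}\cup V$). Since a member of $\IGbaltwo$ has the form $\sum_{i\in V}w_i(x_i^2-x_i)+\sum_{\{i,j\}\in E}w_{ij}x_ix_j+(a_0+a\T x)(f\T x)$, equating coefficients as in Lemma~\ref{lemMIG} shows that such polynomials are exactly the forms $[x]_1\T M[x]_1$ with $M=\left(\begin{matrix} 0 & \tfrac12(-w+a_0f)\T\cr \tfrac12(-w+a_0f) & \Diag(w)+Z+\tfrac12(af\T+fa\T)\end{matrix}\right)$ for some $w,a\in\R^{|V|}$, $a_0\in\R$, $Z\in\mathcal S_G$.

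Next I would read off the block structure of $Q=\left(\begin{matrix}0&0\cr 0&\lambda I-C\end{matrix}\right)+M$: because $Q_{00}=0$ and $Q\succeq 0$, the off-diagonal row must vanish, forcing $w=a_0f$, and the principal submatrix $Q[V]$ becomes $\lambda I-C+a_0\Diag(f)+Z+\tfrac12(af\T+fa\T)$. Writing $v:=a_0$, this yields the intermediate dual
\begin{equation*}
\hbalone(G)=\min_{\lambda,v\in\R,\,a\in\R^{|V|},\,Z\in\mathcal S^{|V|}}\big\{\lambda:\ \lambda I-C+v\Diag(f)+Z+\tfrac12(af\T+fa\T)\succeq 0,\ Z\in\mathcal S_G\big\}.
\end{equation*}
Dualizing this program, the free vector $a$ produces the primal constraint $Xf=0$, the scalar $v$ gives $\langle\Diag(f),X\rangle=0$, the variable $Z\in\mathcal S_G$ gives $X_{ij}=0$ for $\{i,j\}\in E$, and the coefficient of $\lambda I$ gives $\Tr(X)=1$; since $X\succeq 0$, the constraint $Xf=0$ is equivalent to $\langle ff\T,X\rangle=0$, so this dual is exactly the primal (\ref{eq:hbalone}). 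Formulation (\ref{eq:hbalonedual}) is then obtained as the semidefinite dual of (\ref{eq:hbalone}), the multipliers of $\langle ff\T,X\rangle=0$ and $\langle\Diag(f),X\rangle=0$ producing the terms $sff\T$ and $v\Diag(f)$. Strong duality at each step follows from \cite{JoszHenrion} via the presence of the equations $x_i^2-x_i=0$ (alternatively, note that (\ref{eq:hbalonedual}) is strictly feasible, taking $Z=0$, $v=s=0$ and $\lambda$ large enough that $\lambda I-C\succ 0$).

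The only genuinely new point, compared with Lemma~\ref{lemhbbir1}, will be the rank-one reduction from the term $\tfrac12(af\T+fa\T)$ with a free vector $a$ to the single term $sff\T$ appearing in (\ref{eq:hbalonedual}), and I expect stating this cleanly to be the main obstacle. It is resolved by observing that the vector multiplier $a$ and the scalar multiplier $s$ encode, respectively, the constraints $Xf=0$ and $\langle ff\T,X\rangle=0$, which coincide on the positive semidefinite cone; once this equivalence is recorded, the chain of (strong) dualities closes up and all three expressions agree.
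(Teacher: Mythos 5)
Your proposal is correct and follows essentially the same route as the paper: the paper's proof of this lemma simply says to repeat the argument of Lemma~\ref{lemlasbalone} (characterize membership in $\IGbaltwo$ via Lemma~\ref{lemMIG} augmented by the generator $(a_0+a\T x)f\T x$, use $Q_{00}=0$ to kill the border row, dualize to get (\ref{eq:hbalone}), and dualize once more to get (\ref{eq:hbalonedual}), with strong duality from \cite{JoszHenrion}), which is exactly what you do. Your explicit treatment of the passage from the multiplier term $\tfrac12(af\T+fa\T)$ to the single term $sff\T$ — via the equivalence of $Xf=0$ and $\langle ff\T,X\rangle=0$ on the positive semidefinite cone — is precisely the "simplification" the paper leaves implicit.
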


\proof{Proof.} 
The argument is similar to the one used to show Lemma \ref{lemlasbalone}. Namely, one starts with the definition of $\hbalone(G)$ as the smallest  $\lambda$ for which $x\T(\lambda I-C)x\in \Sigma_2 +\IGbaltwo$. Using Lemma \ref{lemMIG} one arrives at a semidefinite program whose dual can be shown (after some simplifications) to take the form (\ref{eq:hbalone}). Then one takes the dual of program (\ref{eq:hbalone}), which has the form (\ref{eq:hbalonedual}).
\endproof

\smallskip
We now compare the parameters $\lasbalone(G)$, $\thetabal(G)$, $\gbalone(G)$ and $\hbalone(G)$.

\begin{proposition}\label{propcomparebal}
For any bipartite graph~$G$, we have the inequalities
$${1\over 4}\lasbalone(G) \leq  {1\over 2}\sqrt{\gbalone(G)} \leq  \hbalone(G) ={1\over 4} \thetabal(G).$$
Moreover, we have  
${1\over 2}\sqrt{\gbalone(G)} = {1\over 4} \thetabal(G)$ $\Longleftrightarrow$ $\lasbalone(G)=\thetabal(G)$.
\end{proposition}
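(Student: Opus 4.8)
The plan is to establish the four relations $\tfrac14\lasbalone(G)\le\tfrac12\sqrt{\gbalone(G)}$, $\tfrac12\sqrt{\gbalone(G)}\le\hbalone(G)$, $\hbalone(G)=\tfrac14\thetabal(G)$, and the equivalence, by exploiting a structural feature of the four programs: $\lasbalone(G)$ and $\gbalone(G)$ are maximized over the \emph{same} feasible set [the matrices $X$ whose bordered matrix is PSD, with $X_{ij}=0$ on edges and $\langle ff\T,X\rangle=0$, cf.\ (\ref{eq:thetalasbal}) and (\ref{eq:gbalone})], differing only in their objectives $\langle I,X\rangle$ versus $\langle C,X\rangle$; likewise $\thetabal(G)$ and $\hbalone(G)$ share a common feasible set [cf.\ (\ref{eq:thetabal}) and (\ref{eq:hbalone})], with objectives $\langle J,X\rangle$ versus $\langle C,X\rangle$.

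The central ingredient I would isolate is a pointwise inequality valid for every $X$ feasible for (\ref{eq:thetalasbal})/(\ref{eq:gbalone}): namely $\langle I,X\rangle=\Tr(X)\le 2\sqrt{\langle C,X\rangle}$, together with the automatic identity $\langle\Diag(f),X\rangle=0$. To prove this I would write the bordered matrix as a Gram matrix of vectors $y_0,(y_i)_{i\in V}$, so that $\|y_0\|=1$ and $\langle y_0,y_i\rangle=X_{ii}=\|y_i\|^2$. Since $\langle ff\T,X\rangle=f\T Xf=0$ and $X\succeq 0$ force $Xf=0$, the vectors $y':=\sum_{i\in V_1}y_i$ and $y'':=\sum_{i\in V_2}y_i$ satisfy $\langle y_i,y'-y''\rangle=0$ for all $i$; summing over $i\in V_1$ and over $i\in V_2$ gives $\langle y',y'-y''\rangle=\langle y'',y'-y''\rangle=0$, hence $\|y'-y''\|^2=0$, i.e.\ $y'=y''$. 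Consequently $\langle C,X\rangle=\langle y',y''\rangle=\|y'\|^2$, while $\Tr(X)=\langle y_0,y'+y''\rangle=2\langle y_0,y'\rangle\le 2\|y'\|=2\sqrt{\langle C,X\rangle}$ by Cauchy--Schwarz, and $\langle\Diag(f),X\rangle=f\T\diag(X)=\langle y_0,y'-y''\rangle=0$.

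With this in hand the three relations are short. For $\tfrac14\lasbalone\le\tfrac12\sqrt{\gbalone}$, take $X$ optimal for (\ref{eq:thetalasbal}); as $X$ is also feasible for (\ref{eq:gbalone}), the pointwise bound gives $\lasbalone=\Tr(X)\le 2\sqrt{\langle C,X\rangle}\le 2\sqrt{\gbalone}$. For $\hbalone=\tfrac14\thetabal$, I would note that $\langle ff\T,X\rangle=0$ forces $\langle J,X\rangle=4\langle C,X\rangle$ (writing $X$ in $V_1,V_2$ blocks, the condition identifies the sum of the two diagonal block-sums with twice the off-diagonal sum); since (\ref{eq:thetabal}) and (\ref{eq:hbalone}) have identical feasible sets, this yields $\thetabal=4\hbalone$. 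For $\tfrac12\sqrt{\gbalone}\le\hbalone$, take $X$ optimal for (\ref{eq:gbalone}) (assume $\gbalone>0$, else trivial); then $X':=X/\Tr(X)$ is feasible for (\ref{eq:hbalone}) precisely because $\langle\Diag(f),X\rangle=0$ holds automatically, so $\hbalone\ge\langle C,X\rangle/\Tr(X)=\gbalone/\Tr(X)$, and combining with $\Tr(X)\le 2\sqrt{\gbalone}$ gives $\hbalone\ge\tfrac12\sqrt{\gbalone}$. The same $X$ also gives the auxiliary bound $\gbalone=\langle C,X\rangle\le\hbalone\,\Tr(X)\le\hbalone\lasbalone$, now using $\Tr(X)\le\lasbalone$.

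For the equivalence I would use the resulting sandwich $\tfrac14\lasbalone\le\tfrac12\sqrt{\gbalone}\le\tfrac14\thetabal$, which in particular gives $\lasbalone\le\thetabal$. If $\lasbalone=\thetabal$ the sandwich collapses, forcing $\tfrac12\sqrt{\gbalone}=\tfrac14\thetabal$. Conversely, if $\tfrac12\sqrt{\gbalone}=\tfrac14\thetabal=\hbalone$, i.e.\ $\gbalone=4\hbalone^2$, then the auxiliary bound $\gbalone\le\lasbalone\hbalone$ gives $4\hbalone^2\le\lasbalone\hbalone$, whence $\thetabal=4\hbalone\le\lasbalone$ when $\hbalone>0$; with $\lasbalone\le\thetabal$ this yields $\lasbalone=\thetabal$, while the degenerate case $\hbalone=0$ forces $\gbalone=0$ and $\lasbalone=\thetabal=0$. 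I expect the only genuinely non-routine step to be the Gram-matrix identity $y'=y''$ underlying the trace bound $\Tr(X)\le 2\sqrt{\langle C,X\rangle}$; once that is in place, all four relations reduce to bookkeeping on the shared feasible regions.
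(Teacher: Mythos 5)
Your proof is correct, and its architecture is essentially the paper's: the same pairing of programs over identical feasible sets, the same use of the identity $J-4C=ff\T$ to get $\hbalone(G)=\tfrac{1}{4}\thetabal(G)$, the same rescaling $X/\Tr(X)$ to transfer feasibility between the programs, and the same auxiliary bound (your $\gbalone(G)\le \lasbalone(G)\,\hbalone(G)$ is the paper's $4\gbalone(G)\le\lasbalone(G)\,\thetabal(G)$) to close the equivalence, including the degenerate case. The only local difference is how you prove the central pointwise inequality $\Tr(X)\le 2\sqrt{\langle C,X\rangle}$: you pass to a Gram factorization of the bordered matrix, deduce $y'=y''$ from $Xf=0$, and apply Cauchy--Schwarz against the border vector $y_0$, whereas the paper obtains the same inequality (and the auxiliary fact $f\T\diag(X)=0$) in one line from $X-\diag(X)\diag(X)\T\succeq 0$ together with $J\succeq 0$, namely $4\langle C,X\rangle=\langle J,X\rangle\ge\langle J,\diag(X)\diag(X)\T\rangle=(e\T\diag(X))^2=\Tr(X)^2$. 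Your route is longer but isolates a nice structural consequence of the balancing constraint (the Gram sums of the two sides coincide, whence $\langle C,X\rangle\ge 0$ and $\langle\Diag(f),X\rangle=0$ come for free); the paper's argument is shorter but less explicit about this. Either way, all the remaining bookkeeping steps check out.
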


\proof{Proof.} 
The equality $\thetabal(G)=4\hbalone(G)$ follows from the fact that the programs (\ref{eq:thetabal}) (defining $\thetabal(G)$) and (\ref{eq:hbalone}) (defining $\hbalone(G)$) differ only in their objective functions that are, respectively, $\langle J,X\rangle$ and $\langle C,X\rangle$, combined with the identity  $J-4C=ff\T$.

The inequality $\lasbalone(G)\le \thetabal(G)$ follows using the formulations (\ref{eq:thetalasbal}) and (\ref{eq:thetabal}) and a classic argument (repeated for convenience). If $X$ is optimal for (\ref{eq:thetalasbal}) with $x:=\diag(X)$, then $X-xx\T\succeq 0$, $f\T x=0$, $\text{\rm Tr}(X)=e\T x$, so $X/\text{Tr}(X)=X/e\T x$ is feasible for  (\ref{eq:thetabal}) and thus we have
 $\thetabal(G)\ge {1\over e\T x}\langle J,X\rangle\ge {1\over e\T x} \langle J,xx\T\rangle =e\T x =\lasbalone(G)$.
 
For the inequality $\lasbalone(G)^2\le 4\cdot \gbalone(G)$, pick  an optimal solution $X$ for (\ref{eq:thetalasbal}) with $x:=\diag(X)$, so that $X-xx\T\succeq 0$,  and use again the fact that $4C=J-ff\T$. Then we have 
 $4\cdot \gbalone(G)\ge \langle 4C,X\rangle = \langle J,X\rangle \ge \langle J,xx\T\rangle =(e\T x)^2 = \langle I,X\rangle ^2=\lasbalone(G)^2$. 
 
We now show the inequality $4\cdot \gbalone(G)\le \thetabal(G)^2$. For this let $X$ be optimal for program (\ref{eq:gbalone}) defining $\gbalone(G)$. Then $X$ is feasible for (\ref{eq:thetalasbal}) and thus 
$\lasbalone(G)\ge \text{Tr}(X)$.
In addition, $X/\text{Tr}(X)$ is feasible for (\ref{eq:thetabal}) and thus $\thetabal(G)\ge {1\over \text{Tr}(X)}\langle J,X\rangle$.
Using $4C=J-ff\T$, we obtain 
$4\cdot \gbalone(G)=\langle 4C,X\rangle = \langle J,X\rangle =  \text{Tr}(X) \cdot\langle J,X/\text{Tr}(X)\rangle\le
 \lasbalone (G)\cdot \thetabal(G)\le  \thetabal(G)^2.$
Finally, this argument also shows that equality $4\cdot \gbalone(G)=\thetabal(G)^2$ implies
$\lasbalone(G)=\thetabal(G)$, which concludes the proof. 
 \endproof

\smallskip
Quite surprisingly, while we had the inequality $h_1(G)\le {1\over 2} \sqrt{g_1(G)}$ (recall Proposition \ref{propghg1h1}), we now have the reverse inequality ${1\over 2}\sqrt{\gbalone(G)}\le \hbalone(G)$ for the balanced analogs. We next give an example where this inequality is strict.

\smallskip
\begin{example}\label{remarkstrictbal}
Let~$G$ be the bipartite graph from Figure~\ref{fig:smallgraphexample}.
One can check that $\hbalone(G)=2/3$, $\gbalone(G)=4/3$ and $\lasbalone(G)= 9/4$, which shows that the strict inequalities 
${1\over 4}\lasbalone(G) <  {1\over 2}\sqrt{\gbalone(G)} <  \hbalone(G)$ hold.
To see this consider the matrices
\begin{align*}
X_1=\tfrac{1}{12}\begin{pmatrix}
1 & 1 & 0 &2 \\
1 & 5 & 2 &4\\
0 & 2 & 1 & 1\\
2 & 4 & 1 & 5
\end{pmatrix},\quad 
X_2=\tfrac{1}{9}\begin{pmatrix}
3 & 1 & 0 &4 \\
1 & 7 & 4 &4\\
0 & 4 & 3 & 1\\
4 & 4 & 1 & 7
\end{pmatrix}, \quad
X_3=\tfrac{1}{32}\begin{pmatrix}
12 & 3 & 0 &15 \\
3 & 24 & 15 &12\\
0 & 15 & 12 & 3\\
15 & 12 & 3 & 24
\end{pmatrix}.
\end{align*}
Then, $X_1$ is feasible for (\ref{eq:hbalone}) with $\langle C,X_1\rangle =2/3$, $X_2$ is feasible for (\ref{eq:gbalone}) with $\langle C,X_2\rangle =4/3$, and $X_3$ is feasible for (\ref{eq:thetalasbal}) with $\langle I,X_3\rangle = 9/4$. One can check optimality of these solutions for the respective programs (for this, use the constraint $\langle ff\T,X\rangle =0$ to  reduce the semidefinite program to an equivalent semidefinite program involving smaller matrices, and then construct a solution of the dual program with the same objective value).  
\end{example}

\subsection{Symmetric versions of the parameters \texorpdfstring{$\lasbalone(G)$, $\thetabal(G)$ and $\gbalone(G)$}{lasbal1(G), thetabal(G) and gbal1(G)}.}

Here we address the question whether it is possible to obtain closed-form eigenvalue-based upper bounds  for $\alphabal(G)$ that improve on the spectral parameter $\widehat h(G)$ from (\ref{eq:hhateigenvaluebound}).  
For this a natural approach is to restrict the optimization in the programs (\ref{eq:thetalasbaldual}), (\ref{eq:thetabaldual}), (\ref{eq:gbalonedual}) to  matrices $Z=tA_G$ for some $t\in \R$ and, for (\ref{eq:thetalasbaldual}) and (\ref{eq:gbalonedual}), to vectors $u=\mu e$ for some $\mu\in \R$.  
Moreover, we add a term $v\Diag (f)$ to the matrix involved   in (\ref{eq:thetalasbaldual}) and (\ref{eq:gbalonedual}), which amounts to adding the redundant constraint $\langle \Diag(f),X\rangle =0$ to the programs (\ref{eq:thetalasbal}) and (\ref{eq:gbalone}). The motivation for this is to get possibly sharper bounds. In addition, the bounds obtained in this way are easier to compare (see Proposition \ref{prop:hatcomparison}). However, as we will show in Proposition \ref{prop:symbalhat}, these additional constraints will turn out to be redundant for bipartite regular graphs.

So we consider the parameters
\begin{align}
&\lasbalhat(G) := \min_{\lambda,\mu,t,s,v \in\R} \{ \lambda: 
\left(\begin{matrix}\lambda &-\mu e\T/2\cr -\mu e/2 & (\mu-1)I + tA_G +sff\T+v\Diag(f)\end{matrix}\right)\succeq 0
\},\label{eq:thetalasbalhat}\\
& = \max_{X \in \mathcal{S}^{|V|}, x\in \R^{|V|}} \Big\{ \langle I, X \rangle:     \left(\begin{matrix} 1 & x\T\cr x& X\end{matrix}\right) \succeq 0,  \text{Tr}(X)=e\T x,   \langle  A_G,X \rangle = 0,   \langle ff\T,X\rangle =0, \langle \Diag(f),X\rangle =0 \Big\},\label{eq:thetalasbalhatI}
  \end{align}
  \begin{align}
& \thetabalhat(G) := \min_{\lambda,t,v,s\in\R} \{ \lambda: \lambda I-J+tA_G +v \Diag(f) +s f f\T  \succeq 0\},\label{eq:thetabalhat}\\
&=\max\{\langle J,X\rangle:\ X\succeq 0,\ \text{Tr}(X)=1,\ \langle A_G,X\rangle = 0, \, \langle f f\T,X\rangle =0,\ \langle \Diag(f),X \rangle = 0\},\label{eq:thetabalhatJ}
\end{align}
\begin{align}
& \gbalonehat(G) := \min_{\lambda,\mu,t,s,v\in \R} \Big\{\lambda:\left(\begin{matrix} \lambda & -\mu e\T /2\cr
-\mu e/2 & \mu I-C+tA_G+sff\T +v\Diag(f)\end{matrix}\right)\succeq 0\Big\},\label{eq:gbalonehat} \\
 &=\max_{X \in \mathcal{S}^{|V|},x\in\R^{|V|}} \Big\{ \langle C, X \rangle:    \left(\begin{matrix} 1 & x\T\cr x& X\end{matrix}\right) \succeq 0,   \text{Tr}(X)=e\T x, \langle X, A_G \rangle = 0,  \langle ff\T,X \rangle =0,\langle \Diag(f),X\rangle =0  \Big\}.\label{eq:gbalonehatC}
 \end{align}
(Since each of the programs (\ref{eq:thetalasbalhat}), (\ref{eq:thetabalhat}), (\ref{eq:gbalonehat}) is strictly feasible, strong duality holds as claimed above.) We begin with comparing the above parameters and  show the analog of Proposition \ref{propcomparebal}.

\begin{proposition}\label{prop:hatcomparison}
For any bipartite graph~$G$, we have 
$${1\over 4}\lasbalhat(G) \leq {1\over 2}\sqrt{\gbalonehat(G) } \leq {1\over 4}\thetabalhat(G).$$ 
\end{proposition}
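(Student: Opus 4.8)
The plan is to mirror the proof of Proposition~\ref{propcomparebal}, exploiting the fact that the three symmetric programs share an \emph{identical} linear constraint system and differ only in their objective functions, together with the algebraic identity $J-4C=ff\T$ (equivalently $4C=J-ff\T$). I would first record two observations used throughout. If $(X,x)$ is feasible for the primal program~\eqref{eq:thetalasbalhatI} defining $\lasbalhat(G)$, then positive semidefiniteness of $\left(\begin{matrix} 1 & x\T \\ x & X\end{matrix}\right)$ gives $X-xx\T\succeq 0$ by taking the Schur complement, whence $\langle J,X\rangle\ge\langle J,xx\T\rangle=(e\T x)^2$. Moreover the constraints $\langle A_G,X\rangle=0$, $\langle ff\T,X\rangle=0$, $\langle\Diag(f),X\rangle=0$ are homogeneous, so the normalization $X\mapsto X/\mathrm{Tr}(X)$ sends a feasible point of~\eqref{eq:thetalasbalhatI} or~\eqref{eq:gbalonehatC} to a feasible point of~\eqref{eq:thetabalhatJ}.

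Next I would establish the auxiliary inequality $\lasbalhat(G)\le\thetabalhat(G)$. Taking $(X,x)$ optimal for~\eqref{eq:thetalasbalhatI}, the matrix $X/\mathrm{Tr}(X)$ is feasible for~\eqref{eq:thetabalhatJ}, so using $\mathrm{Tr}(X)=e\T x$ we get $\thetabalhat(G)\ge\langle J,X\rangle/\mathrm{Tr}(X)\ge (e\T x)^2/\mathrm{Tr}(X)=\mathrm{Tr}(X)=\langle I,X\rangle=\lasbalhat(G)$. For the first inequality of the statement I use that programs~\eqref{eq:thetalasbalhatI} and~\eqref{eq:gbalonehatC} have exactly the same feasible region, so the $\lasbalhat$-optimizer $(X,x)$ is feasible for $\gbalonehat$; then $\langle 4C,X\rangle=\langle J,X\rangle-\langle ff\T,X\rangle=\langle J,X\rangle\ge (e\T x)^2=\lasbalhat(G)^2$, giving $4\gbalonehat(G)\ge\lasbalhat(G)^2$, i.e. $\tfrac14\lasbalhat(G)\le\tfrac12\sqrt{\gbalonehat(G)}$.

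Finally, for the second inequality I would take $X$ (with associated $x$) optimal for $\gbalonehat$. Since it is also feasible for~\eqref{eq:thetalasbalhatI}, we have $\lasbalhat(G)\ge\langle I,X\rangle=\mathrm{Tr}(X)$, and $X/\mathrm{Tr}(X)$ is feasible for~\eqref{eq:thetabalhatJ}, so $\thetabalhat(G)\ge\langle J,X\rangle/\mathrm{Tr}(X)$. Combining with $\langle 4C,X\rangle=\langle J,X\rangle$ yields $4\gbalonehat(G)=\langle J,X\rangle=\mathrm{Tr}(X)\cdot\frac{\langle J,X\rangle}{\mathrm{Tr}(X)}\le\lasbalhat(G)\cdot\thetabalhat(G)\le\thetabalhat(G)^2$, where the last step invokes $\lasbalhat(G)\le\thetabalhat(G)$. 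This is precisely $\tfrac12\sqrt{\gbalonehat(G)}\le\tfrac14\thetabalhat(G)$, completing the chain.

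I do not anticipate a serious obstacle, since the argument runs parallel to Proposition~\ref{propcomparebal} with the matrix $C$ replaced by $I$ in the objective where appropriate and the extra symmetry constraint $\langle A_G,X\rangle=0$ carried along. The only point needing care is verifying that the three constraints $\langle A_G,X\rangle=0$, $\langle ff\T,X\rangle=0$, $\langle\Diag(f),X\rangle=0$ are common to all three symmetric programs and are homogeneous, so that every feasibility transfer (in particular the normalization $X\mapsto X/\mathrm{Tr}(X)$ into~\eqref{eq:thetabalhatJ}) is legitimate; given this, all steps reduce to the identity $4C=J-ff\T$ and the Schur-complement bound $\langle J,X\rangle\ge(e\T x)^2$.
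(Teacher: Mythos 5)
Your proposal is correct and follows exactly the route the paper takes: its proof of Proposition~\ref{prop:hatcomparison} simply invokes the primal formulations \eqref{eq:thetalasbalhatI}, \eqref{eq:thetabalhatJ}, \eqref{eq:gbalonehatC} and says the inequalities follow as in Proposition~\ref{propcomparebal}, which is precisely the argument you spelled out (shared feasible region, the identity $4C=J-ff\T$ killing the $\langle ff\T,X\rangle$ term, the Schur-complement bound $\langle J,X\rangle\ge(e\T x)^2$, and the normalization $X\mapsto X/\mathrm{Tr}(X)$). Your write-up is in fact a more detailed version of what the paper leaves implicit, so there is nothing to flag.
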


\proof{Proof.} 
We use the formulations  (\ref{eq:thetalasbalhatI}), (\ref{eq:thetabalhatJ}), (\ref{eq:gbalonehatC}) for the parameters $\lasbalhat(G)$, $\thetabalhat(G)$,  $\gbalonehat(G)$, respectively. Then the  inequalities follow in the same way as in the proof of Proposition~\ref{propcomparebal}.
\endproof

\smallskip
Next we compute the parameter $\thetabalhat(G)$ and show its relation to $\widehat h(G)$.
\begin{proposition}\label{prop:symbalhat}
Assume $G=(V_1 \cup V_2, E)$ is bipartite $r$-regular, set~$n:=|V_1|=|V_2|$ and let~$\lambda_2$ denote the second largest eigenvalue of~$A_G$. Then we have $\thetabalhat(G) = \frac{2n \lambda_2}{r+\lambda_2}=4\cdot \widehat h(G)$. 
\end{proposition}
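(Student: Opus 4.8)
The plan is to evaluate directly the minimization form~\eqref{eq:thetabalhat} of $\thetabalhat(G)$: it is the smallest $\lambda$ for which there exist $t,v,s\in\R$ making the matrix $S:=\lambda I-J+tA_G+v\Diag(f)+sff\T$ positive semidefinite, and I aim to show this minimum equals $\tfrac{2n\lambda_2}{r+\lambda_2}$. Since $4\cdot\widehat h(G)=\tfrac{2n\lambda_2}{r+\lambda_2}$ by Proposition~\ref{prop:hbbioneedgetransitive}, this yields the claim. First I would write $S$ in $2\times 2$ block form with respect to the bipartition $V=V_1\cup V_2$. Using $A_G=\left(\begin{smallmatrix}0&M_G\\ M_G\T&0\end{smallmatrix}\right)$ from Lemma~\ref{lembipeig}, together with the block forms of $J$, $ff\T$ and $\Diag(f)$ (recall $f=\chi^{V_1}-\chi^{V_2}$), the diagonal blocks of $S$ become $(\lambda\pm v)I_n+(s-1)J_n$ and the off-diagonal block becomes $tM_G-(1+s)J_n$.

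The key step is to diagonalize using the invariant subspaces common to all matrices involved. Since $G$ is $r$-regular bipartite, $M_Ge=M_G\T e=re$, and for $w\perp e$ one has $M_Gw\perp e$; hence $\R^n$ splits as $\operatorname{span}(e)\oplus e^\perp$, and this splitting is respected by $I$, $J$, $M_G$, $ff\T$ and $\Diag(f)$. Evaluating the quadratic form $(x;y)\T S(x;y)$ and writing $x=\alpha e+x'$, $y=\beta e+y'$ with $x',y'\perp e$, I expect the form to decouple into an ``$e$-part'' depending only on $(\alpha,\beta)$ and an ``$e^\perp$-part'' depending only on $(x',y')$, using the identity $x\T M_Gy=rn\alpha\beta+x'\T M_Gy'$. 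Consequently $S\succeq0$ becomes equivalent to two independent conditions: positive semidefiniteness of the $2\times2$ matrix
\[
P=\begin{pmatrix}(\lambda+v)n+(s-1)n^2 & trn-(1+s)n^2\\ trn-(1+s)n^2 & (\lambda-v)n+(s-1)n^2\end{pmatrix}
\]
governing the $e$-part, and positive semidefiniteness of the $e^\perp$-part.

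For the $e^\perp$-part I would invoke the singular value decomposition of $M_G$ restricted to $e^\perp$, whose largest singular value is exactly $\lambda_2$ (the second largest singular value of $M_G$, equivalently the second largest eigenvalue of $A_G$, by Lemma~\ref{lembipeig}). Pairing left and right singular vectors reduces the $e^\perp$ condition to $2\times2$ blocks $\left(\begin{smallmatrix}\lambda+v&t\sigma_i\\ t\sigma_i&\lambda-v\end{smallmatrix}\right)\succeq0$, whose binding instance gives $\lambda\ge|v|$ and $\lambda^2-v^2\ge t^2\lambda_2^2$. For the $e$-part I would analyze $P\succeq0$ through its trace and determinant: setting $c:=trn-2n^2$ and $a:=(s-1)n^2$, one computes $\det P=(\lambda n+c)(\lambda n+2a-c)-v^2n^2$ and $\operatorname{tr}(P)=2\lambda n+2a$. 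Since the two factors of $\det P$ sum to $\operatorname{tr}(P)\ge0$ and have nonnegative product, both factors must be nonnegative; in particular $\lambda n+c\ge0$, i.e.\ $tr\ge 2n-\lambda$.

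Finally I would combine the two conditions. From $tr\ge2n-\lambda$ (so $t\ge(2n-\lambda)/r$ when $\lambda<2n$) and $\lambda\ge t\lambda_2$ I obtain $\lambda r\ge(2n-\lambda)\lambda_2$, i.e.\ $\lambda\ge\tfrac{2n\lambda_2}{r+\lambda_2}$, giving the lower bound on the minimum; the degenerate cases $\lambda\ge 2n$ and $\lambda_2=0$ (complete bipartite) are handled trivially since then $\tfrac{2n\lambda_2}{r+\lambda_2}\le 2n$ and $=0$ respectively. For the matching upper bound I would exhibit the explicit feasible point $v=0$, $t=(2n-\lambda^\ast)/r$ with $\lambda^\ast=\tfrac{2n\lambda_2}{r+\lambda_2}$, and $s$ chosen so that $a=-\lambda^\ast n$ (making $P=0$), and verify it satisfies both conditions. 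I expect the main obstacle to be the bookkeeping in the decoupling computation and, especially, the trace/determinant case analysis for $P\succeq0$: it is what prevents one from ``cheating'' by sending $s\to\pm\infty$ to trivialize the $e$-part, and it is precisely the constraint that couples $t$ back to $\lambda$ via $tr\ge 2n-\lambda$. Once the two scalar conditions are isolated, the rest is routine, and strong duality (asserted after~\eqref{eq:gbalonehatC}) closes the argument.
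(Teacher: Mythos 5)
Your proposal is correct and follows essentially the same route as the paper's proof in Appendix~\ref{appendixproofsym}: both decompose $\R^{2n}$ into the two-dimensional subspace spanned by the all-ones vectors of the two sides (the paper's $\langle e,f\rangle$) and its orthogonal complement, derive exactly the same two conditions $\lambda+tr-2n\ge 0$ and $\lambda\ge\sqrt{v^2+t^2\lambda_2^2}$, and locate the optimum at $t=\tfrac{2n}{r+\lambda_2}$. Your bookkeeping differs only cosmetically --- block form with SVD pairing instead of the paper's direct eigenvector computation, trace/determinant of a $2\times 2$ matrix instead of a discriminant, and an explicit feasible point plus a direct lower bound instead of the paper's ``set $v=0$, eliminate $s$'' reduction --- so there is nothing substantive to add.
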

\noindent
We delay the proof, which is a bit technical, to Appendix \ref{appendixproofsym}.
As the proof will show, the program (\ref{eq:thetabalhat}) defining $\thetabalhat(G)$ admits an optimal solution with $v=0$. Hence, when $G$ is bipartite regular, the constraint $\langle \Diag(f),X\rangle=0$ is redundant in program (\ref{eq:thetabalhat}) and one can set $v=0$ in program (\ref{eq:thetabalhat}), and
the same observation applies to the programs defining $\gbalonehat(G)$ and $\lasbalhat(G)$.

\medskip
We can now  compute the parameters $\lasbalhat(G)$ and $\gbalonehat(G)$ and show their relation to $\widehat h(G)$. 

\begin{proposition}\label{prop:symbalhat2}  
For any regular bipartite graph  $G$  we have   
$${1\over 4}\lasbalhat(G)={1\over 2}\sqrt{\gbalonehat(G)}={1\over 4}\thetabal(G)=\widehat h(G).$$
\end{proposition}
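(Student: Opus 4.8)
The plan is to reduce the whole statement to the single closed-form computation $\lasbalhat(G) = 4\widehat h(G)$, and then let the already-available inequalities do the rest. Indeed, Proposition~\ref{prop:symbalhat} gives $\thetabalhat(G) = \tfrac{2n\lambda_2}{r+\lambda_2} = 4\widehat h(G)$, while Proposition~\ref{prop:hatcomparison} supplies the chain $\tfrac14\lasbalhat(G) \le \tfrac12\sqrt{\gbalonehat(G)} \le \tfrac14\thetabalhat(G) = \widehat h(G)$. So once I establish the reverse bound $\tfrac14\lasbalhat(G) \ge \widehat h(G)$, the chain collapses to equalities throughout, giving $\tfrac14\lasbalhat(G) = \tfrac12\sqrt{\gbalonehat(G)} = \tfrac14\thetabalhat(G) = \widehat h(G)$, which is the assertion (the symbol $\thetabal$ in the statement being read as its symmetric version $\thetabalhat$; see the last paragraph). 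I may assume $G \ne K_{n,n}$, since otherwise $\lambda_2 = 0$ by Lemma~\ref{lembipeig} and all four quantities vanish.

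The core task is therefore to evaluate the minimization SDP (\ref{eq:thetalasbalhat}) defining $\lasbalhat(G)$ in closed form. By the observation following Proposition~\ref{prop:symbalhat}, I may set $v = 0$. Assuming $\lambda > 0$ and taking the Schur complement with respect to the top-left entry, the matrix in (\ref{eq:thetalasbalhat}) is positive semidefinite if and only if $Q := (\mu-1)I + tA_G + sff\T - \tfrac{\mu^2}{4\lambda}ee\T \succeq 0$. The key structural fact is that $Q$ is diagonalized in a common eigenbasis: the all-ones vector $e$ (an eigenvector of $A_G$ for $r$, with $e\perp f$), the vector $f$ (an eigenvector of $A_G$ for $-r$, by Lemma~\ref{lembipeig}), and the eigenvectors $w\perp e,f$ of $A_G$, whose eigenvalues $\theta$ lie in $[-\lambda_2,\lambda_2]$ with the extreme values $\pm\lambda_2$ attained. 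Reading off the three families of eigenvalues of $Q$ gives $(\mu-1)+tr - \tfrac{n\mu^2}{2\lambda}$ at $e$, $(\mu-1)-tr+2ns$ at $f$, and $(\mu-1)+t\theta$ at each $w$.

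Next I would convert $Q\succeq 0$ into a scalar optimization. Since $s$ occurs only in the $f$-eigenvalue, it can be chosen to make that eigenvalue nonnegative and hence drops out. The $w$-eigenvalues force $(\mu-1)\ge |t|\lambda_2$, and the $e$-eigenvalue forces $\lambda \ge \tfrac{n\mu^2}{2((\mu-1)+tr)}$. To minimize $\lambda$ I take $t = (\mu-1)/\lambda_2 > 0$ (the largest value allowed, which maximizes the denominator), turning the bound into $\lambda \ge \tfrac{n\lambda_2}{2(\lambda_2+r)}\cdot\tfrac{\mu^2}{\mu-1}$; minimizing $\tfrac{\mu^2}{\mu-1}$ over $\mu>1$ gives the minimizer $\mu=2$ with value $4$. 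This yields $\lasbalhat(G) = \tfrac{2n\lambda_2}{\lambda_2+r} = 4\widehat h(G)$, hence $\tfrac14\lasbalhat(G) = \widehat h(G)$, and the squeeze from the first paragraph then delivers all four equalities.

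The main obstacle is precisely this reverse inequality $\lasbalhat(G)\ge 4\widehat h(G)$: Proposition~\ref{prop:hatcomparison} only yields the easy direction $\lasbalhat(G)\le \thetabalhat(G)$, and a lower bound on a minimization SDP cannot be read off a single feasible matrix, so one genuinely has to solve (\ref{eq:thetalasbalhat}). Two points deserve care. First, the reduction to $v=0$: I rely on the observation after Proposition~\ref{prop:symbalhat}, since the term $v\Diag(f)$ satisfies $\Diag(f)A_G\Diag(f) = -A_G$ and thus would otherwise obstruct the clean joint diagonalization used above. Second, one must check that the largest absolute eigenvalue of $A_G$ on $\{e,f\}^\perp$ equals $\lambda_2$; this holds even for disconnected regular bipartite graphs, because deleting the eigenvalue $r$ (at $e$) and $-r$ (at $f$) leaves a spectrum symmetric about $0$ with extreme value $\lambda_2$. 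Finally, I would flag that $\thetabal$ in the statement should be $\thetabalhat$: for general regular bipartite $G$ one has $\thetabal(G) = 4\hbalone(G) \le 4h_1(G) \le 4\widehat h(G)$, with strict inequality possible when $G$ is not edge-transitive, so the displayed equality is valid only for the symmetric parameter $\thetabalhat(G)$.
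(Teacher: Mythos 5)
Your proof is correct, and its key step goes by a genuinely different route than the paper's. The reduction skeleton is the same: both arguments use Propositions \ref{prop:hatcomparison} and \ref{prop:symbalhat} to boil everything down to the single reverse inequality $\lasbalhat(G)\ge\thetabalhat(G)$, and both read the statement's $\thetabal$ as $\thetabalhat$ (your typo flag is correct: the paper's own proof establishes the equality for $\thetabalhat$, and for a non-edge-transitive example such as $C_4\oplus C_6$ one has $\tfrac14\thetabal(G)=\hbalone(G)\le h_1(G)\le\tfrac12\sqrt{g_1(G)}=\tfrac12\sqrt 6<\tfrac54=\widehat h(G)$, so the literal statement would fail). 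The difference lies in how the reverse inequality is obtained. The paper never solves the SDP (\ref{eq:thetalasbalhat}): given any feasible $(\lambda,\mu,t,s,v)$, it maps it to a feasible solution of (\ref{eq:thetabalhat}) with the same value $\lambda$, by taking the Schur complement, proving $\mu>1$ (first $\mu\ge1$ from the diagonal blocks indexed by $V_1$ and $V_2$, then excluding $\mu=1$ via a $3\times3$ principal submatrix indexed by $\{0,i,j\}$ for a non-edge $\{i,j\}$), dividing by $\mu-1$, and adding the positive semidefinite matrix $\bigl(\tfrac{\mu^2}{4(\mu-1)}-1\bigr)J$; this needs neither a reduction to $v=0$ nor any spectral analysis of $\lasbalhat$. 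Your route instead computes $\lasbalhat(G)=\tfrac{2n\lambda_2}{r+\lambda_2}$ in closed form, which yields strictly more information than the paper extracts, but it leans on the remark after Proposition \ref{prop:symbalhat} that $v=0$ may be assumed in the program defining $\lasbalhat$; the paper asserts this, yet Appendix \ref{appendixproofsym} proves it only for $\thetabalhat$, so to be self-contained you should supply the analogue: since $\Diag(f)$ sends an eigenvector $w\perp e,f$ of $A_G$ with eigenvalue $\theta$ to one with eigenvalue $-\theta$, the space $\{e,f\}^\perp$ decomposes into $Q$-invariant planes $\mathrm{span}(w,\Diag(f)w)$ on which $Q$ acts by $\bigl(\begin{smallmatrix}(\mu-1)+t\theta & v\\ v&(\mu-1)-t\theta\end{smallmatrix}\bigr)$, and setting $v=0$ only relaxes the positive semidefiniteness of these blocks and of the $2\times 2$ block on $\langle e,f\rangle$. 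With that supplement (plus the one-line observations that $\mu=1$ and $\lambda=0$ are infeasible when $G\ne K_{n,n}$, so that $\mu>1$ and the Schur complement step are legitimate), your eigenvalue computation, your attainment argument for $\pm\lambda_2$ on $\{e,f\}^\perp$, and the final squeeze are all sound.
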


\proof{Proof.} 
Assume $G$ is bipartite regular and set $n:=|V_1|=|V_2|$. 
If $G$ is  complete bipartite, then $\alphabal(G)=0$ and, using (\ref{eq:thetabalhatJ}) and Proposition \ref{prop:hatcomparison}, one can check that $\thetabalhat(G)=0$, so the result holds.
We now assume that $G$ is not complete bipartite. In view of Propositions \ref{prop:hatcomparison} and \ref{prop:symbalhat} it suffices to show $\lasbalhat(G)\ge \thetabalhat(G)$.
 Assume that~$(\lambda, \mu, t,s,v)$ is feasible for the program (\ref{eq:thetalasbalhat}) defining $\lasbalhat(G)$, we construct a feasible solution for the program (\ref{eq:thetabalhat}) defining $\thetabalhat(G)$ with the same objective value $\lambda$.
 Call $Q\in \mathcal S^{1+|V_1|+|V_2|}$ the matrix appearing in program (\ref{eq:thetalasbalhat}).
 By  taking a Schur complement with respect to its upper left corner entry $\lambda$, we obtain 
 $$\lambda((\mu-1) I + t A_G + s ff\T+v\Diag(f))-\tfrac{\mu^2}{4} J \succeq 0.$$
We now claim that $\mu>1$. For this observe that the submatrices of $Q$ indexed by $V_1$ and $V_2$ read $(\mu-1)I_n +s J_n\pm vI_n$. Since they are both positive semidefinite this implies $(\mu-1)I_n+s J_n\succeq 0$ and thus $\mu\ge 1$.
Assume that $\mu=1$. Then the conditions $sJ_n\pm v I_n\succeq 0$ imply $v=0$. 
Let $i\in V_1$ and $j\in V_2$ that are not adjacent (they exist since $G\ne K_{n,n}$). Then the principal submatrix of $Q$ indexed by $\{0,i,j\}$ takes the form 
$\tiny \left(\begin{matrix}\lambda & -1/2 & -1/2 \cr -1/2 & s & -s \cr -1/2 & -s & s\end{matrix}\right)$ and it must be positive semidefinite, so we reach a contradiction.
Hence we have  $\mu>1$. Thus we can scale the above matrix and obtain 
\begin{align*}
\lambda I + \frac{\lambda t}{\mu-1} A_G + \frac{\lambda s}{\mu-1}ff\T +{\lambda v\over \mu-1}\Diag(f)- \frac{\mu^2}{4(\mu-1)} J \succeq 0.
\end{align*}
 Note that~$\tfrac{\mu^2}{4(\mu-1)} - 1 = \tfrac{(\mu-2)^2}{4(\mu-1)}\geq 0$ and add $(\tfrac{\mu^2}{4(\mu-1)} - 1) J \succeq 0$ to the above matrix. So we obtain  
 $$
\lambda I  + \frac{\lambda t}{\mu-1} A_G + \frac{\lambda s}{\mu-1}ff\T+{\lambda v\over \mu -1} \Diag(f) -J \succeq 0,
$$
which gives a feasible solution to the formulation~\eqref{eq:thetabalhat} of~$\thetabalhat(G)$ and thus shows $\thetabalhat(G) \leq \lambda = \lasbalhat(G)$. 
\endproof

\smallskip
\begin{remark}\label{remweaker}
{\em One idea for trying to get a stronger closed-form bound for $\alphabal(G)$ could be to
consider a possibly weaker symmetrization of 
the parameter $\lasbalone(G)$, where  we now allow a vector $u$ taking distinct values for nodes in $V_1$ and in $V_2$ instead of restricting to $u=\mu e$ for some $\mu\in \R$.
So we consider the following variation $\lasbaltilde(G)$ of the parameter 
$\lasbalhat(G)$,  defined by 
\begin{equation}\label{eq:lasbaltilde} 
\min_{\lambda,\mu_1,\mu_2, t,s,v\in\R}
\Big\{\lambda: \left(\begin{matrix} \lambda & -u\T/2\cr
-u/2& \Diag(u)-I+tA_G +s ff\T +v \Diag(f) \end{matrix}\right)\succeq 0,\ u=\mu_1\chi^{V_1}+\mu_2\chi^{V_2}\Big\}.
\end{equation}
By its definition, the parameter $\lasbaltilde (G)$ lower bounds $\lasbalhat(G)$, for which the optimization is restricted to the case $\mu_1=\mu_2$.
Nevertheless it turns out that the two parameters are in fact equal. 
To see this let us use the dual semidefinite program of (\ref{eq:lasbaltilde}), which reads
\begin{equation}\label{eq:lasbaltildeI}
\begin{split}
\lasbaltilde(G)= \max\Big\{ \langle I,X\rangle: 
& \left(\begin{matrix} 1 & x\T\cr x & X\end{matrix}\right)\succeq 0, 
\ \langle A_G,X\rangle =0,
\ \langle ff\T, X\rangle=0,\\
&\langle \Diag(f),X\rangle =0,\ \langle \Diag(\chi^{V_k}),X\rangle =x\T \chi^{V_k} \text{ for } k=1,2 
\Big\}.
\end{split}
\end{equation}
Assume $(x,X)$ is optimal for the program (\ref{eq:thetalasbalhatI}) defining $\lasbalhat(G)$. In order to show that $(x,X)$  is feasible for (\ref{eq:lasbaltildeI}) we only need to check that 
$\langle \Diag(\chi^{V_k}),X\rangle=x\T \chi^{V_k}$ for $k=1,2$. For this note that feasibility for (\ref{eq:thetalasbalhatI}) 
implies $x\T f=0$ and thus $x\T \chi^{V_1}=x\T \chi^{V_2}$. Moreover 
$\langle \Diag(f),X\rangle=0$ gives $\langle \Diag(\chi^{V_1}),X\rangle = \langle \Diag(\chi^{V_2}),X\rangle$ and $\text{\rm Tr}(X)=e\T x$ gives 
$\langle \Diag(\chi^{V_1}),X\rangle+\langle \Diag(\chi^{V_2}),X\rangle=
x\T \chi^{V_1}+x\T \chi^{V_2}$. Combining these facts we get the desired identities $\langle \Diag(\chi^{V_k}),X\rangle=x\T \chi^{V_k}$ for $k=1,2$. This shows   
$\lasbalhat(G)\le \lasbaltilde (G)$ and thus equality $\lasbalhat(G)= \lasbaltilde (G)$ holds.
}
\end{remark}

\section{Concluding remarks.}\label{secconcluding}

In this paper we investigate the parameters~$g(G)$, $h(G)$,  $\alphabal(G)$ (and other related parameters) dealing with (balanced) bipartite biindependent pairs in a bipartite graph $G$.  We show that deciding whether $\alphabal(G)=\alpha(G)$ is an NP-complete problem and that this implies NP-hardness of  the parameters $\alphabal(G), h(G), g(G)$. We offer a systematic study of the basic semidefinite bounds that are obtained at the first level of sums-of-squares (Lasserre) hierarchies. In particular, we introduce the semidefinite bounds $h_1(G), g_1(G) $ (for $g(G), h(G)$), and $\lasbalone(G)$, $\thetabal(G)$ (for $\alphabal(G)$). These semidefinite bounds can be seen as natural variations 
of the celebrated theta number $\vartheta(G)$
of Lov\'asz \cite{Lo79}, allowing a quadratic objective (for $h_1(G)$, $g_1(G)$) or adding a balancing constraint (for $\lasbalone(G)$, $\thetabal(G)$). However, while $\vartheta(G)=\alpha(G)$ when $G$ is bipartite, the parameters $h_1(G)$, $g_1(G)$, $\lasbalone(G)$, $\thetabal(G)$  give only upper bounds for the respective combinatorial graph parameters. An interesting fact is that  $h_1(G)$ in fact provides a better bound for $g(G)$ than $g_1(G)$ (recall Proposition \ref{propghg1h1}). Another interesting fact is that $\lasbalone(G)\le \thetabal(G)$ and that the inequality may be strict, while the unbalanced analogs both coincide with $\vartheta(G)$ (recall Proposition~\ref{propcomparebal} and relation (\ref{eqtheta=})). We also show that deciding whether $h(G)=h_1(G)$ is an NP-hard problem. An object of further study will be to investigate the numerical behaviour of the various bounds introduced in this paper.

When $G$ is an $r$-regular bipartite graph, we give closed-form eigenvalue-based bounds that are obtained by restricting to symmetric solutions in the definitions of $h_1(G),$ $g_1(G)$, $\lasbalone(G)$, and $\thetabal(G)$. In this way we obtain the parameter $\widehat h(G)={n\over 2}{\lambda_2\over r+\lambda_2}$, where $\lambda_2$ is the second largest eigenvalue of $A_G$ and $G$ has $n$ vertices on each side of its bipartition.  Then $h(G)\le h_1(G)\le \widehat h(G)$ holds  
and it turns out that $\widehat h(G)$ provides a better bound for $g(G)$ than its corresponding eigenvalue bound $\widehat g(G)$. Moreover, only edge-transitivity is required to show equality $h_1(G)=\widehat h(G)$, while one needs vertex- and edge-transitivity to show $g_1(G)=\widehat g(G)$. 
Bipartite regular graphs that are edge-transitive but not vertex-transitive are known as {semi-symmetric} graphs; the smallest such graph, constructed by Folkman \cite{Folkman1967},  is 4-regular with 20 vertices.  
We  show that the natural eigenvalue bounds corresponding to the various semidefinite relaxations of    $\alphabal(G)$ all coincide (up to simple transformation) with the parameter $\widehat h(G)$, and that the same holds for a natural strengthening of $h_1(G)$ (recall Proposition \ref{prophhp}). Hence, finding a stronger  closed-form bound for $\alphabal(G)$ that is able to take  advantage of the restriction to balanced independent sets remains an open problem.

We have considered the parameter $\widehat h(G)$ (in relation (\ref{eqh1hat})) and gave a closed-form expression for it in the case when $G$ is a $r$-regular bipartite graph.
More generally, one can consider the case when $G$ is a bipartite $(r_1,r_2)$-regular graph, which means that every vertex in $V_1$ has degree $r_1$ and every vertex in $V_2$ has degree $r_2$.
Then, along the same lines as for Proposition \ref{prop:hbbioneedgetransitive}, one can also compute a closed-form expression for $\widehat h(G)$. It will be interesting to compute this parameter for the graphs $G^n_{k,\ell}$ and $G^{n,q}_{k,\ell}$, that are related to cross-intersecting subset and subspace families as mentioned in the introduction, and to check their relationship with the bounds by Pyber \cite{Pyber1986} and Suda and Tanaka~\cite{ST2014}.

So, we see in this paper an application of the second largest eigenvalue $\lambda_2$ to the study of parameters involving (balanced) independent sets in bipartite graphs.
The second largest eigenvalue $\lambda_2$ has been widely studied and has well-known applications to various graph properties. For instance, there is a classical upper bound on $\lambda_2$ for any $r$-regular graph in terms of $r$ and its diameter~\cite{Nil91}, and large $r$-regular graphs with small second eigenvalue are shown to be Hamiltonian~\cite{KS03}. A notable application of $\lambda_2$ is for bounding the edge expansion (or isoperimetric number), sometimes denoted $h_G$, and defined as the minimum value of $E(S,V\setminus S)/|S|$ taken over all $S\subseteq V$ with $1\le |S|\le |V|/2$.  Namely, if $G$ is $r$-regular, then  $(r-\lambda_2)/2\le h_G\le \sqrt{r^2-\lambda_2^2}$ (see \cite{Mohar}). We refer, e.g., to \cite{Cvetkovic,BH17} and further references therein for more information.

Among other examples we have considered the hypercube $G=Q_r$  on $\{0,1\}^r$. We show that $\alphabal(Q_r)\ge a(r-1)$ for all $r\ge 1$, where $a(r)$ is as defined in (\ref{eqar}). Computational experiments suggest that this is the exact value. Showing $\alphabal(Q_r)=a(r-1)$ for all $r$ is an interesting open problem that would offer a new link from balanced biindependent sets to  other combinatorial counting problems such as the number of $r$-steps random walks on a line starting from the origin and returning to it at least once.

% Appendix here
% Options are (1) APPENDIX (with or without general title) or 
%             (2) APPENDICES (if it has more than one unrelated sections)
% Outcomment the appropriate case if necessary
%
% \begin{APPENDIX}{<Title of the Appendix>}
% \end{APPENDIX}
%
%   or 
%

\appendix

\section{Application to product-free sets in finite groups.}\label{appendixgroup}

Let $\Gamma$ be a finite group. A subset $A\subseteq \Gamma$ is called {\em product-free} if $uv\not\in A$ for all $u,v\in A$.
A problem of interest is to find the maximum cardinality of a product-free set in $\Gamma$; see, e.g.,  Kedlaya \cite{Kedlaya}, Gowers \cite{Gowers} for  background and an overview of results on this problem.
As in  \cite{Kedlaya} let $\beta(\Gamma)$ denote the maximum density $|A|/|\Gamma|$ of a product-free set  $A\subseteq \Gamma$. Clearly, $\beta(\Gamma)\le 1/2$ (since, for any $x\in A$,  the sets $A$ and $xA$ are disjoint subsets of $\Gamma$). It is known that any finite abelian group satisfies 
$1/7\le \beta(\Gamma)\le 1/2$. Moreover any finite group satisfies $\beta(\Gamma)=\Omega(1/n^{3/14})$   
and the question arose whether $\beta(\Gamma)=\Omega(1/n^\epsilon)$ for all $\epsilon>0$.
Gowers \cite{Gowers} answered in the negative by showing that $\beta(\text{PSL}_2(q))=O(1/n^{1/9})$
(see Example \ref{exampleGowers}  below).

As a crucial ingredient in his proof (which applies in fact to a more general setting) Gowers \cite{Gowers} introduces an upper bound on the product-free set density of $\Gamma$  in terms of the second eigenvalue of an associated bipartite Cayley  graph. We follow the exposition by Kedlaya \cite{Kedlaya} and Vallentin~\cite{Vallentin}, which relies on using (a variation of) the parameter $\widehat h$ applied to this bipartite Cayley graph.

Let us fix a product-free set $A\subseteq \Gamma$ and define the bipartite Cayley graph $G_{\Gamma,A}=(V_1\cup V_2,E)$, where $V_1,V_2$ are two disjoint copies of $\Gamma$, where $u\in V_1$, $v\in V_2$ are adjacent in $G_{\Gamma,A}$ if  $uv\in A$.    Note that the graph $G_{\Gamma,A}$ is $|A|$-regular. Let $A_k$ denote the copy of $A$ within the set $V_k$ for $k=1,2$.
Then, by construction, $(A_1,A_2)$ is a biindependent pair in $G_{\Gamma,A}$ since $A$ is product-free.
 
 The next result relates the size of $|A|$ to the second largest eigenvalue of the adjacency matrix of $G_{\Gamma,A}$. 
 It is essentially based on  \cite[Lemma 3.2]{Gowers},   \cite[Lemma 5.3]{Kedlaya}
  (and Vallentin's presentation~\cite{Vallentin}).
 
\begin{lemma}
\label{lemma:boundlambda2}
Let $\Gamma$ be a finite group, $n:=|\Gamma|$, and let $k$ denote the minimum dimension of a non-trivial  representation of  $\Gamma$. Let $A\subseteq \Gamma$ be a product-free set and let $\lambda_2$ denote the second largest eigenvalue of the adjacency matrix of the bipartite Cayley graph $G_{\Gamma,A}$.  Then we have
\begin{align} \label{eqGamma}
\lambda_2 \leq \sqrt{|A|(n-|A|)\over k}. 
\end{align}
\end{lemma}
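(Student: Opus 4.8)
The plan is to bound $\lambda_2$ by relating the adjacency matrix of $G_{\Gamma,A}$ to the regular representation of $\Gamma$ and invoking the representation-theoretic hypothesis. Recall from Lemma \ref{lembipeig} that, writing $A_{G_{\Gamma,A}}={\tiny\left(\begin{matrix}0 & M\cr M\T & 0\end{matrix}\right)}$, the eigenvalues of the adjacency matrix are the singular values of $M$, so $\lambda_2$ equals the second largest singular value of $M$. Here $M$ is the $n\times n$ matrix indexed by $\Gamma\times\Gamma$ with $M_{u,v}=1$ if $uv\in A$ and $0$ otherwise; equivalently $M_{u,v}=\mathbbm 1_A(uv)$, so $M$ is a convolution-type operator associated with the indicator function of $A$.

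First I would diagonalize $M$ using the group structure. The key observation is that $M$ intertwines left- and right-translation actions of $\Gamma$: the entry $M_{u,v}$ depends only on the product $uv$, which makes $M$ a ``group matrix'' whose singular values are governed by the Fourier transform of $\mathbbm 1_A$ over the irreducible representations of $\Gamma$. Concretely, by Schur orthogonality / Peter--Weyl, the space $\mathbb{C}^\Gamma$ decomposes into isotypic components indexed by the irreducible representations $\rho$ of $\Gamma$, and $MM\T$ acts as a scalar (or a block scalar) on each component, with the value determined by $\widehat{\mathbbm 1_A}(\rho)=\sum_{g\in A}\rho(g)$. The largest singular value $r=|A|$ comes from the trivial representation (eigenvector $e$, the all-ones vector), consistent with $\lambda_1=|A|$ in Lemma \ref{lembipeig}. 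The second largest singular value $\lambda_2$ therefore arises from the nontrivial representations, whose minimum dimension is $k$ by hypothesis.

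The heart of the argument is a second-moment (trace) estimate. I would compute $\mathrm{Tr}(MM\T)=\sum_{u,v}M_{u,v}^2=\sum_{u,v}\mathbbm 1_A(uv)=n|A|$, since for each fixed $u$ there are exactly $|A|$ choices of $v$ with $uv\in A$. This sum of squared singular values equals $\lambda_1^2$ plus the sum of the remaining squared singular values, i.e.\ $n|A|=|A|^2+\sum_{i\ge 2}\sigma_i^2$, giving $\sum_{i\ge 2}\sigma_i^2=|A|(n-|A|)$. The crucial point now is that every nontrivial singular value occurs with multiplicity at least $k$: because the nontrivial part of $\mathbb{C}^\Gamma$ splits into irreducible pieces each of dimension $\ge k$ (each appearing with multiplicity equal to its dimension in the regular representation), the singular values attached to any nontrivial representation come in blocks of size at least $k$. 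Hence $k\,\lambda_2^2\le\sum_{i\ge 2}\sigma_i^2=|A|(n-|A|)$, which rearranges to $\lambda_2\le\sqrt{|A|(n-|A|)/k}$, as claimed in \eqref{eqGamma}.

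The main obstacle I anticipate is making the multiplicity bound fully rigorous: one must verify that the second largest singular value genuinely sits inside an isotypic block of dimension at least $k$, rather than appearing in isolation. This requires the structural fact that $M$, being a group matrix, respects the decomposition of the regular representation, so that its restriction to each irreducible isotypic component of dimension $d\ge k$ has all its singular values repeated $d$ times; thus $\lambda_2$ cannot have multiplicity smaller than $k$ among the nontrivial spectrum. Handling the case $|A|=0$ (where the bound is trivial) and confirming the normalization of the Fourier transform are routine. Once the multiplicity argument is in place, the trace computation and the final rearrangement are straightforward.
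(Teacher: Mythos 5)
Your proof is correct, and its skeleton coincides with the paper's: the trace identity $\Tr(MM\T)=n|A|$, which gives $\sum_{i\ge 2}\sigma_i^2=|A|(n-|A|)$, combined with the claim that the eigenvalue $\lambda_2^2$ of $MM\T$ has multiplicity at least $k$. Where you differ is in how that multiplicity bound is established. You route it through the isotypic (Peter--Weyl) decomposition of the regular representation: since $M_{u,v}$ depends only on the product $uv$, the matrix $MM\T$ commutes with right translation, so by Schur's lemma its restriction to each non-trivial isotypic component acts as an operator tensored with the identity, and every eigenvalue occurring there has multiplicity divisible by the dimension of the corresponding irreducible, hence at least $k$. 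The paper reaches the same conclusion more economically and without Peter--Weyl: it verifies the intertwining relation $(M\T x^\gamma)_v=(M\T x)_{\gamma^{-1}v}$ by hand, deduces that the eigenspace $W$ of $MM\T$ for $\lambda_2^2$ inside $e^\perp$ is invariant under right translation, and observes that this action on $W$ is non-trivial (a non-zero $x\in W$ is not a multiple of $e$, so $x^\gamma\ne x$ for some $\gamma$); hence $W$ contains a non-trivial irreducible and $\dim W\ge k$. Your approach proves a stronger structural fact (all eigenvalues outside the trivial component come in blocks whose sizes are dimensions of irreducibles, readable from the Fourier transform of the indicator of $A$), at the price of invoking heavier machinery; the paper's argument is more elementary and self-contained. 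The obstacle you flagged --- ensuring $\lambda_2^2$ genuinely sits in a non-trivial block --- is resolved in both treatments by the same observation: the relevant eigenvectors can be taken orthogonal to $e$, and $e^\perp$ contains no non-zero translation-invariant vectors.
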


\proof{Proof.} 
Set $G:=G_{\Gamma,A}$ and write its adjacency matrix as in (\ref{eqAMG}).
By Lemma \ref{lembipeig}, $\lambda_2^2$ is the second largest eigenvalue of $M_GM_G\T$;  let $k_2$ denote its multiplicity. 
Since $G$ is $|A|$-regular, $G$ has $n|A|$ edges and thus $\text{Tr}(M_GM_G\T)= n|A|$.
On the other hand, by considering the spectral decomposition of $M_GM_G\T$, we obtain 
$\text{Tr}(M_GM_G\T) \ge |A|^2 + \lambda_2^2 k_2$. By combining both facts we deduce 
$n|A|\ge  |A|^2+ \lambda_2^2 k_2$ and thus $\lambda_2\le \sqrt{|A|(n-|A|)/k_2}$.

We now show that $k_2\ge k$, which, combined with the above inequality for $\lambda_2$, gives the desired inequality (\ref{eqGamma}). For this let $W$ denote the eigenspace of $M_GM_G\T$ corresponding to the eigenvalue $\lambda_2^2$, so that $W$ has dimension $k_2$. One can easily check that
$$W=\{x\in \R^{|V_1|}: x\T e=0,\ x\T M_GM_G\T x= \lambda_2^2 \|x\|^2\}.$$
We show that $W$ is invariant under some non-trivial action of $\Gamma$. For this, consider the action of $\Gamma$ on the space $\R^{|V_1|}$ defined by right multiplication; that is, 
for $\gamma\in \Gamma$ and $x=(x_{u})_{u\in \Gamma}$, define $x^\gamma:=(x_{u\gamma})_{u\in \Gamma}$.
We claim that $(M_G\T x^\gamma )_v=(M_G\T x)_{\gamma^{-1}v}$ for any $v\in\Gamma$. Indeed,
$$(M_G\T x^\gamma )_v
=\sum_{u\in\Gamma} M_G(u,v)x^\gamma_u=\sum_{u\in\Gamma: uv\in A} x_{u\gamma}
=\sum_{w\in \Gamma: w\gamma^{-1}v\in A} x_w= \sum_{w\in\Gamma}M_G(w,\gamma^{-1}v)x_w=
(M_G\T x)_{\gamma^{-1}v}.
$$
From this follows that $x\T M_GM_G\T x = (x^\gamma)\T M_GM_G\T x^\gamma$ and $e\T x=e\T x^\gamma$. Hence $x\in W$ implies $x^\gamma\in W$ and thus the space $W$ is invariant under this action of $\Gamma$. This action  is non-trivial since a nonzero vector $x\in W$ is not a multiple of the all-ones vector and thus $x^\gamma\ne x$ for some $\gamma\in \Gamma$.
Therefore, we can conclude that $k_2=\dim W\ge k$ and the proof is complete.
\endproof

\smallskip
We can now show the following  bound on the product-free set density,  which is essentially Theorem 3.3 of Gowers \cite{Gowers} (see Remark \ref{remGowers}  below).

\begin{theorem} 
\label{theorem:Gowers}
Let $\Gamma$ be a finite group and let $k$ denote the minimum dimension of a non-trivial representation of $\Gamma$. If $A$ is  a product-free set in  $\Gamma$, then we have 
$|A|\le {|\Gamma|\over 1+k^{1/3}}$. 
\end{theorem}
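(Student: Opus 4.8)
The plan is to chain together the two ingredients established just above---the sharpened eigenvalue bound $h(G)\le\widehat h(G)$ from Proposition~\ref{prop:hbbioneedgetransitive} and the representation-theoretic estimate on $\lambda_2$ from Lemma~\ref{lemma:boundlambda2}---and then to collapse everything into a single algebraic inequality in $|A|$.

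First I would set $G:=G_{\Gamma,A}$ and $n:=|\Gamma|$, and recall that $G$ is $|A|$-regular with $|V_1|=|V_2|=n$. Since $A$ is product-free, the pair $(A_1,A_2)$ is bipartite biindependent in $G$, so by the definition of $h(\cdot)$ we have $\tfrac{|A|}{2}={|A_1|\cdot|A_2|\over|A_1|+|A_2|}\le h(G)$. Applying Proposition~\ref{prop:hbbioneedgetransitive} with $r=|A|$ and second largest eigenvalue $\lambda_2$ of $A_G$ gives
\[
\frac{|A|}{2}\le h(G)\le \widehat h(G)=\frac{n}{2}\,\frac{\lambda_2}{|A|+\lambda_2}.
\]
Clearing denominators yields $|A|\,(|A|+\lambda_2)\le n\lambda_2$, which rearranges to
\[
|A|^2\le \lambda_2\,(n-|A|).
\]

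Next I would substitute the bound $\lambda_2\le \sqrt{|A|(n-|A|)/k}$ from Lemma~\ref{lemma:boundlambda2} into this inequality, obtaining
\[
|A|^2\le \sqrt{\frac{|A|(n-|A|)}{k}}\,(n-|A|)=\frac{|A|^{1/2}(n-|A|)^{3/2}}{k^{1/2}}.
\]
Dividing both sides by $|A|^{1/2}$ gives $|A|^{3/2}k^{1/2}\le (n-|A|)^{3/2}$, and raising to the power $2/3$ turns this into $|A|\,k^{1/3}\le n-|A|$. Hence $|A|(1+k^{1/3})\le n$, and therefore $|A|\le |\Gamma|/(1+k^{1/3})$, which is exactly the claimed bound.

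There is no real conceptual obstacle at this stage, since all the substantive work is already done in Proposition~\ref{prop:hbbioneedgetransitive} and Lemma~\ref{lemma:boundlambda2}; the only point requiring care is the bookkeeping of exponents when inserting the $\lambda_2$ estimate, together with the observation that $n-|A|\ge 0$ (indeed $|A|\le n/2$ for a product-free set), which makes the square-root and $2/3$-power manipulations legitimate. It is worth noting that the improvement over Gowers' original bound $|\Gamma|/k^{1/3}$ comes precisely from using the sharper factor $\lambda_2/(r+\lambda_2)$ in $\widehat h(G)$ instead of $\lambda_2/r$: this is what supplies the extra $+|A|\lambda_2$ term above and hence the $1+{}$ in the final denominator.
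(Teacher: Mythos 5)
Your proof is correct and follows essentially the same route as the paper's: chaining $\tfrac{|A|}{2}\le h(G_{\Gamma,A})\le \widehat h(G_{\Gamma,A})=\tfrac{n}{2}\tfrac{\lambda_2}{|A|+\lambda_2}$ with the estimate $\lambda_2\le\sqrt{|A|(n-|A|)/k}$ of Lemma~\ref{lemma:boundlambda2}, then rearranging exponents to reach $|A|k^{1/3}\le n-|A|$. The only difference is cosmetic bookkeeping (your explicit check that $n-|A|\ge 0$, which the paper leaves implicit), so there is nothing substantive to add.
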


\proof{Proof.} 
Since $(A_1,A_2)$ is a bipartite biindependent pair in~$G_{\Gamma,A}$,  we have
$
\frac{|A|}{2}  \leq h(G_{\Gamma,A})$ and thus 
${|A|\over 2}\le  \widehat h(G_{\Gamma,A})  
= \frac{n}{2} {\lambda_2\over |A|+\lambda_2}$, which implies $|A|^2\le \lambda_2(n-|A|) \le (n-|A|) 
\sqrt{|A|(n-|A|)/k}$, using (\ref{eqGamma}).
This implies $\big({|A|\over n-|A|}\big)^{3/2}\le {1\over k^{1/2}}$ and thus $|A|\le {n\over 1+k^{1/3}}$ as desired. 
\endproof 

\smallskip
\begin{remark}\label{remGowers} 
{\em The upper estimates in (\ref{eqGamma}) and Theorem \ref{theorem:Gowers} offer  a slight sharpening of the known results.
Indeed Gowers shows $\lambda_2:=\lambda_2(A_{G_{\Gamma,A}}) \le \sqrt{n|A|/k}$ (Lemma 3.2 in \cite{Gowers}), a bound that is a bit weaker than the one in (\ref{eqGamma}), which he then uses to show $|A|\le {n\over k^{1/3}}$ (Theorem 3.3 in \cite{Gowers}). Vallentin \cite{Vallentin} uses his eigenvalue bound to conclude ${|A|\over 2}\le h(G_{\Gamma,A})\le {n \over |A|} \lambda_2\le {n\over |A|} \sqrt{n|A|/k}$, and thus   $|A|\le 2^{2/3}{n\over k^{1/3}}$. Our slightly sharper estimate $|A|\le {n\over 1+k^{1/3}}$ follows using the sharper bound in (\ref{eqGamma}) and the sharper eigenvalue bound $h(G_{\Gamma,A})\le \widehat h(G_{\Gamma,A})={n\over 2} {\lambda_2\over |A|+\lambda_2}$.

One recovers the known bound $\beta(\Gamma)\le 1/2$ using Theorem~\ref{theorem:Gowers}. 
This bound is tight, for instance, when $\Gamma$ is 
the symmetric group $S_n$ (in which case  $k=1$, since the sign representation is a non-trivial representation of dimension 1).  
Since the set $S_n\setminus A_n$  consisting of all permutations  with an odd sign is product-free with size $n!/2$, one gets $\beta(S_n)\ge 1/2$ and thus the bound is tight: $\beta(S_n)=1/2$. 
By contrast it has been a long standing open problem to determine the product-free density of the alternating group $A_n$; it was shown recently in \cite{Keevash22} that $\beta(A_n)=\Theta(1/\sqrt n)$.}
\end{remark}

\smallskip
\begin{example} [Gowers~\cite{Gowers}]\label{exampleGowers}
Consider the group~$\Gamma = \text{PSL}_2(q)$, which is the group of all~$2 \times 2$-matrices over~$\F_q$ with determinant~$1$, quotiented by the subgroup~$\{I,-I\}$. As Gowers notes, it is one of the simplest infinite families of finite simple groups (i.e., nontrivial groups whose only normal subgroups are the trivial group and the group itself). It is natural to consider \emph{simple} finite groups, because any product-free subset in a quotient of a finite group lifts to a product-free subset in the group itself. 

The order of  $\text{PSL}_2(q)$ is $n= q(q^2-1)/2$.  Frobenius proved that every non-trivial representation of~$\text{PSL}_2(q)$ has dimension at least~$k=(q-1)/2$, which is at least $n^{1/3}/{4}$. Applying Theorem~\ref{theorem:Gowers} one obtains  that the maximum size of a product-free subset in~$\Gamma$ is at most ${4^{1/3}n^{8/9}}$ and thus $\beta(\text{PSL}_2(q)) =O(1/n^{1/9})$.
\end{example}

\section{Proof of Lemma \ref{lemXx}.}\label{appendix-Xx}
We use the fact that ${\tiny \left(\begin{matrix}1 & x\T\cr x& X\end{matrix}\right)}\succeq 0$ $\Longleftrightarrow X-xx\T\succeq 0$.

\noindent
The ``only if" part in Lemma \ref{lemXx} is easy: if $X-xx\T \succeq 0$ and $e\T x=1$, then $\langle J,X\rangle \ge e\T xx\T e=1$. We now show the ``if part". So, assume $X\in \mathcal S^n$ satisfies $X\succeq 0$, $\text{\rm Tr}(X)=1$ and $\langle J,X\rangle \ge 1$; we construct $x\in \R^n$ such that $e\T x=1$ and $X-xx\T\succeq 0$. For this, consider the spectral decomposition $X=\sum_{i=1}^n \beta_i u_iu_i\T$, where the $u_i$'s form an orthonormal basis of eigenvectors, $\beta_i\ge 0$, and $\sum_{i=1}^n \beta_i=\text{Tr}(X)=1$. Define the vectors 
$$a:= (\sqrt{\beta_i} \cdot e\T u_i)_{i=1}^n\quad \text{ and } \quad x:=\sum_{i=1}^n {\beta_i \cdot e\T u_i\over \|a\|^2}u_i.
$$
Then, we have  $\|a\|^2 = \sum_{i=1}^n \beta_i (e\T u_i)^2 =\langle J,X\rangle \ge 1$ and
$e\T x= 1$. We now show $X-xx\T\succeq 0$. For this let $z\in\R^n$ be any vector; we show that $z\T(X-xx\T)z\ge 0$. Indeed we have
\begin{align*}
z\T(X-xx\T)z& = z\T Xz-(z\T x)^2=\sum_{i=1}^n \beta_i (z\T u_i)^2 - {1\over \|a\|^4}\Big(\sum_{i=1}^n \beta_i \cdot e\T u_i \cdot z\T u_i\Big)^2\\
& = \sum_{i=1}^n \beta_i (z\T u_i)^2 -{1\over \|a\|^4}\Big(\sum_{i=1}^n \sqrt{\beta_i}(e\T u_i) \cdot 
\sqrt{\beta_i}(z\T u_i)\Big)^2\\
& \ge  \sum_{i=1}^n \beta_i (z\T u_i)^2 -{1\over \|a\|^4} \Big(\sum_{i=1}^n \beta_i (e\T u_i)^2\Big)
\Big(\sum_{i=1}^n \beta_i (z\T u_i)^2\Big)\\
&=  \sum_{i=1}^n \beta_i (z\T u_i)^2\Big(1- {1\over \|a\|^2}\Big)\ge 0,
\end{align*} 
using Cauchy-Schwartz inequality for the first inequality and $\|a\|\ge 1$ for the last one. \hfill\qed

\section{Proof of Proposition \ref{prop:gbbionesymmetric}.}\label{appendixghat}

Here we show the result of Proposition \ref{prop:gbbionesymmetric}.
As starting point we use the formulation of $\gbbione(G)$ from (\ref{gbbione:primal0}), where we restrict the optimization to matrices $Z$ of the form $Z=tA_G$ for some scalar $t\in \R$, and to vectors~$u$ of the form~$u=\mu e$ for some~$\mu \in \R$.  Note that when $G$ is vertex- and edge-transitive  this restriction can be made without loss of generality. Then we consider the equivalent reformulation obtained by taking the Schur complement with respect to the upper left corner $\lambda$ of the matrix in (\ref{gbbione:primal0}). 
So we aim to compute the optimum value of the program 
\begin{equation}\label{eqgonehat}
\widehat {g}(G):=\min_{\lambda, \mu ,t\in \R} \Big\{ \lambda \,\, | \,\,\lambda( \mu I -C + tA_G) - \tfrac{\mu^2}{4}J \succeq 0,\ \lambda\ge 0  \Big\},
\end{equation}
which upper bounds $\gbbione(G)$ and is equal to it when $G$ is vertex- and edge-transitive; we will show that this optimum value has the form claimed in Proposition \ref{prop:gbbionesymmetric}.
For this we need to express the condition that the eigenvalues of the matrix $\lambda(\mu I -C + tA_G) - \tfrac{\mu^2}{4}J$ are nonnegative.
By considering the eigenvalue of this matrix for the all-ones vector we get the condition 
\begin{equation}\label{eqeig1}
\lambda(\mu -{n\over 2}+tr) -{\mu^2\over 2}n\ge 0.
\end{equation}
In addition to this  we need to ensure that $ \mu I -C + tA_G\succeq 0$. Note that the matrix $A:=tA_G-C$ has the block-form (\ref{eqM}), with $M:=  tM_G-{1\over 2}J_n$. We have $MM\T= t^2 M_GM_G\T +(n/4 -tr)J_n$. Hence the eigenvalue of $MM\T$ at the all-ones vector is equal to $t^2r^2 +n(n/4-tr)=(tr-n/2)^2$ and its second largest eigenvalue is $t^2\lambda_2(M_GM_G\T)=t^2 \lambda_2^2$.
Therefore, we obtain that 
\begin{equation}\label{eqeig2}
 \mu I -C + tA_G\succeq 0\Longleftrightarrow \mu\ge |tr-n/2|\ \text{ and } \ \mu \ge  |t\lambda_2|=|t|\lambda_2.
\end{equation}
Again we assume $G$ is not complete bipartite and thus $\lambda>0$. Then it follows from (\ref{eqeig1})  that $\mu -{n\over 2}+tr\ge 0$ and, combined with (\ref{eqeig2}), we must have $\mu -{n\over 2}+tr>0.$
Set 
\begin{equation}\label{eqmut}
\mu(t):=\max\{|tr-n/2|,|t|\lambda_2\}.
\end{equation}
Then we can conclude that $\widehat {g_1}(G)$ can be reformulated as 
\begin{equation}\label{eqg1wh}
\widehat {g}(G)= \min_{\mu, t\in\R} \Big\{ F(\mu):={n\over 2}{\mu^2\over \mu +tr-n/2}: 
\mu+tr-n/2>0,\ \mu\ge \mu(t)\Big\}.
\end{equation}
Our task is now to compute the minimum value of the above program (\ref{eqg1wh}).
It is useful to see the behaviour of the function $F(\mu)$. For this observe that its derivative is 
$F'(\mu)= {n\over 2}{\mu^2-\mu(n-2tr)\over (\mu+tr-n/2)^2}$. Hence $F'(\mu)\le 0$ (and thus $F(\mu)$ is monotone nonincreasing) when $\mu$ lies between $0$ and $n-2tr$, and $F'(\mu) \ge 0$ (and thus $F(\mu)$ is monotone nondecreasing) when $\mu$ lies outside the interval $[0,n-2tr]$ or $[n-2tr,0]$ (depending on the sign of $n-2tr$). Note also that $F(\mu)$ has a vertical asymptote at $\mu=n/2-tr$ (at which its denominator vanishes).

According to (\ref{eqg1wh}) we need to discuss according to the value of $\mu(t)$ in (\ref{eqmut}). So we partition the range of values taken by $t$ into $\R=T_1\cup T_2 \cup T_3$, where we set
$$T_1:=\{t\in\R: tr-n/2\ge 0\},\quad T_2:= \{t\in \R: tr-n/2<0,\ t\ge 0\}, \quad T_3:=\{t\in \R: t<0\}.$$
Then, for $\ell\in\{1,2,3\}$ and for $t\in T_\ell$, set
$$F_\ell(t):= \min_\mu\{F(\mu): \mu+tr-n/2>0,\ \mu\ge \mu(t)\Big\},
$$
so that  we have 
\begin{equation}\label{eqg2}
 \widehat {g}(G)= \min_{\ell\in\{1,2,3\}} \min _{t\in T_\ell}F_\ell(t).
\end{equation}
We thus need to compute the value  of $\min_{t\in T_\ell} F_\ell(t)$ for each $\ell=1,2,3$. 
So we distinguish the three cases $\ell=1,2,3$.

\medskip\noindent
{\bf Case 1: $\ell=1$.} Assume $t\in T_1$. Then, $t>0$ and $\mu(t)= \max\{tr-n/2, t\lambda_2\}$.
Then we have $$F_1(t)=\min_\mu \{F(\mu): \mu\ge \mu(t)\}= F(\mu(t)),$$
where the last equality follows since  the function $F(\mu)$ is monotone nondecreasing on $[0,\infty)$. We have two cases.
\begin{itemize}
\item Either $tr-n/2\ge t\lambda_2$, which implies $\mu(t)= tr-n/2$ and thus
$F_1(t)= F(tr-n/2) ={n\over 4}(tr-n/2)$. Note that in this case we have $r>\lambda_2$. 
Then we obtain
\begin{equation}\label{val1a}
\min_{t\in T_1} \{F_1(t): tr-n/2\ge t\lambda_2\}=\min\Big\{{n\over 4}(tr-n/2): t\ge {n\over 2(r-\lambda_2)}\Big\}=
 {n^2\over 8} {\lambda_2\over r-\lambda_2}.
\end{equation}
\item
Or $tr-n/2\le t\lambda_2$, so that $t\le {n\over 2(r-\lambda_2)}$ if $\lambda_2<r$, and  $\mu(t)=t\lambda_2$. Then we have
\begin{equation*}
\min_{t\in T_1} \{F_1(t): tr-n/2\le t\lambda_2\}
= \min\Big\{ F(t\lambda_2)={n\over 2}{t^2\lambda_2^2\over t(\lambda_2+r)-n/2}: {n\over 2r}\le t\le {n\over 2(r-\lambda_2)}\Big\},
\end{equation*}
setting ${n\over 2(r-\lambda_2)}=\infty$ if $r=\lambda_2$.
Consider the function  $\psi(t):=F(t\lambda_2)$, whose derivative is 
$\psi'(t)={n\lambda_2^2\over 2}{t(t(\lambda_2+r)-n)\over (t(\lambda_2+r)-n/2)^2}$.
Note that ${n\over 2(\lambda_2+r)}\le {n\over 2r}\le {n\over \lambda_2+r}$, where ${n\over 2(\lambda_2+r)}$ is an aymptote of $\psi(t)$ (as it is a zero of its denominator).
We also need to compare the relative positions of ${n\over \lambda_2+r}$ (zero of $\psi'(t)$) and $n\over 2(r-\lambda_2)$ (upper bound of the range for $t$); note that  ${n\over \lambda_2+r}\le {n\over 2(r-\lambda_2)} $ if and only if $r\le 3\lambda_2$.
We can now compute the minimum value taken by the function  $\psi(t)$ for ${n\over 2r}\le t \le {n\over 2(r-\lambda_2)}$. When $r\le 3\lambda_2$ it is attained at 
${n\over \lambda_2+r}$ with value $\psi({n\over \lambda_2+r})= {n^2\lambda_2^2\over (\lambda_2+r)^2}$ and when 
$r\ge 3\lambda_2$ (so that $\lambda_2<r$) it is attained at $n\over 2(r-\lambda_2)$ with value 
$\psi({n\over 2(r-\lambda_2)}) = {n^2\lambda_2\over 8(r-\lambda_2)}$.
In summary we have shown that 
\begin{align}
\min_{t\in T_1} \{F_1(t): tr-n/2\le t\lambda_2\} & = {n^2\lambda_2^2\over (\lambda_2+r)^2} & \text{ if } r\le 3\lambda_2,\label{val1b1}\\
&= {n^2\lambda_2\over 8(r-\lambda_2)} & \text{ if } r\ge 3\lambda_2.\label{val1b2}
\end{align}
\end{itemize}
We can now compute $\min_{t\in T_1}F_1(t)$ by comparing (\ref{val1a}) and (\ref{val1b1}), (\ref{val1b2}).
We obtain:
\begin{align}
\min_{t\in T_1}F_1(t) & = \min\Big\{ {n^2\over 8} {\lambda_2\over r-\lambda_2},  {n^2\lambda_2^2\over (\lambda_2+r)^2}\Big\}= 
 {n^2\lambda_2^2\over (\lambda_2+r)^2} & \text{ if } r\le 3\lambda_2,\label{val1c1}\\
\min_{t\in T_1}F_1(t) & =  {n^2\over 8} {\lambda_2\over r-\lambda_2}
& \text{ if } r\ge 3\lambda_2.\label{val1c2}\end{align}

\medskip\noindent
{\bf Case 2: $\ell=2$.} Assume $t\in T_2$, then $tr-n/2\le 0$ and $t\ge 0$. In this case $\mu(t)=\max\{n/2-tr,t\lambda_2\}$.
We now have $0\le n/2-tr\le \mu(t)$. Moreover, one can verify that
\begin{align}
F_2(t)=\min_{\mu\ge \mu(t)} F(\mu) & = F(n-2tr) &\text{ if } \mu(t)\le n-2tr,\\
& = F(\mu(t))\ge F(n-2tr) & \text{ if } \mu(t) \ge n-2tr.
\end{align}
Hence, the minimum value of $F_2(t)$ for $t\in T_2$ is equal to $F(n-2tr)=n(n-2tr)$, which is obtained when $\mu(t)\le n-2tr$. 
We now proceed to compute the minimum value taken by  $F(n-2tr)$ for $t\in T_2$ and $\mu(t)\le n-2tr$. For this we distinguish two cases depending on the value of $\mu(t)$.
\begin{itemize}
\item Either $n/2-tr \ge t\lambda_2$, i.e., $t\le {n\over 2(r+\lambda_2)}$ and thus $\mu(t)=n/2-tr \le n-2tr$. Then we have
\begin{align}
\min_{t\in T_2} \Big\{F_2(t):  t\le  {n\over 2(r+\lambda_2)}\Big\}= \min\Big\{ n(n-2tr): 0\le t\le {n\over 2(r+\lambda_2)}\Big\}= {n^2\lambda_2\over r+\lambda_2}.\label{val2a}
\end{align}
\item Or $n/2-tr \le t\lambda_2$, i.e., $t\ge {n\over 2(r+\lambda_2)}$ and thus $\mu(t)=t\lambda_2$.
Then $\mu(t)=t\lambda_2 \le n-2tr$ is equivalent to $t\le {n\over \lambda_2+2r}$ ($\le {n\over 2r}$).
Then we have
\begin{align}
\min_{t\in T_2} \Big\{F_2(t):  {n\over 2(r+\lambda_2)}\le t\le {n\over \lambda_2+2r}\Big\} & =\min\Big\{n(n-2tr): {n\over 2(r+\lambda_2)}\le t\le {n\over \lambda_2+2r} \Big\}\\
&= {n^2\lambda_2\over \lambda_2+2r}.\label{val2b}
\end{align}
\end{itemize}
Comparing the values in (\ref{val2a}) and (\ref{val2b}) we obtain that 
\begin{align}
\min_{t\in T_2}F_2(t)= {n^2\lambda_2\over \lambda_2+2r}.\label{val2c}
\end{align}

\medskip
\noindent
{\bf Case 3: $\ell=3$.} Assume $t\in T_3$, i.e., $t<0$, and thus $tr-n/2<0$ and 
$\mu(t)=\max\{n/2-tr, -t\lambda_2\}$. Then $F_3(t)=\min_{\mu\ge \mu(t)} F(\mu)$ (since $\mu+tr-n/2>0$).
If $-t\lambda_2\le n/2-tr$ then $\mu(t)=n/2-tr$ and we find that $F_3(t)=F(n-2tr)$. Else, if  $-t\lambda_2\le n/2-tr$ 
then $\mu(t)=-t\lambda_2\le n-2tr$ and we have again $F_3(t)=F(n-2tr)$.
Hence $F_3(t)=F(n-2tr)=n(n-2tr)$ for all $t\in T_3$. 
Then we have
\begin{align}\min_{t\in T_3} F_3(t)=\min\{n(n-2tr): t<0\}= n^2.\label{val3}
\end{align} 

\medskip\noindent
We can now finally compute the value of $\widehat g(G)$ as defined in  (\ref{eqg2}) based on relations (\ref{val1c1})-(\ref{val1c2}),  (\ref{val2c}) and (\ref{val3}).
 Note that $n^2\ge {n^2 \lambda_2 \over \lambda_2+2r}$, ${n^2\lambda_2^2\over (\lambda_2+r)^2}\le {n^2\lambda_2\over \lambda_2+2r}$, and $\frac{n^2 \lambda_2}{\lambda_2+2r} \geq \frac{n^2}{8} \frac{\lambda_2}{r-\lambda_2}$ if $r \geq 3 \lambda_2$. Based on this we obtain
$$
\widehat g(G) %=\min\Big\{{n^2\lambda_2^2\over (\lambda_2+r)^2}, {n^2\lambda_2\over \lambda_2+2r}\Big\} 
=\begin{cases}
 \frac{n^2\lambda_2^2}{(\lambda_2+r)^2}  &  \text{ if $r\le 3\lambda_2$},\\
 \frac{n^2\lambda_2}{8(r-\lambda_2)}  &  \text{ if $r\ge 3\lambda_2$,}
\end{cases}
$$
which is the desired result. \hfill\qed

\section{Proof of Proposition \ref{prop:symbalhat}.}\label{appendixproofsym}

We give here the proof of Proposition \ref{prop:symbalhat}. For this let $P:= \lambda I -J +tA_G +v\Diag(f) +sff\T$ denote the matrix appearing in program (\ref{eq:thetabalhat}). 
We need to find the smallest $\lambda$ for which there exist $t,v,s\in\R$ such that $P\succeq 0$. Note that $A_G e = re$,  $(\Diag f)e = f$, $ff\T e =0$, $Je = 2ne$, $A_Gf=-rf$, $(\Diag f)f =e$, $ff\T f =2nf$, and $Jf=0$. Hence~$P$ leaves the subspaces $\langle e,f \rangle$ and $\langle e,f \rangle^\perp$ invariant. Let~$u$ be an eigenvector of~$P$ for eigenvalue~$\tau$, and write~$u=x+y$ with~$x \in \langle e,f\rangle$ and~$y \in \langle e,f \rangle^{\perp}$. Then $Px +Py = \tau x + \tau y$, so~$Px - \tau x = \tau y - Py$. The left-hand side is contained in~$\langle e,f\rangle$, while the right-hand side is contained in~$\langle e,f \rangle^{\perp}$, so both sides of the equality are~$0$. So~$\tau$ is an eigenvalue corresponding to~$x$ (if~$x\neq 0$) and also corresponding to~$y$ (if~$y\neq 0$). Hence
$$
P \succeq 0 \,\,\,\, \Longleftrightarrow \,\,\,\,  \begin{cases}
    x\T P x \geq 0   & \text{for all } x \in \langle e,f\rangle,  \\
    y\T P y \geq 0  & \text{for all }  y \in \langle e,f\rangle^\perp.
  \end{cases}
$$
We now characterize when $x\T P x \geq 0$ for all $x \in \langle e,f\rangle$,  and when $y\T P y \geq 0$ for all $y \in \langle e,f\rangle^\perp$.
\begin{description}
\item[(i)] Let $x \in \langle e,f\rangle$ and write~$x = ae+bf$ with~$a,b \in \R$. Then 
\begin{align*}
Px &= a(\lambda e +tre +vf -2ne) +b(\lambda f - trf +ve +2nsf) 
\\ &= \left( a (\lambda +tr-2n) +bv\right) e + \left(av+b(\lambda-tr+2ns)\right) f,
\end{align*}
so $x\T P x =  2na\left( a (\lambda +tr-2n) +bv\right) e + 2nb\left(av+b(\lambda-tr+2ns)\right) f $. Hence
\begin{align}
  x\T P x \geq 0  \,\,\, \forall \, x \in \langle e,f\rangle &\Longleftrightarrow   a^2 (\lambda +tr-2n) +2abv+b^2(\lambda-tr+2ns) \geq 0 \,\,\, \forall a,b \in \R \notag
\\  
&\Longleftrightarrow \begin{cases} v^2 \leq (\lambda+tr-2n)(\lambda-tr+2ns), & \\
  \lambda+tr-2n\geq 0.&  \end{cases}
\end{align}
Here the first equivalence follows by rearranging terms in $x\T Px$, and  the second one  by considering the expression in~$a$ and~$b$ as a quadratic equation in~$a$ and computing the discriminant. 

\item[(ii)]  Assume that~$y=( c \,\,d)\T \in \langle e,f \rangle^\perp$ is an eigenvector of~$P$ for eigenvalue~$\tau$, where $c,d\in\R^n$. Then, $c\T e=d\T e=0$. Using the block-form of $P$ we obtain
$$
Py = \begin{pmatrix} \lambda c + t M_G d +vc \\ \lambda d +tM_G\T c - vd \end{pmatrix}= \tau  \begin{pmatrix}c \\d\end{pmatrix}.
$$
So~$(\tau -\lambda-v)c = t M_G d$ and $(\tau-\lambda+v)d= t M_G\T c$. It follows that 
$t^2 M_G\T M_G d =  (\tau -\lambda-v)tM_G\T c =(\tau-\lambda-v)(\tau-\lambda+v)d = ((\tau-\lambda)^2-v^2)d$. Similarly,~$t^2 M_G M_G\T c = ((\tau-\lambda)^2-v^2)c$. As $c \neq 0$ or $d \neq 0$, we have that $\frac{(\tau-\lambda)^2-v^2}{t^2}$ is an eigenvalue of~$M_G\T M_G$ (if $t\ne 0$), which is distinct from its eigenvalue~$r^2$ for eigenvector~$e$, as~$e\T c=e\T d=0$. So
\begin{align*}
&(\tau-\lambda)^2-v^2 = t^2 \lambda_i(M_G\T M_G) \,\,\,\,\, (i \geq 2),\\
\text{ and thus }\,\,\,\,&\tau = \lambda \pm \sqrt{v^2+t^2\lambda_i (M_G\T M_G)} \,\,\,\,\, (i \geq 2).\end{align*}
We need to ensure~$\tau \geq 0$. Hence we obtain the condition
\begin{align*}
\lambda \geq \sqrt{v^2 + t^2 \lambda_2 (M_G\T M_G)}= \sqrt{v^2 + t^2 \lambda_2^2}.
\end{align*}
Note this also holds if $t=0$.
\end{description}
Summarizing, we obtain that~$\thetabalhat(G)$ is the smallest~$\lambda$ such that there exist~$t,s,v \in \R$ satisfying  
$$
\begin{cases} v^2 \leq (\lambda+tr-2n)(\lambda-tr+2ns), & \\
  \lambda+tr-2n\geq 0, & 
  \\  \lambda \geq \sqrt{v^2 + t^2 \lambda_2^2}. \end{cases}
$$
Without loss of generality we may set~$v=0$, since if~$(\lambda,t,s,v)$ is feasible, then also~$(\lambda,t,s,v=0)$ is feasible. 
Hence $\thetabalhat(G)$ is the minimum~$\lambda$ such that there exist~$t,s \in \R$ satisfying  
$$
\begin{cases}  \lambda+tr-2n\geq 0, & \\
  \lambda-tr+2ns\geq 0, & \\  \lambda \geq |t| \lambda_2. \end{cases}
$$ 
Now we may eliminate the second equation, as we can choose~$s$ such that~$\lambda-tr+2ns=0$. So~$\thetabalhat(G)$ is the minimum~$\lambda$ such that there exists~$t \in \R$ satisfying 
$$
\begin{cases}  \lambda+tr-2n\geq 0, &  \\
  \lambda \geq |t| \lambda_2. \end{cases}
$$ 
This implies $\lambda \geq 2n-tr$ and $\lambda \geq t \lambda_2$. Hence $\lambda$ is above the point of intersection, where~$2n-tr=t\lambda_2$, i.e.,  $t={2n\over \lambda_2+r}$, which implies  $\lambda \geq t\lambda_2 = \frac{2n\lambda_2}{\lambda_2 +r}$. Setting $t={2n\over \lambda_2+r}$ and~$\lambda=\lambda_2 t$ is feasible, so the optimum $\lambda$ is $\frac{2n\lambda_2}{\lambda_2+r}$, which completes the proof of Proposition \ref{prop:symbalhat}. \hfill\qed

\medskip 
Let us point out that it follows from the above proof  that one may set  $v=0$ in the program (\ref{eq:thetabalhat}) defining $\thetabal(G)$; this observation was mentioned just after Proposition \ref{prop:symbalhat}.

% Acknowledgments here
\section*{Acknowledgments.}
We thank Frank Vallentin for communicating his eigenvalue bound on the parameter $h(\cdot)$ and its  application  to bounding product-free sets in finite groups. We are grateful to Aida Abiad for discussions and bringing the paper \cite{CK03} to our attention, and to Hajime Tanaka for letting us know about the papers \cite{ST2014,STT2017}.  This research was carried out while S.\ Polak and L.F.\ Vargas were with Centrum Wiskunde~$\&$ Informatica, Amsterdam. This work is supported by the European Union's Framework Programme for Research and Innovation Horizon
2020 under the Marie Skłodowska-Curie Actions Grant Agreement No. 813211  (POEMA).

% References here (outcomment the appropriate case) 

\end{document}